\def\R{\mathbb{R}}
\def\M{\mathbb{M}}
\newcommand{\ben}{\begin{enumerate}}
\newcommand{\bit}{\begin{itemize}}
\newcommand{\een}{\end{enumerate}}
\newcommand{\eit}{\end{itemize}}
\newcommand{\ed}{\end{document}}
\def\cO{\mathcal{O}}
\DeclareMathOperator{\cotanh}{cotanh}
\let\hat=\widehat
\let\tilde=\widetilde
\def\S{\Sigma}
\def\mA{\mathcal{A}}
\def\mB{\mathcal{B}}
\def\cte.{\mathop{\rm cte.}\nolimits}
\def\cosh{\mathop{\rm cosh }\nolimits}
\def\tanh{\mathop{\rm tanh }\nolimits}
\def\arcsinh{\mathop{\rm arcsinh }\nolimits}
\def\R{\mathbb{R}}
\def\H{\mathbb{H}}
\def\S{\mathbb{S}}
\newfont{\bb}{msbm10 at 12pt}
\titleformat{\subsection}[runin]
{\bfseries} {\thesubsection{.}}{0.15cm}{}[.]
\titleformat{\subsubsection}[runin]
{\em}{\thesubsubsection{.}}{0.15cm}{}[.]
\newtheorem{theorem}{Theorem}[section]
\newtheorem{lemma}[theorem]{Lemma}
\newtheorem{proposition}[theorem]{Proposition}
\newtheorem{remark}[theorem]{Remark}
\newtheorem{corollary}[theorem]{Corollary}
\newtheorem{definition}[theorem]{Definition}
\theoremstyle{definition}
\numberwithin{equation}{section}
\numberwithin{figure}{section}
\begin{document}
\fancyhead[LO]{Free boundary minimal annuli}
\fancyhead[RE]{Alberto Cerezo}
\fancyhead[RO,LE]{\thepage}

\thispagestyle{empty}

\begin{center}
{\bf \LARGE Free boundary minimal annuli in geodesic balls of \texorpdfstring{$\H^3$}{}}

\vspace*{5mm}

\hspace{0.2cm} {\Large Alberto Cerezo}
\end{center}

\vspace{0.5cm}


\footnote[0]{
\noindent \emph{Mathematics Subject Classification}: 53A10, 53C42. \\ \mbox{} \hspace{0.25cm} \emph{Keywords}: minimal surfaces, free boundary, spherical curvature lines, critical catenoid}

\vspace*{7mm}

\begin{quote}
{\small
\noindent {\bf Abstract}\hspace*{0.1cm}
We construct a countable collection of one-parameter families of non-rotational minimal annuli with free boundary in geodesic balls of hyperbolic 3-space. Every surface within a given family shares a common prismatic symmetry group, and they appear as bifurcations from certain free boundary hyperbolic catenoids.

\vspace*{0.1cm}
}
\end{quote}

\section{Introduction}
A classical question in differential geometry is to study the existence of minimal surfaces in compact Riemannian three-manifolds $M$, as these surfaces appear as critical points for the area functional. In the case of manifolds with boundary $\partial M$, it is natural to consider the set of surfaces $\Sigma$ with boundary $\partial \Sigma$ contained in $\partial M$. The critical points of the area functional within this set correspond with free boundary minimal surfaces, that is, minimal surfaces $\Sigma$ which meet $\partial M$ orthogonally along $\partial \Sigma$. More generally, free boundary surfaces with constant mean curvature (not necessarily minimal) appear as critical points of the area functional under volume constraints; see \cite{GHN}. One case of particular interest is when $M$ is a geodesic ball of one of the space forms $\mathbb{R}^3$, $\mathbb{S}^3$ or $\mathbb{H}^3$. 

\medskip

In 1985, Nitsche \cite{Nit} proved that any free boundary minimal or constant mean curvature (CMC) disk in the unit ball of $\mathbb{R}^3$ is totally umbilical, i.e., an equatorial disk or a spherical cap. Ros and Souam \cite{RS,S} later generalized this result to free boundary disks in geodesic balls of $\mathbb{S}^3$ and $\mathbb{H}^3$. All of these disk-type surfaces are rotational, i.e., they are invariant under a 1-parameter group of isometries that fix pointwise a geodesic of the ambient space. For the Euclidean case, Nitsche claimed without proof that any free boundary minimal or CMC annulus in the unit ball of $\mathbb{R}^3$ should be rotational. However, in 1995, Wente \cite{W2} constructed examples of immersed, non-rotational CMC ($H \neq 0$) annuli with free boundary in the unit ball. This led Wente to ask whether any embedded, free boundary CMC annulus in $\mathbb{R}^3$ should be rotational. Regarding the minimal case, Fernández, Hauswirth and Mira \cite{FHM} recently constructed immersed, non-rotational minimal annuli with free boundary in the unit ball. This result was based on the use of the Weierstrass representation and the study of the Liouville equation with an overdetermined condition. Shortly after, a similar construction was done by Kapouleas and McGrath \cite{KM} using doubling methods.

\medskip

The results in \cite{FHM,KM} left open the {\em critical catenoid conjecture}, which claims that the critical catenoid is the only embedded, free boundary minimal annulus in the unit ball of $\mathbb{R}^3$; see \cite{F,FL,L}. However, it showed that the embeddedness condition cannot be omitted from the conjecture. In a more general context, the problem of whether an embedded, free boundary minimal annulus in a geodesic ball of $\mathbb{S}^3$ or $\mathbb{H}^3$ should be rotational has been recently studied by Lima and Menezes \cite{LM2} and Medvedev \cite{Me}. In fact, Medvedev conjectured that any embedded, free boundary minimal annulus in a geodesic ball of these space forms should be rotational, and predicted that immersed, non-rotational examples should exist; see \cite[Section 1.1]{Me}.

\medskip

In \cite{CFM1}, Fernández, Mira and the author constructed embedded, non-rotational CMC ($H \neq 0$) annuli in the unit ball of $\mathbb{R}^3$, answering in the negative the question posed by Wente \cite{W2}. In the same spirit as \cite{FHM}, these examples rely on the construction of a set of solutions to the sinh-Gordon equation with an overdetermined condition. Later on, in \cite{CFM2} they used these solutions to construct a wide family of non-rotational minimal and CMC annuli with free boundary in geodesic balls of $\mathbb{S}^3$ and $\mathbb{H}^3$. More specifically, they constructed embedded free boundary annuli with constant mean curvature $H > 1$ in $\mathbb{H}^3$ and $H \geq \frac{1}{\sqrt{3}}$ in $\mathbb{S}^3$. For the minimal case, immersed examples were constructed exclusively in $\mathbb{S}^3$.

\subsection{Result of the paper and discussion}
We answer in the affirmative the question of whether non-rotational, immersed, free boundary minimal annuli exist in geodesic balls of $\mathbb{H}^3$, posed by Medvedev \cite{Me}. This result shows that the embeddedness assumption is necessary if one aims to extend the {\em critical catenoid conjecture} to the hyperbolic context.

\medskip

We now introduce the main result in this paper. We model the hyperbolic space $\mathbb{H}^3$ as the hyperboloid
\begin{equation}\label{eq:espaciohiperbolico1}
    \mathbb{H}^3 := \{(x_1,x_2,x_3,x_4) \in \mathbb{L}^4 \; : \; x_1^2 + x_2^2 + x_3^2 - x_4^2 = -1, x_4> 0\},
\end{equation}
where $\mathbb{L}^4$ is the 4-dimensional Lorentz space, i.e., $\mathbb{R}^4$ equipped with the pseudo-Riemannian metric $ds^2 = dx_1^2 + dx_2^2 + dx_3^2 - dx_4^2$. Then, it holds:

\begin{theorem}\label{thm:main}
    There exists an open interval $\mathcal{J} \subset \left(-\frac{1}{\sqrt{2}},-\frac{1}{\sqrt{3}}\right)$ satisfying the following property: let $q \in \mathcal{J} \cap \mathbb{Q}$, which we express as an irreducible fraction $q = -m/n$, $m,n \in \mathbb{N}$. Then, there exists $\varepsilon = \varepsilon(q)$ and a real analytic 1-parameter family $\{\mathbb{A}_q(\eta) \; : \; \eta \in [0,\varepsilon)\}$ of minimal compact annuli $\mathbb{A}_q(\eta)$ in $\mathbb{H}^3$ such that:
    \begin{enumerate}
        \item The annulus $\mathbb{A}_q(\eta)$ is free boundary in some geodesic ball ${\bf B} = {\bf B}(q,\eta) \subset \mathbb{H}^3$ whose geodesic center is $\mathbf{e}_4 = (0,0,0,1) \in \mathbb{H}^3$.
        \item $\mathbb{A}_q(\eta)$ is invariant under the symmetry with respect to the totally geodesic surface $\mathbf{S}:= \{x_3 = 0\} \cap \mathbb{H}^3$ and also with respect to $n$ totally geodesic surfaces which meet equiangularly along the geodesic ${\mathcal{L}}:= \{x_1 = x_2 = 0\} \cap \mathbb{H}^3$.
        \item The curve $\mathbb{A}_q(\eta) \cap \mathbf{S}$ of $\mathbb{A}_q(\eta)$ is a geodesic of $\mathbb{A}_q(\eta)$ with rotation index $-m$ in ${\bf S}$.
        \item The annulus $\mathbb{A}_q(\eta)$ is foliated by spherical curvature lines.
        \item If $\eta > 0$, the annulus $\mathbb{A}_q(\eta)$ has a prismatic symmetry group of order $4n$. This group is generated by the isometries in item (2). In particular, $\mathbb{A}_q(\eta)$ is not rotational.
        \item If $\eta = 0$, the annulus $\mathbb{A}_q(0)$ is an $m$-cover of a free boundary hyperbolic catenoid.
    \end{enumerate}
\end{theorem}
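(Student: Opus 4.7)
My plan is to follow the bifurcation strategy of \cite{CFM1, CFM2, FHM}, adapted to the minimal hyperbolic setting: model prismatically symmetric, free boundary minimal annuli with spherical curvature lines in $\mathbb{H}^3$ as solutions of an overdetermined semilinear elliptic problem whose trivial branch consists of free boundary hyperbolic catenoids, and then produce the non-rotational branches by bifurcating off that trivial branch.

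First I would work in conformal curvature-line coordinates $(u,v)$ on a cylinder $[-a,a] \times \mathbb{R}/2\pi\mathbb{Z}$ and reduce the Gauss--Codazzi equations of a minimal surface in $\mathbb{H}^3$, together with the spherical curvature-line assumption, to a single semilinear equation of Liouville/sinh-Gordon type for the conformal factor $\omega(u,v)$, with the specific coupling constant dictated by minimality in $\mathbb{H}^3$. The free boundary condition becomes a Neumann-type condition on $\{u = \pm a\}$, while the symmetries in item (2) of the theorem are encoded as invariance under $u \mapsto -u$, $v \mapsto -v$ and $v \mapsto v + 2\pi/n$, so the problem is solved inside a fixed symmetry subspace. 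The free boundary hyperbolic catenoids centered at $\mathbf{e}_4$ furnish a $1$-parameter family of $v$-independent solutions $\omega_q(u)$, where $q$ is a normalized geometric invariant of the catenoid (essentially its neck/boundary data); the interval $\mathcal{J} \subset (-1/\sqrt{2}, -1/\sqrt{3})$ is then defined as the maximal subset of this parameter range for which the linearization at $\omega_q$ has the spectral behavior required for bifurcation, with the endpoints corresponding to spectral degeneracies in parallel with the thresholds in \cite{CFM2}.

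Next I would linearize around $\omega_q$ and Fourier-decompose in $v$, turning the kernel computation into a family of Sturm--Liouville problems in $u$ parameterized by both the Fourier mode $k$ and the catenoid parameter $q$. The goal is to show that for a rational $q = -m/n \in \mathcal{J}$ the mode $k = n$ produces a simple zero eigenvalue while all other modes remain nondegenerate, and simultaneously to identify the rotation index in $\mathbf{S}$ of the planar geodesic $\mathbb{A}_q(0) \cap \mathbf{S}$ with $-m$, by an explicit integration of the catenoid's geodesic equation together with a matching-of-periods argument. With a transverse crossing of this eigenvalue in $q$ established through a monotonicity calculation along the family $\{\omega_q\}$, the analytic Crandall--Rabinowitz theorem applied inside the prescribed symmetry subspace would then yield a real-analytic $1$-parameter family $\omega_q(\eta)$, branching at $\eta = 0$ from $\omega_q$ and producing surfaces $\mathbb{A}_q(\eta)$ with prismatic symmetry group of order $4n$, which reduces to the catenoid only at $\eta = 0$.

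The main obstacle, in my view, will be the final geometric realization: translating the PDE solution $\omega_q(\eta)$ back to an immersed minimal annulus in $\mathbb{H}^3$ via the representation formula for surfaces with spherical curvature lines, and verifying that the resulting surface closes up smoothly with boundary lying on a single geodesic sphere met orthogonally. This is a period/monodromy problem along the bifurcating branch, and it is precisely where the prismatic symmetry becomes essential: it cuts the a priori infinite-dimensional set of closing conditions down to a finite collection which can be solved by an implicit function argument starting from the catenoid, where the conditions hold tautologically. Verifying the simplicity and the transversality of the eigenvalue crossing along the catenoid family is the key technical input feeding this whole scheme, and pinning down the precise interval $\mathcal{J}$ amounts to a careful asymptotic and numerical analysis of the Sturm--Liouville eigenvalues as $q$ varies across $(-1/\sqrt{2}, -1/\sqrt{3})$.
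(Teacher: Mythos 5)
Your proposal follows a bifurcation-theoretic template (linearize around the catenoid, Fourier-decompose, find a simple eigenvalue crossing at a rational mode, apply an analytic Crandall--Rabinowitz argument, then close the periods by an implicit function theorem inside a symmetry subspace). The paper does something substantially different, and I think your plan has a gap that is precisely the main obstacle the paper is designed to overcome.

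First, a structural misidentification: you say the Gauss equation for minimal surfaces with spherical curvature lines in $\mathbb{H}^3$ is "of Liouville/sinh-Gordon type." It is not. With $H=0$ and ambient curvature $\kappa<0$, the equation $\Delta\omega + \kappa e^{2\omega} - \tfrac14 e^{-2\omega}=0$ is of \emph{cosh-Gordon} type (both exponential terms come in with the same sign after normalization). This is exactly the point emphasized in the paper's introduction: in the Liouville ($H^2+\kappa=0$) and sinh-Gordon ($H^2+\kappa>0$) regimes treated in \cite{FHM,CFM1,CFM2}, the Hamilton--Jacobi reduction produces real-valued auxiliary functions $(s(\lambda),t(\lambda))$ whose phase portrait one can analyze; in the cosh-Gordon regime $(s,t)$ are expected to become complex-valued and that machinery breaks. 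Your outline presupposes that the same phase-space/spectral picture carries over, and it does not. The paper's workaround is to keep $H=0$ but treat the ambient curvature $\kappa$ itself as a third parameter in a family of model spaces $\mathbb{M}^3(\kappa)$, so that all the quantitative estimates can be derived at $\kappa\geq 0$ (where the $(s,t)$ analysis is valid) and then transported to $\kappa<0$ by analytic continuation (Remark~\ref{rem:tauW}, Proposition~\ref{pro:tauW1}, Section~\ref{sec:main}). That is the central idea of the construction, and it does not appear in your plan.

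Second, a Crandall--Rabinowitz bifurcation in a function space of conformal factors (or in a space of free boundary minimal immersions) would not automatically preserve the spherical curvature-line property, which is item (4) of the theorem. Spherical curvature lines are an \emph{overdetermined} condition: in addition to the Gauss equation one imposes the pointwise constraint $2\omega_u = \alpha(u)e^{\omega}+\beta(u)e^{-\omega}$, which forces $(\alpha,\beta)$ to satisfy a second-order ODE system and so confines $\omega$ to a finite-dimensional family of solutions. A generic Crandall--Rabinowitz branch lives in an infinite-dimensional space and has no reason to stay inside this finite-dimensional variety, so either you must build the whole bifurcation analysis directly inside the finite-dimensional $(a,b,\kappa)$-family (which is essentially what the paper does), or you must accept that the branch loses the spherical curvature-line structure, in which case item (4) fails. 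Relatedly, the paper never uses Crandall--Rabinowitz: the non-trivial branch is produced by a sign change argument and an application of Lojasiewicz's Structure Theorem to the analytic zero sets of the period map and the height map (Sections~\ref{sec:EuclidFB} and~\ref{sec:main}), which is a different and more elementary tool suited to this finite-dimensional setting. Your proposal also leaves the identification of the interval $\mathcal{J}\subset(-1/\sqrt{2},-1/\sqrt{3})$, the transversality of the crossing, and the period closure as future work to be supplied by "careful asymptotic and numerical analysis," which are precisely the nontrivial parts; in the paper these come from explicit formulas for $\Theta(1,b,\kappa)$ (Proposition~\ref{pro:periodoa1}) together with the Euclidean catenoid computation in Section~\ref{sec:EuclidFB} (using Theorem~\ref{thm:fhm} from \cite{FHM}), not from a spectral argument.
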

To the author's knowledge, the surfaces in Theorem \ref{thm:main} are the first known free boundary minimal surfaces in geodesic balls of $\mathbb{H}^3$ that are not rotational.

\medskip

\subsection{Strategy of the proof and difficulties}

The basic strategy for proving Theorem \ref{thm:main} comes from our previous works \cite{CFM1,CFM2}. However, the method followed in \cite{CFM1,CFM2} presents severe complications when dealing with minimal surfaces in $\mathbb{H}^3$, as we do here. We explain next which is the main source of complication in this hyperbolic context, as well as the alternative approach that we will use to circumvent it.

First, we note that a common step in the strategy followed in \cite{CFM1,CFM2,FHM} and the present paper is to construct a set of particular solutions $\omega(u,v)$ to the general overdetermined system:
\begin{equation}\label{eq:sistemaIntro}
\begin{cases}
\Delta \omega + (H^2 + \kappa) e^{2\omega} - B e^{-2\omega} = 0, \\
2\omega_u = \alpha(u)e^\omega + \beta(u) e^{-\omega},
\end{cases}
\end{equation}
for some constant $B > 0$ and functions $(\alpha(u),\beta(u))$. This system was considered by Wente \cite{W}, who observed that any solution $\omega(u,v)$ to \eqref{eq:sistemaIntro} induces a conformal immersion $\psi(u,v)$ with constant mean curvature $H$ in a space of constant sectional curvature $\kappa$, with metric $e^\omega |dz|^2$ and holomorphic Hopf differential $Qdz^2$, where $z = u + iv$ and $B = 4|Q|^2$. This holds since the first condition of \eqref{eq:sistemaIntro} can be seen as the Gauss equation of the surface. On the other hand, the second condition of \eqref{eq:sistemaIntro} has a geometrical interpretation: namely, that one of the families of lines of curvature of the surface is {\em spherical}, i.e., each line of the family lies in a totally umbilical surface of the ambient space. The idea in \cite{CFM1,CFM2,FHM}, and also here, is to find within this family a free boundary annulus with the desired properties.

\medskip

In \cite{FHM}, \eqref{eq:sistemaIntro} was studied for the particular case of minimal surfaces in Euclidean space, i.e. $H = \kappa = 0$, with $B = 1$. In this way, the Gauss equation for the surface reduces to the Liouville equation. In \cite{CFM1,CFM2}, several cases were considered, but all of them involved fixing the constants $H$ and $\kappa$ so that $H^2 + \kappa > 0$. This condition covers CMC $H \neq 0$ surfaces in $\mathbb{R}^3$ ($\kappa = 0$), minimal and CMC surfaces in $\mathbb{S}^3$ ($\kappa = 1$) and CMC surfaces with $H > 1$ in $\mathbb{H}^3$ ($\kappa = -1$). In this situation, a simple change of variables is sufficient to turn the Gauss equation into the sinh-Gordon equation; see \cite[Section 4]{CFM2}. However, in the present case of minimal surfaces in hyperbolic space (that is, $H = 0$, $\kappa = -1$), \eqref{eq:sistemaIntro} cannot be reduced to the Liouville or sinh-Gordon equation; instead, it turns into the cosh-Gordon equation, whose treatment is quite different, as we explain next.

\medskip

Wente \cite{W} observed that in order for \eqref{eq:sistemaIntro} to have solutions, the pair of functions $(\alpha(u),\beta(u))$ must satisfy a second order system of differential equations: 
\begin{equation}\label{eq:alphabetaIntro}
\begin{cases}
\alpha'' = \hat a\alpha - 2\alpha^2 \beta - 2(H^2 + \kappa)\beta, \\
\beta'' = \hat a\beta - 2\alpha\beta^2 - 2B\alpha,
\end{cases}
\end{equation}
see \cite[Equation (1.4)]{W}, where $\hat a \in \mathbb{R}$ is a constant. He also noticed that \eqref{eq:alphabetaIntro} could be reduced to a first order system of differential equations for a pair of functions $(s(\lambda),t(\lambda))$ via a Hamilton-Jacobi procedure. The qualitative behaviour of this pair strongly depends on the sign of $H^2 + \kappa$: indeed, if $H^2 + \kappa \geq 0$ (i.e. if \eqref{eq:sistemaIntro} is a Liouville or sinh-Gordon type system), then the pair $(s,t)$ is real valued; see for example \cite[Sections III, IV]{W}. However, if $H^2 + \kappa < 0$, the functions $(s,t)$ are expected to take complex values, and the Hamilton-Jacobi procedure does not yield significative information. This is the case for minimal surfaces in $\mathbb{H}^3$, since $H^2 + \kappa = -1 < 0$. We also refer the reader to \cite[Section V]{W}, where it is also explained how the geometry of minimal surfaces in $\mathbb{H}^3$ with spherical curvature lines is much more complicated than its Euclidean and spherical counterparts.

To overcome this difficulty, in the present paper we adopt a different approach compared to \cite{CFM1, CFM2,FHM}. Instead of constructing solutions $\omega(u,v)$ to \eqref{eq:sistemaIntro} for fixed values of $H$ and $\kappa$, we fix $H = 0$ and treat $\kappa \in \mathbb{R}$ as a parameter. This allows us to deduce relevant properties of the solutions $\omega(u,v)$ for $\kappa < 0$ by studying the cases $\kappa = 0$ and $\kappa > 0$, modelled by Liouville and sinh-Gordon type equations. In this way, we avoid the complex-valued phase space analysis for $(s,t)$ that we would need in the cosh-Gordon case. One of the challenges of this strategy is showing that not only $\omega(u,v)$, but also the induced immersions $\psi(u,v)$ and every geometrical tool developed throughout this paper depends analytically on the parameter $\kappa$, specially as $\kappa$ crosses the value zero.

\subsection{Structure of the paper}
We start by following a similar structure to the one presented in \cite{CFM2}. In Section \ref{sec:preliminares}, we construct a family of analytic solutions $\omega(u,v)$ to the overdetermined system \eqref{eq:sistemaIntro} for $H = 0$ and $B = \frac{1}{4}$, which depend on three parameters $(a,b,\kappa)$. This set of solutions induces a family of minimal surfaces $\psi(u,v) = \psi(u,v;a,b,\kappa)$, each of them immersed in a space form $\mathbb{M}^3(\kappa)$ of constant curvature $\kappa$. These immersions are conformally parametrized by curvature lines. Moreover, the $v$-lines $v\mapsto \psi(u_0,v)$ of the immersion are {\em spherical}, i.e., they are contained in totally umbilical surfaces $\mathcal{Q}(u_0)$ of $\mathbb{M}^3(\kappa)$. By Joachimsthal's theorem, the intersection angle of $\psi(u,v)$ with $\mathcal{Q}(u_0)$ is constant along the corresponding $v$-line. 

\medskip

As explained before, a key difference between \cite{CFM2} and the current situation is that we now use $\kappa$, the sectional curvature of the ambient space, as a parameter in the construction. To do so, we use a special model for the spaces $\mathbb{M}^3(\kappa)$ which we detail in Definition \ref{def:M3k}. Specifically, $\mathbb{M}^3(\kappa)$ is defined as a hypersurface of $\mathbb{R}^4$ equipped with a certain metric dependent on $\kappa$. Intuitively, $\mathbb{M}^3(\kappa)$ is defined so that it depends {\em smoothly} on the parameter $\kappa$ around the Euclidean case $\kappa = 0$ connecting the spherical and hyperbolic spaces. This makes the immersions $\psi(u,v;a,b,\kappa)$ analytic as mappings from an open region of $\mathbb{R}^5$ into $\mathbb{R}^4$.

\medskip

The goal of Section \ref{sec:geometria} is to determine the symmetries of the surfaces $\psi(u,v)$. We will define a {\em period map} $\Theta = \Theta(a,b,\kappa)$ with the property that the $v$-lines of $\psi(u,v)$ are closed whenever $\Theta$ is a rational number. In such a case, the restriction $\Sigma_0 = \Sigma_0(u_0;a,b,\kappa)$ of $\psi(u,v)$ to a strip $[-u_0,u_0]\times \mathbb{R}$ covers a minimal annulus. By our study in Section \ref{sec:preliminares}, the boundary components of $\Sigma_0$ are contained in the totally umbilical surfaces $\mathcal{Q}(-u_0)$, $\mathcal{Q}(u_0)$, and the intersection angle is constant. In particular, if we find special parameters $(a,b,\kappa)$ and $u_0> 0$ such that:
\begin{enumerate}
    \item[(i)] the period $\Theta(a,b,\kappa)$ is rational,
    \item[(ii)] the surfaces $\mathcal{Q}(u_0)$ and $\mathcal{Q}(-u_0)$ are compact and coincide,
    \item[(iii)] the intersection angle with the immersion $\psi(u,v)$ is $\pi/2$,
\end{enumerate}
then $\Sigma_0$ is a compact annulus whose boundary components meet orthogonally the geodesic sphere $S = \mathcal{Q}(u_0) = \mathcal{Q}(-u_0)$. If $\kappa \leq 0$, one can show that $\Sigma_0$ is contained in the geodesic ball $B$ bounded by $S$, i.e., $\Sigma_0$ is a free boundary annulus in $B$. So, the rest of the paper is devoted to finding values $(a,b,\kappa)$, $u_0 > 0$ satisfying the properties listed above.

\medskip

At the end of Section \ref{sec:geometria}, we show that the immersions $\psi(u,v) = \psi(u,v;a,b,\kappa)$ associated to the parameter $a = 1$ are rotational. More specifically, they are {\em spherical, Euclidean or hyperbolic} catenoids, depending on the sign of the parameter $\kappa$. This motivates Section \ref{sec:rotacionales}, in which we study the geometry of these rotational examples. Our goal there is to show that each of these surfaces has a compact piece which is free boundary in some geodesic ball of $\mathbb{M}^3(\kappa)$. The proof of this fact is an adaptation of the arguments shown in \cite[Appendix]{CFM2}. We emphasize that the analysis of these examples is fundamental, as our strategy to prove Theorem \ref{thm:main} is to detect free boundary rotational examples within the family of minimal surfaces foliated by spherical curvature lines, and find a non-rotational bifurcation from them.

\medskip

In Section \ref{sec:tau}, we find a map $\tau(a,b,\kappa): \mathcal{W} \to (0,\infty)$ defined on an open region $\mathcal{W}$ of the parameter space $(a,b,\kappa)$ such that the immersion $\psi(u,v)$ meets orthogonally the totally umbilical surfaces $\mathcal{Q}(\tau)$, $\mathcal{Q}(-\tau)$ along the $v$-lines $u = \pm \tau$.

\medskip

In Section \ref{sec:periodo1} we compute explicitly the period map $\Theta(a,b,\kappa)$ for values with $a = 1$. This allows us to determine the structure of the level sets of this map for values $(a,b,\kappa)$ near $a = 1$. This is relevant since we aim to find immersions $\psi(u,v) = \psi(u,v;a,b,\kappa)$ for which the $v$-lines are closed (that is, the immersion covers an annulus), a condition that is satisfied whenever the period map $\Theta(a,b,\kappa)$ is a rational number.

\medskip

We now provide a geometric motivation for Section \ref{sec:EuclidFB}. We already know from Section \ref{sec:rotacionales} that if $a = 1$ and $\kappa = 0$, then $\psi(u,v)$ is a Euclidean catenoid which admits a free boundary piece in some geodesic ball. Our goal now is to detect this piece by using the map $\tau$ in Section \ref{sec:tau}. More specifically, for every $b$ we know that the restriction $\Sigma_0(\tau;1,b,0)$ covers a compact piece of a catenoid which intersects the spheres $\mathcal{Q}(\tau)$, $\mathcal{Q}(-\tau)$ orthogonally, where $\tau = \tau(1,b,0)$. We will find a special $b_0$ such that the spheres $\mathcal{Q}(\tau)$, $\mathcal{Q}(-\tau)$ coincide.

\medskip

The relevance of Section \ref{sec:EuclidFB} is that it enables us to understand the condition $\mathcal{Q}(\tau) = \mathcal{Q}(-\tau)$ for values $(a,b,\kappa)$ near $(1,b_0,0)$. A simultaneous control of this property and the rationality of the period map (already studied in Section \ref{sec:periodo1}) is carried out in Section \ref{sec:main}. There, we find a countable number of real analytic curves $\hat{\mu}_q$, $q \in \mathbb{Q}$, on the parameter space $(a,b,\kappa)$ lying inside the region $\{a > 1, \kappa < 0\}$ and satisfying both properties, i.e., the period map $\Theta$ is constantly $q$ along each curve $\hat{\mu}_q$, and the spheres $\mathcal{Q}(\tau)$, $\mathcal{Q}(-\tau)$ coincide. The annuli $\Sigma_0(\tau;a,b,\kappa)$ associated to the values $(a,b,\kappa)$ in these curves are shown to be free boundary, proving Theorem \ref{thm:main}.

\subsection{Open problems}
As claimed before, the surfaces constructed in Theorem \ref{thm:main} are the first known free boundary minimal surfaces in geodesic balls of $\mathbb{H}^3$ which are not rotational. However, it is natural to expect that some of the methods used to construct free boundary minimal surfaces in the Euclidean unit ball, such as min-max techniques (see for example \cite{CFS,CSW,FrKS,FrS,Ke}), could be applied in $\mathbb{H}^3$.

\medskip

In \cite[Theorem 1.2]{FHM}, it was proved that any embedded, free boundary minimal annulus in the Euclidean unit ball foliated by spherical curvature lines must be the critical catenoid. This result relies on a theorem by Seo \cite{Seo}. The study carried out in this paper leads us to believe that there are no embedded, non-rotational free boundary minimal annuli in geodesic balls of $\mathbb{H}^3$ foliated by spherical curvature lines. This supports the validity of the $\mathbb{H}^3$ version of the critical catenoid conjecture.

\medskip

We finally emphasize that the annuli constructed in Theorem \ref{thm:main} are free boundary in {\em some} geodesic balls of $\mathbb{H}^3$. In other words,  we do not prescribe the radius of the ball in our construction. We do not know whether non-rotational free boundary minimal annuli exist on any geodesic ball of $\mathbb{H}^3$.

\subsection{Acknowledgements} This work is part of the author's PhD thesis, supervised by Isabel Fernández and Pablo Mira. The author is deeply grateful to Fernández and Mira for their guidance, suggestions and for their careful review of earlier drafts of this paper.

\section{Preliminaries}\label{sec:preliminares}
Let $\R^4_{\kappa} = (\R^4,\langle \cdot, \cdot \rangle_\kappa)$, $\kappa \neq 0$, be the pseudo-Riemannian manifold which consists of $\mathbb{R}^4$ equipped with the metric
\begin{equation}\label{eq:metricaespacial}
\langle \cdot, \cdot \rangle_\kappa:= dx_1^2 + dx_2^2 + dx_3^2 + \frac{1}{\kappa} dx_4^2.
\end{equation}
Up to a homothety in the $x_4$ coordinate, we can identify $\R^4_{\kappa}$ with either the Euclidean space (if $\kappa > 0$) or the Lorentz space $\mathbb{L}^4$ (if $\kappa < 0$). Similarly, for $\kappa = 0$, we define $\mathbb{R}^4_0 := (\mathbb{R}^4,\langle \cdot , \cdot \rangle_0)$, where $\langle \cdot , \cdot \rangle_0$ is the usual canonical metric in $\mathbb{R}^4$.

\begin{definition}\label{def:M3k}
    For any $\kappa \neq 0$, we define the 3-manifold $\mathbb{M}^3(\kappa) \subset \R^4_{\kappa}$ as
\begin{equation}
    \mathbb{M}^3(\kappa) := \{(x_1,x_2,x_3,x_4) \; : \; \kappa(x_1^2 + x_2^2 + x_3^2) + x_4^2 = 1, \;  x_4 > 0 \text{ if } \kappa < 0\}.
\end{equation}
Similarly, for $\kappa = 0$, we define $\mathbb{M}^3(0)$ as the hyperplane $\{x_4 = 1\} \subset \mathbb{R}^4_0$.
\end{definition}

\begin{remark}\label{rem:M3k} 
Observe that for every $\kappa \in \mathbb{R}$, $\mathbb{M}^3(\kappa) \subset \mathbb{R}^4_\kappa$ is a Riemannian space form of constant sectional curvature $\kappa$. This is clear for $\kappa = 0$, and if $\kappa \neq 0$ it suffices to observe that $\mathbb{M}^3(\kappa)$ is just a standard 3-sphere (resp. three-dimensional hyperboloid) in $\mathbb{R}^4$ (resp. $\mathbb{L}^4$) after applying the change $x_4 \mapsto \sqrt{|\kappa|}x_4$.
\end{remark}

\medskip

Now, let $\omega(u,v):\mathcal{D} \to \R$ be a solution of the overdetermined system (see \cite{W}):
\begin{subequations} \label{eq:overdeterminedsystem}
\begin{align}
    \Delta \omega& + \kappa e^{2\omega} - \frac{1}{4} e^{-2\omega} = 0, \label{eq:gauss} \\[8pt]
    2\omega_u &= \alpha(u)e^\omega + \beta(u) e^{-\omega}, \label{eq:lineasesfericas}
\end{align}
\end{subequations}
defined on a simply connected domain $\mathcal{D} \subseteq \R^2$, for some $\kappa \in \R$ and a pair of functions $\alpha(u),\beta(u)$. Wente \cite{W} observed that the solutions of this system induce a family of minimal immersions in $\mathbb{M}^3(\kappa)$. More specifically, by the fundamental theorem of surfaces, we can interpret \eqref{eq:gauss} as the Gauss equation of a conformal minimal immersion $\psi(u,v): \mathcal{D} \to \M^3(\kappa)$ whose fundamental forms are given by
\begin{equation}\label{eq:III}
    I = e^{2\omega}(du^2 + dv^2), \; \; \; \; II = \frac{1}{2}(du^2 - dv^2).
\end{equation}

This immersion is unique up to orientation-preserving ambient isometries. The Hopf differential $Qdz^2$, where $Q = \langle \psi_{zz},N\rangle_\kappa$, $z := u +i v$ and $N$ is the Gauss map of the immersion, is given by $Q \equiv \frac{1}{4}$, and the principal curvatures $\kappa_1,\kappa_2$ are
\begin{equation}\label{eq:kappa}
    \kappa_1 = - \kappa_2 = \frac{1}{2}e^{-2\omega}.
\end{equation}
In particular, $\psi(u,v)$ has no umbilical points. As a consequence of \eqref{eq:III}, the $u$-lines and $v$-lines are lines of curvature of $\psi(u,v)$. On the other hand, equation \eqref{eq:lineasesfericas} implies that the $v$-lines $v \mapsto \psi(u,v)$ are {\em spherical}; see Definition \ref{def:esfericas} and Lemma \ref{lem:lineasesfericas}.

\subsection{A set of solutions of the system \texorpdfstring{\eqref{eq:overdeterminedsystem}}{}}

We will now construct a 3-parameter family of solutions of the overdetermined system \eqref{eq:overdeterminedsystem}, which by our previous discussion will result in a set of minimal immersions in $\mathbb{M}^3(\kappa)$ with fundamental forms given by \eqref{eq:III}. Let $\cO$ be the parameter space
\begin{equation}\label{eq:mO}
\cO:= \left\{(a,b,\kappa) \; : \; a,b \geq 1,  4|\kappa| < 1, -4\kappa a < b\right\}.
\end{equation}
In particular, observe that $\kappa\in \left(-\frac{1}{4},\frac{1}{4}\right)$ from now on. For each $(a,b,\kappa) \in \cO$, we define the constants
\begin{equation}\label{eq:mamb}
    \mA := a + \frac{1}{a}, \; \; \; \; \; \mB := b + \frac{4\kappa}{b},
\end{equation}
as well as the polynomial
\begin{equation}\label{eq:px}
p(x):= -(x-a)\left(x - \frac{1}{a}\right)\left(4\kappa x + b\right)\left(x + \frac{1}{b}\right).
\end{equation}
Now, let $(\alpha(u),\beta(u))$ be the unique solution to the differential system (see \cite[Equation (1.4)]{W})
\begin{equation}\label{eq:alphabeta}
\left\{\def\arraystretch{1.3} \begin{array}{lll} \alpha'' & = & \hat a \alpha - 2 \alpha^2 \beta - 2\kappa \beta, \\ \beta'' & = &\hat a \beta - 2\alpha \beta^2 -\frac{1}{2}\alpha,\end{array} \right.
\end{equation}
with initial conditions and constant $\hat a$ given by
\begin{equation}\label{eq:abha}
    \begin{aligned}
        \alpha(0) &= \beta(0) = 0, \\
        4\alpha'(0) &= b + \frac{4\kappa}{b} - 4\kappa a - \frac{4\kappa}{a} = \mathcal{B} - 4\kappa \mathcal{A},\\
        4 \beta'(0) &= a + \frac{1}{a} - b - \frac{4\kappa}{b} = \mathcal{A} -  \mathcal{B},\\
        4 \hat a &= -\frac{4\kappa}{ab} - \frac{4\kappa a}{b} - \frac{b}{a} - ab + 4\kappa + 1 = -\mathcal{A}\mathcal{B} + 4\kappa + 1.
    \end{aligned}
\end{equation}

The system \eqref{eq:alphabeta} admits a first integral, namely
\begin{equation}\label{eq:primeraintegral1}
        \mathcal{C}_1 = \alpha'\beta' - \hat a\alpha \beta  + \alpha^2\beta^2 + \kappa \beta^2 + \frac{1}{4}\alpha^2,
\end{equation}
for some appropriate constant $\mathcal{C}_1 \in \mathbb{R}$ determined by \eqref{eq:abha}. If, additionally, $\kappa \geq 0$, it also holds
\begin{equation}\label{eq:primeraintegral2}
        \mathcal{C}_2 = (\alpha \beta' - \alpha' \beta)^2 + 4\left(\sqrt{\kappa}\beta' - \frac{\alpha'}{2}\right)^2 + 4(\alpha \beta - \hat a - \sqrt{\kappa})\left(\frac{\alpha}{2} - \sqrt{\kappa}\beta\right)^2
\end{equation}
for another constant $\mathcal{C}_2 \in \mathbb{R}$. Let us now define $x(v) = x(v;a,b,\kappa)$ as the unique solution of the second order differential equation
\begin{equation}\label{eq:xvv}
    x''(v) = \frac{1}{8}p'(x(v)),
\end{equation}
with initial conditions $x(0) = \frac{1}{a}$, $x'(0) = 0$, where $p(x)$ is the polynomial \eqref{eq:px}. Notice that if $a = 1$, then $x(v) \equiv 1$. For $a > 1$, it is possible to obtain the first integral
\begin{equation}\label{eq:xvintegral}
    4x'(v)^2 = p(x(v)).
\end{equation}
This implies that $x(v)$ is a periodic function taking values on the interval $\left[\frac{1}{a},a\right]$; see \eqref{eq:px}. Now, for any $v_0 \in \mathbb{R}$, we define $u \mapsto \omega(u,v_0)$ as the solution to the ODE
\begin{equation}\label{eq:omegau}
    2\omega_u(u,v_0) = \alpha(u) e^\omega + \beta(u) e^{-\omega}
\end{equation}
with initial condition $\omega(0,v_0) = \log(x(v_0))$. We emphasize that the function $\omega(u,v) = \omega(u,v;a,b,\kappa)$ might not be defined on the whole plane $(u,v) \in \mathbb{R}^2$, as \eqref{eq:omegau} may diverge in finite time. However, since $x(v)$ is periodic, $\omega(u,v)$ can always be defined on an open strip $\mathcal{S} = (-u_\mathcal{S}, u_\mathcal{S})\times \mathbb{R} \subset \mathbb{R}^2$.

\begin{remark}\label{rem:omeganalitico}
    The outlined construction shows that $\alpha(u) = \alpha(u;a,b,\kappa)$, $\beta(u) =\beta(u;a,b,\kappa)$ and $\omega(u,v) = \omega(u,v;a,b,\kappa)$ depend analytically on the parameters $(a,b,\kappa) \in \mathcal{O}$.
\end{remark}

\begin{lemma}
    The function $\omega(u,v)$ satisfies the overdetermined system \eqref{eq:overdeterminedsystem}.
\end{lemma}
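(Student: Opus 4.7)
The second equation \eqref{eq:lineasesfericas} is built into the construction of $\omega$: for each fixed $v_0$, the function $u\mapsto\omega(u,v_0)$ is defined as the solution of the ODE \eqref{eq:omegau}, which is exactly \eqref{eq:lineasesfericas}. Hence the content of the lemma is the Gauss equation \eqref{eq:gauss}. Set
$$F(u,v)\;:=\;\omega_{uu}+\omega_{vv}+\kappa e^{2\omega}-\tfrac{1}{4}e^{-2\omega}.$$
The plan is to prove $F\equiv 0$ on $\mathcal{S}$ by verifying $F(0,v)=0$ for every $v$ and then propagating this vanishing by showing that $F$, viewed as a function of $u$ with $v$ as a parameter, satisfies a linear homogeneous ODE in $u$.

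The vanishing at $u=0$ is a direct algebraic computation. The conditions $\alpha(0)=\beta(0)=0$ in \eqref{eq:abha} combined with \eqref{eq:lineasesfericas} yield $\omega_u(0,v)=0$, and differentiating \eqref{eq:lineasesfericas} once in $u$ and evaluating gives $\omega_{uu}(0,v)=\tfrac{1}{2}\bigl(\alpha'(0)\,x(v)+\beta'(0)/x(v)\bigr)$. From $\omega(0,v)=\log x(v)$ one has $\omega_{vv}(0,v)=x''(v)/x(v)-(x'(v)/x(v))^2$. Substituting into $F(0,v)$, multiplying through by $4x(v)^2$, and using \eqref{eq:xvv} and \eqref{eq:xvintegral} to replace $x''=\tfrac{1}{8}p'(x)$ and $(x')^2=\tfrac{1}{4}p(x)$, the identity $F(0,v)=0$ reduces to
$$2x\bigl(\alpha'(0)x^2+\beta'(0)\bigr)+\tfrac{x}{2}p'(x)-p(x)+4\kappa x^4-1\;=\;0.$$
Expanding $p(x)=-(x^2-\mathcal{A}x+1)(4\kappa x^2+\mathcal{B}x+1)=-4\kappa x^4-4\alpha'(0)x^3-4\hat{a}x^2+4\beta'(0)x-1$ via \eqref{eq:abha}, this identity is immediate by comparing coefficients of each power of $x$. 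This is the unique place where the specific factorization of $p(x)$ in \eqref{eq:px} and the exact values of $\alpha'(0),\beta'(0),\hat{a}$ prescribed in \eqref{eq:abha} are used.

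For the propagation, differentiate \eqref{eq:lineasesfericas} once in $u$ to express $\omega_{uu}=\tfrac{1}{2}(\alpha'e^\omega+\beta'e^{-\omega})+\tfrac{1}{4}(\alpha^2 e^{2\omega}-\beta^2 e^{-2\omega})$, and twice in $v$ to obtain the compatibility identities $\omega_{uv}=\tfrac{1}{2}(\alpha e^\omega-\beta e^{-\omega})\omega_v$ and $\omega_{uvv}=\omega_u\omega_v^2+\tfrac{1}{2}(\alpha e^\omega-\beta e^{-\omega})\omega_{vv}$. Computing $F_u$ with these substitutions, replacing $\alpha'',\beta''$ via the ODE system \eqref{eq:alphabeta}, and reorganizing, $F_u$ should group into the form $F_u=\mu(u,v)F$, with any residual terms vanishing because of the first integral \eqref{eq:primeraintegral1} of \eqref{eq:alphabeta}: the value $\mathcal{C}_1=\alpha'(0)\beta'(0)$ determined by \eqref{eq:abha} is precisely the one that forces these leftover terms to cancel. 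Since $F(0,v)=0$, uniqueness of the trivial solution of this linear ODE in $u$ yields $F\equiv 0$ on $\mathcal{S}$. The main technical obstacle lies in this propagation bookkeeping; the verification at $u=0$ is the more illuminating step, as it shows explicitly how \eqref{eq:abha} is engineered so that the Gauss deficit factors through $p(x)$.
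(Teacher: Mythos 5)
Your verification at $u=0$ is on the right track, but the propagation step has a genuine gap. Compute $F_u$ carefully: the term $\omega_{uvv}=\partial_v(\omega_{uv})$, with $\omega_{uv}=\tfrac{1}{2}(\alpha e^\omega-\beta e^{-\omega})\omega_v$, yields $\omega_{uvv}=\omega_u\omega_v^2+\tfrac{1}{2}(\alpha e^\omega-\beta e^{-\omega})\omega_{vv}$. After choosing $\mu=\tfrac{1}{2}(\alpha e^\omega-\beta e^{-\omega})$ so that the $\omega_{vv}$ terms cancel in $F_u-\mu F$, you are left with $F_u-\mu F=\omega_u\omega_v^2+(\text{terms in }\omega,\alpha,\beta,\alpha',\beta'\text{ only})$. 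The residual term $\omega_u\omega_v^2$ carries $\omega_v^2$, which is genuinely independent of $\omega,\alpha,\beta,\alpha',\beta'$; the first integral \eqref{eq:primeraintegral1} is a relation among $\alpha,\alpha',\beta,\beta'$ alone and has no bearing on $\omega_v$. So $F_u-\mu F$ does not vanish identically, and the linear-ODE propagation does not close as you stated.

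The missing ingredient is precisely the identity the paper isolates as \eqref{eq:omegav}: $\omega_v^2=\tfrac{1}{4}\phi$, with $\phi$ the explicit expression \eqref{eq:phioriginal} in $\omega,\alpha,\beta,\alpha',\beta'$. The paper proves this by noting that $\omega_v^2$ and $\phi$ both satisfy the same first-order linear ODE $(\cdot)_u=(\alpha e^\omega-\beta e^{-\omega})(\cdot)$ (your compatibility identity gives the first; \eqref{eq:phiprimerau} gives the second) and agree at $u=0$ via \eqref{eq:xvintegral}; the Gauss equation is then a direct computation from \eqref{eq:omegau} and \eqref{eq:omegav}, with no need to propagate $F$ at all. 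Your plan can be rescued by introducing $G:=\omega_v^2-\tfrac{1}{4}\phi$ as a second unknown and showing $(F,G)$ satisfies a coupled linear homogeneous system in $u$ with $(F,G)(0,v)=(0,0)$—but as written your argument silently assumes $\omega_v^2=\tfrac{1}{4}\phi$, which is exactly the heart of the matter. (You also do not address the case $a=1$, where $\omega_v\equiv 0$ and your propagation does in fact close trivially; this is why the paper treats that case separately via the simpler auxiliary function $\tilde\phi$ in \eqref{eq:phitilde}.)
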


\begin{proof} By construction, $\omega(u,v)$ satisfies \eqref{eq:omegau}, so we only need to check that \eqref{eq:gauss} holds. Assume first that $a > 1$. Let
\begin{equation}\label{eq:phioriginal}
    \phi(u,v) := -(4\kappa + \alpha^2)e^{2 \omega} - (1 + \beta^2)e^{-2\omega} - 4 \alpha'e^\omega + 4 \beta'e^{-\omega} + 6\alpha \beta - 4 \hat a.
\end{equation}
We will now prove that 
\begin{equation}\label{eq:omegav}
    \omega_v^2(u,v) = \frac{1}{4}\phi(u,v).
\end{equation}
By \eqref{eq:omegau}, it follows that $2\omega_{vu} = (\alpha e^\omega - \beta e^{-\omega}) \omega_v$. On the other hand, a direct computation using \eqref{eq:alphabeta} and \eqref{eq:omegau} shows that
\begin{equation}\label{eq:phiprimerau}
    \phi_u = (\alpha e^\omega - \beta e^{-\omega})\phi.
\end{equation}
We can then integrate the expressions for $\omega_{vu}$ and $\phi_u$ in $u$ to obtain $\omega_v^2 = C(v)\phi$ for some function $C(v)$. Let us check that actually $C(v) \equiv \frac{1}{4}$. At $u = 0$, it holds $\omega_v^2(0,v) = \frac{x'(v)^2}{x(v)^2}$ since $x(v) = e^{\omega(0,v)}$, and
$$x'(v)^2 = \frac{p(x(v))}{4} = -\kappa x(v)^4 - \alpha'(0) x(v)^3 - \hat ax(v)^2 +  \beta'(0) x(v) - \frac{1}{4} = \frac{x(v)^2\phi(0,v)}{4},$$
where we have used \eqref{eq:xvintegral}. In particular, $C(v) \equiv \frac{1}{4}$, as we wanted to prove. A direct computation using \eqref{eq:omegau} and \eqref{eq:omegav} shows that
$$\Delta \omega = \omega_{uu} + \omega_{vv} = \frac{1}{4}e^{-2\omega} - \kappa e^{2\omega},$$
proving \eqref{eq:gauss}.

\medskip

Now, let $a = 1$. In such a case, $x(v) \equiv 1$, and consequently $\omega = \omega(u)$ does not depend on $v$. Consider the function $\phi = \phi(u)$ in \eqref{eq:phioriginal}. It is immediate by \eqref{eq:abha} that $\phi(0) = 0$, so by \eqref{eq:phiprimerau} it holds $\phi(u) \equiv 0$. We define next
\begin{equation}\label{eq:phitilde}
    \tilde \phi(u) := -2(4\kappa + \alpha^2)e^{2\omega} +2(1 + \beta^2)e^{-2\omega}  - 4\alpha'e^{\omega} - 4\beta'e^{-\omega}.
\end{equation}
It follows from \eqref{eq:abha} that $\tilde{\phi}(0) = 0$. A direct computation using \eqref{eq:alphabeta} and \eqref{eq:omegau} shows that
$$\tilde{\phi}_u(u) = (\alpha e^\omega + \beta e^{-\omega})\phi(u)  + \frac{1}{2} (\alpha e^\omega - \beta e^{-\omega})\tilde{\phi}(u) = \frac{1}{2} (\alpha e^\omega - \beta e^{-\omega})\tilde{\phi}(u),$$
so $\tilde{\phi}(u) \equiv 0$. We finally observe that
$$\Delta \omega - \frac{1}{4}e^{-2\omega} + \kappa e^{2\omega} = \omega_{uu} - \frac{1}{4}e^{-2\omega} + \kappa e^{2\omega} = -\frac{1}{8}\tilde{\phi}(u) = 0,$$
proving \eqref{eq:gauss}.
\end{proof}

\begin{remark}\label{rem:omegaa1}
    As we saw in the proof of the previous Lemma, if $a = 1$ then $\omega = \omega(u)$ does not depend on $v$. In particular, \eqref{eq:gauss} reduces to a second order differential equation for $\omega(u)$ which yields a unique solution since $\omega(0)= \omega_u(0) = 0$; see \eqref{eq:abha}, \eqref{eq:omegau}. This equation only depends on $\kappa$, so $\omega$ is also independent of the parameter $b$ in this particular case. \eqref{eq:gauss} admits a first integral, namely
    \begin{equation}\label{eq:omegau2}
        \omega_u^2 + \kappa \left(e^{2\omega} - 1\right) + \frac{1}{4} \left(e^{-2\omega} - 1\right) = 0.
    \end{equation}
    Since $\omega(0) = 0$, $\omega_{uu}(0) >0$, it can be deduced from \eqref{eq:omegau2} that $\omega = \omega(u)$ attains its global minimum at $u = 0$.
\end{remark}

Observe that the functions $\alpha(u),\beta(u)$ are anti-symmetric by \eqref{eq:alphabeta} and \eqref{eq:abha}. This implies, along with \eqref{eq:omegau}, that
\begin{equation}\label{eq:simetriau0}
    \omega(u,v) = \omega(-u,v).
\end{equation} 
We have an additional symmetry property:

\begin{lemma}\label{lem:sigma}
    There exists some $\sigma = \sigma(a,b,\kappa) > 0$ such that $x(v)$ in \eqref{eq:xvv} is increasing in $v \in [0,\sigma]$ and
    \begin{equation}\label{eq:periodicidadsigma}
        \omega(u,j\sigma + v) = \omega(u,j \sigma - v)
    \end{equation}
    for all $j \in \mathbb{Z}$. Moreover, $\sigma = \sigma(a,b,\kappa): \mathcal{O}\to\mathbb{R}^+$ is analytic, and
    \begin{equation}\label{eq:sigmaa1}
        \sigma(1,b,\kappa) = \frac{2\pi}{\sqrt{4\kappa + \mathcal{B} + 1}}.
    \end{equation}
\end{lemma}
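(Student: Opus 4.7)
The plan is to first exploit the autonomous first integral \eqref{eq:xvintegral} to define $\sigma$ and derive the symmetry of $x(v)$, then transfer this symmetry to $\omega(u,v)$ via unique solvability of its defining ODE in $u$, and finally establish analyticity of $\sigma$ by a change of variables that removes the degeneracy at $a=1$.

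Assume first $a>1$. The four roots of $p(x)$ in \eqref{eq:px} are $\{1/a,\,a,\,-1/b,\,-b/(4\kappa)\}$, and the defining conditions of $\mathcal{O}$ (in particular $b\geq 1$ and $-4\kappa a<b$) place both $-1/b$ and $-b/(4\kappa)$ outside $[1/a,a]$, so that $p>0$ on $(1/a,a)$. Combined with $x(0)=1/a$, $x'(0)=0$ and $x''(0)=p'(1/a)/8>0$, the first integral $4x'^{\,2}=p(x)$ forces $x$ to increase strictly on an interval $(0,\sigma)$ and to arrive at $x=a$ at a first positive time $\sigma=\sigma(a,b,\kappa)$. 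Autonomy of \eqref{eq:xvv} and $x'(\sigma)=0$ imply, by uniqueness, $x(\sigma+v)=x(\sigma-v)$; combining with the trivial symmetry $x(-v)=x(v)$ yields $x(j\sigma+v)=x(j\sigma-v)$ for all $j\in\mathbb{Z}$. To propagate this symmetry to $\omega$, I use that $\omega(u,v_0)$ is built by solving the ODE in $u$ given by \eqref{eq:omegau}, whose right-hand side depends only on $u$ and $\omega$, with initial value $\log x(v_0)$ at $u=0$; hence uniqueness in $u$ gives $\omega(u,v_1)=\omega(u,v_2)$ whenever $x(v_1)=x(v_2)$, and the choice $v_1=j\sigma+v$, $v_2=j\sigma-v$ yields \eqref{eq:periodicidadsigma}. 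The case $a=1$ is trivial, since then $x\equiv 1$ and $\omega$ is independent of $v$.

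The main technical point is the analyticity of $\sigma:\mathcal{O}\to\mathbb{R}^+$ across $a=1$, since the direct integral formula $\sigma=\int_{1/a}^{a}2\,dx/\sqrt{p(x)}$ is of the indeterminate form $0/0$ at $a=1$. I handle this via the substitution $x=\cosh t+y\sinh t$ with $t=\log a$, which maps $[1/a,a]$ onto $[-1,1]$ and factors
\[
p(x)=\sinh^{2}t\,(1-y^{2})\,q(x(y)),\qquad q(x):=(4\kappa x+b)(x+1/b).
\]
The condition $-4\kappa a<b$ keeps $q>0$ on $[1/a,a]$, so the factor $\sinh t$ cancels and
\[
\sigma=\int_{-1}^{1}\frac{2\,dy}{\sqrt{(1-y^{2})\,q(x(y))}}.
\]
A further change $y=\sin\theta$ absorbs the endpoint singularities and reduces $\sigma$ to the integral of an analytic integrand on $[-\pi/2,\pi/2]$, giving real analyticity in $(a,b,\kappa)\in\mathcal{O}$ including at $a=1$. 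Evaluating there (where $t=0$ and $x(y)\equiv 1$) yields $\sigma(1,b,\kappa)=2\pi/\sqrt{q(1)}=2\pi/\sqrt{4\kappa+\mathcal{B}+1}$, which is \eqref{eq:sigmaa1}.
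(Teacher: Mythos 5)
Your argument is correct and follows the same overall strategy as the paper: use the first integral \eqref{eq:xvintegral} to get the monotonicity of $x(v)$ and the reflection symmetry $x(j\sigma+v)=x(j\sigma-v)$, propagate to $\omega$ via uniqueness of the ODE \eqref{eq:omegau} in $u$, and then reparametrize the integral $\sigma=\int_{1/a}^{a}2\,dx/\sqrt{p(x)}$ to remove the degeneracy at $a=1$. The only difference is the choice of reparametrization: the paper uses the affine substitution $x=h_a(t)=(a-1/a)t+1/a$ over $t\in[0,1]$, which cancels the $(a-1/a)$ factor but leaves integrable endpoint singularities $1/\sqrt{t(1-t)}$, whereas your substitution $x=\cosh t+y\sinh t$ followed by $y=\sin\theta$ additionally absorbs those singularities, producing a smooth integrand on $[-\pi/2,\pi/2]$ so that analyticity of $\sigma$ in $(a,b,\kappa)$ is immediate. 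This is a mild but genuine refinement that makes the analyticity claim across $a=1$ self-evident, and your evaluation $q(1)=4\kappa+\mathcal{B}+1$ recovers \eqref{eq:sigmaa1} exactly.
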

\begin{proof}
    Assume first that $a > 1$ and recall that $\omega(0,v) = \log(x(v))$. By \eqref{eq:xvintegral}, it follows that $x(v)$ is a periodic function which oscillates between $\frac{1}{a}$ and $a$. By definition, we have set $x(0) = \frac{1}{a}$, so the first positive value at which $x(v)$ attains its maximum is given by $v = \sigma$, where
    \begin{equation}\label{eq:sigma}
        \sigma(a,b,\kappa) := \int_{\frac{1}{a}}^a \frac{2}{\sqrt{p(x)}}dx.
    \end{equation}
    It is also immediate by \eqref{eq:xvintegral} that $x(v)$ is $2\sigma$-periodic and  $x(v + j\sigma) = x(j\sigma - v)$ for any $j \in \mathbb{Z}$. Combining this identity and \eqref{eq:omegau} we obtain \eqref{eq:periodicidadsigma}. It also follows from \eqref{eq:sigma} that $\sigma(a,b,\kappa)$ is analytic for all $\mathcal{O} \cap \{a > 1\}$.

    \medskip

    Let us now rewrite \eqref{eq:sigma} by using the change of variables $x(t) = h_a(t) := \left(a - \frac{1}{a}\right)t + \frac{1}{a}$, which yields
    $$\sigma(a,b,\kappa) = \int_{0}^1 \frac{2\left(a - \frac{1}{a}\right)}{\sqrt{p(h_a(t))}}dt.$$
    This alternative expression is defined and analytic for $a = 1$, yielding \eqref{eq:sigmaa1}. Observe that \eqref{eq:periodicidadsigma} trivially holds in this case since $\omega = \omega(u)$ for $a = 1$; see Remark \ref{rem:omegaa1}.
\end{proof}

\subsection{A family of minimal immersions in \texorpdfstring{$\mathbb{M}^3(\kappa)$}{}}\label{sec:familia}
The set of solutions $\omega(u,v)$ constructed in the previous section induces a family of minimal immersions $\psi(u,v)$ of $\mathbb{M}^3(\kappa)$ with first and second fundamental forms given by \eqref{eq:III}. This is a consequence of the fundamental theorem of surfaces and \eqref{eq:gauss}, which corresponds with the Gauss equation of the surface. The rest of the section will be devoted to develop the geometric meaning behind \eqref{eq:lineasesfericas}, the other condition satisfied by $\omega(u,v)$. We will see that this condition is equivalent to the fact that every $v$-line $v \mapsto \psi(u,v)$ of the immersion is spherical. We define this concept next.

\begin{definition}\label{def:esfericas}
    Let $u \in \mathbb{R}$. We will say that the $v$-line $v \mapsto \psi(u,v)$ is spherical if it is contained in some totally umbilical surface of $\mathbb{M}^3(\kappa)$.
\end{definition}
We recall that if $\kappa = 0$, totally umbilical surfaces are spheres or planes. If $\kappa \neq 0$, totally umbilical surfaces are the result of intersecting $\mathbb{M}^3(\kappa)$ with a hyperplane of $\mathbb{R}^4_\kappa$, that is, they correspond with sets
\begin{equation}\label{eq:Smd}
    S[m,d] := \left\{ x \in \mathbb{M}^3(\kappa) \; : \; \langle x, m \rangle_\kappa = d\right\}
\end{equation}
for some $m \in \mathbb{R}^4\setminus \{0\}$, $d \in \mathbb{R}$. The values $m,d$ are determined up to a common multiplicative constant. A necessary condition for $S[m,d]$ to be a (non-empty) surface is that
$$\langle m, m \rangle_\kappa - \kappa d^2 > 0.$$
This condition is also sufficient except when $\kappa < 0$ and $\langle m, m \rangle_\kappa \leq 0$, in which case it is additionally required that $d$ and the fourth coordinate of $m$ have opposite signs.

\medskip

If $\kappa > 0$, any surface $S[m,d]$ must be a round sphere. For $\kappa < 0$, $S[m,d]$ will be a sphere (resp. horosphere, pseudosphere) if and only if $\langle m , m \rangle_\kappa$ is negative (resp. zero, positive). Furthermore, $S[m,d]$ is totally geodesic whenever $d = 0$. Let us also remark that $S[m,d]$ can be defined alternatively as the boundary of the set
\begin{equation}\label{Smd}
    B[m,d]:= \left\{ x \in \mathbb{M}^3(\kappa) \; : \; \langle x, m \rangle_\kappa \geq d\right\}.
\end{equation}
In particular, $B[m,d]$ is a geodesic ball whenever $S[m,d]$ is a 2-sphere.

\medskip

We will now prove that condition \eqref{eq:lineasesfericas} is equivalent to the fact that the $v$-lines $v \mapsto \psi(u,v)$ are spherical. First, we recall that the immersions induced by the functions $\omega(u,v)$ are unique up to an orientation-preserving isometry. Hence, they will be determined once we fix the following values of $(\psi, \psi_u, \psi_v,N)$ at $(u,v) = (0,0)$:
\begin{equation}\label{eq:initialdata}
    \psi(0,0) = {\bf e}_4, \; \; \; \psi_u(0,0) = e^{\omega(0,0)}{\bf e}_3, \; \; \;
    \psi_v(0,0) = -e^{\omega(0,0)}{\bf e}_2, \; \; \;
    N(0,0) = {\bf e}_1,
\end{equation}
where $({\bf e}_1, {\bf e}_2, {\bf e}_3, {\bf e}_4)$ is the canonical basis of $\mathbb{R}^4$. We remark that $\psi(0,0) = {\bf e}_4$ belongs to $\mathbb{M}^3(\kappa)$ for every $\kappa \in \left(-\frac{1}{4},\frac{1}{4}\right)$, and that $\psi_u(0,0), \psi_v(0,0), N(0,0) \in T_{{\bf e}_4}\mathbb{M}^3(\kappa)$. On the other hand, the Gauss-Weingarten equations for the surface are given by

\begin{equation}\label{eq:gw}
\left\{
\renewcommand{\arraystretch}{1.1} 
\begin{array}{l}
\psi_{uu}= \omega_u \psi_u - \omega_v \psi_v + \frac{1}{2} N - \kappa e^{2\omega} \psi, \\[4pt]
\psi_{uv}= \omega_v \psi_u + \omega_u \psi_v, \\[4pt]
\psi_{vv}= -\omega_u \psi_u + \omega_v \psi_v - \frac{1}{2} N - \kappa e^{2\omega} \psi, \\[4pt]
N_{u}= - \frac{1}{2}e^{-2\omega} \psi_u, \\[4pt]
N_{v}= \frac{1}{2}e^{-2\omega} \psi_v.
\end{array}
\right.
\end{equation}

\medskip

\begin{remark}\label{rem:psianalitico}
    By \eqref{eq:initialdata} and \eqref{eq:gw}, it is clear that the immersions $\psi(u,v) = \psi(u,v;a,b,\kappa)$ depend analytically on the parameters $(a,b,\kappa) \in \mathcal{O}$ as $\mathbb{R}^4$-valued maps; see also Remark \ref{rem:omeganalitico}.
\end{remark}

\begin{definition}\label{def:Sigma}
    We denote by $\Sigma = \Sigma(a,b,\kappa)$ the surface associated to the minimal immersion $\psi(u,v)$ with initial values \eqref{eq:initialdata}.
\end{definition}
\begin{lemma}\label{lem:lineasesfericas}
   With the above notations, equation \eqref{eq:lineasesfericas} holds along a $v$-line $v \mapsto \psi(u,v)$ if and only if $\psi$ intersects a totally umbilical surface $\mathcal{Q}(u) \subset \mathbb{M}^3(\kappa)$ with constant angle $\theta = \theta(u)$ along the curvature line $v\mapsto\psi(u,v)$. More specifically, if $\kappa \neq 0$, then $\mathcal{Q}(u) = S[m(u),d(u)]$ with 
  \begin{equation}\label{eq:abmd}
      \alpha = -2\frac{ \kappa d}{\sin\theta \sqrt{\langle m, m \rangle_\kappa - \kappa d^2}}, \; \; \; \; \beta = - \frac{\cos \theta}{\sin \theta}.
  \end{equation}
    If $\kappa = 0$ and $\alpha(u) \neq 0$, then
    \begin{equation}\label{eq:abeuclid}
        |\alpha| = \frac{2\sqrt{ 1 + \beta^2}}{R}, \; \; \; \; \beta = - \frac{\cos \theta}{\sin \theta},
    \end{equation}
    where $R(u) \in (0,\infty)$ is the radius of the sphere $\mathcal{Q}(u)$. If $\alpha(u) = 0$, then $\mathcal{Q}(u)$ is a plane.
\end{lemma}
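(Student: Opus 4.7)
The plan is to prove the equivalence by a direct Gauss--Weingarten computation, separating the cases $\kappa \neq 0$ and $\kappa = 0$. A preliminary observation is that the ``constant intersection angle'' statement is automatic once one knows that the $v$-line $v \mapsto \psi(u,v)$ lies in a totally umbilical surface $\mathcal{Q}(u)$: the $v$-line is a line of curvature of $\psi$ by \eqref{eq:III}, and every curve on a totally umbilical surface is trivially a line of curvature of that surface, so Joachimsthal's theorem yields constancy of the intersection angle. Hence the content of the lemma reduces to showing that \eqref{eq:lineasesfericas}, at a fixed $u$, is equivalent to the existence of $(m,d)$ (respectively a plane or a sphere when $\kappa=0$) such that $\gamma(v) := \psi(u,v)$ is contained in $S[m,d]$, and then extracting the explicit formulas for $\alpha,\beta$.

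For $\kappa \neq 0$, I would use that at every point of $\Sigma$ the vectors $\{\psi_u, \psi_v, N, \psi\}$ form a $\langle\cdot,\cdot\rangle_\kappa$-orthogonal basis of $\mathbb{R}^4$ with squared norms $e^{2\omega}, e^{2\omega}, 1, 1/\kappa$ respectively (the last because $\psi \in \mathbb{M}^3(\kappa)$). Differentiating $\langle \gamma(v), m\rangle_\kappa \equiv d$ in $v$ gives $\langle \psi_v, m\rangle_\kappa = 0$, so along $\gamma$ one expands $m = A(v)\psi_u + C(v) N + D(v) \psi$. Computing $\partial_v m$ via \eqref{eq:gw}, demanding it vanishes, and matching coefficients in $\{\psi_u, \psi_v, N, \psi\}$ produces four scalar equations
\begin{equation*}
A' + A\,\omega_v = 0, \qquad C' = 0, \qquad D' = 0, \qquad A\,\omega_u + \tfrac{1}{2} C\,e^{-2\omega} + D = 0.
\end{equation*}
The first three integrate to $A = A_1 e^{-\omega}$ with $A_1, C, D$ constants; substituting into the fourth and multiplying by $2 e^{\omega}/A_1$ yields exactly \eqref{eq:lineasesfericas} with $\alpha(u) = -2D/A_1$ and $\beta(u) = -C/A_1$. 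Each step is reversible, so \eqref{eq:lineasesfericas} holds at $u$ iff such an $m$ exists.

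To extract \eqref{eq:abmd}, I would normalize $A_1 = 2$, so $m = 2 e^{-\omega}\psi_u - 2\beta N - \alpha \psi$ and $d = \langle \psi, m\rangle_\kappa = -\alpha/\kappa$. A direct computation then gives $\langle m, m\rangle_\kappa = 4 + 4\beta^2 + \alpha^2/\kappa$, so $\langle m,m\rangle_\kappa - \kappa d^2 = 4(1+\beta^2) > 0$, which in particular confirms that $S[m,d]$ is a bona fide totally umbilical surface. The unit normal to $S[m,d]$ in $\mathbb{M}^3(\kappa)$ is
\begin{equation*}
\nu = \frac{m - \kappa d\,\psi}{\sqrt{\langle m,m\rangle_\kappa - \kappa d^2}} = \frac{e^{-\omega}\psi_u - \beta N}{\sqrt{1+\beta^2}},
\end{equation*}
so $\cos\theta = \langle N, \nu\rangle_\kappa = -\beta/\sqrt{1+\beta^2}$ and $\sin\theta = 1/\sqrt{1+\beta^2}$. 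This gives $\beta = -\cot\theta$, and substituting back into $\alpha = -\kappa d$ the first identity of \eqref{eq:abmd}.

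For $\kappa = 0$ one works directly in $\mathbb{R}^3 \cong \mathbb{M}^3(0)$ with the orthogonal basis $\{\psi_u, \psi_v, N\}$; no $\psi$-term appears. If the $v$-line lies in a plane $\{x \cdot n = d\}$, the analogous expansion $n = A(v)\psi_u + C(v) N$ with $\partial_v n = 0$ forces $\alpha = 0$ (and $\beta = -C/A_1$). If it lies on a sphere of center $c$ and radius $R$, one writes $\psi - c = A(v)\psi_u + C(v) N$ and now the condition $\partial_v(\psi - c) = \psi_v$ introduces a right-hand side of $1$ in the $\psi_v$-coefficient; the algebra then gives $\alpha = 2/A_1 \neq 0$, $\beta = -C/A_1$ and $R^2 = A_1^2 + C^2 = 4(1+\beta^2)/\alpha^2$, which is \eqref{eq:abeuclid}. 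The main obstacle throughout is bookkeeping: one must pick a consistent sign and normalization for $(m,d)$ (defined only up to a common multiplicative scalar) so that the explicit formulas in \eqref{eq:abmd} and \eqref{eq:abeuclid} hold with the stated signs. Once that is fixed, the proof is a direct Gauss--Weingarten calculation.
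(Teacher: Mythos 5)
Your proof is correct. For $\kappa\neq 0$ it takes a genuinely different route: the paper delegates that case to \cite[Lemma~2.1]{CFM2} and only spells out a computation for $\kappa=0$, whereas you give a self-contained, uniform derivation for all $\kappa$ by expanding the constant ambient vector $m$ (or $\psi-c$ in the Euclidean sphere case) in the moving frame $\{\psi_u,\psi_v,N,\psi\}$, differentiating with the Gauss--Weingarten equations \eqref{eq:gw}, and matching coefficients to recover \eqref{eq:lineasesfericas}. Your preliminary observation that constancy of the intersection angle is automatic by Joachimsthal's theorem (the $v$-line is a curvature line of $\Sigma$, and every curve on a totally umbilical surface is trivially a curvature line of it) is a useful clarification the paper leaves implicit. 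The bookkeeping checks out: with the normalization $A_1=2$ your $m$ is exactly twice the $m$ of \eqref{eq:mnoeuclideo}, a harmless rescaling since $(m,d)$ is only determined up to a common factor, and the identities $d=-\alpha/\kappa$, $\langle m,m\rangle_\kappa-\kappa d^2=4(1+\beta^2)$, and $\nu=(e^{-\omega}\psi_u-\beta N)/\sqrt{1+\beta^2}$ produce \eqref{eq:abmd}; likewise $R^2=A_1^2+C^2$ with $A_1=2/\alpha$, $C=-2\beta/\alpha$ gives \eqref{eq:abeuclid}. The cost of your approach is re-deriving what the paper cites; the gain is a single transparent Gauss--Weingarten computation that treats all $\kappa$ on the same footing and makes the reversibility of each step visible.
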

\begin{proof}
The proof for the case $\kappa \neq 0$ was already covered in \cite{CFM2}. Indeed, \eqref{eq:abmd} can be obtained from \cite[Lemma 2.1]{CFM2} by substituing $H = 0$, $c_0 = \kappa$, $Q = 1/4$, $|\hat N| = \sqrt{ \langle m, m \rangle_\kappa - \kappa d^2}$. It was also deduced there that $m(u)$ is, up to a multiplicative constant, given by
\begin{equation}\label{eq:mnoeuclideo}
    m(u) = e^{-\omega}\psi_u - \beta N -\frac{\alpha}{2} \psi.
\end{equation}
In particular $S[m,d]$ will be a sphere if and only if \begin{equation}\label{eq:condesfera}
    4\kappa(1 + \beta^2) + \alpha^2 > 0.
\end{equation}
Let us now assume that $\kappa = 0$ and \eqref{eq:lineasesfericas} holds. Suppose first that $\alpha(u) = 0$. In such a case, from \eqref{eq:lineasesfericas}, \eqref{eq:gw} we deduce that ${\bf t} = e^{-\omega}\psi_u - \beta N$ is constant along the $v$-line. In particular, $d = \langle {\bf t}, \psi \rangle_0$ satisfies $d_v = 0$, so $\psi(u,v)$ lies in a plane. Notice also that the angle between ${\bf t}$ and $\psi_u$ is constant. Conversely, assume that $\psi(u,v)$ lies in a plane, that is, there is some unit vector ${\bf t}$ such that $\langle {\bf t}, \psi \rangle_0 = d$ for some constant $d$. In particular, $\langle {\bf t}, \psi_v \rangle_0 = 0$. If we assume further that the intersection angle between $\psi$ and the plane is constant, then ${\bf t} = A e^{-\omega}\psi_u + B N$ for some constants $A,B$. Differentiating this with respect to $v$ we get $2\omega_u(u,v) = \beta e^{-\omega}$ for some $\beta$ independent of $v$, obtaining \eqref{eq:lineasesfericas}.

\medskip

Assume now that \eqref{eq:lineasesfericas} holds with $\alpha(u) \neq 0$, and define
\begin{equation}\label{eq:meuclideo}
    \hat c := -\frac{2}{\alpha}e^{-\omega}\psi_u + \frac{2\beta}{\alpha}N + \psi.
\end{equation}
Notice that $\hat c \in \mathbb{M}^3(0)$. It is straightforward to check that $\hat c_v \equiv 0$. In particular, $\|\hat c - \psi\|^2_0$ is constant along the $v$-line, so we deduce that $\psi(u,v)$ lies in a sphere whose center is $\hat c$. It can be checked that the radius and intersection angle of $\psi$ with the sphere satisfy \eqref{eq:abeuclid}.

Conversely, assume that $\psi(u,v)$ lies in a sphere of radius $R$ and intersects it with constant angle $\theta$. Let us denote by $\hat c$ the center of such sphere. Since $R^2 = \|\hat c - \psi\|^2_0$, if we differentiate with respect to $v$ we obtain $\langle \hat c - \psi, \psi_v\rangle_0 = 0$. This implies that 
$$\hat c - \psi = A e^{-\omega} \psi_u + B N$$
for some $A, B$ such that $A^2 + B^2 = R^2$. The fact that the intersection angle is constant implies that $A,B$ do not depend on $v$. Furthermore, since $\hat c_v \equiv 0$, it holds $2\omega_u = \alpha e^{\omega} + \beta e^{-\omega}$ for some pair $\alpha, \beta$ independent of $v$ and satisfying \eqref{eq:abeuclid}.
\end{proof}

\begin{remark}\label{rem:meuclideo}
    We note that the expression for $m(u)$ in \eqref{eq:mnoeuclideo} makes sense for $\kappa = 0$, and in fact it is just a rescaling of the center map $\hat c(u)$ of the (Euclidean) spheres, since $m(u) = -\frac{\alpha}{2}\hat c(u)$.
\end{remark}
We now highlight the following fact about the map $m(u)$:

\begin{proposition}\label{pro:centroplano}
    For any $(a,b,\kappa) \in \mathcal{O}$, the map $m(u)$ in \eqref{eq:mnoeuclideo} belongs to the 2-dimensional subspace $\mathcal{P} = \mathcal{P}(a,b,\kappa) \subset \mathbb{R}^4_\kappa$ given by $\mathcal{P} = \text{\em span}\{m(0),m'(0)\}$.
\end{proposition}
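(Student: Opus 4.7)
The plan is to show that $m : u \mapsto m(u)$ satisfies a scalar second-order linear ODE of the form $m''(u) = q(u)\, m(u)$ for some function $q(u)$. Once this is established, writing $m(u) = y_1(u)\, v_1 + y_2(u)\, v_2$ where $y_1, y_2$ are two independent scalar solutions of $y'' = q\, y$ and $v_1, v_2 \in \mathbb{R}^4$ are constants, and matching the initial data $m(0), m'(0)$ forces $v_1, v_2 \in \text{span}\{m(0), m'(0)\} = \mathcal{P}$, hence $m(u) \in \mathcal{P}$ for every admissible $u$.

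To set up the computation I would first verify that $m_v \equiv 0$ (a direct check using \eqref{eq:gw} and \eqref{eq:lineasesfericas}), so that $m$ indeed depends only on $u$ and its derivatives may be evaluated along any fixed $v$. I would work along $v = 0$, where the symmetry $\omega(u, v) = \omega(u, -v)$ (a case of \eqref{eq:periodicidadsigma} with $j = 0$) forces $\omega_v(u, 0) \equiv 0$; this kills the $\psi_v$-term in $\psi_{uu}$ and reduces the relevant frame to $\{\psi, \psi_u, N\}$ evaluated at $(u, 0)$.

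Differentiating $m = e^{-\omega}\psi_u - \beta N - \tfrac{\alpha}{2}\psi$ twice via \eqref{eq:gw}, eliminating $\alpha''$ and $\beta''$ by means of the ODE system \eqref{eq:alphabeta}, and using $2\omega_u = \alpha e^\omega + \beta e^{-\omega}$, one obtains an explicit expression for $m''$ in the frame $\{\psi, \psi_u, N\}$. Setting $\mu := 2\alpha\beta - \hat a$, the $\psi$- and $N$-coefficients of $m''$ come out cleanly as $-\mu$ times the corresponding coefficients of $m$. The delicate step is the $\psi_u$-coefficient: after expansion, its discrepancy with the required value $-\mu e^{-\omega}$ reduces to $\tfrac{1}{4} e^{-\omega}\, \phi(u, 0)$, where $\phi$ is the auxiliary function in \eqref{eq:phioriginal}. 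The identity $\omega_v^2 = \tfrac{1}{4}\phi$ proved in the previous Lemma, combined with $\omega_v(u, 0) = 0$, gives $\phi(u, 0) \equiv 0$ and completes the matching, yielding $m''(u) = -\mu(u)\, m(u) = \bigl(\hat a - 2\alpha(u)\beta(u)\bigr)\, m(u)$.

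The main obstacle is precisely this $\psi_u$-matching: the identity $\phi(u, 0) = 0$ is the bridge that connects the ODE system \eqref{eq:alphabeta} for $(\alpha, \beta)$ to the Gauss equation \eqref{eq:gauss} for $\omega$, and without it the three coefficient equations (for $\psi$, $\psi_u$, $N$) would be over-determined. The $\psi$- and $N$-coefficient matches, by contrast, follow algebraically from \eqref{eq:alphabeta} alone and serve as a consistency check.
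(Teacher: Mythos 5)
Your proposal is correct and follows essentially the same route as the paper: establish the vector ODE $m'' = (\hat a - 2\alpha\beta)m$ and deduce from uniqueness of solutions that $m(u)$ stays in the span of its initial data. What you add is a careful unpacking of the paper's ``long but straightforward computation,'' and your account of where each ingredient is used is accurate: the $\psi$- and $N$-coefficient matchings fall out of \eqref{eq:alphabeta} and \eqref{eq:omegau} alone, while the $\psi_u$-coefficient discrepancy is exactly $\tfrac14 e^{-\omega}\phi(u,0)$, which vanishes because $\omega_v(u,0)\equiv 0$ (by the symmetry $\omega(u,v)=\omega(u,-v)$) together with the identity $\omega_v^2 = \tfrac14\phi$. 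Restricting to $v=0$ is a legitimate simplification once you have checked $m_v\equiv 0$; the paper instead works at general $(u,v)$ and cites \eqref{eq:omegav} for the same cancellation, but the two setups are morally identical. The only item you leave unaddressed is the claim that $\mathcal{P}$ is genuinely $2$-dimensional, i.e.\ that $m(0)$ and $m'(0)$ are linearly independent; the paper verifies this explicitly via \eqref{eq:m0mu0}--\eqref{eq:BC} (since $B>0$). Your conclusion $m(u)\in\mathcal P$ does not require this, but the dimensionality statement in the proposition does, so you should add that one line.
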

\begin{proof}
The result is a consequence of the fact that $m''$ is proportional to $m$. Indeed, a long but straightforward computation using \eqref{eq:alphabeta}, \eqref{eq:omegau}, \eqref{eq:omegav} and \eqref{eq:gw} shows that
\begin{equation}\label{eq:muu}
    m'' = (\hat a - 2 \alpha \beta)m.
\end{equation}

Notice also that the vectors $m(0)$ and $m'(0)$ are linearly independent: indeed,
\begin{equation}\label{eq:m0mu0}
\begin{aligned}
    m(0) &= e^{-\omega(0,0)} \psi_u(0,0) = {\bf e}_3, \\
    m'(0) &= Ae^{-\omega(0,0)}\psi_u(0,0) + BN(0,0) + C\psi(0,0) = B{\bf e}_1 + C{\bf e}_4,
\end{aligned}
\end{equation}
where $A = 0$ and $B, C$ are given by
\begin{equation}\label{eq:BC}
\begin{aligned}
    B &= \frac{1}{4}\left(a - \frac{1}{a} + \mathcal{B}\right), \\
    C &= \frac{1}{8}\left(4\kappa \left(a - \frac{1}{a}\right)  - \mathcal{B}\right).
    \end{aligned}
\end{equation}
Observe that $B > 0$ since $a,b \geq 1$ and $4|\kappa| < 1$, so the vectors $m'(0)$, $m(0)$ are not collinear. Let $\mathcal{P}$ be then the plane spanned by $\{m(0), m'(0)\}$. Now, we define $f(u)$ as the orthogonal projection of $m(u)$ in $\mathcal{P}$ with respect to the canonical Euclidean metric in $\mathbb{R}^4$. By \eqref{eq:muu}, it holds $f'' = (\hat a - 2 \alpha \beta) f$, and moreover $f(0) = m(0)$, $f'(0) = m'(0)$, since $m(0), m'(0) \in \mathcal{P}$ by construction. By uniqueness of solutions to ordinary differential equations, we deduce $m(u) \equiv f(u)$ for all $u$, so necessarily $m(u) \in \mathcal{P}$.
\end{proof}

\begin{remark}\label{rem:esferas}
We will be specially interested in the situation in which the totally umbilical surfaces $\mathcal{Q}(u)$ are 2-spheres. This occurs if and only if \eqref{eq:condesfera} holds, or equivalently, if $m(u)$ admits a rescaling $c(u)$ that lies in $\mathbb{M}^3(\kappa)$. In such a case, the rescaling is given by
\begin{equation}\label{eq:definicioncentro}
    c(u):= \varepsilon\frac{-2e^{-\omega}\psi_u + 2\beta N + \alpha \psi}{\sqrt{4\kappa(1 + \beta^2) + \alpha^2}},
\end{equation}
where $\varepsilon = \pm 1$. In order to ensure that $c(u)$ belongs to $\mathbb{M}^3(\kappa)$ for $\kappa \leq 0$, we further need to impose $\varepsilon = \text{sgn}(\alpha)$. It is possible to check that $c(u)$ is precisely the geodesic center of $\mathcal{Q}(u)$. In particular, in the Euclidean case $c(u)$ reduces to the center $\hat c(u)$ obtained in \eqref{eq:meuclideo}.
\end{remark}

\begin{remark}\label{rem:kappaneg}
Since $\alpha, \beta, \omega, \psi, \psi_u, N$, depend analitically on $(u;a,b,\kappa)$ (see Remarks \ref{rem:omeganalitico} and \ref{rem:psianalitico}), the center $c(u;a,b,\kappa)$ defined in \eqref{eq:definicioncentro} is also analytic as a $\mathbb{R}^4$-valued map provided that $\alpha(u) \neq 0$ and \eqref{eq:condesfera} holds. Observe further that if $\kappa \leq 0$, \eqref{eq:condesfera} automatically implies $\alpha(u) \neq 0$.
\end{remark}

We note that if $\mathcal{Q}(u)$ is a 2-sphere, then its geodesic center $c(u)$ must belong to both $\mathbb{M}^3(\kappa)$ and $\mathcal{P}$, so in particular the set $\mathbb{M}^3(\kappa) \cap \mathcal{P}$ is not empty. In the next Lemma we explore the properties of this intersection.

\begin{lemma}\label{lem:geodesicaPi}
    Let $\mathcal{P}$ be the plane in Proposition \ref{pro:centroplano} and ${\bf S}_\kappa$ be the totally geodesic surface
\begin{equation}\label{eq:mathcalS}
        {{\bf S}_\kappa}:= \mathbb{M}^3(\kappa) \cap \{x_3 = 0\}.
    \end{equation}
    Then, ${\mathcal{L}_\kappa}:= \mathbb{M}^3(\kappa) \cap \mathcal{P}$ is a geodesic of $\mathbb{M}^3(\kappa)$ if and only if $\kappa \geq 0$ or $\kappa < 0$ and $\mathcal{P}$ is timelike. In such a case, if $\kappa \leq 0$, then ${\mathcal{L}_\kappa} \cap {{\bf S}_\kappa}$ is a single point $p$, while for $\kappa > 0$ this intersection consists of two antipodal points $\{p,-p\}$. Moreover, ${\mathcal{L}_\kappa}$ is orthogonal to ${{\bf S}_\kappa}$ at $p$, and
    \begin{equation}\label{eq:interseccionL}
        p = -\frac{B {\bf e}_1 + C{\bf e}_4}{C^2 + \kappa B^2},
    \end{equation}
    where $B,C$ are the constants in \eqref{eq:BC}. 
\end{lemma}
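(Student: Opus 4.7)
The plan is to exploit the very concrete description of $\mathcal{P}$ afforded by \eqref{eq:m0mu0}: the plane is spanned by $\mathbf{e}_3$ and $B\mathbf{e}_1 + C\mathbf{e}_4$, so a general point of $\mathcal{P}$ has the form $x(s,t) = s\mathbf{e}_3 + t(B\mathbf{e}_1 + C\mathbf{e}_4) = (tB,0,s,tC)$. Substituting into the defining equation of $\mathbb{M}^3(\kappa)$ (see Definition~\ref{def:M3k}) yields $\kappa s^2 + t^2(C^2 + \kappa B^2) = 1$ when $\kappa \neq 0$, and $tC = 1$ when $\kappa = 0$. The whole argument is then governed by the sign of the discriminant $D := C^2 + \kappa B^2$.

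For the geodesic characterization, I would invoke the standard fact that a geodesic of $\mathbb{M}^3(\kappa)$ arises as the nonempty intersection of $\mathbb{M}^3(\kappa)$ with a $2$-dimensional linear subspace of $\mathbb{R}^4_\kappa$ on which $\langle\cdot,\cdot\rangle_\kappa$ restricts to a Riemannian metric (when $\kappa \geq 0$) or to a Lorentzian one (when $\kappa < 0$); for $\kappa = 0$ the analogous statement is that the intersection of a $2$-subspace with the affine hyperplane $\{x_4 = 1\}$ is an affine line, hence a geodesic, provided the $2$-subspace is transverse to $\{x_4 = 0\}$. In the basis $\{\mathbf{e}_3, B\mathbf{e}_1 + C\mathbf{e}_4\}$ the Gram matrix of $\langle\cdot,\cdot\rangle_\kappa$ is diagonal with entries $1$ and $B^2 + C^2/\kappa$ (for $\kappa \neq 0$): positive definite when $\kappa > 0$, and Lorentzian exactly when $D > 0$ when $\kappa < 0$. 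For $\kappa = 0$ the transversality condition reduces to $C \neq 0$, which holds since $\mathcal{B} > 0$ throughout $\mathcal{O}$. Collecting the three regimes recovers the equivalence stated in the lemma.

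Next I would determine $\mathcal{L}_\kappa \cap \mathbf{S}_\kappa$ by imposing $x_3 = 0$, i.e., $s = 0$. The residual equation is $t^2 D = 1$ for $\kappa \neq 0$ and $tC = 1$ for $\kappa = 0$. When $\kappa > 0$ both roots $t = \pm 1/\sqrt{D}$ yield valid, antipodal points $\{p, -p\}$; when $\kappa \leq 0$, the additional constraint $x_4 = tC > 0$ from Definition~\ref{def:M3k} singles out a unique root, hence a single intersection point. A direct substitution recovers the formula \eqref{eq:interseccionL} for $p$, up to the normalization forced by $p \in \mathbb{M}^3(\kappa)$.

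For the orthogonality claim I would parameterize $\mathcal{L}_\kappa$ near $p$ as $\gamma(s) = s\mathbf{e}_3 + t(s)(B\mathbf{e}_1 + C\mathbf{e}_4)$ with $\gamma(0) = p$; implicit differentiation of $\kappa s^2 + t(s)^2 D = 1$ at $s = 0$ forces $t'(0) = 0$, so $\gamma'(0) = \mathbf{e}_3$. Since $T_p \mathbf{S}_\kappa$ lies in $\{x_3 = 0\}$ and $\mathbf{e}_3$ is $\langle\cdot,\cdot\rangle_\kappa$-orthogonal to that hyperplane, $\gamma'(0) \perp T_p \mathbf{S}_\kappa$. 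The main subtlety I expect is the $\kappa < 0$ case: one has to track the sign flip in the equivalence between ``$\mathcal{P}$ timelike'' and ``$D > 0$'' (multiplying by $\kappa < 0$ reverses the relevant inequality), and then correctly select the branch $x_4 > 0$ of the hyperboloid to pin down the unique intersection point.
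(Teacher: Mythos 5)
Your proposal is correct and follows essentially the same approach as the paper: both parametrize $\mathcal{P}$ via the basis $\{\mathbf{e}_3, B\mathbf{e}_1 + C\mathbf{e}_4\}$ from \eqref{eq:m0mu0}, substitute into the defining equation of $\mathbb{M}^3(\kappa)$ (with the quantity $D = C^2 + \kappa B^2$ governing the analysis), solve $x_3 = 0$ to locate $p$, and use the tangency of $\mathbf{e}_3$ to both $\mathcal{P}$ and $\mathbb{M}^3(\kappa)$ at $p$ for orthogonality. The only stylistic divergences are that you carry out the causal-character check via the explicit Gram matrix (entries $1$ and $B^2 + C^2/\kappa$) and the orthogonality via implicit differentiation of the constraint $\kappa s^2 + t(s)^2 D = 1$, whereas the paper argues the converse direction (``geodesic $\Rightarrow$ $\mathcal{P}$ timelike'' for $\kappa<0$) by the shorter observation that any $p_0 \in \mathcal{L}_\kappa$ is automatically a timelike vector of $\mathcal{P}$, and obtains the tangent $\mathbf{e}_3 \in T_p\mathcal{L}_\kappa$ directly from $\langle \mathbf{e}_3, p\rangle_\kappa = 0$ without differentiating a parametrization; these are equivalent and equally valid.
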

\begin{proof}
    By \eqref{eq:m0mu0}, we can parametrize the plane $\mathcal{P}$ as
\begin{equation}\label{eq:Pi}
    \mathcal{P} = \{v_1 {\bf e}_3 + v_2\left(B{\bf e}_1 +C{\bf e}_4\right) \; : \; v_1,v_2 \in \mathbb{R}\},
\end{equation}
with $B,C$ as in \eqref{eq:BC}. The assumption that $\mathcal{P}$ is timelike for $\kappa < 0$ is equivalent to
\begin{equation}\label{eq:CkB}
    C^2 + \kappa B^2 > 0.
\end{equation} 
Notice further that if $\kappa = 0$, then $C = -b/8 \neq 0$. We deduce that if either $\kappa \geq 0$ or $\kappa < 0$ and \eqref{eq:CkB} holds, the point $p$ in \eqref{eq:interseccionL} is well defined and belongs to $\mathbb{M}^3(\kappa) \cap \mathcal{P}$. In such a case, the (non-empty) set ${\mathcal{L}_\kappa}= \mathcal{P} \cap \mathbb{M}^3(\kappa)$ must be a geodesic of $\mathbb{M}^3(\kappa)$, since $\M^3(\kappa)$ will be either a 3-hyperboloid, a 3-sphere or an affine hyperplane and $\mathcal{P}$ is a 2-dimensional linear subspace. Reciprocally, assume that $\mathcal{L}_\kappa = \mathcal{P} \cap \mathbb{M}^3(\kappa)$ is a geodesic. If $\kappa \geq 0$, we have nothing to prove, while if $\kappa < 0$, we have to show that $\mathcal{P}$ is timelike. This is immediate: indeed, any $p_0 \in \mathcal{L}_\kappa \subset \mathcal{P}$ must be timelike, since $p_0 \in \mathbb{M}^3(\kappa)$. Hence, $\mathcal{P}$ is timelike.

\medskip

Finally, notice that ${\mathcal{L}_\kappa}\cap {{\bf S}_\kappa} = \mathcal{P} \cap \{x_3 = 0 \} \cap \mathbb{M}^3(\kappa)$. By \eqref{eq:Pi}, it is straightforward that ${\mathcal{L}_\kappa}\cap {{\bf S}_\kappa}$ is either the single point $p$ in \eqref{eq:interseccionL} for $\kappa \leq 0$ or the antipodal pair $\{p,-p\}$ when $\kappa > 0$. Furthermore, observe that the vector ${\bf e}_3$ is tangent to both $\mathcal{P}$ and $\mathbb{M}^3(\kappa)$ at the point $p$, so in particular ${\bf e}_3$ is tangent to ${\mathcal{L}_\kappa}$ at $p$. However, ${\bf e}_3$ is orthogonal to ${{\bf S}_\kappa}$, so ${\mathcal{L}_\kappa}$ is orthogonal to ${{\bf S}_\kappa}$ at $p$.
\end{proof}

\section{Geometric properties of the family of immersions}\label{sec:geometria}
Given $(a,b,\kappa) \in \mathcal{O}$, where the set $\mathcal{O}$ was defined in \eqref{eq:mO}, let $\Sigma= \Sigma(a,b,\kappa)$ be the surface in Definition \ref{def:Sigma}. In this section, we aim to determine the conditions under which $\Sigma$ contains a compact annulus $\Sigma_0$ whose boundary components intersect a 2-sphere orthogonally. This objective is attained in Theorem \ref{thm:simetrias}. To achieve this purpose, we will follow a similar strategy to that of \cite[Sections 4.3-4.5]{CFM2}.

\subsection{Description of the symmetries of \texorpdfstring{$\Sigma$}{}} 
Let $\psi(u,v)$ be the immersion associated to $\Sigma(a,b,\kappa)$. The first and second fundamental forms of $\psi(u,v)$ are given by \eqref{eq:III}, and they allow us to detect the symmetries of $\psi(u,v)$ via the study of the function $\omega(u,v)$.
\begin{lemma}\label{lem:simetriaS}
    Let $\Psi_{{\bf S}_\kappa}: \mathbb{M}^3(\kappa) \to \mathbb{M}^3(\kappa)$ denote the reflection with respect to the totally geodesic surface ${{\bf S}_\kappa}$ in \eqref{eq:mathcalS}. Then $\Sigma$ is invariant under $\Psi_{{\bf S}_\kappa}$; specifically,
    $$\psi(u,v) = \Psi_{{\bf S}_\kappa}(\psi(-u,v)).$$
    In particular, $\Sigma$ meets ${{\bf S}_\kappa}$ orthogonally along the $v$-line $v \mapsto \psi(0,v)$, so that this curve is a geodesic of $\Sigma$.
\end{lemma}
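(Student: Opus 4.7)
The plan is to exploit the $u$-symmetry of the conformal factor, $\omega(u,v) = \omega(-u,v)$ established in \eqref{eq:simetriau0}, together with uniqueness in the fundamental theorem of surfaces. I would set
$$\tilde\psi(u,v) := \Psi_{{\bf S}_\kappa}\bigl(\psi(-u,v)\bigr),$$
which maps into $\mathbb{M}^3(\kappa)$ because $\Psi_{{\bf S}_\kappa}$ is an ambient isometry, and aim to show $\tilde\psi\equiv\psi$. Once this identity is in hand, the lemma follows at once: setting $u=0$ gives $\psi(0,v) = \Psi_{{\bf S}_\kappa}(\psi(0,v))$, so the $v$-line $v\mapsto\psi(0,v)$ lies in the fixed-point set ${\bf S}_\kappa$ of $\Psi_{{\bf S}_\kappa}$, and this curve is then the fixed set of an induced isometry of $\Sigma$, hence a geodesic of $\Sigma$ along which $\Sigma$ must meet ${\bf S}_\kappa$ orthogonally.

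To establish $\tilde\psi\equiv\psi$, I would first verify matching fundamental forms. Writing $\sigma(u,v)=(-u,v)$, the chain rule gives $\tilde\psi_u = -d\Psi_{{\bf S}_\kappa}(\psi_u\circ\sigma)$ and $\tilde\psi_v = d\Psi_{{\bf S}_\kappa}(\psi_v\circ\sigma)$. Since $\Psi_{{\bf S}_\kappa}$ preserves $\langle\cdot,\cdot\rangle_\kappa$, the first fundamental form of $\tilde\psi$ equals $e^{2\omega(-u,v)}(du^2+dv^2) = e^{2\omega(u,v)}(du^2+dv^2)$ by \eqref{eq:simetriau0}. Both $\sigma$ and $\Psi_{{\bf S}_\kappa}$ reverse orientation, so the compatible unit normal for $\tilde\psi$ is $\tilde N(u,v) = d\Psi_{{\bf S}_\kappa}(N(-u,v))$, and a direct computation yields $\tilde{II}(X,Y) = -\langle d\tilde N(X), d\tilde\psi(Y)\rangle_\kappa = II(d\sigma(X),d\sigma(Y))$, which in the coordinate basis gives $\tilde{II} = \tfrac{1}{2}(du^2-dv^2)$, matching \eqref{eq:III}.

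Next I would check the initial frame at $(0,0)$. Since $\Psi_{{\bf S}_\kappa}$ acts on $\mathbb{R}^4$ as $(x_1,x_2,x_3,x_4)\mapsto(x_1,x_2,-x_3,x_4)$, it fixes ${\bf e}_1,{\bf e}_2,{\bf e}_4$ and negates ${\bf e}_3$. Substituting \eqref{eq:initialdata} gives $\tilde\psi(0,0)={\bf e}_4$, $\tilde\psi_u(0,0)=e^{\omega(0,0)}{\bf e}_3$ (the chain-rule sign cancels against $d\Psi_{{\bf S}_\kappa}({\bf e}_3)=-{\bf e}_3$), $\tilde\psi_v(0,0)=-e^{\omega(0,0)}{\bf e}_2$, and $\tilde N(0,0)={\bf e}_1$, matching the initial data of $\psi$ exactly. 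Uniqueness in the fundamental theorem of surfaces in $\mathbb{M}^3(\kappa)$ then forces $\tilde\psi\equiv\psi$, and the geometric consequences stated in the lemma follow as explained above. The main (and really only) subtle point in the argument is the sign bookkeeping for the normal and the second fundamental form, which must be tracked carefully; but once \eqref{eq:simetriau0} is invoked, no further analytic input beyond the fundamental theorem of surfaces is required.
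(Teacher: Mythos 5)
Your proposal is correct and follows exactly the argument the paper intends in its one-line proof (``immediate consequence of \eqref{eq:III}, \eqref{eq:simetriau0}, \eqref{eq:initialdata} and the fundamental theorem of surfaces''): form $\tilde\psi = \Psi_{{\bf S}_\kappa}\circ\psi\circ\sigma$ with $\sigma(u,v)=(-u,v)$, use $\omega(u,v)=\omega(-u,v)$ and the isometry property to match the fundamental forms \eqref{eq:III}, verify the initial frame \eqref{eq:initialdata} is reproduced, and invoke uniqueness. The sign bookkeeping for $\tilde\psi_u$, $\tilde N$ and $\tilde{II}$ is tracked correctly, and the geometric conclusion (fixed set of an isometric involution of $\Sigma$ is a geodesic along which $\Sigma$ meets ${\bf S}_\kappa$ orthogonally) is standard; this is a faithful, more detailed rendering of the paper's own proof rather than a different route.
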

\begin{proof}
    It is an immediate consequence of \eqref{eq:III}, \eqref{eq:simetriau0}, \eqref{eq:initialdata} and the fundamental theorem of surfaces.
\end{proof}

In a similar way, from Lemma \ref{lem:sigma} we deduce the existence of a broader set of symmetries.

\begin{proposition}\label{pro:simetrias}
    For each $j \in \mathbb{Z}$, let $\nu_j = \psi_v(0,j\sigma)$, where $\sigma$ is defined in Lemma \ref{lem:sigma}, and define $\Omega_j \subset \mathbb{M}^3(\kappa)$ as the unique totally geodesic surface orthogonal to $\nu_j$ at the point $\psi(0,j\sigma)$. Then,
    \begin{enumerate}
        \item Each $\Omega_j$ is orthogonal to the totally geodesic surface ${{\bf S}_\kappa}$ in \eqref{eq:mathcalS}.
        \item Let $\Psi_j$ be the reflection with respect to $\Omega_j$. Then,
        $$\psi(u, v + j \sigma) = \Psi_j(\psi(u,j \sigma - v)).$$
        In particular, the $u$-curve $u \mapsto \psi(u,j \sigma)$ lies in $\Omega_j$.
        \item The angle between two consecutive vectors $\nu_j$, $\nu_{j + 1}$ does not depend on $j$.
        \item Assume that the vectors $\nu_0$, $\nu_1$ satisfy
        \begin{equation}\label{eq:angulonu}
        \langle\nu_0, \nu_0 \rangle_\kappa\langle\nu_1, \nu_1 \rangle_\kappa > \langle\nu_0, \nu_1 \rangle_\kappa^2.
    \end{equation}
        Then, $\Lambda := \text{span}\{\nu_j\; : \; j \in \mathbb{Z} \} \subset \mathbb{R}_\kappa^4$ is a 2-dimensional subspace, which is spacelike if $\kappa < 0$.
        \item If \eqref{eq:angulonu} holds and $\mathcal{P}$ is the plane in Proposition \ref{pro:centroplano}, then ${\mathcal{L}_\kappa} = \mathcal{P} \cap \mathbb{M}^3(\kappa)$ is a geodesic of $\mathbb{M}^3(\kappa)$, and
    \begin{equation}\label{eq:LOmega}
        {\mathcal{L}_\kappa} = \bigcap_{j \in \mathbb{Z}} \Omega_j.
    \end{equation}
    \end{enumerate}
\end{proposition}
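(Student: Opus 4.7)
The backbone of the argument will be item (2), from which items (1) and (3) follow as corollaries. For item (2), I plan to invoke the fundamental theorem of surfaces for the two immersions $\psi_1(u,v) := \psi(u, v + j\sigma)$ and $\psi_2(u,v) := \Psi_j(\psi(u, j\sigma - v))$, where $\Psi_j$ denotes the reflection with respect to $\Omega_j$. The symmetry $\omega(u, j\sigma + v) = \omega(u, j\sigma - v)$ from Lemma \ref{lem:sigma}, together with the fact that $\Psi_j$ is an isometry, should yield that $\psi_1$ and $\psi_2$ share the fundamental forms \eqref{eq:III}. To upgrade this to $\psi_1 = \psi_2$, I will verify the agreement of the initial data $(\psi, \psi_u, \psi_v, N)$ at $(0,0)$: the base point $\psi(0, j\sigma)\in \Omega_j$ is fixed by $\Psi_j$; $\psi_u(0, j\sigma)$ is tangent to $\Omega_j$ because it is orthogonal to $\nu_j$ by \eqref{eq:III}; the identity $\Psi_{j*}\nu_j = -\nu_j$ compensates the reversal $v \mapsto -v$; and $N(0, j\sigma)$ lies in $T\Omega_j$ since it is orthogonal both to $\psi_u$ and to $\nu_j$. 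Setting $v = 0$ in the resulting identity gives $\psi(u, j\sigma) = \Psi_j(\psi(u, j\sigma))$, so the $u$-curve $u \mapsto \psi(u, j\sigma)$ is pointwise fixed by $\Psi_j$, hence lies in $\Omega_j$.

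With item (2) in hand, item (1) follows at once: by Lemma \ref{lem:simetriaS} the $v$-curve $v \mapsto \psi(0,v)$ lies in ${\bf S}_\kappa$, so $\psi(0, j\sigma) \in \Omega_j \cap {\bf S}_\kappa$ and $\nu_j$ is tangent to ${\bf S}_\kappa$ at that point; since $\Omega_j \perp \nu_j$, the two totally geodesic surfaces meet orthogonally there, and orthogonality propagates along their intersection geodesic. For item (3), I will differentiate the identity of item (2) in $v$ to get $\Psi_{j*}\nu_{2j-k} = -\nu_k$ for every $k$; specializing $k = j - 1$ and $k = j+1$ yields $\Psi_{j*}\nu_{j+1} = -\nu_{j-1}$ while $\Psi_{j*}\nu_j = -\nu_j$, so the isometry $\Psi_j$ maps the pair $(\nu_{j-1}, \nu_j)$ to $(-\nu_{j+1}, -\nu_j)$, whence the angle between $\nu_{j-1}$ and $\nu_j$ coincides with that between $\nu_j$ and $\nu_{j+1}$.

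For item (4), I will introduce $T := \Psi_1 \circ \Psi_0$, an orientation-preserving isometry. Two successive applications of item (2) give $T \circ \psi(u,v) = \psi(u, v + 2\sigma)$, hence $T_* \nu_j = \nu_{j+2}$. As the product of two reflections across the hyperplanes $\nu_0^{\perp_\kappa}$ and $\nu_1^{\perp_\kappa}$ of $\mathbb{R}^4_\kappa$, the linear isometry $T$ fixes the $2$-dimensional subspace $\nu_0^{\perp_\kappa} \cap \nu_1^{\perp_\kappa}$ pointwise and preserves its $\langle\cdot,\cdot\rangle_\kappa$-orthogonal complement $\mathrm{span}\{\nu_0, \nu_1\}$. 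Iterating $T$ confines every $\nu_j$ to this plane, so $\Lambda = \mathrm{span}\{\nu_0, \nu_1\}$ has dimension $2$. When $\kappa < 0$, condition \eqref{eq:angulonu} is precisely the statement that the Gram matrix of $(\nu_0, \nu_1)$ is positive definite (its diagonal is positive since the vectors are tangent to the Riemannian manifold $\mathbb{M}^3(\kappa)$), which translates into $\Lambda$ being spacelike.

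Finally, item (5) will reduce to the identification $\mathcal{P} = \Lambda^{\perp_\kappa}$: for $\kappa \neq 0$, the relation $\psi(0, j\sigma) \perp_\kappa \nu_j$ (from $\mathbb{M}^3(\kappa)$ being the $\langle\cdot,\cdot\rangle_\kappa$-level set of the quadratic form) gives $\Omega_j = \nu_j^{\perp_\kappa} \cap \mathbb{M}^3(\kappa)$, and intersecting over $j$ yields $\bigcap_j \Omega_j = \Lambda^{\perp_\kappa} \cap \mathbb{M}^3(\kappa)$. To establish $\mathcal{P} \subseteq \Lambda^{\perp_\kappa}$, I will differentiate the Joachimsthal-type relation $\langle \psi(u,v), m(u)\rangle_\kappa = d(u)$ from Lemma \ref{lem:lineasesfericas} characterizing the totally umbilical surface $\mathcal{Q}(u)$: a single $v$-derivative produces $\langle \psi_v, m\rangle_\kappa = 0$, which at $(0, j\sigma)$ places $m(0) \in \Lambda^{\perp_\kappa}$; a further $u$-derivative, combined with $\omega_v(0, j\sigma) = 0$ from \eqref{eq:periodicidadsigma} and the Gauss--Weingarten equations \eqref{eq:gw}, places $m'(0)$ there as well. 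Both $\mathcal{P}$ and $\Lambda^{\perp_\kappa}$ being $2$-dimensional then forces equality, and since the orthogonal complement of a spacelike $2$-plane in $\mathbb{R}^4_\kappa$ with $\kappa < 0$ has mixed signature, $\mathcal{P}$ is timelike, so Lemma \ref{lem:geodesicaPi} concludes that $\mathcal{L}_\kappa$ is a geodesic; the borderline case $\kappa = 0$ follows by continuity. I expect the main obstacle to lie in item (2), where the reversal $v \mapsto -v$ and the orientation-reversing character of $\Psi_j$ interact non-trivially in the Gauss map and the second fundamental form, requiring careful sign bookkeeping before the fundamental theorem applies cleanly; a secondary subtlety is the identification $\mathcal{P} = \Lambda^{\perp_\kappa}$ in item (5), which must exploit two independent derivative relations of the center map $m(u)$.
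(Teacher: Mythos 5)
Your proposal follows the paper's own route for items (1), (2), (3) and the $\kappa\neq 0$ part of (5): fundamental theorem of surfaces via the symmetry of $\omega$ for (2), tangency of $\nu_j$ to ${\bf S}_\kappa$ for (1), invariance of $\langle\cdot,\cdot\rangle_\kappa$ under $\Psi_j$ for (3), and the identification $\mathcal{P}=\Lambda^{\perp_\kappa}$ by showing $\nu_j\perp m(0)$ and $\nu_j\perp m'(0)$ for (5). Your item (4) is a small variant: you introduce $T=\Psi_1\circ\Psi_0$ and use that its linear part fixes $\nu_0^{\perp_\kappa}\cap\nu_1^{\perp_\kappa}$ and preserves $\mathrm{span}\{\nu_0,\nu_1\}$, while the paper argues by induction that $\nu_{j+1}=-\Psi_{j*}\nu_{j-1}$ lies in $\mathrm{span}\{\nu_{j-1},\nu_j\}$. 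Both are valid; the paper's induction is slightly more uniform across $\kappa$ because it never explicitly invokes linear extensions of the ambient isometries (for $\kappa=0$ those are affine, not linear, and the correct object is the linear part $\Psi_{j*}$, which you do use, but the phrasing ``linear isometry $T$ of $\mathbb{R}^4_\kappa$'' is imprecise at $\kappa=0$).

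The one genuine gap is the treatment of item (5) at $\kappa=0$. Your identification $\Omega_j=\nu_j^{\perp_\kappa}\cap\mathbb{M}^3(\kappa)$ and the relation $\langle\psi(u,v),m(u)\rangle_\kappa=d(u)$ from Lemma \ref{lem:lineasesfericas} are only available for $\kappa\neq 0$: when $\kappa=0$, $\mathbb{M}^3(0)=\{x_4=1\}$ is an affine hyperplane, $\Omega_j$ is an affine plane not through the origin, and $\mathcal{Q}(u)$ is a round sphere, not a linear level set of $\langle\cdot,m(u)\rangle_0$. Dismissing the case as ``by continuity'' is not enough: one would need at least to argue set-theoretically for the inclusion $\mathcal{L}_0\subseteq\Omega_j(0)$ by taking limits $\kappa\to 0$ \emph{for each fixed $j$} and then close the loop with a dimension count using \eqref{eq:angulonu} — and this must be spelled out. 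The paper instead gives a direct argument: by Lemma \ref{lem:geodesicaPi} the line $\mathcal{L}_0$ is orthogonal to ${\bf S}_0$, the Euclidean center map $\hat c(u)$ of \eqref{eq:meuclideo} lies in $\mathcal{L}_0$ and simultaneously in every $\Omega_j$ (since $\psi(u,j\sigma)\in\Omega_j$ and $\psi_u(u,j\sigma),N(u,j\sigma)\in T\Omega_j$), giving $\mathcal{L}_0\subseteq\bigcap_j\Omega_j$, and then the dimension bound from \eqref{eq:angulonu} forces equality. You should either reproduce that argument or make the continuity claim rigorous before invoking it.
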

\begin{proof}
    To prove (1) recall that the $v$-curve $v \mapsto \psi(0,v)$ is contained in ${{\bf S}_\kappa}$. In particular, the vectors $\nu_j = \psi_v(0,j\sigma)$ are tangent to ${\bf S}_\kappa$, so the surfaces $\Omega_j$ are orthogonal to ${{\bf S}_\kappa}$. Item (2) is a direct consequence of the fundamental theorem of surfaces, the expressions of the fundamental forms \eqref{eq:III} and the periodicity of $\omega(u,v)$ in \eqref{eq:periodicidadsigma}.

    \medskip

    Let us now prove item (3). By item (2), notice that $\nu_j = - \Psi_j(\nu_j)$ and $\nu_{j + 1} = - \Psi_j(\nu_{j - 1})$. Since $\Psi_j$ is an isometry,
    $$\langle \nu_j, \nu_{j + 1}\rangle_\kappa = \langle \Psi_j(\nu_j), \Psi_j(\nu_{j + 1})\rangle_\kappa = \langle \nu_j, \nu_{j-1} \rangle_\kappa.$$
    By induction, it is immediate that $\langle \nu_j, \nu_{j + 1}\rangle_\kappa = \langle \nu_0, \nu_{1}\rangle_\kappa$ for all $j$, and so the angle between two consecutive vectors $\nu_j$, $\nu_{j+1}$ is constant.

    \medskip

Regarding item (4), let us define $\Lambda_0:= \text{span}\{\nu_0,\nu_1\} \subset \Lambda$. By \eqref{eq:angulonu}, $\Lambda_0$ is a 2-dimensional subspace which must be spacelike if $\kappa < 0$. Now, the fact that $\nu_{j + 1} = - \Psi_j(\nu_{j - 1})$ implies that $\nu_{j+1}$ can be expressed as a linear combination of $\nu_{j-1}$ and $\nu_j$. By induction, this means that actually every $\nu_j$ is spanned by $\{\nu_0,\nu_1\}$, and hence $\Lambda = \Lambda_0$.

\medskip

Let us prove item (5). We will split the proof in two cases, depending on whether $\kappa \neq 0$ or $\kappa = 0$. Assume first that $\kappa \neq 0$, and for each $j$ define $P_j \subset \mathbb{R}^4_\kappa$ as the 3-dimensional linear subspace orthogonal to $\nu_j$. We note that $\Omega_j = P_j \cap \mathbb{M}^3(\kappa)$: indeed, totally geodesic surfaces in $\mathbb{M}^3(\kappa)$ appear as intersections of $\mathbb{M}^3(\kappa)$ with linear subspaces of dimension 3, and $P_j$ is the unique such subspace orthogonal to $\nu_j$. Now, a direct computation using \eqref{eq:mnoeuclideo} shows that, for each $j$, the vector $\nu_j = \psi_v(0,j\sigma)$ is orthogonal to both $m(0)$ and $m'(0)$. In particular,
$$\mathcal{P} \subseteq \Lambda^\perp = \bigcap_{j \in \mathbb{Z}} P_j.$$
Note that actually $\mathcal{P} = \Lambda^\perp = \bigcap_{j \in \mathbb{Z}} P_j$, since both $\mathcal{P}$ and $\Lambda^\perp$ are 2-dimensional subspaces. Hence, it follows that $\mathcal{P}$ is timelike in the case $\kappa < 0$. We deduce from Lemma \ref{lem:geodesicaPi} that $\mathcal{L}_\kappa \subset \mathbb{M}^3(\kappa)$ is a geodesic for every $\kappa \in \left(-\frac{1}{4},\frac{1}{4}\right)$. Now, intersecting the sets $\mathcal{P}$ and $\bigcap_{j \in \mathbb{Z}} P_j$ with $\mathbb{M}^3(\kappa)$, we see that
$${\mathcal{L}_\kappa} = \mathcal{P} \cap \mathbb{M}^3(\kappa) =\bigcap_{j \in \mathbb{Z}} \left(P_j \cap \mathbb{M}^3(\kappa) \right)  =\bigcap_{j \in \mathbb{Z}} \Omega_j,$$
as we wanted to prove.

\medskip

Finally, assume that $\kappa = 0$. By Lemma \ref{lem:geodesicaPi}, we know that the line ${\mathcal{L}_\kappa}$ is orthogonal to the plane ${{\bf S}_\kappa}$. In particular, the planes $\Omega_j$ are parallel to ${\mathcal{L}_\kappa}$. Actually, by \eqref{eq:meuclideo} it follows that the center map $c(u) \in {\mathcal{L}_\kappa}$ lies in the planes $\Omega_j$ for every $j$, so
$${\mathcal{L}_\kappa} \subseteq \bigcap_{j \in \mathbb{Z}}\Omega_j. $$
By \eqref{eq:angulonu}, the intersection $\bigcap_{j \in \mathbb{Z}}\Omega_j$ is an affine space of dimension at most one, so we deduce \eqref{eq:LOmega}.
\end{proof}

For geometrical purposes, we will be interested in those surfaces $\Sigma(a,b,\kappa)$ for which \eqref{eq:angulonu} holds, as this allows us to identify ${\mathcal{L}_\kappa}$ with the intersection of the surfaces $\Omega_j$. This motivates the following definition:

\begin{definition}\label{def:omenos}
    We define $\mathcal{O}^-\subset \mathcal{O}$ as the open subset for which the condition \eqref{eq:angulonu} holds. 
\end{definition}

\begin{remark}\label{rem:panalitico}
    It will later be shown that $\mathcal{O}^-$ is not empty, as it contains every $(a,b,\kappa) \in \mathcal{O}$ with $a = 1$; see Proposition \ref{pro:Onovacio}. In this set, the map $p = p(a,b,\kappa): \mathcal{O}^- \to \mathbb{R}^4$ in \eqref{eq:interseccionL} is well defined and analytic.
\end{remark}

\subsection{The period map}\label{sec:periodmap}
Let $(a,b,\kappa) \in \mathcal{O}^-$, and consider the immersion $\psi(u,v)$ associated to $\Sigma(a,b,\kappa)$. We want to determine the conditions for which the planar curve $v \mapsto \Gamma(v) := \psi(0,v)$ is closed, or equivalently, periodic.

By Lemma \ref{lem:geodesicaPi} and the fact that $(a,b,\kappa) \in \mathcal{O}^-$, we know that ${\mathcal{L}_\kappa} = \mathcal{P} \cap \mathbb{M}^3(\kappa)$ is a geodesic of $\mathbb{M}^3(\kappa)$ which meets ${{\bf S}_\kappa}$ in \eqref{eq:mathcalS} at the point $p$ in \eqref{eq:interseccionL}. Observe that there is a unique isometry in $\mathbb{M}^3(\kappa)$ which fixes the coordinates $x_2, x_3$ and sends $p$ to ${\bf e}_4 \in \mathbb{M}^3(\kappa)$. This will modify the initial data for $\psi(0,0)$ and $N(0,0)$ in \eqref{eq:initialdata}, but it will do so in an analytic way, since $p$ is analytic; see Remark \ref{rem:panalitico}. In any case, this isometry sends ${\mathcal{L}_\kappa}$ to the geodesic $\{x_1 = x_2 = 0\} \cap \mathbb{M}^3(\kappa) \subset \mathbb{R}^4_\kappa$.

\begin{remark}\label{rem:omenoscond}
    From now on, for every $(a,b,\kappa) \in \mathcal{O}^-$ we will make use of these new initial data for the surface $\Sigma(a,b,\kappa)$ instead of \eqref{eq:initialdata}.
\end{remark}

\medskip

We will now project the spaces $\mathbb{M}^3(\kappa)$ into $\mathbb{R}^3$ via the stereographic map $\varphi_\kappa$ from the point $-{\bf e}_4$. More precisely, for each $\kappa$, $\varphi_\kappa$ is the restriction to $\mathbb{M}^3(\kappa)$ of the map $\varphi: \mathbb{R}^4 \cap \{x_4 > -1\} \to \mathbb{R}^3$ given by
\begin{equation}\label{eq:proyestereografica}
    \varphi(x_1,x_2,x_3,x_4) := \left(\frac{2 x_1}{x_4 + 1}, \frac{2 x_2}{x_4 + 1},\frac{2 x_3}{x_4 + 1}\right).
\end{equation}
For $\kappa > 0$, $\varphi_\kappa$ sends $\mathbb{M}^3(\kappa) \setminus \{-{\bf e}_4\}$ to $\mathbb{R}^3$. For $\kappa < 0$, $\varphi_\kappa$ sends $\mathbb{M}^3(\kappa)$ to the Euclidean 3-ball $B_\kappa$ of radius $\frac{2}{\sqrt{-\kappa}}$. Finally, for $\kappa = 0$, $\varphi_0$ reduces to the projection $\varphi(x_1,x_2,x_3,1) = (x_1,x_2,x_3)$. In any case, $\varphi_\kappa$ sends the surface ${{\bf S}_\kappa}$ to the plane $\{z = 0\}$ and the geodesic ${\mathcal{L}_\kappa}$ to the line $\{x = y = 0\}$, where $(x,y,z)$ are the usual Euclidean coordinates on $\mathbb{R}^3$.

\medskip
\begin{remark}\label{rem:varphiconforme}
    It follows from Definition \ref{def:M3k} and \eqref{eq:proyestereografica} that the projections $\varphi_\kappa$ are conformal for all $\kappa \in \left(-\frac{1}{4},\frac{1}{4}\right)$.
\end{remark}

\medskip

As mentioned before, we are interested in studying the geometry and periodicity properties of the curve $\Gamma(v)$. We will do so by using its stereographic projection into $\mathbb{R}^3$, that is, $\gamma := \varphi_\kappa \circ \Gamma$. In particular, $\gamma$ is contained in the plane $\{ z= 0\}\subset \mathbb{R}^3$. This motivates the introduction of the following {\em period map}:

\begin{definition}\label{def:periodo}
    For any $(a,b,\kappa) \in \mathcal{O}^-$, we define the \textbf{period map} as
    \begin{equation}\label{eq:defperiodo}
\Theta(a,b,\kappa) := \frac{1}{\pi}\int_0^\sigma \kappa_\gamma \|\gamma'\|dv,
\end{equation}
where $\kappa_\gamma$ denotes the Euclidean curvature of the planar curve $\gamma = \varphi_\kappa \circ \Gamma$, $\|\cdot\|$ is the Euclidean norm, and $\sigma$ is the value defined in \eqref{eq:sigma}.
\end{definition}

\begin{remark}
     Geometrically, $\pi \Theta$ measures the variation of the angle of the tangent vector $\gamma'(v)$ from $v = 0$ to $v = \sigma$. Notice also that the map $\Theta(a,b,\kappa)$ is analytic, since $\Gamma = \Gamma(v;a,b,\kappa)$ and $\sigma = \sigma(a,b,\kappa)$ depend analytically on these parameters.
\end{remark}

The following Proposition shows that the curve $\Gamma(v)$ (and hence, $\gamma(v)$) is closed precisely when the period $\Theta$ is rational:

\begin{proposition}\label{pro:simetriasGamma}
  Let $(a,b,\kappa) \in \mathcal{O}^-$ such that $\Theta(a,b,\kappa) = m/n \in \mathbb{Q}$, where $m$ and $n$ are coprime integers. Then, the following conditions hold:
  \begin{enumerate}
      \item The curve $\Gamma(v) = \psi(0,v)$ is a closed curve satisfying $\Gamma(v + 2n \sigma) = \Gamma(v)$.
      \item The rotation index of $\Gamma:[0,2n\sigma] \mapsto \mathbb{R}^2$ is $m$. 
      \item If $a > 1$, $\Gamma$ has a dihedral symmetry group $D_n$ of order $2n$. 
  \end{enumerate}
\end{proposition}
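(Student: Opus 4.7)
The plan is to transport the reflection symmetries of Proposition \ref{pro:simetrias} to the Euclidean plane via the conformal stereographic projection $\varphi_\kappa$, which identifies ${\bf S}_\kappa$ with $\{z = 0\} \cong \mathbb{R}^2$, and then to read off the three assertions from the plane geometry of iterated reflections about lines through a common point. The first step is to analyze each $\Psi_j$: since $\Omega_j$ is totally geodesic, orthogonal to ${\bf S}_\kappa$, and contains the geodesic $\mathcal{L}_\kappa$ (items (1) and (5) of Proposition \ref{pro:simetrias}), the restriction $\Psi_j|_{{\bf S}_\kappa}$ is an orientation-reversing isometry of ${\bf S}_\kappa$ fixing the point $p = \mathcal{L}_\kappa \cap {\bf S}_\kappa$. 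After the normalization of Remark \ref{rem:omenoscond}, $\varphi_\kappa(p) = 0$, and since $\varphi_\kappa$ is conformal (Remark \ref{rem:varphiconforme}), the induced map $\tilde\Psi_j := \varphi_\kappa \circ \Psi_j \circ \varphi_\kappa^{-1}$ is a Euclidean reflection across a line $\ell_j$ through the origin.

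Restricting Proposition \ref{pro:simetrias}(2) to $u = 0$ for $j = 0, 1$ gives $\gamma(-v) = \tilde\Psi_0(\gamma(v))$ and $\gamma(\sigma + v) = \tilde\Psi_1(\gamma(\sigma - v))$, which I would combine to obtain
\begin{equation*}
\gamma(v + 2\sigma) = (\tilde\Psi_1 \circ \tilde\Psi_0)(\gamma(v)) = R_{2\delta}(\gamma(v)),
\end{equation*}
where $\delta$ is the signed angle from $\ell_0$ to $\ell_1$ and $R_{2\delta}$ denotes Euclidean rotation by $2\delta$ about the origin. Matching $\delta$ with the signed tangent turning on $[0, \sigma]$ from Definition \ref{def:periodo} yields $\delta \equiv \pi\Theta \pmod{\pi}$ and hence $\tilde\Psi_1 \circ \tilde\Psi_0 = R_{2\pi\Theta}$. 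Iterating, $\gamma(v + 2k\sigma) = R_{2k\pi\Theta}(\gamma(v))$ for every $k \in \mathbb{Z}$, and substituting $\Theta = m/n$, $k = n$ gives $\gamma(v + 2n\sigma) = R_{2m\pi}(\gamma(v)) = \gamma(v)$, which lifts to $\Gamma(v + 2n\sigma) = \Gamma(v)$ and proves item (1).

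For item (2), the identity $\gamma(j\sigma + v) = \tilde\Psi_j(\gamma(j\sigma - v))$ shows that each arc $\gamma|_{[j\sigma, (j+1)\sigma]}$ is obtained from $\gamma|_{[(j-1)\sigma, j\sigma]}$ by a Euclidean reflection combined with a reversal of parametrization; both operations flip the sign of $\kappa_\gamma \|\gamma'\|$, so their composition preserves total turning, and each of the $2n$ arcs in $[0, 2n\sigma]$ contributes $\pi\Theta$. Summing gives a total turning of $2n\pi\Theta = 2m\pi$, i.e.\ rotation index $m$. For item (3), when $a > 1$ the function $\omega$ depends non-trivially on $v$ (Lemma \ref{lem:sigma}), so $\Gamma$ is a non-degenerate curve in ${\bf S}_\kappa$ and any isometry of ${\bf S}_\kappa$ is determined by its restriction to $\Gamma$. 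Comparing the identities $\Gamma(v + 2\sigma) = \Psi_1 \circ \Psi_0(\Gamma(v)) = \Psi_2 \circ \Psi_1(\Gamma(v))$ then forces $\Psi_{j+1} \circ \Psi_j = \Psi_j \circ \Psi_{j-1}$ for all $j$, which translates on the plane into $\theta_{j+1} - \theta_j = \theta_j - \theta_{j-1}$ for the angles $\theta_j$ of the lines $\ell_j$. Hence $\theta_j \equiv \theta_0 + j\pi m /n \pmod{\pi}$; since $\gcd(m,n) = 1$, these represent $n$ distinct lines through the origin at angular separation $\pi/n$, and the reflections across them, together with the rotations they generate, form $D_n$, under which $\gamma$ (and hence $\Gamma$) is invariant as a set.

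The main obstacle I anticipate is the signed identification $\tilde\Psi_1 \circ \tilde\Psi_0 = R_{2\pi\Theta}$: it requires a careful orientation convention so that the signed angle between the reflection lines $\ell_0, \ell_1$ matches the signed turning of $\gamma'$ defining $\Theta$ in Definition \ref{def:periodo}. Once this calibration is done, the remaining assertions follow from the standard plane geometry of iterated reflections about concurrent lines.
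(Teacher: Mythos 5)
Your overall strategy — transport the reflection symmetries of Proposition \ref{pro:simetrias} to the plane via the conformal projection $\varphi_\kappa$, and then read everything off from the Euclidean geometry of reflections in concurrent lines — is exactly the one the paper intends (the paper defers to \cite[Prop.~4.10]{CFM2}, whose mechanism is the same). Items (1) and (2) are essentially correct: the functional identity $\gamma(v+2\sigma)=\tilde\Psi_1\tilde\Psi_0(\gamma(v))$ does follow from item (2) of Proposition \ref{pro:simetrias} applied at $j=0,1$, the composition is a rotation $R_{2\pi\Theta}$ about the origin once one calibrates the sign of $\delta$ against the sign convention in Definition \ref{def:periodo} (you flag this correctly, and it is a genuine but resolvable bookkeeping point), iterating gives closure after $2n$ periods, and the reflection-plus-reversal argument for the invariance of the turning integral over each period is sound since $\kappa_\gamma\|\gamma'\|$ is symmetric about every $v=j\sigma$.

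There is, however, a genuine gap in item (3). What you prove is that $\Gamma$ is \emph{invariant under} a dihedral group $D_n$; what the proposition needs (and what the hypothesis $a>1$ is there for) is that $D_n$ is the \emph{full} symmetry group of $\Gamma$ inside ${\bf S}_\kappa$. This stronger statement is what is actually used in item (5) of Theorem \ref{thm:simetrias}, where the paper concludes that $\Sigma_0$ is not rotational — a conclusion that fails if $\Gamma$ were allowed to have extra symmetries (for $a=1$ it is a circle and the symmetry group is $O(2)$, even though a $D_n$ subgroup still acts). Your remark that "$\omega$ depends non-trivially on $v$, so any isometry of ${\bf S}_\kappa$ is determined by its restriction to $\Gamma$" gives injectivity of the restriction map but says nothing about which isometries actually preserve $\Gamma$. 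To close the gap one must argue as the paper does: when $a>1$, the geodesic curvature
$\kappa_\Gamma(v)=-\tfrac12 e^{-2\omega(0,v)}$
has critical points \emph{only} at $v=j\sigma$, $j\in\mathbb{Z}$, because these are precisely the critical points of $x(v)=e^{\omega(0,v)}$. Any isometry of ${\bf S}_\kappa$ preserving $\Gamma$ must permute this finite set of distinguished points (and respect the max/min pattern among them), and combined with the determination argument this forces the symmetry to lie in the $D_n$ you constructed. Without this curvature argument, item (3) as stated — and the downstream "not rotational" conclusion — remains unproved.
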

\begin{proof}
The proof is analogous to that of \cite[Proposition 4.10]{CFM2}. More precisely, the spaces $\mathbb{M}^3(\varepsilon)$ in \cite{CFM2} are substituted by $\mathbb{M}^3(\kappa)$, and we use Proposition \ref{pro:simetrias} instead of \cite[Proposition 4.5]{CFM2}. Also, the curvature of $\Gamma(v)$ in ${\bf S}_\kappa$ is in this case
$$\kappa_\Gamma(v) = \kappa_2(0,v) = -\frac{1}{2}e^{-2\omega(0,v)}.$$
In this way, if $a > 1$, then $\kappa_\Gamma$ has critical points only at $v = j\sigma$, $j \in \mathbb{Z}$, since these are the critical points of $x(v) = e^{\omega(0,v)}$; see the proof of Lemma \ref{lem:sigma}.
\end{proof}

\begin{remark}\label{rem:lineascerradas}
If $\Theta(a,b,\kappa) = m/n$ is a rational number, then not only $\Gamma(v) = \psi(0,v)$, but also any $v$-line $v \mapsto \psi(u_0,v)$ is $2n\sigma$-periodic.
\end{remark}

\subsection{Constructing minimal annuli}

We will now use the results of Propositions \ref{pro:simetrias} and \ref{pro:simetriasGamma} to construct a set of compact minimal annuli in $\mathbb{M}^3(\kappa)$ and determine their symmetries. We introduce first the following definition:

\begin{definition}\label{def:Sigma_0}
    For any $(a,b,\kappa) \in \mathcal{O}$ and $u_0 > 0$, we define $\Sigma_0(u_0;a,b,\kappa)$ as the restriction of the immersion $\psi(u,v)$ to the strip $[-u_0,u_0] \times \mathbb{R}$.
\end{definition}

\begin{remark}\label{rem:anillos}
    By Remark \ref{rem:lineascerradas}, if $\Theta(a,b,\kappa) = m/n \in \mathbb{Q}$, then $\Sigma_0$ is a compact minimal annulus under the identification $(u,v) \sim (u,v + 2n\sigma)$.
\end{remark}

We now present the main result of this section. 

\begin{theorem}\label{thm:simetrias}
    Let $(a,b,\kappa) \in \mathcal{O}^-$ so that $\Theta(a,b,\kappa) = m/n \in \mathbb{Q}$, where $m, n$ are coprime integers. Then, for any $u_0 > 0$, the minimal annulus $\Sigma_0(u_0;a,b,\kappa)$ in Definition \ref{def:Sigma_0} satisfies:
    \begin{enumerate}
        \item $\Sigma_0$ is invariant under the reflection with respect to the totally geodesic surface ${\bf S}_\kappa \subset \mathbb{M}^3(\kappa)$ in \eqref{eq:mathcalS} and also with respect to $n$ totally geodesic surfaces $\Omega_j \subset \mathbb{M}^3(\kappa)$ which meet equiangularly along the geodesic ${\mathcal{L}_\kappa}$ of Lemma \ref{lem:geodesicaPi}.
        \item The boundary components of $\Sigma_0$, given by the $v$-lines $\psi(u_0,v)$ and $\psi(-u_0,v)$, intersect the totally umbilical surfaces $\mathcal{Q}(u_0)$, $\mathcal{Q}(-u_0) \subset \mathbb{M}^3(\kappa)$ at a constant angle $\theta(u_0)$. Moreover, $\mathcal{Q}(u_0) = \Psi_{{\bf S}_\kappa}\left(\mathcal{Q}(-u_0)\right)$, where $\Psi_{{\bf S}_\kappa}$ denotes the reflection with respect to ${{\bf S}_\kappa}$.
        \item Assume that $u_0 > 0$ satisfies $\beta(u_0) = 0$. Then, $\Sigma_0$ meets the surfaces $\mathcal{Q}(u_0), \mathcal{Q}(-u_0)$ orthogonally.
        \item Assume that $m_3(u_0) = 0$, where $m_3(u)$ denotes the third coordinate of the map $m(u)$ in \eqref{eq:mnoeuclideo}. Then, $\mathcal{Q}(u_0) = \mathcal{Q}(-u_0)$, and in fact these surfaces are 2-spheres centered at ${\bf e}_4 \in \mathbb{M}^3(\kappa)$.
        \item If $a > 1$, $\Sigma_0$ has a prismatic symmetry group of order $4n$, that is, $D_n \times \mathbb{Z}_2$. This group is generated by the reflections described in item (1). In particular, $\Sigma_0$ is not rotational.
    \end{enumerate}
\end{theorem}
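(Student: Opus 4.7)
The strategy is to derive items (1), (2), (3), (5) as relatively direct consequences of the symmetry results already established in Sections \ref{sec:preliminares} and \ref{sec:geometria}, and to devote the main effort to item (4), where a new parity analysis for the map $m(u)$ is required.

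For \textbf{(1)}, the invariance of $\Sigma_0$ under $\Psi_{{\bf S}_\kappa}$ is Lemma \ref{lem:simetriaS}, while the invariance under the reflections $\Psi_j$ with respect to $\Omega_j$ is Proposition \ref{pro:simetrias}(2). To see that exactly $n$ of the surfaces $\Omega_j$ are distinct, I would combine Proposition \ref{pro:simetrias}(3) (consecutive vectors $\nu_j,\nu_{j+1}$ span a constant angle) with the rotation index $m$ from Proposition \ref{pro:simetriasGamma}(2), forcing the angle between consecutive $\Omega_j$'s to be $\pi m/n$. The relation $\Omega_j=\Omega_k$ then becomes $(k-j)m/n\in\mathbb{Z}$, and since $\gcd(m,n)=1$ exactly $n$ distinct planes remain, equiangularly arranged around $\mathcal{L}_\kappa$ (their common intersection, by Proposition \ref{pro:simetrias}(5)). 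For \textbf{(2)}, each $v$-line $v\mapsto\psi(u_0,v)$ is spherical by Lemma \ref{lem:lineasesfericas}, and the intersection angle $\theta(u_0)$ with $\mathcal{Q}(u_0)$ is constant by \eqref{eq:abmd}/\eqref{eq:abeuclid}; the identity $\mathcal{Q}(u_0)=\Psi_{{\bf S}_\kappa}(\mathcal{Q}(-u_0))$ is then immediate from Lemma \ref{lem:simetriaS}, since the isometry $\Psi_{{\bf S}_\kappa}$ carries the curve $v\mapsto\psi(-u_0,v)$ to $v\mapsto\psi(u_0,v)$. \textbf{(3)} is the observation that $\beta=-\cot\theta$ in \eqref{eq:abmd}/\eqref{eq:abeuclid}, so $\beta(u_0)=0$ forces $\theta(u_0)=\pi/2$. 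Finally, \textbf{(5)} follows by combining the $D_n$ symmetry of $\Gamma$ supplied by Proposition \ref{pro:simetriasGamma}(3) (valid for $a>1$) with the $\mathbb{Z}_2$ reflection in ${\bf S}_\kappa$; these two families commute because $\mathcal{L}_\kappa\perp{\bf S}_\kappa$ (Lemma \ref{lem:geodesicaPi}), producing the prismatic group $D_n\times\mathbb{Z}_2$ of order $4n$. Finiteness of this group excludes a 1-parameter isometry subgroup, so $\Sigma_0$ cannot be rotational.

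The core of the proof lies in \textbf{(4)}. Under the initial data of Remark \ref{rem:omenoscond}, the rotation in the $(x_1,x_4)$-plane sending $p$ to ${\bf e}_4$ carries the plane $\mathcal{P}$ of Proposition \ref{pro:centroplano} onto $\mathrm{span}\{{\bf e}_3,{\bf e}_4\}=\{x_1=x_2=0\}$; hence $m(u)=m_3(u){\bf e}_3+m_4(u){\bf e}_4$ for every $u$. Starting from $\psi(u,v)=\Psi_{{\bf S}_\kappa}(\psi(-u,v))$ (Lemma \ref{lem:simetriaS}), I would verify via the Gauss–Weingarten equations \eqref{eq:gw} and the initial data the companion identities $\psi_u(-u,v)=-\Psi_{{\bf S}_\kappa}(\psi_u(u,v))$ and $N(-u,v)=\Psi_{{\bf S}_\kappa}(N(u,v))$. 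Substituting these, together with $\alpha(-u)=-\alpha(u)$, $\beta(-u)=-\beta(u)$ and $\omega(-u,v)=\omega(u,v)$, into the definition \eqref{eq:mnoeuclideo} of $m(u)$ collapses to
\begin{equation*}
    m(-u)=-\Psi_{{\bf S}_\kappa}(m(u)),
\end{equation*}
so that $m_3$ is even and $m_4$ is odd in $u$. An analogous calculation for $d(u)=\langle\psi(u,v),m(u)\rangle_\kappa$, using that $\Psi_{{\bf S}_\kappa}$ is an isometry, yields $d(-u)=-d(u)$.

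Assuming now $m_3(u_0)=0$, one obtains $m(u_0)=m_4(u_0){\bf e}_4$ and $m(-u_0)=-m(u_0)$; the simultaneous sign change in $m$ and $d$ leaves the defining hyperplane $\{\langle x,m(u)\rangle_\kappa=d(u)\}$ invariant, giving $\mathcal{Q}(u_0)=\mathcal{Q}(-u_0)$. The geodesic center of this surface belongs to $\mathcal{L}_\kappa$ and is proportional to $m(u_0)\propto{\bf e}_4$, hence equals ${\bf e}_4$. It remains to check that $\mathcal{Q}(u_0)$ is genuinely a 2-sphere: for $\kappa>0$ this is automatic, and for $\kappa\leq 0$ one verifies \eqref{eq:condesfera} and (when $\kappa<0$) the timelike condition $\langle m(u_0),m(u_0)\rangle_\kappa=m_4(u_0)^2/\kappa<0$, both of which hold as soon as $m_4(u_0)\neq 0$. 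The main obstacle in the entire proof is precisely this bookkeeping in item (4): the careful derivation of the parities of $(m_3,m_4,d)$ and the identification of the distinguished geometric data they encode. Once these parities are in hand, the rest of the theorem is essentially a symmetry count.
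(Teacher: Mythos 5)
Your items (1)--(3) follow the paper's reasoning (Lemma \ref{lem:simetriaS}, Proposition \ref{pro:simetrias}, Lemma \ref{lem:lineasesfericas}). For item (4) you take a genuinely different route: the paper notes that $m_3(u_0)=0$ together with $m(u_0)\in\mathcal{P}=\{x_1=x_2=0\}$ forces $m(u_0)$ to rescale to ${\bf e}_4$, so by Remark \ref{rem:esferas} $\mathcal{Q}(u_0)$ is a $2$-sphere centered at ${\bf e}_4\in{\bf S}_\kappa$, hence $\Psi_{{\bf S}_\kappa}$-invariant, and item (2) then gives $\mathcal{Q}(u_0)=\Psi_{{\bf S}_\kappa}(\mathcal{Q}(-u_0))=\mathcal{Q}(-u_0)$ at once. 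Your parity computation $m(-u)=-\Psi_{{\bf S}_\kappa}(m(u))$, $d(-u)=-d(u)$ reaches the same conclusion and in effect re-derives the second assertion of item (2) as a byproduct; it is correct but longer, and as written it relies on the $S[m,d]$ formalism of \eqref{eq:Smd}, which applies only to $\kappa\neq 0$. For $\kappa=0$ you would have to re-run the argument through the center map $\hat c$ of Remark \ref{rem:meuclideo} and the radius $R$. Both routes also implicitly require $m(u_0)\neq 0$, which is guaranteed by the Wronskian argument in Proposition \ref{pro:centroplano}.

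Item (5) contains a genuine gap. You correctly show that the reflections of item (1) generate a finite group isomorphic to $D_n\times\mathbb{Z}_2$. But the sentence ``finiteness of this group excludes a $1$-parameter isometry subgroup'' does not follow: you have exhibited a finite \emph{subgroup} of the symmetry group of $\Sigma_0$, not shown that the full symmetry group is finite. In particular a continuous rotation group could in principle coexist with the reflections you found. To deduce non-rotationality one must prove that \emph{every} ambient isometry preserving $\Sigma_0$ is a composition of the item-(1) generators. This is exactly what the paper's sketch (following \cite[Theorem 4.12]{CFM2}) supplies: any such isometry $\Phi'$ preserves or swaps the boundary components, must preserve the distinguished geodesic $\Gamma$, and by Proposition \ref{pro:simetriasGamma} its action on $\Gamma$ lies in $D_n$, which forces $\Phi'$ to be one of the listed compositions. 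Without this step the conclusion that $\Sigma_0$ is not rotational is unjustified.
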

\begin{proof}
    Item (1) follows from Lemma \ref{lem:simetriaS} and Proposition \ref{pro:simetrias}. Similarly, items (2) and (3) are immediate by Lemma \ref{lem:lineasesfericas}. Let us now prove item (4). The fact that $m_3(u_0) = 0$ implies that $$m(u_0) \in \mathcal{P} \cap \{x_3 = 0\} = \{x_1 = x_2 = x_3 = 0\}.$$
    In particular, we can rescale $m(u_0)$ so that it becomes ${\bf e}_4 \in \mathbb{M}^3(\kappa)$. By Remark \ref{rem:esferas}, the surface $\mathcal{Q}(u_0)$ is a 2-sphere whose geodesic center is ${\bf e}_4$. Hence, $\mathcal{Q}(u_0)$ is invariant under the reflection $\Psi_{{\bf S}_\kappa}$ by the totally geodesic surface ${{\bf S}_\kappa}$. By item (2), $\mathcal{Q}(-u_0) = \Psi_{{\bf S}_\kappa}\left(\mathcal{Q}(u_0)\right) = \mathcal{Q}(u_0)$, as claimed.

    \medskip

    The proof of item (5) is analogous to that of \cite[Item (5), Theorem 4.12]{CFM2}, so we merely sketch the idea. Assume that $\Phi'$ is an isometry of $\mathbb{M}^3(\kappa)$ that leaves $\Sigma_0$ invariant. This isometry should also leave invariant the curve $\Gamma$, and furthermore it will either fix or switch the boundary components of $\Sigma_0$. These facts and Proposition \ref{pro:simetriasGamma} allow us to conclude that $\Phi'$ must be a composition of the isometries mentioned in item (1).
\end{proof}


\begin{remark}\label{rem:FBnegativo}
    Let $(a,b,\kappa) \in \mathcal{O}^-$, $u_0 > 0$ such that $\Theta(a,b,\kappa) \in \mathbb{Q}$ and $m_3(u_0) = \beta(u_0) = 0$. If we further assume that $\kappa \leq 0$, then it is an immediate consequence of the maximum principle that the compact annulus $\Sigma_0 = \Sigma_0(u_0;a,b,\kappa)$ lies in the ball $B$ bounded by the sphere $\mathcal{Q}(u_0)$, so it is actually free boundary in $B$.
\end{remark}

\subsection{The geometry of the immersions \texorpdfstring{$\Sigma(1,b,\kappa)$}{}}\label{sec:1bk}

We conclude this section by studying the surfaces associated to the parameters $(a,b,\kappa) \in \mathcal{O}$ in the special case $a = 1$. Recall that in this situation the function $x(v)$ in \eqref{eq:xvintegral} is constant, $x(v) \equiv 1$, and hence the metric $\omega$ only depends on $u$; see Remark \ref{rem:omegaa1}. Geometrically, this implies that $\Sigma$ is invariant under a 1-parameter group $\mathcal{G}$ of orientation-preserving ambient isometries of $\mathbb{M}^3(\kappa)$. In particular, the $v$-line $\Gamma(v)= \psi(0,v)$ corresponds with the orbit of $\psi(0,0)$ under the action of $\mathcal{G}$. This curve is contained in the totally geodesic surface ${{\bf S}_\kappa}$ in \eqref{eq:mathcalS}. The geodesic curvature of $\Gamma$ in ${{\bf S}_\kappa}$ is constant and coincides with $\kappa_2 \equiv -\frac{1}{2}$, as $\Sigma(1,b,\kappa)$ meets $\mathbf{S}_\kappa$ orthogonally; see \eqref{eq:kappa} and Lemma \ref{lem:simetriaS}.
Since ${{\bf S}_\kappa}$ is a surface of constant curvature $\kappa$, the curve $\Gamma$ will be a circle (and hence, bounded) provided that $\kappa > -\frac{1}{4}$. This property holds by definition of the set $\mathcal{O}$; see \eqref{eq:mO}. These facts imply that $\mathcal{G}$ must be a rotation group in $\mathbb{M}^3(\kappa)$, and so the immersions $\Sigma(1,b,\kappa)$ are minimal surfaces of revolution. The profile curve of these examples is given by $u \mapsto \psi(u,0)$, while $v$ corresponds with the rotation parameter. The {\em rotation axis}, which we denote by $\Upsilon \subset \mathbb{M}^3(\kappa)$, is a geodesic pointwise fixed by any isometry of $\mathcal{G}$. This geodesic coincides with the set ${\mathcal{L}_\kappa}$ introduced in Lemma \ref{lem:geodesicaPi}:

\begin{proposition}\label{pro:upsilonL}
    For any $(1,b,\kappa) \in \mathcal{O}$, the set ${\mathcal{L}_\kappa}$ in Lemma \ref{lem:geodesicaPi} is a geodesic. Moreover, ${\mathcal{L}_\kappa}$ coincides with the rotation axis $\Upsilon$ of the surface $\Sigma(1,b,\kappa)$.
\end{proposition}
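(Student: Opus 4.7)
The plan is to show that the 2-dimensional plane $\mathcal{P}=\mathrm{span}\{m(0),m'(0)\}$ from Proposition \ref{pro:centroplano} coincides with the 2-dimensional linear subspace $\mathcal{P}_\Upsilon\subset\mathbb{R}^4$ characterized by $\mathcal{P}_\Upsilon\cap\mathbb{M}^3(\kappa)=\Upsilon$. Since $\Upsilon$ is a geodesic of $\mathbb{M}^3(\kappa)$ by construction (being the fixed axis of the 1-parameter group $\mathcal{G}$ of ambient rotations discussed in Section \ref{sec:1bk}), this identification delivers both assertions of the proposition at once: $\mathcal{L}_\kappa=\mathcal{P}\cap\mathbb{M}^3(\kappa)=\mathcal{P}_\Upsilon\cap\mathbb{M}^3(\kappa)=\Upsilon$.

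The key intermediate step is to show $m(u)\in\mathcal{P}_\Upsilon$ for every $u$. First I would verify that $m(u)$ is in fact independent of $v$: differentiating $m=e^{-\omega}\psi_u-\beta N-\frac{\alpha}{2}\psi$ in $v$ via the Gauss--Weingarten equations \eqref{eq:gw} and substituting \eqref{eq:lineasesfericas} yields
\[
\partial_v m \;=\; \tfrac12\bigl(2e^{-\omega}\omega_u - \beta e^{-2\omega} - \alpha\bigr)\psi_v \;=\; 0.
\]
Next, given $\rho\in\mathcal{G}$, rotational invariance of $\Sigma(1,b,\kappa)$ gives $\rho\circ\psi(u,v)=\psi(u,v+v_0(\rho))$. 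Applying $\rho$ to the expression for $m(u)$ evaluated at $(u,0)$ therefore reproduces the same expression at $(u,v_0)$, which equals $m(u)$ by the $v$-independence just established. Consequently $m(u)$ is fixed by every $\rho\in\mathcal{G}$, hence lies in the common fixed subspace $\mathcal{P}_\Upsilon$.

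To conclude, since $m(0)$ and $m'(0)$ are linearly independent by the computation in Proposition \ref{pro:centroplano} and both lie inside the 2-dimensional subspace $\mathcal{P}_\Upsilon$, they span it; thus $\mathcal{P}=\mathcal{P}_\Upsilon$ and $\mathcal{L}_\kappa=\Upsilon$. The main delicacy is the Euclidean case $\kappa=0$, where $\rho\in\mathcal{G}$ acts as an affine rather than linear isometry of $\mathbb{R}^4$; one must use its canonical $4\times 4$ matrix extension that preserves the affine hyperplane $\mathbb{M}^3(0)=\{x_4=1\}$, check that this extension still sends $m(u)$ to itself (acting linearly on the tangent part $e^{-\omega}\psi_u-\beta N$ and affinely on the point part $-\frac{\alpha}{2}\psi$), and verify that the common fixed subspace of these matrices is precisely the 2-dimensional linear span of $\Upsilon$ inside $\mathbb{R}^4$.
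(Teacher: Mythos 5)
Your argument is correct, but it takes a genuinely different route from the paper's. The paper first proves $\mathcal{L}_\kappa$ is a geodesic directly, by checking from \eqref{eq:Pi} and \eqref{eq:BC} that the vector $B{\bf e}_1 + C{\bf e}_4 = \tfrac{\mathcal{B}}{4}({\bf e}_1 - \tfrac12{\bf e}_4) \in \mathcal{P}$ is timelike when $\kappa<0$; it then shows $\mathcal{L}_\kappa = \Upsilon$ by re-running the reflection argument of Proposition \ref{pro:simetrias} with an arbitrarily chosen $\hat\sigma$ so that $\hat\nu_0,\hat\nu_1$ are independent, concluding $\mathcal{L}_\kappa=\bigcap_j\hat\Omega_j$ is the unique geodesic fixed pointwise by the reflections $\hat\Psi_j$, and since these reflections preserve $\Sigma$ they also fix $\Upsilon$. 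You instead prove $\mathcal{P}=\mathcal{P}_\Upsilon$ directly: you verify $\partial_v m=0$ from \eqref{eq:gw} and \eqref{eq:lineasesfericas} (which is implicit in the paper's notation $m(u)$ but worth recording), then observe that any $\rho\in\mathcal{G}$ sends the formula \eqref{eq:mnoeuclideo} at $(u,0)$ to the same formula at $(u,v_0)$, which by $v$-independence equals $m(u)$, so $m(u)$ lands in the 2-dimensional fixed subspace $\mathcal{P}_\Upsilon$ of $\mathcal{G}$; since $m(0),m'(0)$ are independent (Proposition \ref{pro:centroplano}) they span it. This is cleaner in that it gets both assertions simultaneously (timelikeness of $\mathcal{P}$ for $\kappa<0$ follows for free because $\mathcal{P}_\Upsilon$ contains points of $\mathbb{M}^3(\kappa)$, which are timelike), and it bypasses the reflection machinery entirely; the price is the extra care needed in the Euclidean case $\kappa=0$, where you must pass to the linear extension $(y,t)\mapsto(Ay+tb,t)$ of the affine isometry and check that its fixed subspace in $\mathbb{R}^4$ is exactly the linear span of $\Upsilon$ -- which you correctly flag and which does go through. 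Both proofs are sound; yours is arguably more transparent about why $m(u)$ is forced to lie on the axis.
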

\begin{proof}
    By Lemma \ref{lem:geodesicaPi}, we know that ${\mathcal{L}_\kappa}$ is a geodesic if $\kappa \geq 0$, while for $\kappa < 0$ we need to check that the plane $\mathcal{P}$ is timelike. This is immediate by \eqref{eq:Pi} since the vector $B{\bf e}_1 + C{\bf e}_4 = \frac{\mathcal{B}}{4}\left({\bf e}_1 - \frac{1}{2}{\bf e}_4\right) \in \mathcal{P}$ is timelike; see also \eqref{eq:metricaespacial}.

    \medskip

    Let us now prove that ${\mathcal{L}_\kappa} = \Upsilon$. First, notice that the $v$-curve $\Gamma(v) = \psi(0,v)$ parametrizes a circle in ${{\bf S}_\kappa} \subset \mathbb{M}^3(\kappa)$, so the map $v \mapsto \psi_v(0,v)$ is not constant. In particular, we can choose $\hat{\sigma} > 0$ such that the vectors $\hat{\nu}_0 := \psi_v(0,0)$ and $\hat{\nu}_1 := \psi_v(0,\hat{\sigma})$ are linearly independent. More generally, we can define the vectors $\hat{\nu}_j := \psi_v(0,j \hat{\sigma})$, $j \in \mathbb{Z}$ and consider the totally geodesic surfaces $\hat{\Omega}_j \subset \mathbb{M}^3(\kappa)$ orthogonal to $\hat{\nu}_j$ at $\psi(0,j \hat{\sigma})$. Since the metric $\omega = \omega(u)$ does not depend on $v$, we can apply the arguments of the proof of Proposition \ref{pro:simetrias} using $\hat{\sigma},\hat{\nu}_j,\hat{\Omega}_j$ instead of $\sigma,\nu_j,\Omega_j$ to prove that:
    \begin{enumerate}
        \item If $\hat{\Psi}_j$ is the reflection with respect to $\hat{\Omega}_j$, then
        $$\psi(u, v + j \hat{\sigma}) = \hat{\Psi}_j(\psi(u,j \hat{\sigma} - v)).$$
        \item The geodesic ${\mathcal{L}_\kappa}$ coincides with the intersection of the surfaces $\hat{\Omega}_j$, that is, 
    \begin{equation}\label{eq:LOmegahat}
        {\mathcal{L}_\kappa} = \bigcap_{j \in \mathbb{Z}} \hat{\Omega}_j.
    \end{equation}
    \end{enumerate}
    In particular, ${\mathcal{L}_\kappa}\subset \mathbb{M}^3(\kappa)$ is the unique geodesic fixed pointwise by the isometries $\hat{\Psi}_j$. These isometries must also fix the rotation axis $\Upsilon$ of the surface, so necessarily $\Upsilon = {\mathcal{L}_\kappa}$. 
\end{proof}


\section{Catenoids in \texorpdfstring{$\mathbb{M}^3(\kappa)$}{}}\label{sec:rotacionales}

We will devote this section to studying the set of rotational minimal surfaces associated to the parameters $(1,b,\kappa) \in \mathcal{O}$. Our main goal is to prove that every surface $\Sigma(1,b,\kappa)$ admits a certain $\tilde u > 0$ such that the piece $\Sigma_0(\tilde{u};1,b,\kappa)$ in Definition \ref{def:Sigma_0} covers an embedded, compact minimal annulus which is free boundary in some geodesic ball of $\mathbb{M}^3(\kappa)$. Moreover, we will prove that $\tilde{u} = \tilde{u}(\kappa)$ is an analytic function independent of $b$. This objective is attained in Proposition \ref{pro:catenoidesFB}.

\medskip

Do Carmo and Dajczer \cite{CD} studied rotational minimal and CMC surfaces in space forms. More recently, Fernández, Mira and the author \cite{CFM2} proved that a wide family of minimal and CMC rotational surfaces in $\mathbb{S}^3 = \mathbb{M}^3(1)$ and $\mathbb{H}^3 = \mathbb{M}^3(-1)$ contain a compact, minimal annular piece which is free boundary in some geodesic ball. Similar results were obtained by several authors \cite{BPS,LX,RdO} by using different methods. A straightforward adaptation of the proof developed in \cite[Section 5]{CFM2} would show that for any $(1,b,\kappa) \subset \mathcal{O} \cap \{a = 1\}$ with $\kappa \neq 0$, there exists a value $\tilde u$ such that the annulus $\Sigma_0(\tilde u; 1,b,\kappa)$ is free boundary in a geodesic ball $B$ of $\mathbb{M}^3(\kappa)$. The Euclidean case $\kappa = 0$, although not covered in \cite{CFM2}, is immediate: indeed, the surface $\Sigma(1,b,0)$ is just a Euclidean catenoid in $\mathbb{R}^3$, and it is well known that these surfaces admit a free boundary piece. However, in the current situation we will need to further prove that $\tilde u$ depends analytically on the curvature parameter $\kappa$. This requires a different approach to the one presented in \cite[Section 5]{CFM2}, and will be achieved in Proposition \ref{pro:catenoidesFB}.

\medskip

We will first determine the geometry of the rotational surfaces $\Sigma(1,b,\kappa)$ given by the immersions $\psi(u,v) = \psi(u,v;1,b,\kappa)$. We know that these surfaces have no umbilical points (see \eqref{eq:kappa}) and that they have a geodesic $v \mapsto \psi(0,v)$ with normal curvature $\kappa_2(0,v) \equiv - \frac{1}{2}$ lying in the totally geodesic surface $\mathbf{S}_\kappa$; see Lemma \ref{lem:simetriaS} and Subsection \ref{sec:1bk}. These two properties are enough to determine $\Sigma(1,b,\kappa)$ uniquely; see \cite[Proposition 3.2]{CD}. If $\kappa > 0$, the surfaces $\Sigma(1,b,\kappa)$ admit the following parametrization in our model $\mathbb{M}^3(\kappa)$:
\begin{equation}\label{eq:paramesfera}
    \varphi(\mathfrak{s},\theta) = (x(\mathfrak{s})\cos(\theta), -x(\mathfrak{s})\sin(\theta),\sqrt{1/\kappa - x(\mathfrak{s})^2}\sin(\phi(\mathfrak{s})),\sqrt{1 - \kappa x(\mathfrak{s})^2}\cos(\phi(\mathfrak{s}))),
\end{equation}
where $x(\mathfrak{s})$ is the unique analytic, non-constant solution of
\begin{equation}\label{eq:x}
    x'^2 = \frac{h(x)}{x^2}, \; \; \; h(x):= x^2 - \kappa x^4 - \delta^2, \; \; \; \delta = \frac{2}{4\kappa + 1}, \; \; \; x(0) = \frac{2}{\sqrt{4\kappa + 1}},
\end{equation}
and 
\begin{equation}\label{eq:phiesferico}
    \phi(\mathfrak{s}) = \int_0^\mathfrak{s} \frac{\delta}{\sqrt{\kappa} x(\mathfrak{s}')(1/\kappa - x(\mathfrak{s}')^2)}d\mathfrak{s}'.
\end{equation}
Similarly, for $\kappa < 0$, $\Sigma(1,b,\kappa)$ can be parametrized as
\begin{equation}\label{eq:paramhiperb}
    \varphi(\mathfrak{s},\theta) = (x(\mathfrak{s})\cos(\theta), -x(\mathfrak{s})\sin(\theta),\sqrt{x(\mathfrak{s})^2 - 1/\kappa}\sinh(\phi(\mathfrak{s})),\sqrt{1 - \kappa x(\mathfrak{s})^2}\cosh(\phi(\mathfrak{s}))),
\end{equation}
where $x(\mathfrak{s})$ is the unique non-constant analytic solution of \eqref{eq:x} and
\begin{equation}\label{eq:phihip}
    \phi(\mathfrak{s}) = \int_0^\mathfrak{s} \frac{\delta}{\sqrt{-\kappa} x(\mathfrak{s}')(x(\mathfrak{s}')^2-1/\kappa)}d\mathfrak{s}'.
\end{equation}

Finally, for the Euclidean case $\kappa = 0$, $\Sigma(1,b,0)$ is a standard catenoid whose neck has curvature $\frac{1}{2}$. An explicit parametrization of this catenoid in $\mathbb{M}^3(0)$ is
\begin{equation}\label{eq:parameuclideo}
    \varphi(\mathfrak{s},\theta) = \left(\sqrt{\mathfrak{s}^2 + 4}\cos(\theta),-\sqrt{\mathfrak{s}^2 + 4}\sin(\theta), 2 \; \text{arcsinh}\left(\frac{\mathfrak{s}}{2}\right),1\right).
\end{equation}

\begin{definition}
    We will refer to the immersions \eqref{eq:paramesfera} (resp. \eqref{eq:paramhiperb}) as spherical (resp. hyperbolic) catenoids.
\end{definition}

\begin{remark}\label{rem:parametrosstheta}
For any $\kappa \in (-1/4,1/4)$, the parameter $\mathfrak{s}$ represents the arc-length of the profile curve of the rotation surface $\varphi(\mathfrak{s},\theta)$, while $\theta$ corresponds with the rotation parameter. Hence, this parametrization is by curvature lines (although not conformal), and the principal curvatures are
\begin{equation}\label{eq:curvaturasrotacionales}
    \kappa_1 = \frac{\delta}{x^2}, \; \; \; \; \; \; \; \kappa_2 = -\frac{\delta}{x^2},
\end{equation}
where $\delta$ is given in \eqref{eq:x}. The rotation axis of the surfaces is the geodesic $\Upsilon = \mathbb{M}^3(\kappa) \cap \{x_1 = x_2 = 0\}$. Moreover, $\varphi(\mathfrak{s},\theta)$ is symmetric under the reflection with respect to the totally geodesic surface ${{\bf S}_\kappa}$ in \eqref{eq:mathcalS}. This surface contains the $\theta$-curve $\varphi(0,\theta)$, which is a geodesic with constant normal curvature $\kappa_2(0,\theta) \equiv -\frac{1}{2}$.
\end{remark}

\begin{remark}\label{rem:independenciab}
    Observe that the parametrizations $\varphi(\mathfrak{s},\theta)$ in \eqref{eq:paramesfera}, \eqref{eq:paramhiperb}, \eqref{eq:parameuclideo} of $\Sigma(1,b,\kappa)$ are independent of the parameter $b$. This also happens for the conformal parametrizations $\psi(u,v)$ of these examples, as the metrics $\omega(u)$ only depend on $\kappa$; see Remark \ref{rem:omegaa1}.
\end{remark}

So far, we have given two different parametrizations of each surface $\Sigma(1,b,\kappa)$, namely $\varphi(\mathfrak{s},\theta)$ and $\psi(u,v)$. We determine next the relation between their parameters:

\begin{lemma}\label{lem:stuv}
    The parameters $(\mathfrak{s},\theta)$ and $(u,v)$ are related by a change of variables $\mathfrak{s} = \mathfrak{s}(u)$, $\theta = \theta(v)$. Moreover, $\mathfrak{s}'(u) = e^{\omega(u)}$ and $\mathfrak{s}(0) = 0$.
\end{lemma}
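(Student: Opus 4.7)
The plan is to exploit the uniqueness (up to ambient isometry) of rotational minimal surfaces containing a prescribed geodesic of constant normal curvature to identify the two parametrizations, and then to read off $\mathfrak{s}'(u)$ by comparing first fundamental forms.

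First, I would invoke the classification result \cite[Proposition 3.2]{CD} to argue that both $\psi(u,v) = \psi(u,v;1,b,\kappa)$ and $\varphi(\mathfrak{s},\theta)$ parametrize (a piece of) a rotational minimal surface in $\mathbb{M}^3(\kappa)$ without umbilical points, containing a geodesic of constant normal curvature $-\tfrac{1}{2}$ lying in ${\bf S}_\kappa$. Indeed, $v \mapsto \psi(0,v)$ has this property by Lemma \ref{lem:simetriaS} together with \eqref{eq:kappa} and $\omega(0)=0$, while $\theta \mapsto \varphi(0,\theta)$ has it by Remark \ref{rem:parametrosstheta}. Uniqueness up to ambient isometry then allows us to assume, after composing $\varphi$ with a suitable isometry of $\mathbb{M}^3(\kappa)$, that both immersions share the same image and that $\psi(0,0) = \varphi(0,0)$.

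Second, I would argue that the reparametrization between the two parametrizations is of product form $\mathfrak{s} = \mathfrak{s}(u)$, $\theta = \theta(v)$. Since both are parametrizations by curvature lines on an umbilical-free domain, the change of variables must send coordinate curves to coordinate curves. To decide the pairing, observe that the principal curvature $-\tfrac{1}{2}$ at the base point is associated with the $v$-direction for $\psi$ (by \eqref{eq:kappa} and $\omega(0)=0$) and with the $\theta$-direction for $\varphi$ (since $\delta/x(0)^2 = 1/2$ in \eqref{eq:curvaturasrotacionales}), forcing $v$-lines to correspond with $\theta$-lines and $u$-lines with $\mathfrak{s}$-lines.

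Third, I would recover $\mathfrak{s}'(u)$ by comparing metric lengths along the profile curve $v = \theta = 0$. Since $\mathfrak{s}$ is the arc-length parameter of $\mathfrak{s} \mapsto \varphi(\mathfrak{s},0)$ (see Remark \ref{rem:parametrosstheta}), we have $|\partial_\mathfrak{s}\varphi| \equiv 1$. Meanwhile, by \eqref{eq:III} together with Remark \ref{rem:omegaa1}, $|\psi_u|^2 = e^{2\omega(u)}$. The chain rule then yields $\mathfrak{s}'(u)^2 = e^{2\omega(u)}$, and the positive sign is fixed by matching orientations at $u = 0$. The initial condition $\mathfrak{s}(0) = 0$ is forced by the identification $\psi(0,v) \equiv \varphi(0,\theta(v))$ of the two distinguished geodesic circles.

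The step I expect to be the main obstacle is the alignment/uniqueness step: one must verify not only that the two surfaces agree up to ambient isometry but that the ambient isometry can be chosen so that the tangent frames at the base point match in a way compatible with the stated product-form change of variables (rather than, say, a swap of coordinates or an orientation-reversal). Once this is pinned down, the remainder is a direct comparison of first fundamental forms.
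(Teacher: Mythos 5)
Your proof is correct and follows essentially the same structure as the paper's: both parametrizations describe the same rotational minimal surface (by the classification in \cite[Proposition 3.2]{CD}, which the paper invokes just before the Lemma), the change of variables is of product form because both are curvature-line parametrizations of an umbilic-free surface with the roles of profile curve and orbit circle identified, and $\mathfrak{s}'(u)=e^{\omega(u)}$ follows by comparing the metric along the profile curve with the arc-length normalization.

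The one place where you diverge slightly is in establishing $\mathfrak{s}(0)=0$. You fix a specific ambient isometry so that $\psi(0,0)=\varphi(0,0)$ and then deduce $\mathfrak{s}(0)=0$ from the identification of the two distinguished circles; this works, but it does require you to first observe that $\psi(0,0)$ lies on the neck (the orbit of extremal normal curvature), so that such an isometry actually exists, and it implicitly uses local injectivity of $\varphi$ near the neck. The paper's argument is a bit more economical and isometry-independent: the reparametrization $\mathfrak{s}=\mathfrak{s}(u)$ must send the unique critical point of the principal-curvature function along the profile curve of $\psi$ (which is $u=0$, by Remark \ref{rem:omegaa1} and \eqref{eq:kappa}) to the corresponding unique critical point for $\varphi$ (which is $\mathfrak{s}=0$, by \eqref{eq:x} and \eqref{eq:curvaturasrotacionales}), giving $\mathfrak{s}(0)=0$ directly without any normalization of the ambient isometry. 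You correctly anticipated that the alignment step was the delicate one; the paper simply sidesteps it. No gap, but this is where the paper's proof is tighter than yours.
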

\begin{proof}
    By the discussion in Section \ref{sec:1bk} and Remark \ref{rem:parametrosstheta}, we know that the $\mathfrak{s}$-curves in $\varphi(\mathfrak{s},\theta)$ and the $u$-curves in $\psi(u,v)$ correspond with the profile curves of the rotational surfaces, so $\mathfrak{s} = \mathfrak{s}(u)$. Moreover, by \eqref{eq:III} and since $\mathfrak{s}$ is the arc-length parameter, $\mathfrak{s}'(u) = e^{\omega(u)}$. Similarly, $\theta$ and $v$ both represent the rotation parameter, so $\theta = \theta(v)$. Finally, the equality $\mathfrak{s}(0) = 0$ follows from the fact that the curvatures of the $\mathfrak{s}$-curve $\varphi(\mathfrak{s},0)$ and the $u$-curve $\psi(u,0)$ both attain their maximum value at the point $u = \mathfrak{s} = 0$; this follows from \eqref{eq:x}, \eqref{eq:curvaturasrotacionales}, \eqref{eq:kappa} and Remark \ref{rem:omegaa1}.
\end{proof}

We now present the main result of this section. We first introduce the following notation.

\begin{definition}\label{def:zetau}
    Let $\psi(u,v)$ be the parametrization associated to $(1,b,\kappa) \in \mathcal{O}$. For every $u \in \mathbb{R}$, we denote by $\zeta_u$ the geodesic of $\mathbb{M}^3(\kappa)$ passing through $\psi(u,0)$ with tangent vector $\psi_u(u,0)$.
\end{definition}

\begin{proposition}\label{pro:catenoidesFB}
    For any $(1,b,\kappa) \in \mathcal{O}$, the following assertions hold:
    \begin{enumerate}
        \item There exists $\tilde u = \tilde u(\kappa)$, independent of $b$, such that $\Sigma_0(\tilde u; 1,b,\kappa)$ in Definition \ref{def:Sigma_0} is an embedded compact minimal annulus which is free boundary in some geodesic ball $B = B(\kappa) \subset \mathbb{M}^3(\kappa)$ centered at ${\bf e}_4 \in \mathbb{M}^3(\kappa)$. Moreover, $B(\kappa)$ is contained in the half-space $\{x_4 > 0\}$.
        \item There exists a neighbourhood $\mathcal{I} \subset \mathbb{R}$ of $\tilde u$ such that, for every $u \in \mathcal{I}$, the axis $\Upsilon$ and $\zeta_u$ meet at a unique point $\hat p = \hat p(u)$ in the half-space $\{x_4 > 0\}$. The map $\hat p: \mathcal{I} \to \Upsilon$ is analytic, and moreover ${\hat p}_3(\tilde u) = 0$, ${\hat p}_3'(\tilde u) > 0 $, where ${\hat p}_3$ denotes the third coordinate of $\hat p$.
        \item $\tilde u(\kappa): (-1/4,1/4) \mapsto \mathbb{R}^+$ is a real analytic function.
    \end{enumerate}
\end{proposition}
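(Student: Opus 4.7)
The rotational invariance of $\Sigma(1,b,\kappa)$ about the axis $\Upsilon = {\mathcal{L}_\kappa}$ (Proposition \ref{pro:upsilonL}) implies that the profile curve $u \mapsto \psi(u,0)$, and with it every tangent geodesic $\zeta_u$, lies in a single totally geodesic surface $\Pi \subset \mathbb{M}^3(\kappa)$ containing $\Upsilon$ (concretely $\Pi = \{x_2 = 0\} \cap \mathbb{M}^3(\kappa)$). Inside $\Pi$ both $\zeta_u$ and $\Upsilon$ are geodesics, and the key geometric observation is that $\Sigma_0(\tilde u; 1,b,\kappa)$ is free boundary in some geodesic ball of $\mathbb{M}^3(\kappa)$ centered at ${\bf e}_4$ if and only if $\zeta_{\tilde u}$ passes through ${\bf e}_4$. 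Indeed, by rotational symmetry around $\Upsilon$, the boundary components $\psi(\pm \tilde u,v)$ are rotational circles contained in the geodesic sphere centered at ${\bf e}_4$ through $\psi(\tilde u,0)$, and orthogonality along them is equivalent to $\psi_u(\tilde u,v)$ being radial with respect to ${\bf e}_4$, i.e.\ to ${\bf e}_4 \in \zeta_{\tilde u}$. Since $\Upsilon \cap \{x_4 > 0\} \cap \{x_3 = 0\} = \{{\bf e}_4\}$ (for $\kappa \le 0$ by Lemma \ref{lem:geodesicaPi}; for $\kappa > 0$ the other candidate $-{\bf e}_4$ is ruled out by the $x_4 > 0$ assumption in item (2)), the condition reduces to $\hat p_3(\tilde u) = 0$ together with $\hat p_4(\tilde u) > 0$.

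For items (1) and (2) I would construct $\tilde u(\kappa)$ in each case using the explicit parametrizations \eqref{eq:paramesfera}, \eqref{eq:paramhiperb}, \eqref{eq:parameuclideo}. In the Euclidean case this is the classical critical catenoid computation, which yields the unique $\tilde z > 0$ solving $(\tilde z/2)\tanh(\tilde z/2) = 1$. For $\kappa \neq 0$ I would adapt the argument of \cite[Appendix]{CFM2}, which handles $\kappa = \pm 1$ and extends to any nonzero $\kappa \in (-1/4,1/4)$ by the homothetic rescaling of the $x_4$ coordinate underlying Definition \ref{def:M3k}, giving the corresponding $\tilde{\mathfrak{s}}(\kappa) > 0$; Lemma \ref{lem:stuv} then recovers $\tilde u(\kappa)$ from $\tilde{\mathfrak{s}}(\kappa)$ via $\mathfrak{s}'(u) = e^{\omega(u)}$. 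A direct computation of the foot $\hat p(\mathfrak{s})$ in each of the three cases shows that $\hat p$ traces $\Upsilon$ monotonically as $\mathfrak{s}$ grows, giving $\hat p_3'(\tilde u) > 0$ and transversality of $\zeta_{\tilde u}$ with $\Upsilon$. Since $\psi(u,v;1,b,\kappa)$ is analytic in $(u,\kappa)$ by Remarks \ref{rem:omeganalitico} and \ref{rem:psianalitico} and independent of $b$ by Remark \ref{rem:independenciab}, the implicit function theorem applied inside $\Pi$ to the intersection equation $\zeta_u(s) \in \Upsilon$ (with $s$ an arc-length parameter along $\zeta_u$) yields the analytic map $\hat p: \mathcal{I} \to \Upsilon$ of item (2). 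Embeddedness of $\Sigma_0(\tilde u;1,b,\kappa)$ follows because the profile curve in $\Pi$ is an embedded arc ($x(\mathfrak{s})$ of \eqref{eq:x} is monotone on a bounded interval), and $B(\kappa) \subset \{x_4 > 0\}$ is automatic for $\kappa \leq 0$ while for small $\kappa > 0$ the geodesic radius stays below $\pi/(2\sqrt{\kappa})$ by continuity from the Euclidean value.

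Item (3) is then an application of the implicit function theorem to the jointly analytic equation $\hat p_3(u,\kappa) = 0$: the non-degeneracy $\partial_u \hat p_3(\tilde u(\kappa_0), \kappa_0) > 0$ established above provides an analytic extension $\tilde u(\kappa)$ in a neighborhood of every $\kappa_0 \in (-1/4,1/4)$, and uniqueness patches these into a global analytic function. \textbf{The main obstacle} I expect is the uniform treatment at $\kappa = 0$: the three parametrizations \eqref{eq:paramesfera}, \eqref{eq:paramhiperb}, \eqref{eq:parameuclideo} have genuinely different analytic forms, so item (3) cannot be obtained by merely gluing the three case analyses. The remedy is to run the whole IFT argument through the conformal parametrization $\psi(u,v;1,b,\kappa)$ of Section \ref{sec:preliminares}, which by construction depends analytically on $\kappa$ across zero, and to check that $\hat p(u,\kappa)$—defined via the intersection of $\zeta_u$ with $\Upsilon$ inside $\Pi(\kappa)$—is jointly analytic in $(u,\kappa)$ on a neighborhood of $(\tilde u(\kappa_0), \kappa_0)$ for every $\kappa_0 \in (-1/4,1/4)$, including $\kappa_0 = 0$.
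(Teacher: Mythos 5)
Your proposal captures the paper's overall strategy and correctly pinpoints the main obstacle for item (3): the three explicit parametrizations \eqref{eq:paramesfera}, \eqref{eq:paramhiperb}, \eqref{eq:parameuclideo} have genuinely different analytic forms, so analyticity of $\tilde u(\kappa)$ across $\kappa=0$ cannot come from gluing case-by-case analyses. The remedy you propose (route everything through the conformal parametrization $\psi(u,v;1,b,\kappa)$) differs from the paper's, which stays in the arc-length variable $\mathfrak{s}$ and observes that the triple $(x,x_3,x_4)$ of profile-curve coordinates satisfies the ODE system \eqref{eq:xx3x4}, whose coefficients and initial data are manifestly analytic in $\kappa$; the implicit function theorem is then applied to the scalar equation $F(\mathfrak{s},\kappa):=x_3x'-x_3'x=0$ of \eqref{eq:F}, with non-degeneracy $F'(\tilde{\mathfrak{s}})>0$ verified by the explicit formula \eqref{eq:derivadaFcalculo}. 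Your route is plausible, but the step you label ``check that $\hat p(u,\kappa)$ is jointly analytic'' is exactly the non-trivial content that needs a concrete implementation: geodesics in $\mathbb{M}^3(\kappa)$ have sign-of-$\kappa$-dependent closed forms (trigonometric vs.\ hyperbolic vs.\ affine), and the paper handles this via the gnomonic projection $P$ in \eqref{eq:proyP}, which turns geodesic intersections into affine-linear ones and yields the rational expression \eqref{eq:yF} for $\overline y(\mathfrak{s})$ in terms of the analytic-in-$\kappa$ quantities $(x,x_3,x_4,x',x_3',x_4')$. Your sketch defers precisely this piece.

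Two smaller issues. First, your embeddedness argument invokes monotonicity of $x(\mathfrak{s})$ ``on a bounded interval,'' but $x(\mathfrak{s})$ is even in $\mathfrak{s}$ (since $x'(0)=0$ by \eqref{eq:x}) and hence not injective on the symmetric interval $[-\tilde{\mathfrak{s}},\tilde{\mathfrak{s}}]$; the paper instead uses that $x_3(\mathfrak{s})$ is strictly increasing there (Lemma \ref{lem:F}(2) and Lemma \ref{lem:Fbordelibre}). Second, for $\kappa>0$ you justify $B(\kappa)\subset\{x_4>0\}$ only ``for small $\kappa>0$ by continuity,'' which does not cover all of $\kappa\in(0,1/4)$; the paper proves $x_4(\tilde{\mathfrak{s}})>0$ for every $\kappa\neq 0$ (Lemma \ref{lem:F}(3)), from which containment is immediate since $B=B[\mathbf{e}_4,x_4(\tilde{\mathfrak{s}})/\kappa]$. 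Relatedly, you assert that orthogonality along the boundary circles is the whole free boundary condition, but one must also verify $\Sigma_0\subset B$; the paper does this by a direct monotonicity computation for all signs of $\kappa$ in Lemma \ref{lem:Fbordelibre} (the maximum-principle shortcut of Remark \ref{rem:FBnegativo} only covers $\kappa\le 0$).
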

\begin{proof}
 The proof can be found in Appendix \ref{sec:appendix}.
\end{proof}

\section{The orthogonality condition}\label{sec:tau}

Let $(a,b,\kappa) \in \mathcal{O}$. Our goal in this section is to determine whether for some $u_0> 0$ the piece $\Sigma_0(u_0;a,b,\kappa)$ in Definition \ref{def:Sigma_0} meets the totally umbilical surfaces $\mathcal{Q}(-u_0), \mathcal{Q}(u_0)$ orthogonally along its boundary components. According to Theorem \ref{thm:simetrias}, this will happen if the function $\beta(u) = \beta(u;a,b,\kappa)$ in \eqref{eq:alphabeta} vanishes at $u = u_0$.

\medskip

The main result of this section is Proposition \ref{pro:tauW1}, which states that there exists a subset $\mathcal{W}_1 \subset \mathcal{O}$ and an analytic map $\tau(a,b,\kappa): \mathcal{W}_1 \to \mathbb{R}^+$ such that the first positive root of $\beta(u) = \beta(u;a,b,\kappa)$ is attained at $u = \tau$.

\medskip

In order to prove the existence of the map $\tau(a,b,\kappa)$, we will study the differential system \eqref{eq:alphabeta}. This system admits a first integral that can be obtained by a classical Hamilton-Jacobi procedure; see \cite[pp. 9-14]{W}. More specifically, if $\kappa \geq 0$, consider the change of variables
\begin{equation}\label{eq:cambiost}
    \begin{aligned}
        \alpha \beta &= s + t - 2\sqrt{\kappa}, \\
        -st &= \left(\alpha/2 - \sqrt{\kappa}\beta\right)^2,
    \end{aligned}
\end{equation}
where we set $s \geq t$. We can express the pair $(s,t)$ explicitly as
\begin{equation}\label{eq:cambiostexplicito} 
    \begin{aligned}
        s  = \sqrt{\kappa} + \frac{\alpha\beta}{2} + \frac{1}{2}\sqrt{(2\sqrt{\kappa} + \alpha \beta)^2 + \left(\alpha - 2\sqrt{\kappa}\beta\right)^2}, \\
        t  = \sqrt{\kappa} + \frac{\alpha\beta}{2} - \frac{1}{2}\sqrt{(2\sqrt{\kappa} + \alpha \beta)^2 + \left(\alpha - 2\sqrt{\kappa}\beta\right)^2}.
    \end{aligned}
\end{equation}
If $(\alpha(u),\beta(u))$ is a solution of \eqref{eq:alphabeta} with initial conditions \eqref{eq:abha}, then $(s,t) = (s(\lambda),t(\lambda))$ satisfies the following first order system of differential equations:
\begin{equation}\label{eq:st}
\left\{\def\arraystretch{1.3} \begin{array}{lll} s'^2(\lambda) &=& s(s - 2\sqrt{\kappa})g(s), \\
        t'^2(\lambda) &=& t(t - 2\sqrt{\kappa})g(t),\end{array} \right.
\end{equation}
where $\lambda = \lambda(u)$ satisfies
\begin{equation}\label{eq:lambdau}
    2u'(\lambda) =s(\lambda) - t(\lambda) \geq 0
\end{equation}
and $g(x)$ is the third-order polynomial given by
\begin{equation}\label{eq:gx}
    g(x) = -x^3 + (\hat{a} + 3\sqrt{\kappa})x^2 + (\mathcal{C}_1 - 2\hat{a}\sqrt{\kappa} -2\kappa)x + \frac{1}{4}\mathcal{C}_2,
\end{equation}
where $\mathcal{C}_1,\mathcal{C}_2$ are the constants in \eqref{eq:primeraintegral1}, \eqref{eq:primeraintegral2}; see \cite[(2.25)]{W}. For every $(a,b,\kappa) \in \mathcal{O}$, the polynomial $g(x)$ has three real roots $r_1 \leq r_2 \leq 0 < r_3$ satisfying
\begin{equation}\label{eq:raicesg}
    \begin{aligned}
        r_1 & = \frac{-4\kappa + 4a\sqrt{\kappa}b - a^2b^2}{4ab} = -\frac{\left(ab -2\sqrt{\kappa}\right)^2}{4ab},\\
        r_2 & = \frac{-4a^2\kappa + 4a\sqrt{\kappa}b - b^2}{4ab} = -\frac{\left(2a\sqrt{\kappa} - b\right)^2}{4ab},\\
        r_3 & = \frac{4\kappa + 4\sqrt{\kappa}+ 1}{4} = \frac{\left(2\sqrt{\kappa} + 1\right)^2}{4}. 
    \end{aligned}
\end{equation}
An alternative version of this change of variables can also be obtained for $\kappa < 0$, although in this case the corresponding functions $(s,t)$ are expected to take complex values, so the Hamilton-Jacobi procedure does not yield significative information. For this reason, we will derive the information we need for the case $\kappa < 0$ by means of a careful analysis of the situation for $\kappa \geq 0$.

\subsection{Case \texorpdfstring{$\kappa > 0$}{}}
We set $\lambda = \lambda(u)$ in \eqref{eq:lambdau} so that $\lambda(0) = 0$. According to \eqref{eq:cambiostexplicito}, the initial conditions $\alpha(0) =\beta(0) =0$ imply that $s(0) = 2\sqrt{\kappa}$, $t(0) = 0$. Hence, the function $s(\lambda)$ will oscillate in the interval $[2 \sqrt{\kappa}, r_3]$ with a period $\mathcal{M} > 0$. The behaviour of the function $t(\lambda)$ depends on the values of the roots \eqref{eq:raicesg}:
\begin{enumerate}
    \item If $r_1 < r_2 < 0$, $t(\lambda)$ will be oscillating in $[r_2, 0]$ with a certain period $\mathcal{N} > 0$.
    \item If $r_1 = r_2 < 0$, $t(\lambda)$ decreases for all $\lambda > 0$, converging to the limit value $r_2$ as $\lambda \to \infty$.
    \item Finally, if $r_2 = 0$, then $t(\lambda) \equiv 0$.
\end{enumerate}

We emphasize that in any case, it holds $2u'(\lambda) = s(\lambda) - t(\lambda) \geq 2\sqrt{\kappa} > 0$ for every $\lambda\in \mathbb{R}$. By \eqref{eq:st}, the periods $\mathcal{M}$, $\mathcal{N}$ can be computed in terms of the following integrals, provided that $r_2 < 0$:
\begin{equation}\label{eq:MN}
\begin{aligned}
    \mathcal{M} &= 2\int_{2\sqrt{\kappa}}^{r_3}\frac{dz}{\sqrt{-z(z - 2\sqrt{\kappa})(z-r_1)(z-r_2)(z-r_3)}}, \\
    \mathcal{N} &= 2\int_{r_2}^{0}\frac{dz}{\sqrt{-z(z - 2\sqrt{\kappa})(z-r_1)(z-r_2)(z-r_3)}}.
\end{aligned}
\end{equation}
Observe that the integral for $\mathcal{M}$ always converges, while $\mathcal{N} < \infty$ if and only if $r_1 < r_2$.

\begin{lemma}\label{lem:MN}
    If $r_1 \leq  r_2 < 0$, then $\mathcal{M} < \mathcal{N}$.
\end{lemma}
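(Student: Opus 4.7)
When $r_1 = r_2$, the factor $(z - r_1)(z - r_2)$ in the integrand of $\mathcal{N}$ becomes $(z - r_2)^2$, producing a non-integrable singularity at $z = r_2$. Thus $\mathcal{N} = +\infty$ while $\mathcal{M}$ remains finite, so $\mathcal{M} < \mathcal{N}$ holds trivially. I will assume henceforth $r_1 < r_2 < 0$.

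The key idea is to transport $\mathcal{N}$ onto the same interval of integration as $\mathcal{M}$ via a Möbius involution. A direct cross-ratio computation gives $(r_2, 0;\, 2\sqrt{\kappa}, r_3) = (2\sqrt{\kappa}, r_3;\, r_2, 0)$, which guarantees the existence of a unique Möbius involution $\phi$ satisfying $\phi(r_2) = 2\sqrt{\kappa}$ and $\phi(0) = r_3$. Explicitly $\phi(z) = (z - r_3)/(cz - 1)$, with $c = (B_M + q)/(2\sqrt{\kappa}\,q)$, where $B_M = r_3 - 2\sqrt{\kappa}$ and $q = -r_2$. Setting $p = -r_1$, one verifies that $\phi(r_1) = (p + r_3)/(cp + 1)$ lies in the open interval $(0, 2\sqrt{\kappa})$, using $p > q$.

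Applying $z = \phi(w)$ to $\mathcal{N}$ and expanding each factor $(\phi(w) - r_i)$ via the standard Möbius identity $\phi(w) - \phi(w_0) = (w - w_0)(cr_3 - 1)/[(cw - 1)(cw_0 - 1)]$, a careful bookkeeping of signs yields
\[
\mathcal{N} = \frac{2}{\sqrt{cp + 1}} \int_{2\sqrt{\kappa}}^{r_3} \frac{\sqrt{cw - 1}\, dw}{\sqrt{R(w)(w - \phi(r_1))}}, \qquad R(w) := w(w - 2\sqrt{\kappa})(w - r_2)(r_3 - w),
\]
while $\mathcal{M} = 2\int_{2\sqrt{\kappa}}^{r_3} dw/\sqrt{R(w)(w - r_1)}$. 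Comparing the two positive integrands pointwise on $(2\sqrt{\kappa}, r_3)$, squaring and simplifying reduces the desired inequality to the single quadratic condition $cw^2 - 2w + r_3 > 0$. Its discriminant is $4(1 - cr_3)$, and a direct computation gives $cr_3 - 1 = B_M(r_3 + q)/(2\sqrt{\kappa}\,q) > 0$. Hence the discriminant is negative and the quadratic is strictly positive on all of $\mathbb{R}$, yielding the pointwise integrand inequality and thus $\mathcal{M} < \mathcal{N}$.

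The main obstacle is purely algebraic: the careful execution of the Möbius substitution, in particular verifying the clean identity $K^2 = 1/(cp + 1)$ for the constant $K = \sqrt{(1 - cr_2)(c\phi(r_1) - 1)(cA - 1)}/(cr_3 - 1)$ that collects the multiplicative corrections produced by $\phi$ (here $A = 2\sqrt{\kappa}$). This simplification is what makes the final reduction to the readily verified quadratic inequality $cw^2 - 2w + r_3 > 0$ possible.
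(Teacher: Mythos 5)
Your proof is correct, and it takes a genuinely different route from the paper's. The paper proves $\mathcal{M}<\mathcal{N}$ by a contour argument: it forms the multivalued differential $f(z)\,dz$ with branch cuts on the three real intervals $(-\infty,r_1]$, $[r_2,0]$, $[2\sqrt{\kappa},r_3]$, integrates over a large closed contour, and uses Cauchy's theorem to get the ``sum of periods'' identity $\mathcal{C}+\mathcal{M}-\mathcal{N}=0$ with $\mathcal{C}>0$ the integral over $(-\infty,r_1]$. You instead construct the unique Möbius involution $\phi$ swapping $[r_2,0]$ with $[2\sqrt{\kappa},r_3]$, push $\mathcal{N}$ forward to the same interval as $\mathcal{M}$, and compare integrands pointwise; the verification collapses to the quadratic $cw^2-2w+r_3>0$, whose discriminant $4(1-cr_3)$ is negative since $cr_3-1=B_M(r_3+q)/(2\sqrt{\kappa}q)>0$. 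I checked the two key identities you cite -- $c\phi(r_1)-1=(cr_3-1)/(cp+1)$ and $(1+cq)(2c\sqrt{\kappa}-1)=cr_3-1$, which together give $K^2=1/(cp+1)$ -- and they are correct, as is the sign bookkeeping ($c>0$, $cw-1>0$ on the interval, $c\phi(r_1)>1$). Minor remark: the cross-ratio ``identity'' $(r_2,0;2\sqrt{\kappa},r_3)=(2\sqrt{\kappa},r_3;r_2,0)$ is one of the universal Klein-four symmetries of the cross-ratio and holds for any four points, so it carries no information; what you actually need is just that a Möbius map with a $2$-cycle is automatically an involution, which is true. The paper's proof is shorter and exposes the underlying period structure of the elliptic curve, while yours is more elementary (no complex analysis) and gives the stronger statement that the inequality holds pointwise after a change of variables -- a nice alternative.
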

\begin{proof}
If $r_1 = r_2$ then $\mathcal{N}$ diverges, and the lemma holds. Thus, we may assume that $r_1 < r_2$. Consider the function
$$f(z) = \frac{i}{\sqrt{z}\sqrt{z - 2\sqrt{\kappa}}\sqrt{z - r_1}\sqrt{z-r_2}\sqrt{z-r_3}},$$
where the square root $w \mapsto \sqrt{w}$ is defined and holomorphic on $\mathbb{C}$ minus the real interval $(-\infty, 0]$. In this way, $f(z)$ is holomorphic on the whole complex plane minus the intervals $I_1 = (-\infty,r_1]$, $I_2 = [r_2, 0]$, $I_3 = [2\sqrt{\kappa},r_3]$. Now, let $\Xi_n$ be the integration path of Figure \ref{fig:integracion1}. $\Xi_n$ can be decomposed into four pieces, namely: the path $\Xi_n^{(1)}$ around the interval $I_1$, the cycles $\Xi_n^{(2)}$ and $\Xi_n^{(3)}$ enclosing the intervals $I_2$ and $I_3$ and the circular arc $\Xi_n^{(4)}$ of radius $n$. As $n$ grows, we make the pieces $\Xi_n^{(i)}$, $i = 1,2,3$ converge to their corresponding intervals. By Cauchy's theorem,
$$\int_{\Xi_n}f(z)dz = 0.$$

\begin{figure}
\centering
\includegraphics[width=0.45\textwidth]{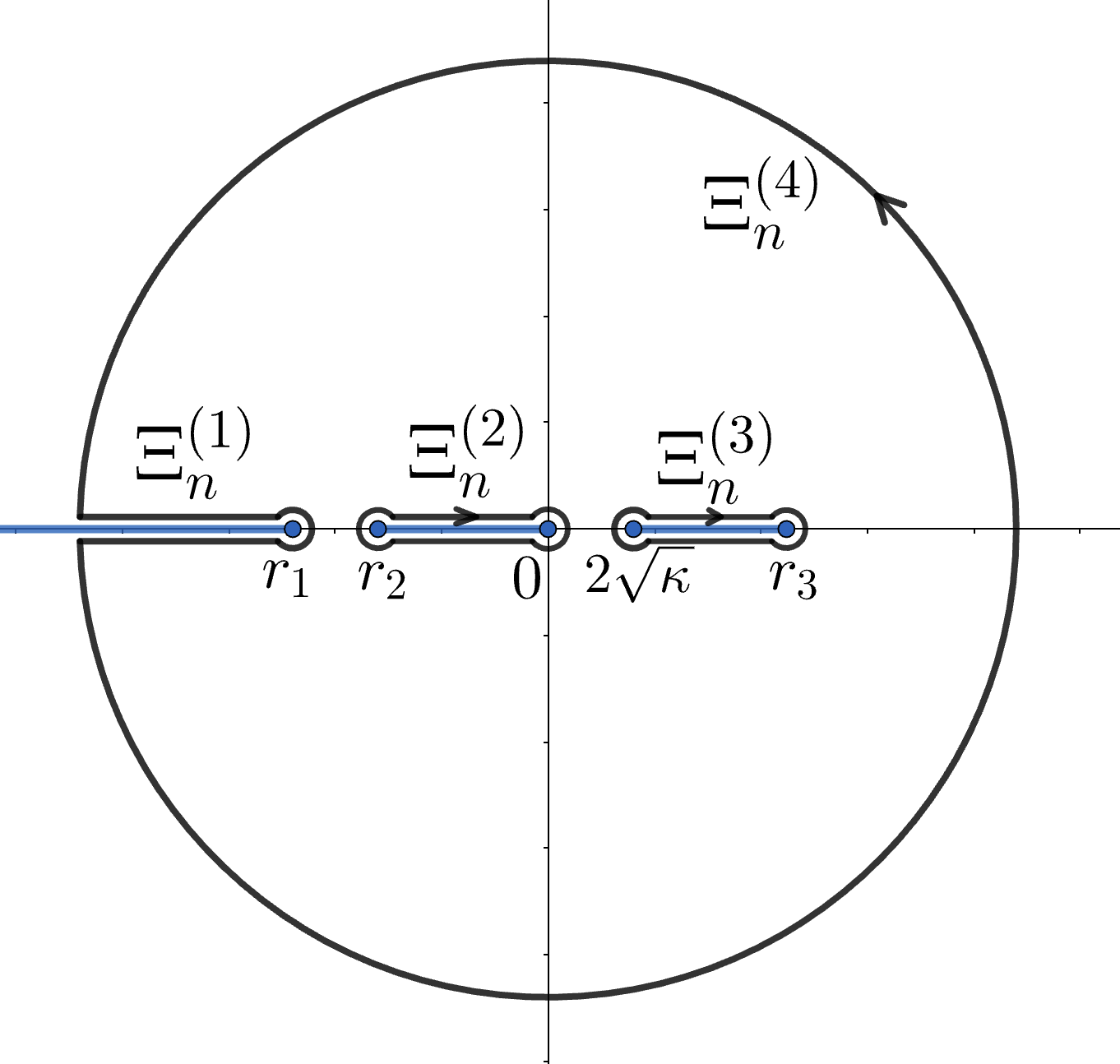}
\caption{Integration path $\Xi_n$.}\label{fig:integracion1}
\end{figure}

On the other hand, the decomposition of $\Xi_n$ in its four pieces yields
\begin{align*}
   \lim_{n \to \infty}& \int_{\Xi^{(1)}_n}f(z)dz =2\int_{-\infty}^{r_1} \frac{1}{\sqrt{-z}\sqrt{2\sqrt{\kappa} - z}\sqrt{r_1-z}\sqrt{r_2 - z}\sqrt{r_3-z}}  = \mathcal{C} > 0, \\
   \lim_{n \to \infty}& \int_{\Xi^{(2)}_n}f(z)dz =-2\int_{r_2}^{0} \frac{1}{\sqrt{-z}\sqrt{2\sqrt{\kappa} - z}\sqrt{z - r_1}\sqrt{z-r_2}\sqrt{r_3-z}}  = -\mathcal{N}, \\
   \lim_{n \to \infty}& \int_{\Xi^{(3)}_n}f(z)dz =2\int_{2\sqrt{\kappa}}^{r_3} \frac{1}{\sqrt{z}\sqrt{z - 2\sqrt{\kappa}}\sqrt{z - r_1}\sqrt{z-r_2}\sqrt{r_3-z}}  = \mathcal{M}, \\
   \lim_{n \to \infty}& \int_{\Xi^{(4)}_n}f(z)dz  = 0,
\end{align*}
for some $\mathcal{C} > 0$. In particular,
$$0 = \lim_{n \to \infty} \int_{\Xi_n} f(z)dz = \mathcal{C} + \mathcal{M} - \mathcal{N},$$
and so $\mathcal{N} = \mathcal{M} + \mathcal{C} > \mathcal{M}$, as we wanted to show.
\end{proof} 
The inequality in Lemma \ref{lem:MN} allows us to prove the following:
\begin{proposition}\label{pro:u1}
Let $(a,b,\kappa) \in \mathcal{O} \cap \{\kappa > 0\}$. Then, there exists a value $u_1 > 0$ such that the functions $y(u) := \alpha(u)/2 + \sqrt{\kappa} \beta(u)$ and $z(u) := \alpha(u)/2 - \sqrt{\kappa} \beta(u)$ satisfy:
\begin{enumerate}
    \item $y(u) > 0$ for all $u \in (0, u_1)$, and $y(u_1) = 0$.
    \item If $\mathcal{B} >  2\sqrt{\kappa}\mathcal{A}$, then $z(u)$ is positive on $(0, u_1]$, where $\mathcal{A}$ and $\mathcal{B}$ are defined in \eqref{eq:mamb}. In the limit case $\mathcal{B} =  2\sqrt{\kappa}\mathcal{A}$, $z(u) \equiv 0$.
\end{enumerate}
Additionally, the map $u_1(a, b, \kappa):\mathcal{O} \cap \{\kappa > 0\}\to \mathbb{R}^+$ is analytic.
\end{proposition}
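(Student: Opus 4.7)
The plan is to translate everything into the Wente variables $(s(\lambda),t(\lambda))$ of \eqref{eq:cambiost}--\eqref{eq:cambiostexplicito}. A direct computation gives the key identities
\begin{equation*}
y^2 = -(s-2\sqrt{\kappa})(t-2\sqrt{\kappa}), \qquad z^2 = -st.
\end{equation*}
Using the system \eqref{eq:st} and the initial data, $s(\lambda)$ starts at its minimum $s(0)=2\sqrt{\kappa}$ and oscillates in $[2\sqrt{\kappa},r_3]$ with period $\mathcal{M}$, while $t(\lambda)$ starts at $t(0)=0$. The explicit formulas \eqref{eq:raicesg} show that $r_2=0$ if and only if $b=2a\sqrt{\kappa}$, which by a short algebraic manipulation is equivalent to $\mathcal{B}=2\sqrt{\kappa}\mathcal{A}$. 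In that boundary case, $t\equiv 0$ by uniqueness applied to $t'^2=t(t-2\sqrt{\kappa})g(t)$, so $z\equiv 0$ identically; when $r_2<0$, $t$ oscillates in $[r_2,0]$ with period $\mathcal{N}$.

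Next, I define $u_1$ as the value of $u$ corresponding to $\lambda=\mathcal{M}$, which is finite because $u'(\lambda)=(s-t)/2\geq \sqrt{\kappa}>0$ by \eqref{eq:lambdau}. Since $t\leq 0<2\sqrt{\kappa}$, the identity for $y^2$ shows that $y=0$ exactly when $s=2\sqrt{\kappa}$, and the first positive such $\lambda$ is $\mathcal{M}$; hence $y(u_1)=0$ and $y$ has no other zeros in $(0,u_1)$. The initial data \eqref{eq:abha} gives
\begin{equation*}
y'(0)=\frac{(1-2\sqrt{\kappa})(\mathcal{B}+2\sqrt{\kappa}\mathcal{A})}{8}>0,\qquad z'(0)=\frac{(1+2\sqrt{\kappa})(\mathcal{B}-2\sqrt{\kappa}\mathcal{A})}{8},
\end{equation*}
where positivity of $y'(0)$ follows from $4\kappa<1$. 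Thus $y>0$ on $(0,u_1)$, proving item (1). For item (2), in the generic case $r_2<0$, Lemma \ref{lem:MN} gives $\mathcal{M}<\mathcal{N}$, so $t(\lambda)<0$ on $(0,\mathcal{M}]$, hence $z^2=-st>0$ on $(0,u_1]$; combined with $z'(0)>0$, this yields $z>0$ throughout $(0,u_1]$.

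Finally, for analyticity of $u_1(a,b,\kappa)$ I plan to apply the analytic implicit function theorem to the equation $y(u_1;a,b,\kappa)=0$. This requires $y'(u_1)\neq 0$, which is the main technical point: naively differentiating $y^2$ yields only $0=0$. The idea is to show that $s(\lambda)$ has a non-degenerate minimum at $\lambda=\mathcal{M}$. From $s'^2=F(s)$ with $F(s)=s(s-2\sqrt{\kappa})g(s)$, one has $s''=F'(s)/2$, and
\begin{equation*}
F'(2\sqrt{\kappa})=2\sqrt{\kappa}\,g(2\sqrt{\kappa})=-2\sqrt{\kappa}(2\sqrt{\kappa}-r_1)(2\sqrt{\kappa}-r_2)(2\sqrt{\kappa}-r_3).
\end{equation*}
Checking via \eqref{eq:raicesg} that $r_1,r_2\leq 0<2\sqrt{\kappa}<r_3$ (the last strict inequality is exactly $4\kappa<1$) shows $F'(2\sqrt{\kappa})>0$, hence $s''(\mathcal{M})>0$. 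Therefore $s-2\sqrt{\kappa}$ vanishes to order two at $\lambda=\mathcal{M}$, so $y^2$ does as well, and since $y$ is real-analytic, it has a simple zero at $u_1$, giving $y'(u_1)\neq 0$. The analytic implicit function theorem then delivers the analyticity of $u_1$ on $\mathcal{O}\cap\{\kappa>0\}$. The main obstacle in the whole argument is precisely this non-degeneracy of the turning point, which is what makes $y$ a real-analytic function with a simple (rather than a branch-point) zero at $u_1$.
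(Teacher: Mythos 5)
Your proof is correct, and items (1)--(2) are handled in essentially the same way as the paper, with the pleasant addition of the explicit factorizations $y^2 = -(s-2\sqrt{\kappa})(t-2\sqrt{\kappa})$ and $z^2=-st$, which make the correspondence between zeros of $y,z$ and the crossings $s=2\sqrt{\kappa}$, $t=0$ immediate.

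Where you genuinely diverge from the paper is the analyticity step: the paper instead rewrites the system \eqref{eq:alphabeta} directly in the variables $(y,z)$, obtaining the coupled system \eqref{eq:yz} in which the equation for $y$ has the form $y''=P(u)\,y$ with $P$ depending on $u$ through $(y,z)$; if $y(u_1)=y'(u_1)=0$, uniqueness for this linear second-order equation would force $y\equiv 0$, contradicting $y'(0)>0$. Your argument instead works entirely in the Wente phase plane: you show $s''(\mathcal{M})=F'(2\sqrt{\kappa})/2>0$, so $s-2\sqrt{\kappa}$ vanishes to order exactly two at $\lambda=\mathcal{M}$, and since $t(\mathcal{M})-2\sqrt{\kappa}\neq 0$ and $\lambda\mapsto u$ is a real-analytic diffeomorphism (because $u'(\lambda)\geq\sqrt{\kappa}>0$), the analytic function $y^2$ vanishes to order exactly two at $u_1$, forcing $y$ to have a simple zero. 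Both arguments are valid. The paper's is shorter, as it exploits a hidden linear structure of the transformed ODE; yours is slightly longer but more quantitative, as it identifies the precise non-degeneracy mechanism ($F'(2\sqrt{\kappa})>0$, i.e. $r_3>2\sqrt{\kappa}$, which is exactly $4\kappa<1$) that prevents $u_1$ from being a branch point. Your route has the conceptual advantage of making explicit why the strict inequality $4\kappa<1$ in the definition of $\mathcal{O}$ is what keeps $u_1$ analytic; in the paper's proof this hypothesis enters only indirectly.

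One cosmetic remark: when you invoke uniqueness to conclude $t\equiv 0$ in the boundary case $r_2=0$, you should cite the second-order ODE $t''=\tfrac{1}{2}\tfrac{d}{dt}\left(t(t-2\sqrt{\kappa})g(t)\right)$ rather than the first integral $t'^2=t(t-2\sqrt{\kappa})g(t)$, since the latter alone does not have a uniqueness property at $t=0$ where the right-hand side vanishes. The conclusion is of course correct.
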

\begin{proof}
We first rewrite system \eqref{eq:alphabeta} for $(\alpha,\beta)$ in terms of $(y,z)$ as above, as follows:
\begin{equation}\label{eq:yz}
\left\{\def\arraystretch{1.3} \begin{array}{lll} y'' & = & \hat{a} y - y\left(\sqrt{\kappa}  + \frac{1}{\sqrt{\kappa}}\left(y^2 - z^2\right)\right), \\ 
z'' & = &\hat{a} z + z\left(\sqrt{\kappa} - \frac{1}{\sqrt{\kappa}}\left(y^2 - z^2\right) \right).\end{array} \right.
\end{equation}
The initial conditions \eqref{eq:abha} are given in this situation by $y(0) = z(0) = 0$ and
\begin{equation}\label{eq:condiniyz}
    y'(0) = \left(\frac{1}{2} - \sqrt{\kappa}\right)\left(\frac{\mathcal{B} + 2\sqrt{\kappa}\mathcal{A} }{4}\right), \; \; \; \; \; z'(0) = \left(\frac{1}{2} + \sqrt{\kappa}\right)\left(\frac{ \mathcal{B} - 2\sqrt{\kappa}\mathcal{A}}{4}\right).
\end{equation}

It is easy to check that $y(u)$ (resp. $z(u)$) vanishes if and only if $s(\lambda) = 2\sqrt{\kappa}$ (resp. $t(\lambda) = 0)$. Since $y(0) = 0$, $y'(0) > 0$, there exists a first positive root $u_1$ of $y(u)$, corresponding to the point at which $\lambda(u_1) = \mathcal{M}$; see \eqref{eq:MN}.

To prove (2), observe that for $\mathcal{B} = 2\sqrt{\kappa}\mathcal{A}$ it holds $z(0) = z'(0) = 0$, and so $z \equiv 0$. On the other hand, if $\mathcal{B} >2\sqrt{\kappa}\mathcal{A}$, then $z(0) = 0$, $z'(0) > 0$ and therefore $z(u)$ is positive in $(0,u_2)$, where $u_2$ satisfies $\lambda(u_2) = \mathcal{N}$; see \eqref{eq:MN}. Since in this case $r_1 \leq r_2 < 0$, Lemma \ref{lem:MN} gives that $u_1 < u_2$ (note that $\lambda(u)$ is strictly increasing due to \eqref{eq:lambdau} and the fact that for $\kappa > 0$ it holds $s(\lambda) - t(\lambda) \geq 2\sqrt{\kappa} > 0$), which proves (2).

\medskip

Let us finally show that $u_1 = u_1(a,b,\kappa)$ is an analytic function. Since $\alpha(u) = \alpha(u;a,b,\kappa)$ and $\beta(u) = \beta(u;a,b,\kappa)$ are analytic and $u_1 > 0$ is by definition the first root of $y(u)$, it suffices to show that $y'(u_1) \neq 0$ and apply the implicit function theorem. This is immediate: assume by contradiction that $y'(u_1) = 0$. From the fact that $y(u_1) = 0$ and \eqref{eq:yz}, it would follow that $y(u) \equiv 0$, which is impossible.
\end{proof}

\medskip

The function $u_1$ will be key to finding the first positive root $\tau$ of the function $\beta(u)$. Let
\begin{equation}\label{eq:mathcalW0}
    \mathcal{W}_0 := \{(a,b,\kappa) \in \mathcal{O} \; : \; \mathcal{A} > \mathcal{B} \geq 2\sqrt{\kappa}\mathcal{A}, \;  0 <\kappa < 1/4\}.
\end{equation}
\begin{remark}\label{rem:motivacionW0}
    The restrictions on the definition of $\mathcal{W}_0$ are motivated by the following fact: if $(a,b,\kappa) \in \mathcal{W}_0$ then by \eqref{eq:abha} and \eqref{eq:condiniyz} we have that $\alpha'(0), \beta'(0)> 0$ and $z'(0) \geq 0$.
\end{remark}

\begin{proposition}\label{pro:tauW0}
    For any $(a,b, \kappa) \in \mathcal{W}_0$, the function $\beta(u)$ has a first positive root $\tau \in (0,u_1]$, where $\tau = u_1$ if and only if $\mathcal{B} = 2\sqrt{\kappa}\mathcal{A}$. Additionally, the map $\tau(a,b,\kappa):\mathcal{W}_0 \to \mathbb{R}^+$ is analytic.
\end{proposition}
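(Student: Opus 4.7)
Writing $\beta = (y-z)/(2\sqrt{\kappa})$, I would invoke Proposition \ref{pro:u1} to analyze $\beta$ on $(0, u_1]$ and split according to the sign of $\mathcal{B} - 2\sqrt{\kappa}\mathcal{A}$. In the boundary case $\mathcal{B} = 2\sqrt{\kappa}\mathcal{A}$ one has $z \equiv 0$, so $\beta = y/(2\sqrt{\kappa})$ and the first positive zero of $\beta$ is precisely $u_1$. In the strict case $\mathcal{B} > 2\sqrt{\kappa}\mathcal{A}$, Proposition \ref{pro:u1} gives $y > 0$ on $(0, u_1)$ with $y(u_1) = 0$, and $z > 0$ on $(0, u_1]$; hence $\beta(u_1) = -z(u_1)/(2\sqrt{\kappa}) < 0$, while $\beta'(0) = (\mathcal{A} - \mathcal{B})/4 > 0$ inside $\mathcal{W}_0$. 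The intermediate value theorem then yields a first zero $\tau \in (0, u_1)$ of $\beta$, establishing both existence and the claimed characterization of the equality $\tau = u_1$.

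The main obstacle, and the technical heart of the argument, is to show that $\beta'(\tau) \neq 0$, which is what the implicit function theorem will require. The boundary case $\tau = u_1$ is immediate, since $\beta$ is proportional to $y$ and $y'(u_1) \neq 0$ by the argument at the end of the proof of Proposition \ref{pro:u1}. For the strict case I would argue by contradiction, supposing $\beta(\tau) = \beta'(\tau) = 0$. Evaluating the first integral \eqref{eq:primeraintegral1} at $u = \tau$ collapses it to $\alpha(\tau)^2/4 = \mathcal{C}_1 = \alpha'(0)\beta'(0)$. Using the initial data \eqref{eq:abha} and the inequalities defining $\mathcal{W}_0$ (in particular $\mathcal{B} \geq 2\sqrt{\kappa}\mathcal{A} > 4\kappa\mathcal{A}$, which holds strictly because $0 < \kappa < 1/4$), both $\alpha'(0)$ and $\beta'(0)$ are strictly positive, so $\mathcal{C}_1 > 0$. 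Moreover $\alpha(\tau) = 2z(\tau) > 0$ by Proposition \ref{pro:u1}. Plugging $\beta(\tau) = 0$ into the second equation of \eqref{eq:alphabeta} then yields $\beta''(\tau) = -\alpha(\tau)/2 < 0$, so $\tau$ is a strict local maximum of $\beta$ attaining the value zero. This forces $\beta < 0$ on both sides of $\tau$, contradicting the minimality of $\tau$ as a positive zero of $\beta$.

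With $\beta'(\tau) \neq 0$ on all of $\mathcal{W}_0$ and $\beta(u; a, b, \kappa)$ jointly analytic by Remark \ref{rem:omeganalitico}, the implicit function theorem applied to $\beta(u; a, b, \kappa) = 0$ at $u = \tau$ produces local analytic branches; these glue by the uniqueness of the first positive root, yielding the desired analytic map $\tau : \mathcal{W}_0 \to \mathbb{R}^+$.
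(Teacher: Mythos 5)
Your proof is correct, and it is slightly more direct than the paper's for the existence of $\tau$. The paper establishes existence by working in the Hamilton–Jacobi phase plane: it considers $f(\lambda) = s(\lambda)+t(\lambda)-2\sqrt{\kappa} = \alpha(u)\beta(u)$, observes that $f(\mathcal{M}) = t(\mathcal{M}) < 0$ in the strict case, extracts a first positive root $\lambda^*$ of $f$, and then uses $z(\tau)>0$ to rule out $\alpha(\tau)=0$ and conclude $\beta(\tau)=0$. You bypass this entirely by reading off from Proposition~\ref{pro:u1} that $\beta(u_1) = -z(u_1)/(2\sqrt{\kappa}) < 0$ while $\beta'(0)=(\mathcal{A}-\mathcal{B})/4>0$, and invoking the intermediate value theorem on $\beta$ itself; this is cleaner and avoids re-entering the $(s,t)$ picture after Proposition~\ref{pro:u1} has been established. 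The analyticity argument you give is essentially the paper's: in the boundary case $\beta'(u_1)\neq 0$ because $\beta$ is a positive multiple of $y$ (the paper instead applies the first integral directly to $(\alpha,\beta)$ — both work), and in the strict case the contradiction $\beta''(\tau) = -\alpha(\tau)/2 < 0$ against the minimality of $\tau$ is exactly the paper's. One small redundancy worth trimming: in the strict case you invoke the first integral \eqref{eq:primeraintegral1} to get $\alpha(\tau)^2/4 = \mathcal{C}_1 > 0$, but you then independently establish $\alpha(\tau) = 2z(\tau) > 0$ via Proposition~\ref{pro:u1}, which is what you actually use; the first-integral detour can be dropped.
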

\begin{proof}
   Consider the function
    \begin{equation}\label{eq:flambda}
        f(\lambda) := s(\lambda) + t(\lambda) - 2\sqrt{\kappa}.
    \end{equation}
    By \eqref{eq:cambiost}, $f(\lambda)$ coincides with the product $\alpha(u)\beta(u)$. It is then straightforward that $f(0) = 0$ and $f(\lambda) > 0$ for $\lambda > 0$ small enough, since $\alpha(u),\beta(u)$ are positive for small $u$; see Remark \ref{rem:motivacionW0}.

    \medskip

    We will now prove that $f(\lambda)$ has a first positive root $\lambda^* \in (0, \mathcal{M}]$, and denote by $\tau$ the value in the variable $u$ corresponding to $\lambda^*$. By \eqref{eq:cambiost}, we would have that either $\alpha(\tau) = 0$ or $\beta(\tau) = 0$. We will show that the latter case always holds, and so $\tau$ is the first root of $\beta(u)$. We will split our analysis into two cases, depending on whether the inequality $\mathcal{B} \geq 2\sqrt{\kappa}A$ is strict or not.

    \medskip

Assume first that $\mathcal{B} = 2\sqrt{\kappa}\mathcal{A}$. Since $a, b \geq 1$, this implies that $b = 2a\sqrt{k}$, and by \eqref{eq:raicesg}, $r_2 = 0$. The function $t(\lambda)$ is constantly zero, and so $f(\lambda)$ reduces to $s(\lambda) - 2 \sqrt{\kappa}$. Since $\mathcal{M}$ is the first positive value for which $s(\lambda) = 2\sqrt{\kappa}$, we deduce that $\lambda^* = \mathcal{M}$, so $\tau$ exists and coincides with $u_1 > 0$; see the proof of Proposition \ref{pro:u1}. Now, by \eqref{eq:cambiost}, $\alpha(\tau) \beta(\tau) = 0$ and $\alpha(\tau)/2 = \sqrt{\kappa}\beta(\tau)$, so $\alpha(\tau) = \beta(\tau) = 0$ and $\tau$ is the first positive root of both functions.

    \medskip
    
    Let us now show that $\tau$ is analytic in this case. It suffices to check that $\beta'(\tau) \neq 0$. This is immediate by the first integral \eqref{eq:primeraintegral1} and the fact that $\alpha(\tau) = \beta(\tau) = 0$, as $0 \neq \alpha'(0)\beta'(0) = \alpha'(\tau)\beta'(\tau)$, so necessarily $\beta'(\tau) \neq 0$.

    \medskip

    We now assume that $\mathcal{B} > 2\sqrt{\kappa}\mathcal{A}$. Hence, $b > 2 \sqrt{\kappa}a$, and so $r_2 < 0$. In that case, $t(\lambda)$ will take values in the interval $[r_2,0]$. If $r_1 = r_2$, $t(\lambda)$ never vanishes for $\lambda > 0$, while if $r_1 < r_2$, $t(\lambda)$ will have a first positive root at $\lambda = \mathcal{N} >  \mathcal{M}$. In any case, we see that
    $f(\mathcal{M}) = t(\mathcal{M}) < 0$, so $f(\lambda)$ must have a first positive root $\lambda^* \in (0, \mathcal{M})$. As before, this implies the existence of a first value $\tau > 0$ for which either $\alpha(\tau) = 0$ or $\beta(\tau) = 0$. Observe also that by Proposition \ref{pro:u1}, $z(u)$ is positive for all $u \in (0,u_1]$, so $0 < z(\tau) = \alpha(\tau)/2 - \sqrt{\kappa}\beta(\tau)$. Now, $\alpha(\tau), \beta(\tau) \geq 0$, so necessarily $\beta(\tau) =0$ and $\alpha(\tau) > 0$. 

    \medskip

We will finally show that $\tau$ is analytic in this case. Assume by contradiction that $\beta'(\tau) = 0$. Notice that $\beta''(\tau) = - \frac{\alpha(\tau)}{2} < 0$. This indicates that $\beta(u)$ has a local maximum at $u = \tau$, but this is impossible since $\beta(u) > \beta(\tau) = 0$ for $u \in (0,\tau)$. Hence, $\beta'(\tau) \neq 0$, and so $\tau$ is analytic in $\mathcal{W}_0$ by the implicit function theorem.
\end{proof}
\subsection{Case \texorpdfstring{$\kappa = 0$}{}}
    So far, the map $\tau(a,b,\kappa)$ is just defined on the set $\mathcal{W}_0$ in \eqref{eq:mathcalW0}. We now aim to extend this map for points $(a,b,\kappa)$ with $\kappa = 0$. To do so, we will make use again of the change of variables \eqref{eq:cambiost}. The system \eqref{eq:st} for $\kappa = 0$ was previously studied by Wente \cite[Section IV]{W} and Fernández, Hauswirth, Mira \cite[Section 3]{FHM}, and it exhibits a different behaviour with respect to the case $\kappa > 0$. We summarize it in the next Lemma:
    \begin{lemma}{\bf (\cite{FHM})}\label{lem:solucionesdegeneradas}
       Let $\kappa = 0$ and $(s(\lambda),t(\lambda))$ be a solution of \eqref{eq:st}. Assume that for some $\lambda_0 \in \mathbb{R}$ it holds $s(\lambda_0) \in (0,r_3]$, $ t(\lambda_0)\in [r_2,0)$ and $t(\lambda)$ is decreasing at $\lambda = \lambda_0$. Then,
        \begin{enumerate}
            \item The function $s(\lambda)$ takes values in $(0,r_3]$ for all $\lambda$. More specifically, $s(\lambda)$ is increasing until reaching $r_3$ and then decreases, satisfying $\lim_{\lambda \to \pm \infty} s(\lambda) = 0$.
            \item The function $t(\lambda)$ takes values in $[r_2,0)$, and $\lim_{\lambda \to -\infty} t(\lambda) = 0$.
            \item If $r_1 = r_2$, $t(\lambda)$ is decreasing for all $\lambda$, and $\lim_{\lambda \to \infty}t(\lambda) = r_2$.
            \item Otherwise, if $r_1 < r_2$, then $t(\lambda)$ reaches the minimum $r_2$ and then it is increasing, with $\lim_{\lambda \to \infty}t(\lambda) = 0$.
        \end{enumerate}
    \end{lemma}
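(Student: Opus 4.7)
The plan is to exploit the fact that system \eqref{eq:st} with $\kappa = 0$ completely decouples: $s(\lambda)$ and $t(\lambda)$ independently satisfy conservative first-order ODEs of the form $\xi'^2 = \xi^2 g(\xi)$. For such an equation I would use the elementary principle that at a simple zero $\xi_\ast$ of the right-hand side $F(\xi) := \xi^2 g(\xi)$ the integral $\int d\xi/\sqrt{F(\xi)}$ converges at $\xi_\ast$, so the orbit reaches $\xi_\ast$ in finite $\lambda$-time and, by analyticity of the ODE, bounces back; whereas at a zero of order $\ge 2$ the integral diverges, so $\xi_\ast$ can only be approached asymptotically. For $\kappa = 0$ the factor $\xi^2$ contributes a double zero at $\xi = 0$, while the roots of $g$ at $r_1, r_2, r_3$ are simple unless $r_1 = r_2$.

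For the $s$-equation, the interval $(0, r_3)$ is free of zeros of $F$, so $s$ is strictly monotone there; the boundary $s = r_3$ is a simple root of $g$ (hence a turning point reached in finite time), and $s = 0$ is a double root of $F$ (reached only asymptotically). Starting from $s(\lambda_0) \in (0, r_3]$, this forces the picture of item (1): there is a unique $\lambda_\ast$ at which $s(\lambda_\ast) = r_3$, the function $s$ is strictly increasing on $(-\infty, \lambda_\ast)$ and strictly decreasing on $(\lambda_\ast, \infty)$, and $s(\lambda) \to 0$ as $\lambda \to \pm\infty$.

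For the $t$-equation, the same reasoning applies on the interval $[r_2, 0)$: $t = 0$ is always a double zero (asymptotic), while $t = r_2$ is simple exactly when $r_1 < r_2$ and double when $r_1 = r_2$. Combined with the hypothesis that $t$ is decreasing at $\lambda_0$, one obtains items (2)--(4): $t$ decays asymptotically to $0$ as $\lambda \to -\infty$ regardless; and as $\lambda \to +\infty$, $t$ tends asymptotically to $r_2$ in the coincident-root case, or reaches $r_2$ at some finite $\lambda_1$ and then increases back to $0$ asymptotically in the simple-root case.

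The main subtlety is justifying that the global orbits really continue all the way to $\lambda = \pm\infty$ as described (rather than $s$, say, getting stuck at some interior value or escaping the invariant interval). This is secured by noting that $F(\xi) = \xi^2 g(\xi)$ has no zeros inside $(0, r_3)$ or inside $(r_2, 0)$, so the decoupled ODEs have no interior equilibria and strict monotonicity cannot fail between boundary hits; together with the divergence of $\int d\xi/\sqrt{F(\xi)}$ at $\xi = 0$ --- which holds because $F$ vanishes quadratically there while $g(0) = \mathcal{C}_2/4 = r_1 r_2 r_3 > 0$ (the sign coming from $r_3 > 0 > r_1, r_2$) --- this rules out reaching $0$ in finite time and pins down the asymptotic limits claimed.
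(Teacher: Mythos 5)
Your proposal is correct and follows essentially the same approach as the paper, which confines $s$ and $t$ to the intervals where $g \geq 0$ and then reads off the monotonicity and limit behaviour directly from the autonomous equations $\xi'^2 = \xi^2 g(\xi)$; the paper simply declares those monotonicity and limit properties "straightforward" without spelling out the finite-time-turning-point versus asymptotic-double-zero dichotomy that you make explicit. One small imprecision: the phrase "by analyticity of the ODE, bounces back" is a bit loose, since $\xi' = \pm\sqrt{F(\xi)}$ is not Lipschitz at a simple zero of $F$ and admits the singular constant solution there; what forces the bounce is that the relevant $s,t$ come from the analytic $(\alpha,\beta)$ system (equivalently, satisfy the differentiated second-order equation $2\xi'' = F'(\xi)$), which excludes sticking at the turning point.
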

    \begin{proof}
    Notice that the roots $r_1, r_2, r_3$ of $g(x)$ in \eqref{eq:raicesg} satisfy $r_1 \leq r_2 < 0 < r_3 = \frac{1}{4}$, where the equality $r_1 = r_2$ holds if and only if $a = 1$. The polynomial $g(x)$ in \eqref{eq:st} is nonnegative on $(-\infty,r_1] \cup [r_2,0) \cup (0,r_3]$. In particular, under the hypotheses of the Lemma, $s(\lambda)$ must take values in $(0,r_3]$ for all $\lambda$ while $t(\lambda)$ does so in $[r_2,0)$. The monotonicity and limit properties of $s(\lambda), t(\lambda)$ are straightforward from \eqref{eq:st}.
    \end{proof}

    \medskip
    
    \begin{remark}\label{rem:stabk0}
    Any solution $(s(\lambda),t(\lambda))$ of \eqref{eq:st} in the conditions of Lemma \ref{lem:solucionesdegeneradas} induces by \eqref{eq:cambiost} a solution $(\varepsilon \alpha(u),\varepsilon\beta(u))$, of \eqref{eq:alphabeta} (defined up to a sign $\varepsilon = \pm 1$). Notice, however, that this solution is not defined for all $u \in \mathbb{R}$: if $r_1 < r_2$, $u'(\lambda) = \frac{s - t}{2}$ converges to zero exponentially as $\lambda \to \pm \infty$, so the domain of the induced solution is some interval $I:= (u_m,u_M)$. By \eqref{eq:cambiost}, we deduce that $\alpha(u) \neq 0$ for $u \in I$, but $\alpha(u_m) = \alpha(u_M) = 0$. On the other hand, if $r_1 = r_2$, then $s-t$ converges to $0$ when $\lambda \to -\infty$ and to $-r_2 > 0$ as $\lambda \to \infty$. In this case, the induced solution is defined on an interval $u \in (u_m,\infty)$.

    \medskip
    
    Reciprocally, let $(\alpha(u),\beta(u)): I \to \mathbb{R}^2$ be any solution of \eqref{eq:alphabeta} with initial conditions \eqref{eq:abha}, where $I:= (0,u_M)$ is the maximal interval in which $\alpha(u)$ does not vanish. For $u_0$ small enough, $\alpha(u_0)$ and $\beta(u_0)$ are near zero, so the corresponding values $(s,t)$ obtained by the change \eqref{eq:cambiostexplicito} will be in the conditions of Lemma \ref{lem:solucionesdegeneradas}. The limit $\lambda \to -\infty$, in which $s,t$ converge to zero, corresponds with the limit value $u = 0$ in the system $(\alpha(u),\beta(u))$. If $r_1 = r_2$, then $I = (0,\infty)$, while for $r_1 < r_2$ the value $u_M < \infty$ corresponds with the first positive root of $\alpha(u)$.
    \end{remark}

Our main objective is to extend analytically the map $\tau$ defined in $\mathcal{W}_0$ \eqref{eq:mathcalW0} for values with $\kappa = 0$. As an intermediate step, we must also extend the map $u_1$ of Proposition \ref{pro:u1}.

\begin{proposition}\label{pro:u1extension}
The function $u_1$ defined in Proposition \ref{pro:u1} can be extended analytically to the set $\mathcal{O} \cap \{a > 1, \kappa = 0\}$. In particular, if $\kappa = 0$, $u_1 = u_1(a, b, 0)$ corresponds to the first root of $\alpha(u)$.
\end{proposition}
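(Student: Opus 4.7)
The plan is to characterize the extended $u_1$ at $\kappa = 0$ directly as the first positive root of $\alpha(u; a, b, 0)$, to establish that this is a simple root via an asymptotic analysis of the reduced Hamilton--Jacobi system \eqref{eq:st}, and then to invoke the implicit function theorem to obtain real analyticity on $\mathcal{O} \cap \{a > 1, \kappa = 0\}$, closing with a continuity argument to match the definition of Proposition \ref{pro:u1}.

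First I would fix $(a_0, b_0, 0) \in \mathcal{O}$ with $a_0 > 1$. By \eqref{eq:raicesg}, the hypothesis $a_0 > 1$ is equivalent to $r_1 < r_2$ (since at $\kappa = 0$ one has $r_1 = -ab/4$ and $r_2 = -b/(4a)$), so Remark \ref{rem:stabk0} guarantees that the maximal interval $(0, u_M)$ on which $\alpha(u; a_0, b_0, 0)$ does not vanish is bounded, and $\alpha$ vanishes at $u_M$. I would then define the extension by setting $u_1(a_0, b_0, 0) := u_M$.

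The hard part will be showing $\alpha_u(u_M; a_0, b_0, 0) \neq 0$, so as to apply the implicit function theorem. The approach is to pass to the Hamilton--Jacobi variables \eqref{eq:cambiost}: the limit $u \to u_M^{-}$ corresponds to $\lambda \to +\infty$ in \eqref{eq:st}, with $s \to 0^+$ and $t \to 0^-$. A short computation with \eqref{eq:primeraintegral2} and the initial data \eqref{eq:abha} yields $\mathcal{C}_2 = b^2/16$, hence $g(0) = \mathcal{C}_2/4 > 0$. Near the fixed point $(s, t) = (0, 0)$ the decaying branches of \eqref{eq:st} at $\kappa = 0$ reduce to $s'(\lambda) = -s\sqrt{g(s)}$ and $t'(\lambda) = -t\sqrt{g(t)}$, giving $s \sim C_s e^{-k\lambda}$ and $t \sim -C_t e^{-k\lambda}$ with $k = \sqrt{g(0)}$ and positive constants $C_s, C_t$. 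Using $\alpha = 2\sqrt{-st}$ and $du/d\lambda = (s-t)/2$, integrating and eliminating the exponential leads to $\alpha \sim C(u_M - u)$ with $C = 4k\sqrt{C_s C_t}/(C_s + C_t) > 0$ as $u \to u_M^{-}$, which is exactly the desired simplicity of the zero.

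Once the root is simple, the real analyticity of $(a, b) \mapsto u_1(a, b, 0)$ on $\mathcal{O} \cap \{a > 1, \kappa = 0\}$ follows from the implicit function theorem applied to the equation $\alpha(u; a, b, 0) = 0$, using the joint analyticity of $\alpha$ from Remark \ref{rem:omeganalitico}. Finally, to check compatibility with Proposition \ref{pro:u1} for $\kappa > 0$, I would observe that the defining condition $y(u_1(\kappa); \kappa) = 0$ rewrites as $\alpha(u_1(\kappa); \kappa) = -2\sqrt{\kappa}\,\beta(u_1(\kappa); \kappa)$, so the right-hand side tends to $0$ as $\kappa \to 0^+$. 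By continuity together with the local uniqueness of the simple root of $\alpha(\cdot; a, b, 0)$, this forces $\lim_{\kappa \to 0^+} u_1(a, b, \kappa) = u_1(a, b, 0)$, so the two prescriptions glue continuously and the extension claimed in the Proposition holds.
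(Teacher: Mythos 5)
Your argument reaches the right conclusion and defines the extension the same way (first positive root of $\alpha(\cdot;a,b,0)$, whose existence follows from Remark \ref{rem:stabk0} and $r_1<r_2$), but the crucial step --- simplicity of that zero --- is handled in a genuinely different and heavier way than the paper. You return to the Hamilton--Jacobi variables $(s,t)$ of \eqref{eq:st}, compute $\mathcal{C}_2=b^2/16$ to get $g(0)>0$, linearize near the fixed point $(0,0)$ to extract the decay rate $k=\sqrt{g(0)}$, and then match the exponential asymptotics of $s$, $t$ and $u_M-u$ to conclude $\alpha\sim C\,(u_M-u)$. This is correct, but it requires justifying that $s$ and $t$ decay \emph{exactly} like $C_{s,t}\,e^{-k\lambda}$ (not merely $e^{-k\lambda+o(\lambda)}$); that step rests on the analyticity of $\sqrt{g(\cdot)}$ near $0$ making the error summable, a point you leave implicit.

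The paper's route is much shorter and avoids the phase-plane asymptotics entirely. At $\kappa=0$ the first line of \eqref{eq:alphabeta} reads $\alpha''=\alpha\left(\hat a-2\alpha\beta\right)$, a second-order ODE with locally Lipschitz right-hand side in $(\alpha,\alpha')$ once $\beta$ is regarded as a given analytic function. If one had $\alpha(u_M)=\alpha'(u_M)=0$, uniqueness of solutions would force $\alpha\equiv 0$, contradicting $\alpha'(0)>0$ from \eqref{eq:abha}; hence $\alpha'(u_M)\neq 0$ and the implicit function theorem applies. Your asymptotic analysis buys extra quantitative information (an explicit slope $\alpha'(u_M)$ and the exact exponential rate), but the ODE-uniqueness argument gets the non-degeneracy directly with far less machinery. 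Your final continuity check against Proposition \ref{pro:u1} is a sensible clarification that the paper leaves implicit, since the implicit function theorem applied to $y(u;a,b,\kappa)=0$ at a simple root automatically patches the two prescriptions.
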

\begin{proof}
   Let $(a,b,\kappa) \in \mathcal{O} \cap \{a > 1, \kappa = 0\}$. Then, the function $y(u)$ coincides with $\alpha(u)/2$, so we just need to study the first root of $\alpha(u)$, in case it exists. Since $a > 1$, we know that $r_1 < r_2$ (see \eqref{eq:raicesg}), and by Remark \ref{rem:stabk0}, $\alpha(u)$ has a first root $u_M = u_M(a,b)$. Hence, $u_1(a,b,0)\equiv u_M(a,b)$. Let us now show that this extension is analytic. We claim that $\alpha'(u_1) \neq 0$: suppose otherwise that $\alpha'(u_1) = 0$. Since $\alpha(u_1) = 0$, using \eqref{eq:alphabeta} we deduce that $\alpha(u) \equiv 0$, which is impossible as $\alpha'(0) > 0$; see \eqref{eq:abha}. By the implicit function theorem, the map $u_1(a,b,\kappa)$ extends analytically to $\mathcal{O}\cap \{a > 1, \kappa = 0\}$, as we wanted to prove.
\end{proof}


We will now extend the map $\tau(a,b,\kappa)$ of Proposition \ref{pro:tauW0}:

\begin{proposition}\label{pro:tauW1}
    The map $\tau = \tau(a,b,\kappa) > 0$ defined in Proposition \ref{pro:tauW0} can be extended analytically to the set
    \begin{equation}\label{eq:mathcalW1}
        \mathcal{W}_1:= \left\{ (a,b,\kappa) \in \mathcal{O}\; : \; 
\mathcal{A} > \mathcal{B} \geq 2\sqrt{\kappa}\mathcal{A}, \; 0 \leq \kappa < 1/4\right\}.
    \end{equation}
    In particular, $\tau(a,b,\kappa) > 0$ and $\beta(\tau(a,b,\kappa)) = 0$ for all $(a,b,\kappa) \in \mathcal{W}_1$.
\end{proposition}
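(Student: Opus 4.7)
The plan is to extend $\tau$ analytically across the slice $\kappa=0$ of $\mathcal{W}_1$ by defining it directly there and then invoking the analytic implicit function theorem, exactly as was done at the end of the proof of Proposition \ref{pro:tauW0}. The key observation is that on $\mathcal{W}_1\cap\{\kappa=0\}$ the hypothesis $\mathcal{B}\geq 2\sqrt{\kappa}\mathcal{A}$ reads $\mathcal{B}\geq 0$ and is actually strict, since $b\geq 1$ forces $\mathcal{B}\geq 1>0$. Thus the $\kappa=0$ slice lies in the ``strict regime'' of Proposition \ref{pro:tauW0}, where one expects $\tau<u_1$ and $\alpha(\tau)>0$.

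My first step is to produce a candidate $\tilde\tau(a,b)>0$ at each $(a,b,0)\in\mathcal{W}_1$ and to check that $\alpha(\tilde\tau;a,b,0)>0$, so that the simple-root condition is still available. For $a>1$ I can take a subsequential limit $\tilde\tau:=\lim_{\kappa_n\to 0^+}\tau(a,b,\kappa_n)$, which is well defined because $\tau(a,b,\kappa)\leq u_1(a,b,\kappa)$ and Proposition \ref{pro:u1extension} guarantees that $u_1$ remains bounded as $\kappa\to 0^+$. Joint continuity of $\beta$ in $(u,a,b,\kappa)$ (Remark \ref{rem:omeganalitico}) then gives $\beta(\tilde\tau;a,b,0)=0$, while the uniform positivity of $\beta'(0;a,b,\kappa)=(\mathcal{A}-\mathcal{B})/4$ on compact subsets of $\mathcal{W}_1$ forces $\beta(\cdot;a,b,\kappa)>0$ on a uniform interval $(0,\varepsilon)$ and hence $\tilde\tau\geq\varepsilon>0$. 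To exclude $\tilde\tau=u_1(a,b,0)$, I use the first integral \eqref{eq:primeraintegral1}: if both $\alpha(\tilde\tau)$ and $\beta(\tilde\tau)$ vanished, then $\mathcal{C}_1=\alpha'(\tilde\tau)\beta'(\tilde\tau)$, which combined with the asymptotic behaviour of $(s,t)$ described in Remark \ref{rem:stabk0} (exponential decay at the endpoint $\lambda\to+\infty$) leads to an incompatibility with the value of $\mathcal{C}_1$ computed from the initial conditions \eqref{eq:abha} for $(a,b,0)\in\mathcal{W}_1$. Once $\alpha(\tilde\tau)>0$ is established, evaluating the second equation in \eqref{eq:alphabeta} at $\tilde\tau$ yields $\beta''(\tilde\tau)=-\alpha(\tilde\tau)/2<0$, and this rules out $\beta'(\tilde\tau)=0$: otherwise $\beta$ would attain a strict local maximum of value zero at $\tilde\tau$, contradicting $\beta>0$ on $(0,\tilde\tau)$. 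The case $a=1$ requires a separate bound since Proposition \ref{pro:u1extension} does not cover it, but here $\omega$ depends only on $u$ (Remark \ref{rem:omegaa1}) and the explicit first integral \eqref{eq:omegau2} provides a direct uniform upper bound for $\tau(1,b,\kappa)$ as $\kappa\to 0^+$.

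With $\tilde\tau$ and $\beta'(\tilde\tau;a,b,0)\neq 0$ in hand, the analytic implicit function theorem applied to $\beta(u;a,b,\kappa)=0$ at $(u,a,b,\kappa)=(\tilde\tau,a_0,b_0,0)$ produces a unique analytic local function $\tau(a,b,\kappa)$ on a neighbourhood of $(a_0,b_0,0)$ in $\mathcal{W}_1$. By uniqueness of first roots this local extension coincides with the $\tau$ of Proposition \ref{pro:tauW0} on $\{\kappa>0\}$, so that gluing over a covering of $\mathcal{W}_1\cap\{\kappa=0\}$ yields a single analytic map $\tau:\mathcal{W}_1\to\mathbb{R}^+$ with $\beta(\tau)=0$. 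The main obstacle is the strict inequality $\tilde\tau<u_1(a,b,0)$ needed to guarantee $\alpha(\tilde\tau)>0$: at $\kappa=0$ the two periods $\mathcal{M},\mathcal{N}$ in \eqref{eq:MN} both diverge, so Lemma \ref{lem:MN} does not apply directly, and one must instead control the difference between the transit times of $s$ and $t$ at the degenerate endpoint $\lambda\to+\infty$. Handling this limit uniformly across $\mathcal{W}_1$, and in particular across the boundary case $a=1$ where the $u_1$-framework is unavailable, is the key technical point.
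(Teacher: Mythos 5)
Your overall architecture (take a limit of $\tau(a,b,\kappa_n)$ as $\kappa_n\to 0^+$, identify a root of $\beta$, show $\beta'(\tau)\neq 0$, then apply the implicit function theorem) agrees in outline with the paper. However, there are two genuine gaps that the paper fills differently.

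First, you identify as your ``main obstacle'' proving the strict inequality $\tilde\tau<u_1(a,b,0)$ for $a>1$, so as to guarantee $\alpha(\tilde\tau)>0$, and you explicitly admit that Lemma~\ref{lem:MN} does not apply and that your argument for the claimed ``incompatibility'' of the first integral with the asymptotics of $(s,t)$ is not carried out. This is not a detail you can defer: your route through $\beta''(\tau)=-\alpha(\tau)/2<0$ collapses if $\alpha(\tau)=0$. The paper's insight is precisely that this exclusion is unnecessary: it treats $\tau=u_1$ (so $\alpha(\tau)=\beta(\tau)=0$) as a live possibility, and uses the first integral \eqref{eq:primeraintegral1} evaluated at $u=0$ and $u=\tau$ to get $\alpha'(\tau)\beta'(\tau)=\alpha'(0)\beta'(0)\neq 0$, which already yields $\beta'(\tau)\neq 0$ without any control on the difference of transit times. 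You should drop the exclusion attempt and instead handle the degenerate subcase the same way.

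Second, your treatment of $a=1$ is off. You propose to bound $\tau(1,b,\kappa)$ as $\kappa\to 0^+$ ``via the explicit first integral \eqref{eq:omegau2}'', but that is a first integral for the conformal factor $\omega$, not for $(\alpha,\beta)$; it says nothing directly about the vanishing of $\beta$ and does not produce an upper bound for $\tau$. The paper argues differently at $a=1$, $\kappa=0$: since $r_1=r_2<0$, $t(\lambda)\to r_2<0$ while $s(\lambda)\to 0$, so $f=s+t$ changes sign and has a first root $\lambda^*$; since $s(\lambda^*),t(\lambda^*)\neq 0$, one gets $\alpha(\tau)\neq 0$ and hence $\beta(\tau)=0$, with $\tau$ the first positive root. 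This direct $(s,t)$ argument is what you need in place of the $\omega$-based bound.
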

\begin{proof}
 
Note that $\mathcal{W}_1$ is exactly $\mathcal{W}_0$ in \eqref{eq:mathcalW0} plus the boundary component of $\mathcal{W}_0$ in which $\kappa = 0$. Therefore, to prove this result, we only need to show that $\tau(a, b, \kappa)$ can be analytically extended for these boundary values. We will split the proof in two cases, depending on whether $a > 1$ or $a = 1$.

Let $(a_0,b_0,0) \in \mathcal{W}_1$ with $a_0>1$. Since $\mathcal{W}_1 \subset \overline{\mathcal{W}_0}$, we can define
$$\rho_0:= \liminf_{(a,b,\kappa) \to (a_0,b_0,0)} \tau(a,b,\kappa).$$

This limit exists and it is finite, since $0 < \tau(a,b,\kappa) \leq u_1(a,b,\kappa)$ for all $(a,b,\kappa) \in \mathcal{W}_0$ and $u_1(a_0,b_0,0) < \infty$; see Proposition \ref{pro:u1extension}. Notice that $\rho_0$ is a root of $\beta(u;a_0,b_0,0)$: indeed, consider a sequence $(a_n,b_n,\kappa_n)$ converging to $(a_0,b_0,0)$ and such that $\tau_n := \tau(a_n,b_n,\kappa_n)$ converges to $\rho_0$. By continuity, $\beta(\rho_0;a_0,b_0,0) = \lim_{n\to \infty}\beta(\tau_n;a_n,b_n,\kappa_n) = 0$. We also claim that $\rho_0 > 0$: otherwise, since
$$\beta(0;a_n,b_n,\kappa_n) = \beta(\tau_n;a_n,b_n,\kappa_n) = 0,$$
there would exist some $\xi_n \in (0,\tau_n)$ such that $\beta'(\xi_n;a_n,b_n,\kappa_n) = 0$. If $\rho_0 = \lim \tau_n = 0$, then the sequence $\{\xi_n\}_n$ would also converge to zero, and hence $\beta'(0;a_0,b_0,0) =0$, which is impossible by definition of $\mathcal{W}_1$ \eqref{eq:mathcalW1} and \eqref{eq:abha}. As a consequence, $\rho_0 >0$, so $\beta(u;a_0,b_0,0)$ necessarily admits a first positive root $\tau = \tau(a_0,b_0,0) \in (0,\rho_0]$. In particular, $\tau(a_0,b_0,0) \leq \rho_0 \leq u_1(a_0,b_0,0)$.

For the case $a = 1$, we consider the function $f(\lambda)$ defined in \eqref{eq:flambda}. We recall that $f$ coincides with the product $\alpha(u)\beta(u)$ by \eqref{eq:cambiost}. Since $\alpha(0)\beta(0) = 0$ but $\alpha(u)\beta(u) > 0$ for small positive values of $u$, we deduce that $f(\lambda)$ is positive for $\lambda$ negative and large enough. In this case, it holds $r_1 = r_2 < 0$, so $\lim_{\lambda \to \infty} f(\lambda) =\lim_{\lambda \to \infty} t(\lambda) = r_2 < 0$. In particular, $f(\lambda)$ changes sign, so it must have a first root $\lambda^* \in \mathbb{R}$. Let $\tau$ be the value in the variable $u$ corresponding to $\lambda^*$. By \eqref{eq:cambiost} and the fact that $s(\lambda^*), t(\lambda^*) \neq 0$, we conclude that $\alpha(\tau) \neq 0$, so $\beta(\tau) = 0$. Moreover, since $\lambda^*$ is the first root of $f(\lambda)$, $\tau$ must be the first (positive) root of $\beta(u)$.

\medskip

Let us now show that $\tau = \tau(a,b,\kappa)$ is analytic in $\mathcal{W}_1$. We already proved this in $\mathcal{W}_0$, so we only need to consider those points in $\mathcal{W}_1$ with $\kappa = 0$. As we have done before, it suffices to check that $\beta'(\tau) \neq 0$ and apply the implicit function theorem. Assume by contradiction that $\beta'(\tau) = 0$. If $a = 1$ or $a > 1$ and $\tau < u_1$, then, by \eqref{eq:alphabeta}, $\beta''(\tau) = -\frac{\alpha(\tau)}{2} < 0$. This implies that $\beta$ has a maximum at $\tau$, which is impossible since $\beta(u) > \beta(\tau) = 0$ for all $u \in (0,\tau)$. In particular, $\beta'(\tau) < 0$. Finally, if $a >1$ and $\tau = u_1$, then $\alpha(\tau) = \beta(\tau) = 0$. Using the first integral \eqref{eq:primeraintegral1} of the system $(\alpha, \beta)$, we deduce that $\beta'(\tau) = -\beta'(0) < 0$.
\end{proof}

\begin{remark}\label{rem:taulambdastar}
    In the previous Proposition we showed that if $a = 1$ and $\kappa = 0$, then $\tau$ corresponds with the first root $\lambda^*$ of the function $f(\lambda)$ in \eqref{eq:flambda}, and that $\alpha(\tau) > 0$, $\beta'(\tau) < 0$.
\end{remark}

\begin{remark}\label{rem:tauW}
    The analyticity of $\tau(a,b,\kappa)$ on $\mathcal{W}_1$ implies that $\tau(a,b,\kappa)$ is defined and analytic on a larger open set $\mathcal{W} \subset \mathcal{O}$ which contains $\mathcal{W}_1$. In particular $\mathcal{W}$ is a neighborhood of $\mathcal{W}_1$ which extends into the region $\{\kappa < 0\} \cap \mathcal{O}$; see Figure \ref{fig:mathcalW1}.
    
    Let us also remark that $\beta(\tau(a,b,\kappa)) \equiv 0$ for all $(a,b,\kappa) \in \mathcal{W}$: indeed, this equality holds on $\mathcal{W}_1$, so by analyticity it extends to $\mathcal{W}$. Hence, by \eqref{eq:abmd}, \eqref{eq:abeuclid} we deduce that the surfaces $\Sigma_0(\tau(a,b,\kappa);a,b,\kappa)$ intersect orthogonally the spheres $\mathcal{Q}(\pm\tau)$ for all $(a,b,\kappa) \in \mathcal{W}$; see Definition \ref{def:Sigma_0}.
\end{remark}

\begin{figure}
\centering
\includegraphics[width=0.45\textwidth]{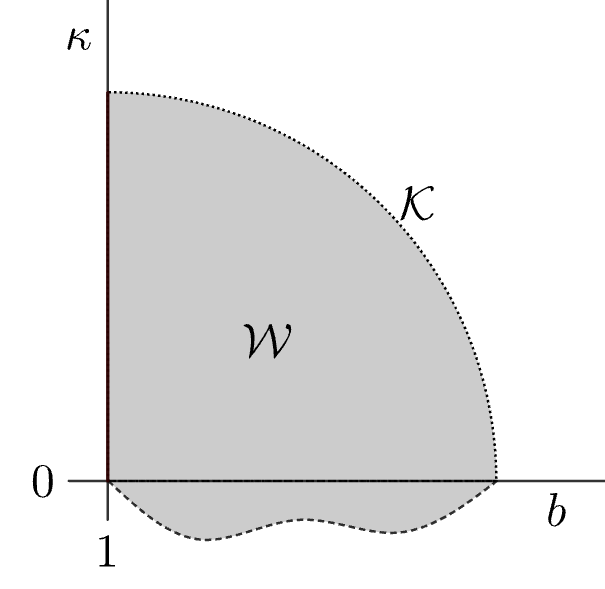}
\caption{The open set $\mathcal{W} \subset \mathcal{O}$ in Remark \ref{rem:tauW}. The boundary component of $\mathcal{W}$ in the quadrant $\{b > 1, \kappa \geq 0\}$ represents the set $\mathcal{K}$ in \eqref{eq:defK}.}\label{fig:mathcalW1}
\end{figure}

\begin{lemma}\label{lem:limitecontinuotau}
    Let $(a_0,b_0,\kappa_0) \in \mathcal{K}$, where
    \begin{equation}\label{eq:defK}
        \mathcal{K}:= \{(a,b,\kappa) \in \partial \mathcal{W}_1\; : \; \mathcal{A} = \mathcal{B}, \; 0\leq \kappa < 1/4\}.
    \end{equation}
    Then, $\lim_{n \to \infty}\tau(a_n,b_n,\kappa_n) = 0$ for any sequence $\{(a_n,b_n,\kappa_n)\}_n\subset \mathcal{W}_1$ converging to $(a_0,b_0,\kappa_0)$.
    \end{lemma}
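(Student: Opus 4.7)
The plan is to argue by contradiction, combining a Taylor expansion of $\beta(u;a_0,b_0,\kappa_0)$ at $u = 0$ with the analytic (hence continuous) dependence of $(\alpha,\beta)$ on the parameters recorded in Remark \ref{rem:omeganalitico}. First I would analyse the behaviour of $\beta$ near $u = 0$ at the limit point. Since $(a_0,b_0,\kappa_0) \in \mathcal{K}$ satisfies $\mathcal{A}_0 = \mathcal{B}_0$, the initial conditions \eqref{eq:abha} yield $\alpha(0) = \beta(0) = \beta'(0) = 0$ and $\alpha'(0) = \mathcal{A}_0(1 - 4\kappa_0)/4 > 0$, using $\mathcal{A}_0 \geq 2$ (from $a_0 \geq 1$) and $\kappa_0 < 1/4$. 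Differentiating the second equation of \eqref{eq:alphabeta}, I obtain $\beta''(0) = -\alpha(0)/2 = 0$ and $\beta'''(0) = \hat{a}\,\beta'(0) - \alpha'(0)/2 = -\alpha'(0)/2 < 0$. Hence
\begin{equation*}
\beta(u;a_0,b_0,\kappa_0) = \frac{\beta'''(0;a_0,b_0,\kappa_0)}{6}\, u^3 + O(u^4),
\end{equation*}
which is strictly negative on some right neighbourhood $(0, u_\star)$ of the origin.

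Next, assuming for contradiction that $\tau_n := \tau(a_n,b_n,\kappa_n) \not\to 0$, I would pass to a subsequence with $\tau_n \to \rho \in (0,+\infty]$ and fix any $u_0 \in (0, u_\star)$ satisfying $u_0 < \rho$ (when $\rho = +\infty$, any $u_0 \in (0, u_\star)$ works). For $n$ large one has $u_0 < \tau_n$. Since $(a_n,b_n,\kappa_n) \in \mathcal{W}_1$ forces $\mathcal{A}_n > \mathcal{B}_n$, i.e.\ $\beta'(0;a_n,b_n,\kappa_n) > 0$, and $\tau_n$ is by construction the first positive zero of $\beta(\cdot;a_n,b_n,\kappa_n)$ by Proposition \ref{pro:tauW1}, the function $\beta(\cdot;a_n,b_n,\kappa_n)$ is strictly positive on $(0, \tau_n)$; in particular $\beta(u_0;a_n,b_n,\kappa_n) > 0$.

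Finally, I would pass to the limit using continuous dependence of solutions to \eqref{eq:alphabeta} on parameters: $\beta(u_0;a_n,b_n,\kappa_n) \to \beta(u_0;a_0,b_0,\kappa_0) \geq 0$, contradicting the sign analysis of the first paragraph. I do not anticipate a serious obstacle; the only point requiring mild care is ensuring that $u_0$ lies in the common domain of the solutions $\beta(\cdot;a_n,b_n,\kappa_n)$ for $n$ large, which follows from the standard fact that maximal intervals of existence of ODE solutions depend lower semicontinuously on initial data and parameters.
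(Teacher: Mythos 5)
Your proof is correct and follows essentially the same route as the paper's: a Taylor expansion of $\beta(\cdot;a_0,b_0,\kappa_0)$ at $u=0$ showing $\beta_0(0)=\beta_0'(0)=\beta_0''(0)=0$, $\beta_0'''(0)=-\alpha_0'(0)/2<0$ (so $\beta_0<0$ just to the right of $0$), combined with the fact that $\beta_n>0$ on $(0,\tau_n)$ and continuous dependence on parameters, to obtain the contradiction $\beta_0\geq 0$ near the origin. The paper uses $\tau_0:=\limsup\tau_n$ and concludes $\beta_0\geq 0$ on $[0,\tau_0]$, whereas you extract a convergent subsequence and evaluate at a single fixed $u_0$; this is only a cosmetic difference.
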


\begin{proof}
    Let us denote by $(\alpha_0(u),\beta_0(u))$ the solution to \eqref{eq:alphabeta} associated to the parameters $(a_0,b_0,\kappa_0) \in \mathcal{K}$, and by $(\alpha_n(u),\beta_n(u))$, $n \in \mathbb{N}$, the corresponding solutions for $(a_n,b_n,\kappa_n) \in \mathcal{W}_1$. Moreover, let $\tau_n := \tau(a_n,b_n,\kappa_n) > 0$. To prove this Lemma, it suffices to show that the value $\tau_0:= \limsup_{n \to \infty} \tau_n$ is zero, so assume by contradiction that $\tau_0 > 0$. By definition, $\tau_n > 0$ is the first positive root of $\beta_n(u)$. In fact, $\beta_n(u) > 0$ for every $u \in (0,\tau_n)$, since $\beta_n'(0) > 0$. By continuity of the solutions to \eqref{eq:alphabeta} with respect to the parameters $(a,b,\kappa)$, it follows that $\beta_0(u) \geq 0$ for every $u \in [0,\tau_0]$. However, this is impossible: by \eqref{eq:alphabeta}, \eqref{eq:abha} and \eqref{eq:defK}, we have that $\beta_0(0) = \beta_0'(0) = \beta_0''(0) = 0$ but $\beta_0'''(0) = - \frac{\alpha_0'(0)}{2} < 0$, so $\beta_0(u)$ must be negative for $u > 0$ small enough.
\end{proof}

\section{The period map for the rotational immersions}\label{sec:periodo1}

In this section we will obtain an explicit expression for the the period map \eqref{eq:defperiodo} for every $(a,b,\kappa) \in \mathcal{O}$ with $a = 1$. In particular, we will show that every tuple $(1,b,\kappa)\in \mathcal{O}$ belongs to $\mathcal{O}^-$ so that $\Theta(1,b,\kappa)$ is indeed well defined; see Definition \ref{def:periodo}. To achieve this purpose, we will adapt the results in \cite[Section 7]{CFM2} to our current context.

\medskip

We first compute the value $\Theta(1,b,\kappa)$ for every $(1,b,\kappa) \in \mathcal{O}^- \cap \{a = 1\}$; see Definition \ref{def:omenos}. This will later allow us to conclude that, in fact, every tuple $(1,b,\kappa) \in \mathcal{O}$ belongs to $\mathcal{O}^-$.

\medskip

\begin{proposition}\label{pro:periodoa1}
    Let $(1,b,\kappa) \in \mathcal{O}^-$. Then,
    \begin{equation}\label{eq:Theta1bk}
    \Theta(1,b,\kappa) = -\frac{\sqrt{4\kappa + 1}}{\sqrt{4\kappa + \mathcal{B} + 1}}.
    \end{equation}
\end{proposition}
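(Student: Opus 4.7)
My plan is to leverage the rotational structure of $\Sigma(1,b,\kappa)$ established in Subsection \ref{sec:1bk} and Proposition \ref{pro:upsilonL}. Since $a = 1$, the metric satisfies $\omega(u,v) = \omega(u)$ with $\omega(0) = 0$ (Remark \ref{rem:omegaa1}), so the curve $\Gamma(v) = \psi(0,v)$ is a unit-speed orbit of the rotation group $\mathcal{G}$ about the axis $\mathcal{L}_\kappa$. By the conventions in Remark \ref{rem:omenoscond}, we have $\mathcal{L}_\kappa = \{x_1 = x_2 = 0\} \cap \mathbb{M}^3(\kappa)$ and $\mathbf{S}_\kappa = \{x_3 = 0\} \cap \mathbb{M}^3(\kappa)$, so the stereographic projection $\varphi_\kappa$ sends the axis to the $z$-axis and $\mathbf{S}_\kappa$ to the plane $\{z=0\}$.

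First I would identify $\Gamma(v)$ with the orbit $R_{\theta(v)}(\Gamma(0))$ of the rotation group, which forces $\theta(v) = c v$ linearly by the group law. To pin down $c$, I would use the speed condition $\|\Gamma'(v)\|_\kappa = e^{\omega(0)} = 1$ together with the fact that the Euclidean radius of the orbit equals the catenoid's neck radius $x(0) = 2/\sqrt{4\kappa+1}$ (cf.\ \eqref{eq:x} and \eqref{eq:paramesfera}), obtaining $c = \sqrt{4\kappa+1}/2$; the positive sign follows from comparing the initial direction $\psi_v(0,0) = -\mathbf{e}_2$ in \eqref{eq:initialdata} with the sense of rotation inherited from \eqref{eq:paramesfera}.

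Next I would observe that $\gamma(v) = \varphi_\kappa(\Gamma(v))$ lies in $\{z=0\}$ and, since $\Gamma$ is a geodesic circle in $\mathbf{S}_\kappa$ centered at the point $\mathbf{e}_4 \in \mathcal{L}_\kappa \cap \mathbf{S}_\kappa$ whose image under $\varphi_\kappa$ is the origin of $\mathbb{R}^3$, the curve $\gamma$ is a Euclidean circle in $\{z=0\}$ centered at the origin. Parametrizing it as $\gamma(v) = R(\cos\theta(v), -\sin\theta(v), 0)$ for some $R>0$, a routine planar computation yields $\kappa_\gamma\|\gamma'\| = -\theta'(v)$. Integrating and using $\sigma(1,b,\kappa) = 2\pi/\sqrt{4\kappa + \mathcal{B} + 1}$ from \eqref{eq:sigmaa1} gives
\[
\Theta(1,b,\kappa) = \frac{1}{\pi}\int_0^\sigma \kappa_\gamma \|\gamma'\| \, dv = -\frac{\theta(\sigma)}{\pi} = -\frac{c\,\sigma}{\pi},
\]
which simplifies to the desired expression.

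The main obstacle is bookkeeping: one must carefully track the isometry from Remark \ref{rem:omenoscond} applied to pass from the canonical rotational parametrization $\varphi(\mathfrak{s},\theta)$ of Section \ref{sec:rotacionales} to the conformal parametrization $\psi(u,v)$ underlying the definition of $\Theta$, so that both the position of $\mathcal{L}_\kappa$ and the sign of $\theta'(v)$ are as claimed. Once this is in place, the computation reduces to the planar circle geometry above, and the minus sign in \eqref{eq:Theta1bk} is ultimately a consequence of the convention $\psi_v(0,0) = -\mathbf{e}_2$.
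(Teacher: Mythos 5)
Your argument is correct and reaches \eqref{eq:Theta1bk} by a route that is genuinely different from the paper's. The paper works in the conformal chart: it writes the metric on ${\bf S}_\kappa$ as $\rho(dx^2+dy^2)$, invokes the transformation law $\kappa_\gamma = \langle\nabla\sqrt{\rho},{\bf n}\rangle/\sqrt{\rho} + \sqrt{\rho}\,\kappa_\Gamma$, feeds in $\kappa_\Gamma\equiv -\tfrac{1}{2}$, and then solves explicitly for the Euclidean radius $r$ of the projected circle $\gamma$, combining this with $\|\gamma'\| = e^{\omega(0,v)}/\sqrt{\rho}$ to evaluate the integrand. You instead exploit the rotational structure more directly: since $\Gamma$ is the orbit of a one-parameter rotation group, $\theta(v)=cv$ with $c$ fixed by the unit-speed condition $x(0)\,|\theta'|=\|\Gamma'\|_\kappa=e^{\omega(0)}=1$, using that the orbit sits at ambient $(x_1,x_2)$-radius $x(0)=2/\sqrt{4\kappa+1}$ (the coefficient in \eqref{eq:paramesfera}, \eqref{eq:paramhiperb}, \eqref{eq:parameuclideo}). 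The equivariance of $\varphi_\kappa$ under rotations about the $x_3$-axis then carries $\theta(v)$ unchanged into the projected picture, and the identity $\kappa_\gamma\|\gamma'\|=-\theta'(v)$ for a circle centered at the origin finishes it, with no need to determine $r$ or the conformal factor at all. Both arguments use the same geometric input (the orbit is a circle of constant geodesic curvature $-\tfrac12$ centered at the axis, and $\sigma(1,b,\kappa)$ from \eqref{eq:sigmaa1}); yours replaces the conformal-curvature bookkeeping with the observation that $\pi\Theta$ is a total turning and is therefore the rotation angle $c\sigma$, which is cleaner. One small imprecision worth tightening in a write-up: ``the Euclidean radius of the orbit'' should explicitly refer to the radius of the unprojected orbit in $\mathbb{R}^4_\kappa$ (equivalently the $(x_1,x_2)$-coordinate radius), not the radius of $\gamma$ in $\mathbb{R}^3$, since the latter is $4/(1+\sqrt{1+4\kappa})\neq x(0)$; as written the phrase could be misread, but your computation correctly uses the former.
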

\begin{proof}
    The proof is a straightforward adaptation of \cite[Theorem 7.1]{CFM2}. Let $\psi(u,v)$ be the immersion associated to $\Sigma(1,b,\kappa)$ and $\varphi_\kappa$ be the stereographic projection from $\mathbb{M}^3(\kappa)\setminus \{-e_4\}$ to $\mathbb{R}^3$; see the discussion after Remark \ref{rem:omenoscond}. We can use the inverse map $\varphi_\kappa^{-1} = \varphi_\kappa^{-1}(x,y,z)$ to express the metric on ${{\bf S}_\kappa}\setminus \{-e_4\}$ (where ${\bf S}_\kappa$ is given by \eqref{eq:mathcalS}) as
    \begin{equation}
        \rho(dx^2 + dy^2), \; \; \; \; \rho:= \frac{1}{\left(1 + \frac{\kappa}{4}(x^2 + y^2)\right)^2},
    \end{equation}
    where $(x,y)$ are the usual Euclidean coordinates on $\mathbb{R}^2 \equiv \mathbb{R}^3\cap \{z = 0\} = \varphi_\kappa^{-1}\left({{\bf S}_\kappa}\setminus \{-e_4\}\right)$. In particular, for the curve $\gamma = \varphi_\kappa \circ \Gamma$, it holds
    \begin{equation}\label{eq:gammaprima}
        \|\gamma'\| = \frac{e^{\omega(0,v)}}{\sqrt{\rho}},
    \end{equation}
    see Definition \ref{def:periodo}. Since the projection $\varphi_\kappa$ is conformal (see Remark \ref{rem:varphiconforme}), we can relate the geodesic curvature $\kappa_\Gamma$ of $\Gamma(v)$ in ${{\bf S}_\kappa}$ with the curvature $\kappa_\gamma$ of the planar curve $\gamma(v)$ in $\mathbb{R}^2$ by
    \begin{equation}\label{eq:relacioncurvaturas}
        \kappa_\gamma = \frac{\langle\nabla \sqrt{\rho},{\bf n}\rangle}{\sqrt{\rho}} + \sqrt{\rho}\kappa_\Gamma,
    \end{equation}
    where $\nabla$, $\langle\cdot,\cdot\rangle$ denote the usual Euclidean gradient and inner product respectively, while ${\bf n}= {\bf n}(v)$ is the unit normal of $\gamma(v)$. Since $\Gamma(v) \subset {{\bf S}_\kappa}$ has constant negative geodesic curvature $\kappa_\Gamma \equiv -\frac{1}{2}$ (see Subsection \ref{sec:1bk}), we deduce that its projection $\gamma(v)$ is a negatively oriented circle of some radius $r > 0$ in $\mathbb{R}^2$. This circle must be centered at $(0,0)$: indeed, $(0,0) = \varphi_{\kappa}({\bf e}_4)$, and by Proposition \ref{pro:upsilonL}, the rotation axis $\Upsilon$ of the immersion $\psi(u,v)$ coincides with the geodesic ${\mathcal{L}_\kappa}$, which passes through ${\bf e}_4$. In particular, along the curve $\gamma(v)$ we have that ${\bf n}= \frac{1}{r}\gamma(v),$ $\kappa_\gamma = -\frac{1}{r}$ and 
    \begin{equation}
        \; \; \; \sqrt{\rho} = \frac{1}{1 + \frac{\kappa}{4} r^2}, \; \; \;  \nabla \sqrt{\rho} = -\frac{\kappa}{2(1 + \frac{\kappa}{4} r^2)^2}\gamma(v).
    \end{equation}
    It follows then from \eqref{eq:relacioncurvaturas} that $r$ satisfies
    \begin{equation}\label{eq:r}
        r = \frac{4}{1 + \sqrt{1 + 4\kappa}}.
    \end{equation}
    Hence, combining \eqref{eq:sigmaa1}, \eqref{eq:defperiodo}, \eqref{eq:gammaprima}, and having in mind that $e^{\omega(0,v)} \equiv 1$ if $a = 1$, we deduce \eqref{eq:Theta1bk}.
\end{proof}
\begin{proposition}\label{pro:Onovacio}
    Every $(a,b,\kappa) \in \mathcal{O}$ with $a = 1$ belongs to $\mathcal{O}^-$. Moreover, $\Theta(1,b,\kappa) \in \left(-\frac{\sqrt{2}}{2},0\right)$.
\end{proposition}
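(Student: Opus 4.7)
The plan is to first verify that condition \eqref{eq:angulonu} is automatic for the rotational examples $\Sigma(1,b,\kappa)$, and then to read off the range of $\Theta$ from the explicit formula in Proposition \ref{pro:periodoa1}.

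For the first step, I would exploit that $\Sigma(1,b,\kappa)$ is a minimal surface of revolution (Subsection \ref{sec:1bk}) whose axis, by Proposition \ref{pro:upsilonL}, coincides with $\mathcal{L}_\kappa$. Crucially, this identification does not presuppose $(1,b,\kappa)\in\mathcal{O}^-$, so I am free to apply an ambient isometry of $\mathbb{M}^3(\kappa)$ placing the axis at $\{x_1=x_2=0\}\cap\mathbb{M}^3(\kappa)$. Since the scalars entering \eqref{eq:angulonu} are invariant under such isometries, I can then use the rotational parametrizations \eqref{eq:paramesfera}, \eqref{eq:paramhiperb}, or \eqref{eq:parameuclideo}. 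The curve $\Gamma(v)=\psi(0,v)$ is thereby realized as a Euclidean circle of radius $c_3=x(0)=2/\sqrt{4\kappa+1}$ lying in a spacelike $2$-plane of $\mathbb{R}^4_\kappa$; since $\omega$ is independent of $v$ when $a=1$, the conformal normalization $|\Gamma'(v)|_\kappa^2=e^{2\omega(0)}=1$ together with Lemma \ref{lem:stuv} gives $\theta'(v)=1/c_3$. A direct computation then shows that $\nu_0$ and $\nu_1$ are unit vectors in the same spacelike $2$-plane differing by an ambient rotation of angle $\sigma/c_3$, so
\[
\langle\nu_0,\nu_0\rangle_\kappa\langle\nu_1,\nu_1\rangle_\kappa - \langle\nu_0,\nu_1\rangle_\kappa^2 = \sin^2(\sigma/c_3).
\]

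Substituting $c_3=2/\sqrt{4\kappa+1}$ and $\sigma=2\pi/\sqrt{4\kappa+\mathcal{B}+1}$ from \eqref{eq:sigmaa1}, I obtain $\sigma/c_3=\pi\sqrt{4\kappa+1}/\sqrt{4\kappa+\mathcal{B}+1}$, which I would then verify lies in $(0,\pi)$: positivity follows from $\kappa>-1/4$, and the strict upper bound $\sigma/c_3<\pi$ is equivalent to $\mathcal{B}>0$, itself immediate from $b\geq 1$ and $4|\kappa|<1$ since $\mathcal{B}=(b^2+4\kappa)/b$. This forces $\sin^2(\sigma/c_3)>0$, proving $(1,b,\kappa)\in\mathcal{O}^-$. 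With this at hand, Proposition \ref{pro:periodoa1} yields $\Theta(1,b,\kappa)=-\sqrt{4\kappa+1}/\sqrt{4\kappa+\mathcal{B}+1}<0$, and the lower bound $\Theta>-1/\sqrt{2}$ becomes $\mathcal{B}>4\kappa+1$; multiplying by $b$ this factors as $(b-1)(b-4\kappa)>0$, which holds since $b\geq 1>4\kappa$.

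The main obstacle, as I see it, is avoiding circular reasoning: Proposition \ref{pro:periodoa1} cannot be invoked to establish \eqref{eq:angulonu} because it already presumes $(1,b,\kappa)\in\mathcal{O}^-$. The workaround is to compute $\nu_0$ and $\nu_1$ directly from the rotational parametrization, where the delicate point is identifying the ambient metric restricted to the $2$-plane containing $\Gamma$ with the standard Euclidean one. This is justified by noting that any $1$-parameter rotation subgroup of isometries of $\mathbb{M}^3(\kappa)$ extends to a linear map of $\mathbb{R}^4_\kappa$ fixing a $2$-plane pointwise and acting as a standard Euclidean rotation on its spacelike orthogonal complement, which is precisely where $\nu_0$ and $\nu_1$ reside.
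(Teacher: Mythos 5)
Your proposal is correct and takes a genuinely different route from the paper. The paper's argument introduces an auxiliary quantity $\hat\theta$ given by the same turning integral as $\Theta$ but defined directly (not presupposing membership in $\mathcal{O}^-$), shows via the computations of Proposition \ref{pro:periodoa1} that $\hat\theta$ satisfies \eqref{eq:Theta1bk} and hence lies in $\left(-\tfrac{\sqrt{2}}{2},0\right)$, and then derives \eqref{eq:angulonu} in two separate steps: non-collinearity of $\nu_0,\nu_1$ would force the turning number $\hat\theta$ to be an integer (a contradiction), and, for $\kappa<0$, the spacelikeness of $\Lambda_0=\mathrm{span}\{\nu_0,\nu_1\}$ follows from identifying $\Lambda_0^\perp=\mathcal P$ and invoking Lemma \ref{lem:geodesicaPi} via Proposition \ref{pro:upsilonL}. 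You instead bypass the stereographic projection entirely: after normalizing the axis to $\{x_1=x_2=0\}\cap\mathbb{M}^3(\kappa)$ (legitimately, since Proposition \ref{pro:upsilonL} does not presuppose $\mathcal{O}^-$ and the scalars in \eqref{eq:angulonu} are isometry-invariant), you read off $\nu_0,\nu_1$ from the rotational parametrization and compute the Gram determinant in closed form as $\sin^2(\sigma/c_3)$. This handles non-degeneracy and the signature condition simultaneously, since both vectors sit in the spacelike plane $\mathrm{span}\{{\bf e}_1,{\bf e}_2\}$, and the verification $\sigma/c_3\in(0,\pi)$ reduces to $\mathcal B>0$. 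Your approach is arguably more elementary and self-contained; the paper's buys a cleaner reuse of the period-map machinery that is needed elsewhere. One minor imprecision, which you share with the paper: at $b=1$ the factorization $(b-1)(b-4\kappa)$ vanishes and $\Theta(1,1,\kappa)=-\tfrac{1}{\sqrt{2}}$, so the value sits at the left endpoint of the stated open interval rather than strictly inside it. Since $\mathcal{O}$ permits $b=1$, the statement of the proposition is slightly off at this boundary; this is the paper's issue (only $b>1$ is used downstream in Section \ref{sec:main}), but worth noting explicitly.
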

\begin{proof}
    Let $(1,b,\kappa) \in \mathcal{O}$. We know that $\Sigma(1,b,\kappa)$ is a rotational minimal surface whose rotation axis $\Upsilon$ coincides with the geodesic ${\mathcal{L}_\kappa}$ of Lemma \ref{lem:geodesicaPi}; see Proposition \ref{pro:upsilonL}. Following the discussion at the beginning of Section \ref{sec:periodmap}, there exists a unique isometry in $\mathbb{M}^3(\kappa)$ which fixes the coordinates $x_2,x_3$ and sends the point $p$ in \eqref{eq:interseccionL} to ${\bf e}_4$. We apply this isometry and consider the stereographic projection $\varphi$ defined in \eqref{eq:proyestereografica}. Setting $\Gamma(v) = \psi(0,v)$ and $\gamma = \varphi_\kappa \circ \Gamma$, we can define the value
    \begin{equation}\label{eq:hattheta}
        \hat{\theta} := \frac{1}{\pi}\int_{0}^\sigma\kappa_\gamma \|\gamma'\|dv,
    \end{equation}
   If $(1,b,\kappa)$ belonged to $\mathcal{O}^-$, then $\hat \theta$ would coincide with $\Theta(1,b,\kappa)$ in \eqref{eq:defperiodo}. In any case, the integral in \eqref{eq:hattheta} is well defined, and $\pi \hat{\theta}$ measures the variation of the angle of the unit normal vector of $\gamma(v)$ from $v = 0$ to $v = \sigma$. In fact, applying the same computations as in Proposition \ref{pro:periodoa1} we obtain that the value $\hat{\theta}$ also satisfies \eqref{eq:Theta1bk}. In particular, it holds $\hat \theta \in \left(-\frac{\sqrt{2}}{2},0\right)$.

\medskip

   Let us now prove that the vectors $\nu_0 = \psi_v(0,0)$, $\nu_1 = \psi_v(0,\sigma)$ satisfy \eqref{eq:angulonu}. First, we claim that $\nu_0,\nu_1$ are not collinear: otherwise, the unit normal vectors of $\gamma(v)$ at $v = 0$ and $v = \sigma$ would be collinear as well, and so $\hat{\theta}$ should be an integer. This shows \eqref{eq:angulonu} provided that $\kappa \geq 0$. If $\kappa < 0$, we also need to check that the plane $\Lambda_0:= \text{span}\{\nu_0,\nu_1\}$ is spacelike. Observe that both $\nu_0,\nu_1$ are orthogonal to the vectors $m(0),m'(0)$ which span the plane $\mathcal{P}$ in Proposition \ref{pro:centroplano}, so $\Lambda_0^\perp = \mathcal{P}$. Now, since ${\mathcal{L}_\kappa} = \Upsilon$ is a geodesic, we deduce by Lemma \ref{lem:geodesicaPi} that $\mathcal{P}$ is timelike, so $\Lambda_0$ must be spacelike.
\end{proof}

\begin{remark}\label{rem:indicecatenoides}
    Let $(1,b,\kappa) \in \mathcal{O}$ and $\Theta_0 := \Theta(1,b,\kappa)$. Assume further that $\Theta_0 \in \mathbb{Q}$ and express $\Theta_0$ as an irreducible fraction $\Theta_0 = -m/n$, $m,n \in \mathbb{N}$. Then, it follows from Remark \ref{rem:lineascerradas} that the annulus $\Sigma_0(u_0;1,b,\kappa)$ is an $m$-cover of a piece of a catenoid under the identification $(u,v) \sim (u,v + 2n\sigma)$; see Definition \ref{def:Sigma_0}.
\end{remark}

\begin{remark}\label{rem:btheta}
    Let $(1,b_0,\kappa_0) \in \mathcal{O}$ and $\Theta_0:= \Theta(1,b_0,\kappa_0)$. By \eqref{eq:Theta1bk} it follows that the partial derivative $\frac{\partial \Theta}{\partial b}(1,b_0,\kappa_0)$ is strictly positive. Since the map $\Theta(a,b,\kappa)$ is analytic, we deduce by the implicit function theorem that the level set $\Theta(a,b,\kappa) = \Theta_0$ can be expressed locally around $(1,b_0,\kappa_0)$ as a graph $b_{\Theta_0} = b_{\Theta_0}(a,\kappa)$. If $a = 1$, we can use \eqref{eq:Theta1bk} and \eqref{eq:mamb} to compute $b_{\Theta_0}$ explicitly:
   \begin{equation}\label{eq:btheta}
       b_{\Theta_0}(1,\kappa) = \frac{1}{2\Theta_0^2}\left((1 + 4\kappa)(1 - \Theta_0^2) + \sqrt{(1+4\kappa)^2(1 - \Theta_0^2)^2 - 16\Theta_0^4\kappa}\right).
   \end{equation}
\end{remark}

\section{Detecting the Euclidean free boundary catenoid}\label{sec:EuclidFB}

Let $a = 1$, $\kappa = 0$. The goal of this section is to detect some $b_0 \geq 1$ such that the surface $\Sigma_0(\tau;1,b_0,0)$ covers a Euclidean catenoid with free boundary in a ball, where $\tau := \tau(1,b_0,0)$; see Definition \ref{def:Sigma_0} and Proposition \ref{pro:tauW1}. By Proposition \ref{pro:catenoidesFB}, this happens if $\tau(1,b_0,0)$ satisfies the condition
\begin{equation}\label{eq:tautilde}
    \tau(1,b_0,0) = \tilde u(0).
\end{equation}
We recall that when $a = 1$, the surfaces $\Sigma(1,b,\kappa)$ are independent of $b$, i.e., they all represent the same catenoid in $\mathbb{M}^3(\kappa)$; see Remark \ref{rem:independenciab}. In particular, for $\kappa = 0$, $\Sigma(1,b,0)$ is a Euclidean catenoid of neck curvature $\frac{1}{2}$. This explains why $\tilde u = \tilde u (\kappa)$ in Proposition \ref{pro:catenoidesFB} does not depend on this parameter. We will devote the rest of this section to prove that
\begin{equation}\label{eq:desigualdadbstar}
    \tau(1,1,0) > \tilde u(0) > 0 = \lim_{b \to 2^-} \tau(1,b,0).
\end{equation}
In particular, this implies the existence of $b_0 \in (1,2)$ such that \eqref{eq:tautilde} holds. The limit in the right-hand side of \eqref{eq:desigualdadbstar} is immediate: indeed, $(1,2,0) \in \mathcal{O}$ belongs to the set $\mathcal{K}$ in Lemma \ref{lem:limitecontinuotau}, so $\lim_{b \to 2^-} \tau(1,b,0) = 0$. Hence, it remains to show the first inequality of \eqref{eq:desigualdadbstar}. This will be achieved in Theorem \ref{thm:radiogeometrico}, and the rest of the Section is dedicated to prove that result. It will be necessary to study the geometry of the catenoid $\Sigma(1,b,0)$ as well as the system $(\alpha(u),\beta(u))$ in \eqref{eq:alphabeta} for the parameters $(a,b,\kappa) = (1,1,0) \in \mathcal{O}$. We do this next.

\subsection{The case \texorpdfstring{$(1,1,0) \in \mathcal{O}$}{}}
 We fix the parameters $(a,b,\kappa) =(1,1,0)$, and consider the solution $(s(\lambda),t(\lambda))$ to \eqref{eq:st}. The roots of the polynomial $g(x)$ satisfy $-r_1 = -r_2 = r_3 = 1/4$, that is, 
 \begin{equation}\label{eq:geuclideo}
     g(x) = -(x + 1/4)^2(x-1/4), 
 \end{equation}
 see \eqref{eq:raicesg}. If we define the variables $S = 4s$, $T = 4t$, $\eta = 2^{-3}\lambda$, \eqref{eq:st} reduces to
 \begin{equation}\label{eq:ST}
\left\{\def\arraystretch{1.3} \begin{array}{lll} S'^2(\eta) &=& S^2(1-S)(S+1)^2, \\
T'^2(\eta) &=& T^2(1-T)(T+1)^2.\end{array} \right.
\end{equation}

This system was previously studied in \cite[Section 4]{FHM}. The following result is a straightforward consequence of \cite[Theorem 4.2]{FHM} and its proof:

\begin{theorem}\label{thm:fhm}{\bf (\cite[Theorem 4.2]{FHM}).} The function $f(\lambda) = s(\lambda) + t(\lambda)$ in \eqref{eq:flambda} has a first root $\lambda^*$ satisfying
\begin{equation}
    \mathcal{F}\left(4s(\lambda^*)\right) = -1
\end{equation}
and $s'(\lambda^*) < 0$, where $\mathcal{F}(x):= H(x)H(-x)$, being
$$H(x) := \frac{1 + \sqrt{1 - x}}{1 - \sqrt{1 - x}} 
\left( 
\frac{\sqrt{2} - \sqrt{1 - x}}{\sqrt{2} + \sqrt{1 - x}} 
\right)^{\frac{1}{\sqrt{2}}}.$$

\end{theorem}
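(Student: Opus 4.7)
Plan for Theorem \ref{thm:fhm}. The plan is to integrate explicitly the two decoupled first-order ODEs in \eqref{eq:ST}. A routine substitution $y = \sqrt{1-x}$ followed by partial fractions on $\tfrac{1}{(1-y^2)(2-y^2)}$ yields the antiderivative
\begin{equation*}
\int \frac{dx}{x(x+1)\sqrt{1-x}} \,=\, -\log H(x) + c,
\end{equation*}
valid on $(0,1)$ and, with $|H|$ in place of $H$, on $(-1,0)$. Equivalently, $H'(x)/H(x) = -1/(x(x+1)\sqrt{1-x})$. Since $S$ increases from $0$ (as $\eta \to -\infty$) to the maximum $1$ at some $\eta_0$ and then decreases back to $0$ (as $\eta \to +\infty$), while $T$ monotonically decreases from $0^-$ to $-1$ on all of $\mathbb{R}$, integrating \eqref{eq:ST} gives $\log H(S(\eta)) = -\eta + c_1$ for $\eta \leq \eta_0$, $\log H(S(\eta)) = \eta + c_2$ for $\eta \geq \eta_0$, and $\log|H(T(\eta))| = -\eta + c_4$ for all $\eta$. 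Setting the origin so that $\eta_0 = 0$ and using $H(1) = 1$ gives $c_1 = c_2 = 0$.

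The constant $c_4$ is fixed by matching asymptotics at $\eta \to -\infty$. From $\alpha(0) = \beta(0) = 0$ and $\alpha'(0) = \beta'(0) = 1/4$, the change of variables \eqref{eq:cambiost} produces the expansions $s \sim u/8$ and $t \sim -u/8$ as $u \to 0^+$; combined with $2\,du/d\lambda = s - t \sim u/4$, this yields $u \sim c_0 e^{\eta}$, hence $S(\eta) \sim (c_0/2) e^{\eta}$ and $T(\eta) \sim -(c_0/2) e^{\eta}$. The behaviour $H(x) \sim 4K/x$ near $0$ with the same constant $K = \left(\tfrac{\sqrt{2}-1}{\sqrt{2}+1}\right)^{1/\sqrt{2}}$ on both sides of $0$ then forces $\log H(S) - \log|H(T)| \to 0$, so $c_4 = 0$.

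Consequently, in the decreasing-$S$ phase ($\eta \geq 0$), $\log H(S) + \log|H(T)| = \eta + (-\eta) = 0$, i.e., the conservation law $H(S(\eta))\,|H(T(\eta))| \equiv 1$ holds. In particular $|H(T(0))| = 1$ determines a unique $T(0) = T_\ast \in (-1, 0)$, so $S(0) + T(0) = 1 + T_\ast > 0$. The first root $\eta^\ast = \lambda^\ast/8$ of $S + T = 4(s+t) = 4f(\lambda)$ exists (it corresponds to $u = \tau$ via Proposition \ref{pro:tauW1} and Remark \ref{rem:taulambdastar}, using that $\alpha > 0$ on $(0,\infty)$ in this case); combined with $(S+T)(0) > 0$, this forces $\eta^\ast > 0$, so $\eta^\ast$ lies in the decreasing-$S$ phase and $s'(\lambda^\ast) < 0$. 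Substituting $T(\eta^\ast) = -S(\eta^\ast)$ into the identity $H(S)\,|H(T)| = 1$ and using $|H(-S)| = -H(-S)$ for $S \in (0,1)$ yields $H(S^\ast)\,H(-S^\ast) = -1$, i.e., $\mathcal{F}(4\,s(\lambda^\ast)) = -1$.

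The main obstacle is the asymptotic matching step: one must verify that the two independently-normalized solutions of identical-form ODEs are coupled through the initial data of the $(\alpha, \beta)$-system precisely so that $S(\eta)/|T(\eta)| \to 1$ as $\eta \to -\infty$. Once this symmetric initial behaviour is established, the conservation law $H(S)\,|H(T)| \equiv 1$ on the decreasing branch follows automatically, and the remainder reduces to a clean algebraic substitution.
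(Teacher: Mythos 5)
This theorem is cited from \cite[Theorem 4.2]{FHM} and the paper itself does not reproduce its proof, so your proposal is a reconstruction of the argument in \cite{FHM}. The strategy you follow — explicit integration of the two decoupled ODEs via the antiderivative $-\log H$, fixing the integration constants by matching the asymptotics $S/|T| \to 1$ as $\eta \to -\infty$ coming from the initial data $\alpha'(0) = \beta'(0) = 1/4$, and then reading off the conservation law $H(S)\,|H(T)| \equiv 1$ on the decreasing branch — is indeed the approach of \cite{FHM}. Your key identity $H'(x)/H(x) = -1/\bigl(x(x+1)\sqrt{1-x}\bigr)$ is correct, as is the matching computation (both $S$ and $|T|$ behave as $(c_0/2)e^\eta$ to leading order, and $H(x) \sim 4K/x$ uniformly from both sides).

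There is, however, a genuine gap in the deduction that $\eta^* > 0$. Knowing that a first root exists and that $(S+T)(0) = 1 + T_* > 0$ does not by itself rule out a first root with $\eta^* < 0$: a priori $S + T$ could vanish in the increasing-$S$ phase and be positive again at $\eta = 0$. You need the extra observation that $S + T$ cannot have its \emph{first} zero in the increasing phase. One clean way: if $(S+T)(\eta^*) = 0$ with $\eta^* < 0$, then $T(\eta^*) = -S(\eta^*)$ with $S(\eta^*) \in (0,1)$, and using $S' = S(S+1)\sqrt{1-S}$, $T' = T(T+1)\sqrt{1-T}$ one computes
\begin{equation*}
(S+T)'(\eta^*) \;=\; S\left[(S+1)\sqrt{1-S} - (1-S)\sqrt{1+S}\right] \;>\; 0,
\end{equation*}
since $(1+S)(1-S^2) > (1-S)(1-S^2)$ for $S \in (0,1)$. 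But by definition of the first root, $S + T > 0$ on $(-\infty,\eta^*)$, forcing $(S+T)'(\eta^*) \leq 0$ — contradiction. Hence $\eta^* \geq 0$, and $\eta^* \neq 0$ because $(S+T)(0) > 0$; only then does $\eta^* > 0$ and $s'(\lambda^*) < 0$ follow. With this one extra step the argument is complete and, contrary to what you flag, the asymptotic matching is not the main obstacle — the location of $\eta^*$ relative to the turning point of $S$ is.
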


\begin{lemma}\label{lem:alphaderivada}
    Let $(\alpha(u),\beta(u))$ be the solution to \eqref{eq:alphabeta} for $(a,b,\kappa) = (1,1,0)$, and $\tau_1 := \tau(1,1,0)$. Then, $\alpha'(\tau_1) < 0$. 
\end{lemma}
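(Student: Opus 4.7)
My proof will rest on two independent facts: the conservation of the first integral $\mathcal{C}_1$ in \eqref{eq:primeraintegral1}, and the Hamilton--Jacobi identification of the point $\tau_1$ with the first root $\lambda^*$ of $f(\lambda) = s(\lambda)+t(\lambda)$ described in Remark \ref{rem:taulambdastar}. The former reduces the sign question for $\alpha'(\tau_1)$ to a one-sided bound for $\alpha(\tau_1)$, and the latter, combined with Theorem \ref{thm:fhm}, yields that bound.

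For $(a,b,\kappa)=(1,1,0)$ we compute directly from \eqref{eq:abha} that $\alpha'(0)=\beta'(0)=1/4$ and $\hat a=-1/4$. Evaluating the conserved quantity \eqref{eq:primeraintegral1} at $u=0$ gives $\mathcal{C}_1 = \alpha'(0)\beta'(0) = 1/16$, while at $u=\tau_1$ the equalities $\beta(\tau_1)=0$ and $\kappa=0$ collapse it to $\mathcal{C}_1 = \alpha'(\tau_1)\beta'(\tau_1) + \alpha(\tau_1)^2/4$. Comparing the two expressions I obtain
\[
\alpha'(\tau_1)\,\beta'(\tau_1) \;=\; \frac{1-4\alpha(\tau_1)^2}{16}.
\]
Since $\beta'(\tau_1)<0$ by Remark \ref{rem:taulambdastar}, the sign of $\alpha'(\tau_1)$ is opposite to that of $1-4\alpha(\tau_1)^2$; hence it will suffice to show the strict inequality $\alpha(\tau_1)<1/2$.

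To produce that bound, I will use that $\tau_1$ corresponds to the first positive root $\lambda^*$ of the function $f(\lambda) = s(\lambda)+t(\lambda)$ given by Theorem \ref{thm:fhm}. The change of variables \eqref{eq:cambiost} at $\kappa=0$ reads $\alpha\beta = s+t$ and $-st = \alpha^2/4$. Evaluating at $\lambda^*$, the first relation gives $t(\lambda^*) = -s(\lambda^*)$, and the second then yields $\alpha(\tau_1)^2 = 4s(\lambda^*)^2$. Since $\alpha(\tau_1)>0$ and $s(\lambda^*)>0$ by Lemma \ref{lem:solucionesdegeneradas}, I get $\alpha(\tau_1) = 2s(\lambda^*)$, and the desired bound becomes $s(\lambda^*)<1/4$.

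Finally, Theorem \ref{thm:fhm} tells me that $s'(\lambda^*)<0$, so in particular $s'(\lambda^*)\ne 0$. The $\kappa=0$ version of \eqref{eq:st} is $s'^{\,2} = s^2\,g(s)$ with $g(x)=-(x+1/4)^2(x-1/4)$, which vanishes precisely at $x=1/4$ on the interval $(0,1/4]$ where $s(\lambda)$ takes its values. Thus $s'(\lambda^*)\ne 0$ forces $s(\lambda^*)\in(0,1/4)$, whence $\alpha(\tau_1)=2s(\lambda^*)<1/2$. Substituting into the identity displayed above makes the right-hand side strictly positive; together with $\beta'(\tau_1)<0$ this gives $\alpha'(\tau_1)<0$, as required. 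The only delicate point in this plan is the correct matching of signs under the change of variables, since \eqref{eq:cambiost} determines $\alpha$ only up to sign; this is handled by invoking $\alpha(\tau_1)>0$ from Remark \ref{rem:taulambdastar}.
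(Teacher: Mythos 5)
Your proof is correct, and it takes a genuinely different route from the paper's. The paper differentiates the identity $\alpha^2 = -4st$ in $u$ at $\tau_1$, which (after using $s(\lambda^*)=-t(\lambda^*)$ and $\lambda'(\tau_1)>0$) reduces the claim to $s'(\lambda^*)-t'(\lambda^*)<0$; this is then verified by comparing $\sqrt{g(-s^*)}/\sqrt{g(s^*)}$ with $1$, i.e.\ by checking $g(x)>g(-x)$ on $(0,1/4)$, and uses both $t'(\lambda^*)<0$ (monotonicity of $t$) and $s'(\lambda^*)<0$ (Theorem~\ref{thm:fhm}). Your argument instead invokes the conserved quantity $\mathcal{C}_1$ from~\eqref{eq:primeraintegral1}, evaluated at $u=0$ and $u=\tau_1$, to obtain the clean identity $\alpha'(\tau_1)\beta'(\tau_1)=(1-4\alpha(\tau_1)^2)/16$, which converts the problem into a single strict bound $\alpha(\tau_1)<1/2$, equivalently $s(\lambda^*)<1/4$. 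You then get the strict inequality from $s'(\lambda^*)<0$ exactly as the paper does. The two proofs share the crucial inputs from Remark~\ref{rem:taulambdastar} and Theorem~\ref{thm:fhm}, but your use of the first integral bypasses the monotonicity of $t$, the explicit computation of $t'/s'$, and the polynomial inequality $g(x)>g(-x)$; it is arguably tighter and more self-contained, at the small cost of introducing one new ingredient (the first integral $\mathcal{C}_1$) that the paper's proof does not touch.
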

\begin{proof}
    Let $(s(\lambda),t(\lambda))$ be the solution to \eqref{eq:st} for $(1,1,0) \in \mathcal{O}$. By Remark \ref{rem:taulambdastar}, $\tau_1$ corresponds with the first root $\lambda^*$ of the function $f(\lambda) = s(\lambda) + t(\lambda)$, and $\alpha(\tau_1) > 0$. Now, by \eqref{eq:cambiost}, $\alpha^2 = -4st$, so differentiating with respect to $u$ at $\tau_1$,
    $$\alpha(\tau_1)\alpha'(\tau_1) = -2\lambda'(\tau_1)\left(s'(\lambda^*)t(\lambda^*) + s(\lambda^*)t'(\lambda^*)\right) = 2\lambda'(\tau_1)s(\lambda^*)\left(s'(\lambda^*) - t'(\lambda^*)\right),$$
    where we use $s(\lambda^*) = -t(\lambda^*)$. Since $\alpha(\tau_1),\lambda'(\tau_1), s(\lambda^*) > 0$, it suffices to prove that 
    \begin{equation}\label{eq:desigualdadstprima}
        s'(\lambda^*) - t'(\lambda^*) < 0.
    \end{equation}
    Let $s^*:= s(\lambda^*) = -t(\lambda^*)$. We know that $t(\lambda)$ is decreasing so $t'(\lambda^*) < 0$, and that $s^* \in (0,1/4]$; see Remark \ref{rem:stabk0}. By Theorem \ref{thm:fhm}, it also holds that $s'(\lambda^*) < 0$, which by \eqref{eq:st} and \eqref{eq:geuclideo} implies that $s^* \neq 1/4$. Hence, \eqref{eq:desigualdadstprima} is equivalent to
    $$1 > \frac{t'(\lambda^*)}{s'(\lambda^*)} = \frac{\sqrt{g(-s^*)}}{\sqrt{g(s^*)}}.$$
    It is immediate to check that $g(x) > g(-x)$ for all $x \in (0,1/4)$, so we deduce \eqref{eq:desigualdadstprima}.
\end{proof}

\subsection{The orthogonal radius}
As we mentioned at the beginning of this Section, for every $b \geq 1$ the surface $\Sigma = \Sigma(1,b,0)$ represents the same catenoid. In fact, the immersion $\psi(u,v;1,b,0)$ does not depend on $b$ either; see Remark \ref{rem:independenciab}. In this particular situation, we can compute $\psi(u,v)$ explicitly. First, we identify $\mathbb{M}^3(0) = \mathbb{R}^4 \cap \{x_4 = 1\}$ with $\mathbb{R}^3$. Under this identification, the rotation axis $\Upsilon$ is just the $x_3$-axis. The initial conditions in Remark \ref{rem:omenoscond} for the immersion are
\begin{equation}\label{eq:inicondicat}
    \psi(0,0) = 2{\bf e}_1, \; \; \; \; \psi_u(0,0) = {\bf e}_3,\; \; \; \; \psi_v(0,0) = -{\bf e}_2, \; \; \; \; N(0,0) = {\bf e}_1,
\end{equation}
where $\{{\bf e}_1,{\bf e}_2,{\bf e}_3\}$ denotes the canonical basis of $\mathbb{R}^3$. It is immediate by \eqref{eq:omegau2} that $e^{\omega(u)} = \cosh\left(\frac{u}{2}\right)$. Since the $u$-curve $\psi(u,0)$ is the profile curve of the catenoid and $v$ is the rotation parameter, we deduce that
\begin{equation}\label{eq:catenoide}
\begin{aligned}
\psi(u,v) &= \left(2\cosh\left(\frac{u}{2}\right)\cos\left(\frac{v}{2}\right),-2\cosh\left(\frac{u}{2}\right)\sin\left(\frac{v}{2}\right),u\right), \\
N(u,v) &= \left(\operatorname{sech}\left(\frac{u}{2}\right)\cos\left(\frac{v}{2}\right),-\operatorname{sech}\left(\frac{u}{2}\right)\sin\left(\frac{v}{2}\right),-\tanh\left(\frac{u}{2}\right)\right).
\end{aligned}
\end{equation}
We now introduce the following concept.
\begin{definition}
   For every $u_0 > 0$, we define the \textbf{orthogonal radius} $R_\perp = R_\perp(u_0) > 0$ as the radius of the unique ball $B_\perp = B_\perp(u_0) \subset \mathbb{R}^3$ which intersects the Euclidean catenoid $\psi(u,v)$ in \eqref{eq:catenoide} orthogonally along the $v$-line $\psi(u_0,v)$; see Figure \ref{fig:radiogeometrico}.
\end{definition}
\begin{remark}\label{rem:cperp}
    It follows from the definition of orthogonal radius that the centers $c_\perp(u)$ of the balls $B_\perp(u)$ must lie in the rotation axis $\Upsilon$ of the catenoid; see Figure \ref{fig:radiogeometrico}. These centers are given by
    \begin{equation}\label{eq:cperp}
        c_\perp(u) = \psi(u,0) - R_\perp(u)e^{-\omega(u)}\psi_u(u,0).
    \end{equation}
\end{remark} 
The study of the orthogonal radius will be key to prove \eqref{eq:desigualdadbstar}, as we will see in Theorem \ref{thm:radiogeometrico}.

\begin{lemma}
    The orthogonal radius satisfies the following properties:
    \begin{enumerate}
        \item $\lim_{u \to 0^+} R_{\perp}(u) = \lim_{u \to \infty} R_{\perp}(u) = \infty$.
        \item $R_\perp(u)$ is strictly decreasing in $(0,u^*]$ and increasing in $[u^*,\infty)$, where $u^* = 2\arcsinh\left(1\right)$.
        \item $u^* < \tilde u(0) < \hat u$, where $\tilde u(0)$ is given in Proposition \ref{pro:catenoidesFB}, and $\hat u =2\arcsinh\left(2\right)$.
    \end{enumerate}
\end{lemma}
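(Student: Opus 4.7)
My plan is to derive an explicit closed-form expression for $R_\perp(u)$ using the Euclidean catenoid parametrization \eqref{eq:catenoide} together with Remark \ref{rem:cperp}, and then to prove the three items by elementary analysis of this expression and of the characterization of $\tilde u(0)$ given by Proposition \ref{pro:catenoidesFB}.

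For the formula, I start from $c_\perp(u) = \psi(u,0) - R_\perp(u)\, e^{-\omega(u)}\psi_u(u,0)$. Using $\psi(u,0) = (2\cosh(u/2), 0, u)$, $\psi_u(u,0) = (\sinh(u/2), 0, 1)$ and $e^{\omega(u)} = \cosh(u/2)$, and imposing that $c_\perp(u)$ lies on the rotation axis $\Upsilon = \{x_1 = x_2 = 0\}$, the first-coordinate equation yields
$$R_\perp(u) = \frac{2\cosh^2(u/2)}{\sinh(u/2)}.$$
Item (1) is then immediate: as $u \to 0^+$ the denominator vanishes, while as $u \to \infty$ the right-hand side grows like $\tfrac{1}{2}e^{u/2}$. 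For item (2), differentiating and simplifying with $\cosh^2 - \sinh^2 = 1$ gives
$$R_\perp'(u) = \frac{\cosh(u/2)\,(\sinh^2(u/2) - 1)}{\sinh^2(u/2)},$$
whose unique zero in $(0,\infty)$ is at $\sinh(u/2) = 1$, that is, $u = u^* = 2\arcsinh(1)$; a direct sign analysis confirms it is a strict minimum.

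For item (3), the key is to identify $\tilde u(0)$ explicitly. In the Euclidean setting the geodesic $\zeta_u$ from Definition \ref{def:zetau} is the straight line $\{(2\cosh(u/2) + t\sinh(u/2), 0, u + t) : t \in \mathbb{R}\}$. Requiring its first coordinate to vanish yields $t = -2\coth(u/2)$, so that the intersection with $\Upsilon$ is $\hat p(u) = (0, 0, u - 2\coth(u/2))$. By Proposition \ref{pro:catenoidesFB}(2), $\tilde u(0)$ is the unique positive solution of $\hat p_3(u) = 0$, i.e. of $u\tanh(u/2) = 2$. Since $F(u) := u\tanh(u/2)$ is strictly increasing on $(0, \infty)$, the stated bounds reduce to $F(u^*) < 2 < F(\hat u)$, i.e. $\sqrt{2}\,\ln(1 + \sqrt{2}) < 2$ and $(4/\sqrt{5})\,\ln(2 + \sqrt{5}) > 2$. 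The first follows from $\ln(1+x) < x$ for $x > 0$ applied to $x = \sqrt{2}$; the second rearranges to $(2 + \sqrt{5})^2 = 9 + 4\sqrt{5} > e^{\sqrt{5}}$, which holds because $\sqrt{5} < 5/2$ and $e^{5/2} < 13 < 9 + 4\sqrt{5}$.

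I do not anticipate any real obstacle. The only conceptual step is translating the implicit characterization of $\tilde u(0)$ from Proposition \ref{pro:catenoidesFB}(2) into the explicit equation $u\tanh(u/2) = 2$; once this is done, everything reduces to standard manipulations with hyperbolic functions and elementary estimates for $\ln$ and $\exp$.
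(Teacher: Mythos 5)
Your proof is correct and takes essentially the same route as the paper's: derive the explicit formula $R_\perp(u) = 2\coth(u/2)\cosh(u/2)$, from which (1) and (2) are immediate, and characterize $\tilde u(0)$ as the unique zero of the strictly increasing function $\hat p_3(u) = u - 2\coth(u/2)$ (the paper's $G(u)$, equivalent to your $u\tanh(u/2) = 2$). The only minor difference is that you supply elementary estimates ($\ln(1+x)<x$ and $e^{5/2}<13<9+4\sqrt5$) for the sign checks $G(u^*)<0<G(\hat u)$, which the paper simply asserts.
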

\begin{figure}
\centering
\includegraphics[width=0.45\textwidth]{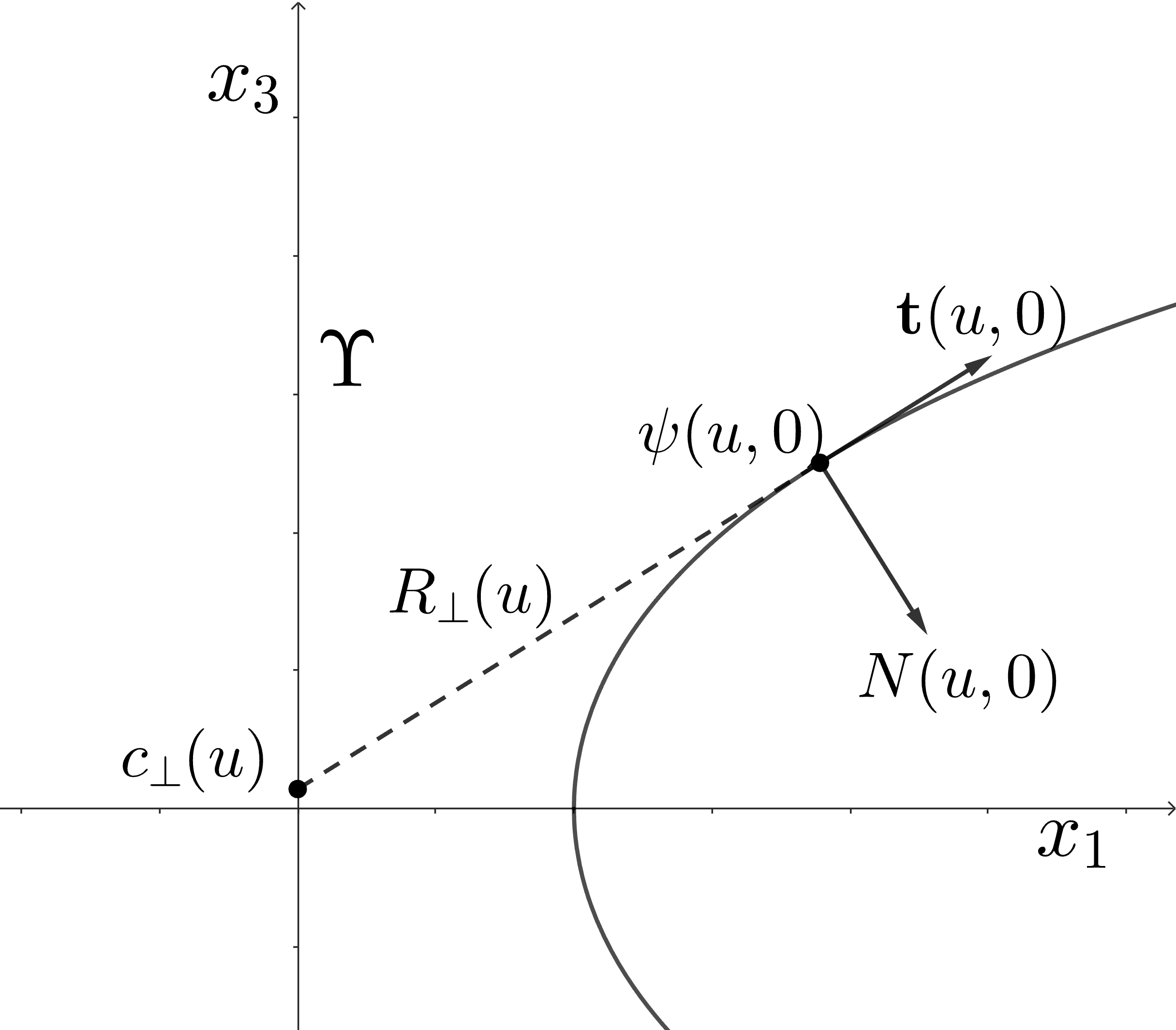}
\caption{Geometric interpretation of the orthogonal radius. Here, ${\bf t}(u,0)$ denotes the unit tangent vector $e^{-\omega(u)}\psi_u(u,0)$.}\label{fig:radiogeometrico}
\end{figure}

\begin{proof}
We first obtain an explicit expression for the orthogonal radius. Indeed, for each $u> 0$, $R_\perp(u) > 0$ is the unique value for which the right-hand side of \eqref{eq:cperp} lies in the $x_3$-axis. A straightforward computation using \eqref{eq:catenoide} allows us to deduce that
\begin{equation}\label{eq:crperp}
\begin{aligned}
    c_\perp(u) &= \left(0,0,  u-2\cotanh\left(\frac{u}{2}\right) \right), \\
    R_\perp(u) &= 2\cotanh\left(\frac{u}{2}\right)\cosh\left(\frac{u}{2}\right).
\end{aligned}
\end{equation}
Items (1) and (2) of the Lemma are then immediate. Let us prove item (3). By definition, $\tilde u(0)$ is the value in which the piece of catenoid $\Sigma_0(\tilde u(0);1,b,0)$ is free boundary in a ball; see Proposition \ref{pro:catenoidesFB}. By definition, this ball must be exactly $B_\perp(\tilde u(0))$, so necessarily $c_\perp(\tilde u(0)) = (0,0,0)$. In particular, we deduce by \eqref{eq:crperp} that $\tilde u(0)$ is a root of
\begin{equation}\label{eq:G}
    G(u):= u-2\cotanh\left(\frac{u}{2}\right).
\end{equation}
Observe that $G(u)$ is strictly increasing, so $\tilde u(0)$ is actually the unique root of this function. Since $G(u^*) = 2\arcsinh(1) - 2\sqrt{2} < 0$ and $G(\hat u) = 2\arcsinh(2) - \sqrt{5} > 0$, we deduce that $u^* < \tilde u (0) < \hat u$, proving item (3).
\end{proof}

\begin{remark}
    Since the map $R_\perp(u)$ is increasing for $u \in [u^*,\infty)$, it holds 
    \begin{equation}\label{eq:Rperp5}
        R_\perp(\tilde u(0)) < R_\perp(\hat u) = 5.
    \end{equation}
\end{remark}

\begin{corollary}\label{cor:radiogeometrico}
    Let $u_0 > 0$ such that $R_\perp(u_0) > R_\perp(\tilde u(0))$ and $R_\perp'(u_0) > 0$. Then, $u_0 > \tilde u(0)$.
\end{corollary}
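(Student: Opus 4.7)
The plan is to read this off directly from the previous lemma, which completely determines the monotonicity behaviour of $R_\perp$. Recall that $R_\perp$ is strictly decreasing on $(0,u^*]$ and strictly increasing on $[u^*,\infty)$, with $\tilde u(0) \in (u^*,\hat u)$.

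First I would use the hypothesis $R_\perp'(u_0) > 0$ to localize $u_0$. On $(0,u^*]$ the function $R_\perp$ is strictly decreasing, hence $R_\perp'(u) \leq 0$ there (and the point $u^*$ is the unique minimum, where $R_\perp'(u^*)=0$). Therefore $R_\perp'(u_0) > 0$ forces $u_0 \in (u^*, \infty)$. In this interval, $R_\perp$ is strictly increasing, hence injective.

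Next, since $\tilde u(0) > u^*$ by item (3) of the previous lemma, both $u_0$ and $\tilde u(0)$ lie in the strictly increasing branch $(u^*,\infty)$ of $R_\perp$. The assumption $R_\perp(u_0) > R_\perp(\tilde u(0))$, together with the strict monotonicity of $R_\perp$ on this branch, immediately yields $u_0 > \tilde u(0)$, which is the desired conclusion.

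There is no real obstacle here; the whole content is the monotonicity picture of $R_\perp$ already proved, and the corollary is essentially a sanity check that any candidate $u_0$ with the two stated properties lies on the ``right'' side of $\tilde u(0)$. This is exactly the configuration we will exploit later to compare $\tau(1,1,0)$ with $\tilde u(0)$ in Theorem \ref{thm:radiogeometrico}: showing that $\tau(1,1,0)$ satisfies the two hypotheses (namely, that $R_\perp$ evaluated at $\tau(1,1,0)$ exceeds $R_\perp(\tilde u(0))$ and that its derivative is positive there) will automatically place $\tau(1,1,0)$ beyond $\tilde u(0)$, giving the first inequality of \eqref{eq:desigualdadbstar}.
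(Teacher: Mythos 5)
Your proof is correct and follows exactly the same argument as the paper: use $R_\perp'(u_0)>0$ together with the monotonicity dichotomy from the preceding lemma to place $u_0$ in $(u^*,\infty)$, then compare $u_0$ with $\tilde u(0)$ (which also lies there) via strict monotonicity of $R_\perp$ on that branch.
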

\begin{proof}
    Since $R_\perp'(u_0) > 0$, we deduce by the previous Lemma that $u_0$ belongs to the interval $(u^*,\infty)$, in which $R_\perp$ is strictly increasing. The value $\tilde u(0)$ also lies in this interval, and due to the fact that $R_\perp(u_0) > R_\perp(\tilde u(0))$, we deduce $u_0 > \tilde u(0)$. 
\end{proof}

Our next goal is to show that the value $u_0 = \tau(1,1,0)$ is in the conditions of Corollary \ref{cor:radiogeometrico}, from which we will prove \eqref{eq:desigualdadbstar}.

\begin{lemma}\label{lem:derivadaRperp}
    Let $\tau_1 = \tau(1,1,0)$. Then, $R'_\perp(\tau_1) > 0$.
\end{lemma}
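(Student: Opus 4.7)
The plan is to reduce the claim $R'_\perp(\tau_1) > 0$ to the inequality $\tau_1 > u^* := 2\arcsinh(1)$. Differentiating the explicit formula $R_\perp(u) = 2\cotanh(u/2)\cosh(u/2)$ from \eqref{eq:crperp} yields
\[
R'_\perp(u) \;=\; \frac{\cosh(u/2)\bigl(\sinh^2(u/2) - 1\bigr)}{\sinh^2(u/2)},
\]
so $R'_\perp(u) > 0$ precisely when $\sinh(u/2) > 1$, i.e., when $u > u^*$.

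To establish $\tau_1 > u^*$ I would exploit the rotational symmetry of $\Sigma(1,1,0)$. Since $a = 1$, each sphere $\mathcal{Q}(u)$ is rotation-invariant, so its center $\hat c(u)$ from \eqref{eq:meuclideo} must lie on the rotation axis $\Upsilon = \{x_1 = x_2 = 0\} \cap \mathbb{M}^3(0)$. Substituting the explicit parametrizations of $\psi(u,v)$ and $N(u,v)$ from \eqref{eq:catenoide} (with $e^{\omega(u)} = \cosh(u/2)$) into \eqref{eq:meuclideo} and imposing that the first two coordinates of $\hat c(u)$ vanish produces the algebraic identity
\[
\alpha(u)\cosh^2(u/2) + \beta(u) \;=\; \sinh(u/2),
\]
valid wherever $\alpha(u) \neq 0$.

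The final step is to differentiate this identity and evaluate it at $u = \tau_1$. Using $\beta(\tau_1) = 0$ the identity itself gives $\alpha(\tau_1) = \sinh(\tau_1/2)/\cosh^2(\tau_1/2)$; substituting this value into the derivative of the identity and simplifying via $\cosh^2 - \sinh^2 = 1$ leads to
\[
\alpha'(\tau_1)\cosh^2(\tau_1/2) + \beta'(\tau_1) \;=\; \frac{1 - \sinh^2(\tau_1/2)}{2\cosh(\tau_1/2)}.
\]
By Lemma \ref{lem:alphaderivada} we have $\alpha'(\tau_1) < 0$, and by Remark \ref{rem:taulambdastar} we have $\beta'(\tau_1) < 0$, so the left-hand side is strictly negative. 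Therefore $1 - \sinh^2(\tau_1/2) < 0$, which forces $\tau_1 > u^*$ and concludes the argument.

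The main obstacle is the clean derivation of the algebraic identity. The geometric argument (centers on the axis) is transparent, but \eqref{eq:meuclideo} is singular at the zeros of $\alpha$, so the identity must be read as valid on open intervals where $\alpha \neq 0$; since $\alpha(\tau_1) > 0$ by Remark \ref{rem:taulambdastar}, the identity and its first derivative are well-defined in a neighborhood of $\tau_1$, which is all that is needed.
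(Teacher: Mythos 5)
Your proof is correct, and it takes a genuinely different (and arguably cleaner) route than the paper's. The paper establishes $R'_\perp(\tau_1) \geq R'(\tau_1)$ via a one-sided comparison: it writes $R_\perp(u) = \frac{2}{\alpha(u)} - \frac{2\beta(u)}{\alpha(u)\sinh(u/2)}$, observes that $\beta(\tau_1)=0$ and $\beta'(\tau_1)<0$ force $R_\perp(u) > \frac{2}{\alpha(u)} = R(u)$ just to the right of $\tau_1$ with equality at $\tau_1$, and then computes $R'(\tau_1) = -2\alpha'(\tau_1)/\alpha(\tau_1)^2 > 0$ from Lemma \ref{lem:alphaderivada}. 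Your route instead converts the "center on the axis" condition into the equivalent algebraic identity $\alpha(u)\cosh^2(u/2) + \beta(u) = \sinh(u/2)$, differentiates once, and reads off the sign of $1-\sinh^2(\tau_1/2)$ directly from the signs of $\alpha'(\tau_1)$ and $\beta'(\tau_1)$. Both arguments rest on exactly the same two inputs (Lemma \ref{lem:alphaderivada} and Remark \ref{rem:taulambdastar}), and your identity is equivalent to the paper's expression for $R_\perp$ in disguise, but your route avoids the auxiliary radius $R(u)$ and the one-sided comparison, replacing them with the geometrically transparent inequality $\tau_1 > u^*$ together with the monotonicity of $R_\perp$ already recorded in the preceding lemma. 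I verified the differentiated identity: using $\alpha(\tau_1)=\sinh(\tau_1/2)/\cosh^2(\tau_1/2)$ one indeed obtains $\alpha'(\tau_1)\cosh^2(\tau_1/2)+\beta'(\tau_1) = \bigl(\cosh^2(\tau_1/2)-2\sinh^2(\tau_1/2)\bigr)/\bigl(2\cosh(\tau_1/2)\bigr) = \bigl(1-\sinh^2(\tau_1/2)\bigr)/\bigl(2\cosh(\tau_1/2)\bigr)$, so the argument is sound.
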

\begin{proof}
    Let $R = R(u)$ denote the radius of the sphere $\mathcal{Q}(u)$ in Lemma \ref{lem:lineasesfericas}. At $u = \tau_1$, the sphere $\mathcal{Q}(\tau_1)$ intersects the $v$-line $\psi(\tau_1,v)$ orthogonally, so
    \begin{equation}\label{eq:RRperptau}
        R(\tau_1) = R_\perp(\tau_1)
    \end{equation}
     by definition of orthogonal radius. We will now prove that
\begin{equation}\label{eq:comparativaR}
    R'_\perp(\tau_1) \geq R'(\tau_1) > 0,
\end{equation}
    from which the Lemma follows. 
    First, by \eqref{eq:abeuclid},
    \begin{equation}\label{eq:derivadaR}
        R'(\tau_1) =\left(\frac{2\sqrt{1 + \beta^2}}{\alpha}\right)'\bigg|_{u = \tau_1} =\left(\frac{2}{\alpha}\right)'\bigg|_{u = \tau_1} = -\frac{2\alpha'(\tau_1)}{\alpha^2(\tau_1)},
    \end{equation}
where we have used that $\beta(\tau_1) = 0$ and $\alpha(\tau_1) > 0$; see Remark \ref{rem:taulambdastar}. By Lemma \ref{lem:alphaderivada} we conclude that $R'(\tau_1) > 0$. 

\medskip

Let us now prove that $R'_\perp(\tau_1) \geq R'(\tau_1)$. To do so, for any $u> 0$ we will compute the first component of $\psi(u,0)$, i.e. $\langle \psi(u,0),{\bf e}_1\rangle$ in two different ways. Here, $\langle \cdot, \cdot \rangle$ denotes the usual Euclidean product in $\mathbb{R}^3$ and ${\bf e}_1 = (1,0,0)$. First, we use the fact that $c_\perp(u)$ lies in the rotation axis $\Upsilon$, so $\langle c_\perp(u) ,{\bf e}_1 \rangle = 0$. In particular, by \eqref{eq:catenoide} and \eqref{eq:crperp},
\begin{equation}\label{eq:calculo1}
    \langle \psi(u,0),{\bf e}_1\rangle = \langle \psi(u,0) - c_\perp(u),{\bf e}_1\rangle = R_\perp(u)\tanh\left(\frac{u}{2}\right) > 0.
\end{equation}
We can apply a similar trick by using the center map $c(u)$ in \eqref{eq:definicioncentro} instead, yielding
\begin{equation}\label{eq:calculo2}
    \langle \psi(u,0),{\bf e}_1\rangle = \langle \psi(u,0) - c(u),{\bf e}_1\rangle = \frac{2}{\alpha(u)}\tanh\left(\frac{u}{2}\right) - \frac{2\beta(u)}{\alpha(u)}\operatorname{sech}\left(\frac{u}{2}\right).
\end{equation}
Now, the expressions in \eqref{eq:calculo1} and \eqref{eq:calculo2} must coincide, so
\begin{equation}\label{eq:Rperpalpha}
    R_\perp(u) = \frac{2}{\alpha(u)} - \frac{2\beta(u)}{\alpha(u)\sinh\left(\frac{u}{2}\right)}.
\end{equation}
Since $\beta(\tau_1) =0$ but $\beta'(\tau_1) < 0$ (see Remark \ref{rem:taulambdastar}), it follows that $\beta(u) < 0$ for every $u > \tau_1$ close enough to $\tau_1$. In particular, from \eqref{eq:Rperpalpha} it holds 
$R_\perp(\tau_1) = \frac{2}{\alpha(\tau_1)}$ while $R_\perp(u) > \frac{2}{\alpha(u)}$ for $u > \tau_1$. Along with \eqref{eq:derivadaR}, this implies that
$$R_\perp'(\tau_1) \geq \left(\frac{2}{\alpha}\right)'\bigg|_{u = \tau_1} = R'(\tau_1) > 0.$$
This proves \eqref{eq:comparativaR} and the Lemma.
\end{proof}
We finally prove \eqref{eq:desigualdadbstar}, the main result of this Section.

\begin{theorem}\label{thm:radiogeometrico}
    Let $\tau_1 = \tau(1,1,0)$. Then, $\tau_1 > \tilde u(0)$.
\end{theorem}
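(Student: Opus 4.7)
The plan is to apply Corollary~\ref{cor:radiogeometrico} to $u_0 = \tau_1$. That corollary needs two hypotheses: $R'_\perp(\tau_1) > 0$, which is exactly the content of Lemma~\ref{lem:derivadaRperp}, and $R_\perp(\tau_1) > R_\perp(\tilde u(0))$, which carries all the novelty. Since \eqref{eq:Rperp5} gives $R_\perp(\tilde u(0)) < R_\perp(\hat u) = 5$, the second hypothesis reduces to $R_\perp(\tau_1) > 5$; and evaluating \eqref{eq:Rperpalpha} at $\tau_1$, where $\beta(\tau_1) = 0$, gives the explicit identity $R_\perp(\tau_1) = 2/\alpha(\tau_1)$. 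Hence everything reduces to the estimate $\alpha(\tau_1) < 2/5$.

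To bound $\alpha(\tau_1)$ I would move into the Hamilton--Jacobi variables $(s,t)$ of \eqref{eq:cambiost} for $(a,b,\kappa) = (1,1,0)$. By Remark~\ref{rem:taulambdastar}, the parameter $\lambda^*$ corresponding to $\tau_1$ is the first root of $f(\lambda) = s + t$, so $s(\lambda^*) = -t(\lambda^*) > 0$. The relation $\alpha^2 = -4st$ then gives $\alpha(\tau_1) = 2s(\lambda^*) = S(\lambda^*)/2$ in the rescaled variable $S = 4s$ of \eqref{eq:ST}, and the target becomes $S(\lambda^*) < 4/5$. By Theorem~\ref{thm:fhm}, $S(\lambda^*)$ is singled out by $\mathcal{F}(S(\lambda^*)) = -1$. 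I would then check (i) that $\mathcal{F}$ is monotone increasing on $(0,1)$ --- a direct computation from the explicit formula for $H$, combined with $\mathcal{F}(x) \to -\infty$ as $x \to 0^+$ and $\mathcal{F}(1) = 0$, forcing uniqueness of the root of $\mathcal{F} = -1$ --- and (ii) that $\mathcal{F}(4/5) > -1$, which together place the root in $(0, 4/5)$.

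The main obstacle is step (ii), the explicit arithmetic verification that $\mathcal{F}(4/5) > -1$. Unpacking the definition of $H$ after rationalizing denominators, this reduces to the algebraic inequality
\begin{equation*}
(9 + 4\sqrt{5})\left(\frac{13 - 4\sqrt{10}}{13 + 4\sqrt{10}}\right)^{1/\sqrt{2}} < 1,
\end{equation*}
whose left-hand side combines an algebraic number close to $18$ with a small positive quantity (numerically $\approx 0.0137$) raised to the irrational exponent $1/\sqrt{2}$. A certified numerical computation, or explicit interval bounds on each factor separately, is the cleanest route to making this strict inequality rigorous; the remaining steps are routine once this one quantitative bound is secured.
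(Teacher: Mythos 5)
Your argument is correct and follows essentially the same route as the paper: both reduce the theorem to the inequality $R_\perp(\tau_1) > R_\perp(\tilde u(0))$ via Corollary~\ref{cor:radiogeometrico} and Lemma~\ref{lem:derivadaRperp}, identify $R_\perp(\tau_1) = 2/\alpha(\tau_1) = 4/S(\lambda^*)$, and combine Theorem~\ref{thm:fhm}, the monotonicity of $\mathcal{F}$, and the bound $\mathcal{F}(4/5) > -1$ with \eqref{eq:Rperp5} to conclude. Your unpacking of the decisive numerical check into the explicit algebraic inequality $(9 + 4\sqrt{5})\bigl(\tfrac{13-4\sqrt{10}}{13+4\sqrt{10}}\bigr)^{1/\sqrt{2}} < 1$ is a nice touch and more self-contained than the paper's numerical approximation, but the underlying proof is the same.
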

\begin{proof}
It is an immediate consequence of Corollary \ref{cor:radiogeometrico} and Lemma \ref{lem:derivadaRperp} once we prove that $R_\perp(\tau_1) > R_\perp(\tilde u(0))$. We first note that the function $\mathcal{F}(x)$ in Theorem \ref{thm:fhm} satisfies
\begin{equation}\label{eq:igualdades}
    -1 = \mathcal{F}(4s(\lambda^*)) = \mathcal{F}(2\alpha(\tau_1)) = \mathcal{F}\left(\frac{4}{R(\tau_1)}\right) = \mathcal{F}\left(\frac{4}{R_\perp(\tau_1)}\right).
\end{equation}
The first equality follows from the statement of Theorem \ref{thm:fhm}. For the second equality, we use \eqref{eq:cambiost} and the fact that $\lambda^*$ corresponds with the value $\tau_1$ in the variable $u$; see Remark \ref{rem:taulambdastar}. Finally, the third and fourth equalities are a consequence of \eqref{eq:abeuclid}, Remark \ref{rem:taulambdastar} and \eqref{eq:RRperptau}. Now, it is not hard to prove that the function $\mathcal{F}(x)$ is strictly increasing. This implies, by \eqref{eq:Rperp5}, that
$$\mathcal{F}\left(\frac{4}{R_\perp(\tilde u(0))}\right) > \mathcal{F}\left(\frac{4}{5}\right).$$
Observe that $\mathcal{F}\left(\frac{4}{5}\right) \approx -0.862875 > -1$. Hence, by \eqref{eq:igualdades}, it follows that
$$\mathcal{F}\left(\frac{4}{R_\perp(\tilde u(0))}\right) > -1 = \mathcal{F}\left(\frac{4}{R_\perp(\tau_1)}\right).$$
By the monotonicity of $\mathcal{F}(x)$,  we deduce that $R_\perp(\tau_1) > R_\perp(\tilde u(0))$, as we wanted to prove.
\end{proof}

\begin{remark}\label{rem:mu}
Let $\hat f:\mathcal{W} \cap \{a = 1\} \to \mathbb{R}$ be the analytic map
\begin{equation}\label{eq:hatf}
    \hat f(b,\kappa) := \tau(1,b,\kappa) - \tilde u(\kappa),
\end{equation}
where $\mathcal{W}$ is the open set in Remark \ref{rem:tauW}. From \eqref{eq:desigualdadbstar}, it follows that $\hat f(1,0) > 0$ and that $\hat f(b,0) < 0$ for every $b < 2$ close enough to 2. Hence, there exists some $b_0 \in (1,2)$ with $\hat f(b_0,0) = 0$ and such that $b \mapsto \hat f(b,0)$ changes sign at $b = b_0$. Specifically, $\hat f(b,0) > 0$ for $b < b_0$ and $b$ close enough to $b_0$, while $\hat f(b,0) < 0$ for $b > b_0$ in the same conditions. 

\medskip
    
By Lojasiewicz's Structure Theorem \cite[Theorem 5.2.3]{KP} it follows that there exists an analytic regular curve $\mu \subset \mathbb{R}^2$ (not necessarily unique) contained in the region $\{b > 1, \kappa \leq 0\} \subset \mathbb{R}^2$ such that $\hat f(b,\kappa)$ vanishes along $\mu$, and $(b_0,0) \in \mu$. Since $\hat f$ changes sign at $(b_0,0)$, the curve $\mu$ can be locally expressed as a graph over $\kappa$, i.e., there exists some $\kappa_0 <0$ and an analytic function $b_\mu(\kappa)$ defined on $(\kappa_0,0)$ satisfying
$$\mu \equiv \mu(\kappa) = (b_\mu(\kappa),\kappa).$$
Furthermore, the curve $\mu$ can be chosen so that for every $\kappa \in (k_0,0)$, the function $b \mapsto \hat f(b,\kappa)$ changes sign at $b = b_\mu(\kappa)$, meaning that $\hat f(b,\kappa) > 0$ (resp. $\hat f(b,\kappa) <0$) for $b < b_\mu(\kappa)$ (resp. $b > b_\mu(\kappa)$). In other words, the sign of $\hat f(b,\kappa)$ coincides with that of $b_\mu(\kappa) - b$ locally around $\mu$.
\end{remark}

\section{Proof of the main theorem}\label{sec:main}
In Remark \ref{rem:mu}, we found a curve $\mu(\kappa) = (b_\mu(\kappa),\kappa) \in \mathcal{W}\cap \{a = 1\}$, defined for $\kappa \in (\kappa_0,0)$, along which $\tau(1,b_\mu(\kappa),\kappa) = \tilde u(\kappa)$. By Proposition \ref{pro:catenoidesFB}, this means that for every $\kappa \in(\kappa_0,0)$, the surface $\Sigma_0(\tau;1,b_\mu(\kappa),\kappa)$ is a hyperbolic free boundary catenoid, where $\tau= \tau(1,b_\mu(\kappa),\kappa)$. We now aim to construct non-rotational free boundary annuli. To achieve this purpose, we first recall that the surfaces $\Sigma_0(u_0;a,b,\kappa)$ with $a > 1$ are (non-rotational) annuli provided that $\Theta(a,b,\kappa) \in \mathbb{Q}$; see Remark \ref{rem:anillos}. This property will be studied in Subsection \ref{subsec:81}. In Subsection \ref{subsec:h} we will introduce a {\em height map} $\mathfrak{h} := \mathfrak{h}(a,b,\kappa)$ whose zeros are related with the free boundary property. Finally, in Subsection \ref{subsec:83}, we will prove Theorem \ref{thm:main}.

\subsection{The curve \texorpdfstring{$\mu$}{} and the period map}\label{subsec:81}
We will now study some properties of the curve $\mu$ and its relation with the period map $\Theta(a,b,\kappa)$ for $a = 1$.

\begin{lemma}\label{lem:curvaperiodo}
    Let $a = 1$ and $\Theta_0 \in \left(-\frac{1}{\sqrt{2}},-\frac{1}{\sqrt{3}}\right)$. Then, there exists some $\kappa_1 \in \left(0,\frac{1}{4}\right)$ such that the point $\left(1,b_{\Theta_0}(1,\kappa),\kappa \right)$ belongs to $\mathcal{W}_1$ for every $\kappa \in [0,\kappa_1)$ and to $\mathcal{K}$ for $\kappa = \kappa_1$, where $\mathcal{W}_1$ and $\mathcal{K}$ are defined in \eqref{eq:mathcalW1} and \eqref{eq:defK}, and $b_{\Theta_0}(1,\kappa)$ is given by \eqref{eq:btheta}.
\end{lemma}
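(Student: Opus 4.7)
The plan is to exploit the closed-form expression \eqref{eq:Theta1bk} for the period map at $a=1$ to locate $\kappa_1$ explicitly, and then verify the two defining inequalities of $\mathcal{W}_1$ along the curve $\kappa\mapsto(1, b_{\Theta_0}(1,\kappa),\kappa)$. Since at $a=1$ we have $\mathcal{A}=2$, the condition $\mathcal{A}=\mathcal{B}$ reduces to $\mathcal{B}=2$. Solving $\Theta(1,b,\kappa)=\Theta_0$ using \eqref{eq:Theta1bk} gives the explicit relation $\mathcal{B}(\kappa)=(4\kappa+1)(1-\Theta_0^2)/\Theta_0^2$, which is strictly increasing in $\kappa$. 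Setting $\mathcal{B}(\kappa_1)=2$ yields the candidate
$$\kappa_1=\frac{3\Theta_0^2-1}{4(1-\Theta_0^2)},$$
and using the hypothesis $\Theta_0^2\in(1/3,1/2)$ one sees immediately that the numerator is positive and strictly smaller than the denominator, so $\kappa_1\in(0,1/4)$. Monotonicity of $\mathcal{B}(\kappa)$ then gives $\mathcal{A}>\mathcal{B}(\kappa)$ for $\kappa<\kappa_1$ with equality at $\kappa_1$.

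Next, I would verify the remaining conditions for membership in $\mathcal{W}_1$ (and in $\mathcal{O}$), namely $b_{\Theta_0}(1,\kappa)\geq 1$ and $\mathcal{B}(\kappa)\geq 4\sqrt{\kappa}=2\sqrt{\kappa}\mathcal{A}$. The bound $b\geq 1$ follows from the observation that $b_{\Theta_0}(1,\kappa)$ is the larger root of $x^2-\mathcal{B}x+4\kappa=0$ and that the map $x\mapsto x+4\kappa/x$ is strictly increasing on $[2\sqrt{\kappa},\infty)$; hence $b\geq 1$ is equivalent to $\mathcal{B}(\kappa)\geq 1+4\kappa$, which reduces algebraically to $(1-\Theta_0^2)/\Theta_0^2\geq 1$, i.e., $\Theta_0^2\leq 1/2$, holding strictly by hypothesis.

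For the inequality $\mathcal{B}(\kappa)\geq 4\sqrt{\kappa}$ I would set $g(\kappa):=\mathcal{B}(\kappa)-4\sqrt{\kappa}$ and use convexity. A direct differentiation gives $g''(\kappa)=\kappa^{-3/2}>0$ on $(0,\infty)$, so the minimum of $g$ on $[0,\kappa_1]$ is attained either at an endpoint or at the unique interior critical point $\kappa_*:=\Theta_0^4/(4(1-\Theta_0^2)^2)$. One computes $g(0)=(1-\Theta_0^2)/\Theta_0^2>0$ and $g(\kappa_1)=2-4\sqrt{\kappa_1}>0$ (the latter since $\kappa_1<1/4$), and a short simplification at the critical point yields
$$g(\kappa_*)=\frac{1-2\Theta_0^2}{\Theta_0^2(1-\Theta_0^2)}>0,$$
again because $\Theta_0^2<1/2$. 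Combining these three facts shows $g>0$ on $[0,\kappa_1]$, giving the desired lower bound.

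Because each step reduces to an explicit algebraic computation, I do not expect any serious obstacle. The only delicate point is the convexity argument for $g$, which is needed to rule out a dip below zero between two positive endpoint values; this is precisely the step that would fail outside the range $\Theta_0^2\in(1/3,1/2)$, and it explains why the interval $\mathcal{J}$ in Theorem \ref{thm:main} must sit inside $(-1/\sqrt{2},-1/\sqrt{3})$.
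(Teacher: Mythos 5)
Your proof is correct, though it follows a genuinely different (and more explicit) route than the paper's. The paper reduces membership in $\mathcal{W}_1$ (for $a=1$, $\kappa\geq 0$) to the single inequality $b<\hat b(\kappa):=1+\sqrt{1-4\kappa}$, evaluates $b_{\Theta_0}(1,\kappa)$ at the endpoints $\kappa=0$ and $\kappa=1/4$ to find opposite inequalities, and invokes the intermediate value theorem to produce $\kappa_1$; it does not give a closed form for $\kappa_1$, nor does it spell out the verification that $b\geq 1$ or that $\mathcal{B}\geq 2\sqrt{\kappa}\mathcal{A}$ (the latter is automatic from $\mathcal{B}=b+4\kappa/b$ with $b\geq 1>2\sqrt{\kappa}$). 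You instead pass to the variable $\mathcal{B}$, in which the level set $\Theta=\Theta_0$ becomes the affine function $\mathcal{B}(\kappa)=(4\kappa+1)(1-\Theta_0^2)/\Theta_0^2$, and solve $\mathcal{B}(\kappa_1)=\mathcal{A}=2$ exactly. This buys you an explicit formula $\kappa_1=(3\Theta_0^2-1)/(4(1-\Theta_0^2))$ and makes the monotonicity needed for ``$\mathcal{W}_1$ on all of $[0,\kappa_1)$'' immediate; the price is a longer list of side-conditions to verify, which you handle correctly. One small economy you could have made: your convexity argument for $g(\kappa)=\mathcal{B}(\kappa)-4\sqrt{\kappa}$ is slightly heavier than needed, since the critical point $\kappa_*=\Theta_0^4/(4(1-\Theta_0^2)^2)$ in fact lies strictly above $\kappa_1$ for $\Theta_0^2\in(1/3,1/2)$ (one checks $\kappa_*\leq\kappa_1\Leftrightarrow(2\Theta_0^2-1)^2\leq 0$), so $g$ is monotone on $[0,\kappa_1]$ and only the endpoints need checking. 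Also, your closing remark that the convexity bound ``is precisely the step that would fail outside $\Theta_0^2\in(1/3,1/2)$'' is only half right: the upper bound $\Theta_0^2<1/2$ indeed governs positivity of $g$ and of $b-1$, but the lower bound $\Theta_0^2>1/3$ is what makes $\kappa_1>0$ in the first place. None of this affects the correctness of your proof.
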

\begin{figure}
\centering
\includegraphics[width=0.6\textwidth]{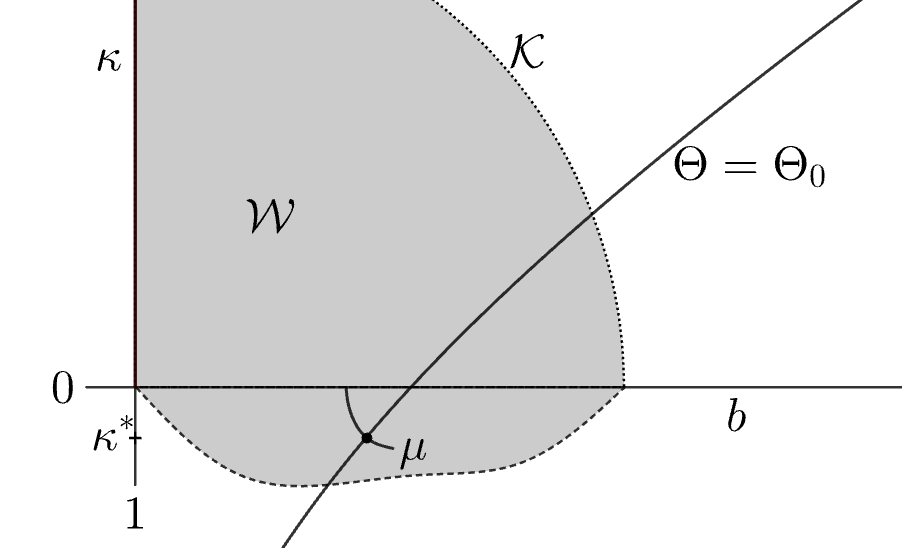}
\caption{Level curve $\Theta = \Theta_0$ of the period map on the plane $\{a = 1\}$ for $\Theta_0 \in \left(- \frac{1}{\sqrt{2}},-\frac{1}{\sqrt{3}}\right)$. This level set and the curve $\mu$ intersect at a point $\mu(\kappa^*) \in \mathcal{W}$ with $\kappa^* <0$.}\label{fig:muperiodo}
\end{figure}
\begin{proof}
    Let $a = 1$ and $\kappa \in [0,1/4)$. From \eqref{eq:mathcalW1} it follows that $(1,b,\kappa) \in \mathcal{O}$ belongs to $\mathcal{W}_1$ if and only if $b < \hat b(\kappa)$, and $(1,b,\kappa) \in \mathcal{K}$ if and only if $b = \hat b(\kappa)$, where $\hat b(\kappa) := 1 + \sqrt{1 - 4\kappa} \geq 1$. By \eqref{eq:btheta},
   \begin{align*}
       b_{\Theta_0}(1,0) &= \frac{1 - \Theta_0^2}{\Theta_0^2} < 2 = \hat b(0), \\
       b_{\Theta_0}\left(1,\frac{1}{4}\right) &= \frac{1 - \Theta_0^2}{\Theta_0^2} + \frac{\sqrt{1 - 2\Theta_0^2}}{\Theta_0^2} >1 = \hat b\left(\frac{1}{4}\right), \\
   \end{align*}
   which implies the existence of some $\kappa_1 \in (0,1/4)$ such that $(1,b_{\Theta_0}\left(1,\kappa\right),\kappa)$ belongs to $\mathcal{W}_1$ for $\kappa \in [0,\kappa_1)$ and to $\mathcal{K}$ for $\kappa = \kappa_1$.
\end{proof}

\begin{lemma}\label{lem:pernoconstante}
    Let $a = 1$ and consider the curve $\mu(\kappa) = (b_\mu(\kappa),\kappa)$, $\kappa \in (\kappa_0,0)$, introduced in Remark \ref{rem:mu}. Then, the period map is not constant along $\mu(\kappa)$. 
\end{lemma}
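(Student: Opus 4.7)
The plan is to argue by contradiction. Suppose $\Theta$ is constant along $\mu$, with value $\Theta_0$. Since by Remark~\ref{rem:mu} the curve $\mu$ passes through $(b_0, 0)$ with $b_0 \in (1,2)$, Proposition~\ref{pro:periodoa1} forces
$$\Theta_0 = -\frac{1}{\sqrt{b_0 + 1}} \in \left(-\frac{1}{\sqrt{2}},-\frac{1}{\sqrt{3}}\right).$$
The strategy is to transport this identity along the level curve $\{\Theta = \Theta_0\}$ into the half-plane $\{\kappa > 0\}$, where we can access the boundary set $\mathcal{K}$ and exploit the degeneration of $\tau$ granted by Lemma~\ref{lem:limitecontinuotau}.

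First, I would identify $\mu$ with the level curve $\{\Theta = \Theta_0\}$ near $(b_0,0)$. By Remark~\ref{rem:btheta}, this level curve is locally the regular analytic graph $b = b_{\Theta_0}(1,\kappa)$, and by our standing assumption $\mu$ is contained in it. Since $\mu$ is itself a regular analytic graph over $\kappa$ passing through $(b_0, 0)$, uniqueness in the implicit function theorem yields $b_\mu(\kappa) = b_{\Theta_0}(1,\kappa)$ for $\kappa$ in a one-sided neighborhood of $0$. Combined with the vanishing of $\hat f$ along $\mu$ (see Remark~\ref{rem:mu}), this gives that the analytic function
$$F(\kappa) := \tau\bigl(1,b_{\Theta_0}(1,\kappa),\kappa\bigr) - \tilde u(\kappa)$$
vanishes on this neighborhood.

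Next, I would extend $F$ as an analytic function for $\kappa \geq 0$. By Lemma~\ref{lem:curvaperiodo}, there exists $\kappa_1 \in (0,1/4)$ such that $(1,b_{\Theta_0}(1,\kappa),\kappa) \in \mathcal{W}_1 \subset \mathcal{W}$ for $\kappa \in [0,\kappa_1)$ and $(1,b_{\Theta_0}(1,\kappa_1),\kappa_1) \in \mathcal{K}$. Since $\tau$ is analytic on $\mathcal{W}$ (Remark~\ref{rem:tauW}) and $\tilde u$ is analytic on $(-1/4,1/4)$ (Proposition~\ref{pro:catenoidesFB}), $F$ extends real-analytically to an open interval containing $[0,\kappa_1)$. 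By the identity theorem, $F \equiv 0$ on this whole interval.

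Finally, I would take $\kappa \to \kappa_1^-$ to derive the contradiction. Since the end-point lies in $\mathcal{K}$, Lemma~\ref{lem:limitecontinuotau} yields $\tau\bigl(1,b_{\Theta_0}(1,\kappa),\kappa\bigr) \to 0$, whereas $\tilde u(\kappa) \to \tilde u(\kappa_1) > 0$ by continuity (and positivity) of $\tilde u$. Hence $F(\kappa) \to -\tilde u(\kappa_1) < 0$, contradicting $F \equiv 0$. The main obstacle is not any single computation but rather the \emph{global} geometric fact that the level curve $\{\Theta = \Theta_0\}$ has to leave the region where $\tau$ stays bounded away from zero; this is precisely the content of the preparatory Lemma~\ref{lem:curvaperiodo}, which is why the threshold $\kappa_1$ is produced inside $\left(0,\frac{1}{4}\right)$ rather than being left arbitrary.
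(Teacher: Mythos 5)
Your proposal matches the paper's proof essentially step for step: identify $\mu$ with the level set $\{\Theta = \Theta_0\}$, form the analytic function $g(\kappa) = \hat f(b_{\Theta_0}(1,\kappa),\kappa)$ (your $F$), use Lemma~\ref{lem:curvaperiodo} to extend it up to $\kappa_1$ where the level curve hits $\mathcal{K}$, invoke the identity theorem to propagate $g \equiv 0$, and then reach a contradiction at $\kappa_1$ via Lemma~\ref{lem:limitecontinuotau} and the positivity of $\tilde u$. The argument is correct and is the same one the paper uses.
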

\begin{proof}
Assume by contradiction that the period map is constant along $\mu$, that is, $\Theta(1,b_\mu(\kappa),\kappa) \equiv \Theta_0$. This means that $\mu(\kappa)$ coincides with the level curve $\kappa \mapsto (b_{\Theta_0}(1,\kappa),\kappa)$ in \eqref{eq:btheta} for $\kappa \in (\kappa_0,0)$. Observe that, since $\mu(0) = (b_0,0)$ with $b_0 \in (1,2)$, \eqref{eq:Theta1bk} shows that $\Theta_0 \in \left(-\frac{1}{\sqrt{2}},-\frac{1}{\sqrt{3}}\right)$. Now, let
\begin{equation}\label{eq:g}
    g(\kappa):= \hat f(b_{\Theta_0}(1,\kappa),\kappa),
\end{equation}
where $\hat f(b,\kappa)$ is defined in \eqref{eq:hatf}. Note that $g(\kappa)$ is analytic for $\kappa \in (\kappa_0,\kappa_1)$, where $\kappa_1 > 0$ is defined in Lemma \ref{lem:curvaperiodo}. It is clear that for $\kappa \in (\kappa_0,0)$, $g(\kappa) =\hat f(\mu(\kappa)) = 0$, so by analyticity, $g(\kappa) \equiv 0$ for all $\kappa \in (\kappa_0,\kappa_1)$. However, this leads to a contradiction:
$$0 = \lim_{\kappa \to \kappa_1} g(\kappa) =\lim_{\kappa \to \kappa_1} \tau(1,b_{\Theta_0}(1,\kappa),\kappa) - \tilde{u}(\kappa) = - \tilde{u}(\kappa_1) < 0,$$
where in the last inequality we use the fact that $(1,b_{\Theta_0}(1,\kappa_1),\kappa_1) \in \mathcal{K}$ and Lemma \ref{lem:limitecontinuotau}. This contradiction proves Lemma \ref{lem:pernoconstante}.
\end{proof}

\begin{proposition}\label{pro:bthetabmu}
There exists an interval $\mathcal{J} \subset \left(-\frac{1}{\sqrt{2}},-\frac{1}{\sqrt{3}}\right)$ such that, for every $\Theta_0 \in \mathcal{J}$, it holds $b_{\Theta_0}(1,\kappa^*) = b_\mu(\kappa^*)$ for some $\kappa^* = \kappa^*(\Theta_0) < 0$. Moreover, the function $\kappa \mapsto  b_\mu(\kappa) - b_{\Theta_0}(1,\kappa)$ changes sign at $\kappa = \kappa^*$; see Figure \ref{fig:muperiodo}.
\end{proposition}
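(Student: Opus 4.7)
The plan is to translate the statement into a one-variable problem by introducing the auxiliary analytic function
\[
\Phi(\kappa) := \Theta(1, b_\mu(\kappa), \kappa),
\]
defined on $(\kappa_0, 0)$, which records the period along the curve $\mu$. The key observation is that, since $\partial_b \Theta(1,b,\kappa) > 0$ (see Remark \ref{rem:btheta}), the level sets $\{\Theta = \Theta_0\}$ in the slice $\{a=1\}$ are exactly the graphs $b = b_{\Theta_0}(1,\kappa)$, and for each fixed $\kappa$ the sign of $b_\mu(\kappa) - b_{\Theta_0}(1,\kappa)$ coincides with the sign of $\Phi(\kappa) - \Theta_0$. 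Hence the proposition reduces to finding, for every $\Theta_0$ in a suitable interval $\mathcal{J} \subset (-1/\sqrt{2}, -1/\sqrt{3})$, a value $\kappa^* < 0$ at which $\Phi$ crosses the level $\Theta_0$ transversally.

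First I would extend $\Phi$ continuously up to $\kappa = 0$ by setting $\Phi(0) := \Theta(1,b_0,0)$. Since $b_0 \in (1,2)$ by Remark \ref{rem:mu}, the explicit formula \eqref{eq:Theta1bk} yields $\Phi(0) = -1/\sqrt{b_0+1} \in (-1/\sqrt{2}, -1/\sqrt{3})$. By continuity there exists $\kappa_2 \in (\kappa_0, 0)$ such that $\Phi((\kappa_2, 0]) \subset (-1/\sqrt{2}, -1/\sqrt{3})$. By Lemma \ref{lem:pernoconstante}, $\Phi$ is non-constant analytic on $(\kappa_0, 0)$; hence $\Phi'$ has only isolated zeros, and I can choose $\kappa^*_0 \in (\kappa_2, 0)$ with $\Phi'(\kappa^*_0) \neq 0$.

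Then I apply the inverse function theorem to $\Phi$ at $\kappa^*_0$: there exists an open neighborhood $U \subset (\kappa_2, 0)$ of $\kappa^*_0$, on which $\Phi'$ does not vanish, such that $\Phi : U \to \mathcal{J} := \Phi(U)$ is an analytic diffeomorphism onto an open interval $\mathcal{J} \subset (-1/\sqrt{2}, -1/\sqrt{3})$. For any $\Theta_0 \in \mathcal{J}$, set $\kappa^* := (\Phi|_U)^{-1}(\Theta_0) \in U$; then $\kappa^* < 0$ and $\Phi(\kappa^*) = \Theta_0$, which by uniqueness of the graph $b_{\Theta_0}(1,\cdot)$ is precisely the equality $b_\mu(\kappa^*) = b_{\Theta_0}(1, \kappa^*)$. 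Moreover, $\Phi'(\kappa^*) \neq 0$ implies that $\Phi - \Theta_0$ changes sign at $\kappa^*$, and via the sign comparison above this yields the claimed sign change of $b_\mu - b_{\Theta_0}(1,\cdot)$ at $\kappa^*$.

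The only delicate point is the non-vanishing of $\Phi'$ at some interior point of $(\kappa_0,0)$: without it, $\Phi$ could in principle be constant and the entire argument would collapse. This is precisely the obstruction ruled out by Lemma \ref{lem:pernoconstante}, which is why that lemma was set up in advance. Everything else is a routine application of analyticity of the data and the inverse function theorem.
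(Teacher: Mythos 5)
Your proof is correct, and it departs in a meaningful but compatible way from the paper's argument. Both proofs hinge on the same core ingredients: the auxiliary function $\Phi(\kappa)=\Theta(1,b_\mu(\kappa),\kappa)$, Lemma~\ref{lem:pernoconstante} (non-constancy of $\Phi$), and the monotonicity $\partial_b\Theta>0$ from Remark~\ref{rem:btheta}. The paper, however, takes a compact subinterval $I=[\kappa_a,\kappa_b]\subset(\kappa_0,0)$, sets $\mathcal{J}=(\Theta_m,\Theta_M)$ where $\Theta_m<\Theta_M$ are the extreme values of $\Phi$ on $I$, and uses the mean value theorem in the $b$-variable plus the intermediate value theorem in $\kappa$ to locate $\kappa^*$. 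You instead pick a regular point of $\Phi$ (which exists by analyticity and non-constancy), apply the inverse function theorem, and define $\mathcal{J}$ as the image of a small neighbourhood. The two routes buy slightly different things. Your transversality approach makes the sign change of $b_\mu-b_{\Theta_0}(1,\cdot)$ at $\kappa^*$ immediate from $\Phi'(\kappa^*)\neq 0$, whereas the paper's IVT argument only produces a zero of that function and leaves the ``changes sign'' clause (which does follow, since an analytic function with opposite signs at the endpoints must cross transversally somewhere) essentially unargued. You are also more explicit in pinning $\mathcal{J}$ inside $\left(-\tfrac{1}{\sqrt 2},-\tfrac{1}{\sqrt 3}\right)$ by restricting to $\kappa$ near $0$, using $\Phi(0)=-1/\sqrt{b_0+1}$; the paper relies on this implicitly through the choice of $I$. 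On the other hand, the paper's compact-interval construction can in principle yield a larger interval $\mathcal{J}$ than your local neighbourhood $\Phi(U)$, but since only the existence of a countable rational family is needed downstream, this is immaterial. Your proof is sound and arguably tidies up the two small loose ends in the paper's version.
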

\begin{proof}
    The result is a consequence of Remark \ref{rem:btheta} and Lemma \ref{lem:pernoconstante}. Indeed, consider a closed interval $I:= [\kappa_a,\kappa_b] \subset (\kappa_0,0)$. Let $\Theta_m < \Theta_M$ be the minimum and maximum of the (non-constant) function $\kappa \mapsto \Theta(1,b_{\mu}(1,\kappa),\kappa)$ on $I$, and $\kappa_c, \kappa_d \in I$ be values in which the minimum and maximum are attained, respectively. Then, for every $\Theta_0\in\mathcal{J}:=  \left(\Theta_m,\Theta_M\right)$,
    $$0 < \Theta_0 - \Theta_m = \Theta(1,b_{\Theta_0}(1,\kappa_c),\kappa_c) - \Theta(1,b_\mu(\kappa_c),\kappa_c) = \frac{\partial \Theta}{\partial b}(1,\xi,\kappa_c)(b_{\Theta_0}(1,\kappa_c) - b_\mu(\kappa_c)),$$
    by the mean value theorem, where $\frac{\partial \Theta}{\partial b}$ denotes the partial derivative of $\Theta$ with respect to $b$. This derivative is positive (see Remark \ref{rem:btheta}), so $b_{\Theta_0}(1,\kappa_c) > b_\mu(\kappa_c)$. An analogue computation using $\Theta_0 - \Theta_M$ shows that $b_{\Theta_0}(1,\kappa_d) < b_\mu(\kappa_d)$. As a consequence, there exists some $\kappa^*$ between $\kappa_c$ and $\kappa_d$ satisfying the properties listed in the Proposition.
\end{proof}

\begin{corollary}\label{cor:gcambiasigno}
   With the notations of Proposition \ref{pro:bthetabmu}, for every $\Theta_0 \in \mathcal{J}$, the function $g(\kappa)$ in \eqref{eq:g} changes sign at $\kappa^* < 0$.
\end{corollary}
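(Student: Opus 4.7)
The plan is to directly combine Proposition \ref{pro:bthetabmu} with the local sign characterization of $\hat f$ given in Remark \ref{rem:mu}. Recall that Remark \ref{rem:mu} asserts that locally around the curve $\mu(\kappa)=(b_\mu(\kappa),\kappa)$, the sign of $\hat f(b,\kappa)$ agrees with the sign of $b_\mu(\kappa)-b$. Since by Proposition \ref{pro:bthetabmu} we have $b_{\Theta_0}(1,\kappa^*)=b_\mu(\kappa^*)$ with $\kappa^*\in(\kappa_0,0)$, the point $(b_{\Theta_0}(1,\kappa^*),\kappa^*)$ lies on $\mu$; and by continuity of $\kappa\mapsto(b_{\Theta_0}(1,\kappa),\kappa)$, this curve stays arbitrarily close to $\mu$ in a neighbourhood of $\kappa^*$.

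The first step is to apply Remark \ref{rem:mu} to conclude that, for $\kappa$ in a sufficiently small neighbourhood $U$ of $\kappa^*$, one has
\[
\operatorname{sign}\bigl(g(\kappa)\bigr)=\operatorname{sign}\bigl(\hat f(b_{\Theta_0}(1,\kappa),\kappa)\bigr)=\operatorname{sign}\bigl(b_\mu(\kappa)-b_{\Theta_0}(1,\kappa)\bigr).
\]
The second step is to invoke the last assertion of Proposition \ref{pro:bthetabmu}, which states precisely that $\kappa\mapsto b_\mu(\kappa)-b_{\Theta_0}(1,\kappa)$ changes sign at $\kappa=\kappa^*$. Combining the two, $g(\kappa)$ changes sign at $\kappa^*$, as required. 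Finally, $g(\kappa^*)=0$ follows either from continuity across the sign change or by noting directly that $(b_{\Theta_0}(1,\kappa^*),\kappa^*)=\mu(\kappa^*)$ lies in the zero set of $\hat f$ by the defining property of $\mu$ in Remark \ref{rem:mu}.

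There is essentially no obstacle: the result is a direct transfer of the sign-change property from $\kappa\mapsto b_\mu(\kappa)-b_{\Theta_0}(1,\kappa)$ to $g(\kappa)$ via the local sign dictionary supplied by Remark \ref{rem:mu}. The only mild point of care is to verify that $\kappa^*$ lies inside the domain of definition of $b_\mu$ (i.e.\ that $\kappa^*\in(\kappa_0,0)$) so that Remark \ref{rem:mu} is applicable; this is built into the construction of $\kappa^*$ in Proposition \ref{pro:bthetabmu}, where $\kappa^*$ is chosen in a compact subinterval $[\kappa_a,\kappa_b]\subset(\kappa_0,0)$.
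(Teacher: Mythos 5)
Your proof is correct and follows essentially the same route as the paper's: both use the local sign dictionary from Remark \ref{rem:mu} ($\operatorname{sign}\hat f(b,\kappa)=\operatorname{sign}(b_\mu(\kappa)-b)$ near $\mu$) and then transfer the sign change of $\kappa\mapsto b_\mu(\kappa)-b_{\Theta_0}(1,\kappa)$ at $\kappa^*$, guaranteed by Proposition \ref{pro:bthetabmu}, to $g(\kappa)$.
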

\begin{proof}
     By definition of $\kappa^*$, it holds $b_{\Theta_0}(1,\kappa^*) = b_\mu(\kappa^*)$, so $g(\kappa^*) = \hat f(b_\mu(\kappa^*),\kappa^*) = 0$. From Remark \ref{rem:mu} we also know that the function $b \mapsto \hat f(b,\kappa)$ changes sign at $b = b_\mu(\kappa)$, i.e., the sign of $\hat f(b,\kappa)$ coincides with that of $b_\mu(\kappa) - b$ locally around $\mu$. Now, since $b_\mu(\kappa)-b_{\Theta_0}(1,\kappa)$ changes sign at $\kappa = \kappa^*$, so does $g(\kappa)$.
\end{proof}

\subsection{The height map}\label{subsec:h}
We now define the following map, whose purpose is to detect free boundary examples.
\begin{definition}\label{def:h}
    Let $(a,b,\kappa) \in \mathcal{W}$ and suppose that for $\tau= \tau(a,b,\kappa)$, it holds \eqref{eq:condesfera} and $\alpha(\tau) \neq 0$, where $\tau$ and $\mathcal{W}$ are given by Proposition \ref{pro:tauW1} and Remark \ref{rem:tauW}. We then define the height map as the analytic function $\mathfrak{h}(a,b,\kappa):= c_3(\tau(a,b,\kappa))$, where $c_3$ is the third coordinate of the center map $c(u)$ in \eqref{eq:definicioncentro}. 
\end{definition}

    Let $(a,b,\kappa) \in \mathcal{W} \cap \mathcal{O}^-$. According to Remarks \ref{rem:anillos}, \ref{rem:FBnegativo}, if $\kappa < 0$, $\mathfrak{h}(a,b,\kappa) = 0$ and $\Theta(a,b,\kappa) \in \mathbb{Q}$, then $\Sigma_0(\tau;a,b,\kappa)$ is a free boundary minimal annulus in some geodesic ball of $\mathbb{M}^3(\kappa)$, where $\tau = \tau(a,b,\kappa)$. This motivates the study of the roots of the function $\mathfrak{h}(a,b,\kappa)$. The following Lemma relates $\mathfrak{h}$ with the map $\hat p$ in Proposition \ref{pro:catenoidesFB}:
\begin{lemma}\label{lem:hI}
    Let $(1,b,\kappa) \in \mathcal{W}$ with $\kappa < 0$ and $\hat p(u): \mathcal{I} \subset \mathbb{R} \to \Upsilon$ be the map in Proposition \ref{pro:catenoidesFB}. If $\tau(1,b,\kappa)$ lies in $\mathcal{I}$, then $\mathfrak{h}(1,b,\kappa)$ is well defined and satisfies
   \begin{equation}\label{eq:hp3}
       \mathfrak{h}(1,b,\kappa) = \hat p_3(\tau(1,b,\kappa)).
   \end{equation}
\end{lemma}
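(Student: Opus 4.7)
The plan is to show that, at $u = \tau := \tau(1,b,\kappa)$, both $c(\tau)$ and $\hat p(\tau)$ are the unique point of $\mathbb{M}^3(\kappa) \cap \{x_4 > 0\}$ on a certain $1$-dimensional line through the origin of $\mathbb{R}^4_\kappa$; this will simultaneously establish that $\mathfrak{h}(1,b,\kappa)$ is well defined and that it equals $\hat p_3(\tau)$. Consider the two linear $2$-subspaces $\Pi := \operatorname{span}\{\psi(\tau,0), \psi_u(\tau,0)\}$ (whose intersection with $\mathbb{M}^3(\kappa)$ is the geodesic $\zeta_\tau$, since $\kappa < 0$) and the plane $\mathcal{P}$ of Proposition \ref{pro:centroplano} (whose intersection with $\mathbb{M}^3(\kappa)$ equals $\Upsilon = \mathcal{L}_\kappa$, by Proposition \ref{pro:upsilonL}). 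The inclusion $\hat p(\tau) \in \zeta_\tau \cap \Upsilon$ places a nonzero vector in $\Pi \cap \mathcal{P}$, while $\Pi \neq \mathcal{P}$ because $\psi(\tau,0) \in \Pi$ lies on the catenoid $\Sigma(1,b,\kappa)$ and hence not on $\Upsilon \subset \mathcal{P}$. Thus $\Pi \cap \mathcal{P} = \mathbb{R}\cdot \hat p(\tau)$ is exactly $1$-dimensional.

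Next, since $\beta(\tau) = 0$ by Proposition \ref{pro:tauW1}, evaluating the formula \eqref{eq:mnoeuclideo} at $(u,v) = (\tau,0)$ yields
$$m(\tau) = e^{-\omega(\tau,0)}\psi_u(\tau,0) - \tfrac{\alpha(\tau)}{2}\psi(\tau,0) \in \Pi,$$
and Proposition \ref{pro:centroplano} gives $m(\tau) \in \mathcal{P}$. Linear independence of $\psi_u(\tau,0)$ and $\psi(\tau,0)$ in $\mathbb{R}^4$ (immediate from $\langle \psi, \psi\rangle_\kappa = 1/\kappa \neq 0$ and $\langle \psi, \psi_u\rangle_\kappa = 0$) gives $m(\tau) \neq 0$, so that $m(\tau) = \lambda \hat p(\tau)$ for some $\lambda \neq 0$. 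Consequently
$$\langle m(\tau), m(\tau)\rangle_\kappa = \lambda^2 \langle \hat p(\tau), \hat p(\tau)\rangle_\kappa = \lambda^2/\kappa < 0,$$
which is precisely condition \eqref{eq:condesfera} at $u = \tau$ with $\beta(\tau) = 0$; and by Remark \ref{rem:kappaneg}, $\alpha(\tau) \neq 0$. Therefore $\mathfrak{h}(1,b,\kappa)$ is well defined.

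Finally, by Remark \ref{rem:esferas}, $c(\tau)$ is the unique rescaling of $m(\tau)$ lying in $\mathbb{M}^3(\kappa)$, and for $\kappa < 0$ this set is contained in $\{x_4 > 0\}$. Since $\hat p(\tau)$ is also a scalar multiple of $m(\tau)$ (both span $\Pi \cap \mathcal{P}$) lying in $\mathbb{M}^3(\kappa) \cap \{x_4 > 0\}$ by Proposition \ref{pro:catenoidesFB}(2), uniqueness forces $c(\tau) = \hat p(\tau)$, and in particular $\mathfrak{h}(1,b,\kappa) = c_3(\tau) = \hat p_3(\tau)$. The main obstacle is verifying \eqref{eq:condesfera}, without which $\mathfrak{h}$ has no content; rather than attempting to estimate $\alpha(\tau)^2 > -4\kappa$ directly, the linear-algebraic description of $\Pi$ and $\mathcal{P}$ delivers \eqref{eq:condesfera} and the identification $c(\tau) = \hat p(\tau)$ in a single stroke.
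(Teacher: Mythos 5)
Your proof is correct and takes essentially the same route as the paper's: both identify $m(\tau)$ as a nonzero vector in the one-dimensional intersection $\Pi \cap \mathcal{P}$ (the paper calls $\Pi$ the plane $P_0$), note that by Proposition \ref{pro:catenoidesFB}(2) the point $\hat p(\tau)$ also lies in this line, deduce that the line is timelike so that condition \eqref{eq:condesfera} holds and $c(\tau)$ is defined, and conclude $c(\tau) = \hat p(\tau)$ by uniqueness of the rescaling into $\mathbb{M}^3(\kappa)$. You merely spell out a couple of small steps (dimension count $\Pi \neq \mathcal{P}$, nonvanishing of $m(\tau)$, the explicit computation $\langle m(\tau),m(\tau)\rangle_\kappa = \lambda^2/\kappa$) that the paper leaves implicit; also note that the fact $\beta(\tau)=0$ for $\kappa<0$ should be cited from Remark \ref{rem:tauW} rather than Proposition \ref{pro:tauW1}, since the latter covers only $\mathcal{W}_1 \subset \{\kappa \geq 0\}$.
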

\begin{proof}
    Let $\tau = \tau(1,b,\kappa)$. We will prove that the center map $c(u) = c(u;1,b,\kappa)$ is well defined at $u = \tau$, i.e., the condition \eqref{eq:condesfera} is satisfied; see Remark \ref{rem:kappaneg}. In fact, we will show that $c(\tau) = \hat p(\tau)$, from which \eqref{eq:hp3} follows.

    \medskip

    Let $m(u)$ be the map in \eqref{eq:mnoeuclideo}. For every $u$, $m(u)$ lies in the timelike plane $\mathcal{P} = \{x_1 = x_2 = 0\}$; see Remark \ref{rem:omenoscond} and Proposition \ref{pro:Onovacio}. Recall that this plane intersects $\mathbb{M}^3(\kappa)$ along the geodesic ${\mathcal{L}_\kappa}$, which coincides with the rotation axis of the surface $\Sigma(1,b,\kappa)$; see Proposition \ref{pro:upsilonL}.
    
    \medskip
    
    Now, let $u = \tau$. Using \eqref{eq:mnoeuclideo} and by definition of the map $\tau$, it follows that $m(\tau)$ belongs to the timelike plane $P_0$ spanned by $\psi(\tau)$ and $\psi_u(\tau)$. We will now prove that $P_0 \cap \mathcal{P}$ is a timelike line in $\mathbb{R}^4_\kappa$, which implies by Remark \ref{rem:esferas} that $c(u)$ is well defined, i.e., $m(u)$ admits a rescaling which belongs to $\mathbb{M}^3(\kappa)$.

    \medskip
    
    We note that $P_0 \cap \mathbb{M}^3(\kappa)$ is a geodesic of $\mathbb{M}^3(\kappa)$ which must coincide with $\zeta_\tau$; see Definition \ref{def:zetau}. By Proposition \ref{pro:catenoidesFB}, we know that $\zeta_\tau$ and the rotation axis of the catenoid meet at a unique point $\hat p(\tau)$ in the half space $\{x_4 > 0\}$. This implies that $P_0 \cap \mathcal{P}$ is a timelike line in which $m(\tau)$ lies. In fact, the rescaling $c(\tau)$ of $m(\tau)$ must be $\hat p(\tau)$, as we wanted to prove.
\end{proof}

\begin{proposition}\label{pro:g} With the notations of Proposition \ref{pro:bthetabmu}, for each $\Theta_0 \in \mathcal{J}$, the function
$$\tilde{\mathfrak{g}}(\kappa):= \mathfrak{h}\left(1, b_{\Theta_0}(1,\kappa),\kappa\right)$$
    is well defined on a neighbourhood of $\kappa^*$ and changes sign at that point.
\end{proposition}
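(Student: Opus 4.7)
The plan is to reduce $\tilde{\mathfrak{g}}$ to the third coordinate of the map $\hat{p}$ from Proposition \ref{pro:catenoidesFB}, so that the monotonicity property $\hat{p}_3'(\tilde{u}(\kappa^*)) > 0$ can be combined with the sign change of $g(\kappa)$ established in Corollary \ref{cor:gcambiasigno}.

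First I would check that $\tilde{\mathfrak{g}}$ is well defined on a neighbourhood of $\kappa^*$. At $\kappa = \kappa^*$ we have $b_{\Theta_0}(1,\kappa^*) = b_\mu(\kappa^*)$ by Proposition \ref{pro:bthetabmu}, so by the definition of the curve $\mu$ (Remark \ref{rem:mu}),
\[
\tau\bigl(1, b_{\Theta_0}(1,\kappa^*), \kappa^*\bigr) = \tau\bigl(1, b_\mu(\kappa^*), \kappa^*\bigr) = \tilde{u}(\kappa^*),
\]
which lies in the open set $\mathcal{I}$ provided by Proposition \ref{pro:catenoidesFB}. By continuity of $\tau$ and $\tilde{u}$, the value $\tau(1, b_{\Theta_0}(1,\kappa), \kappa)$ still belongs to $\mathcal{I}$ for $\kappa$ close to $\kappa^*$, so Lemma \ref{lem:hI} applies and gives
\[
\tilde{\mathfrak{g}}(\kappa) = \hat{p}_3\bigl(\tau(1, b_{\Theta_0}(1,\kappa), \kappa);\,\kappa\bigr),
\]
where I am viewing $\hat{p}$ as depending analytically on both $u$ and $\kappa$ through the underlying catenoid $\Sigma(1,b,\kappa)$.

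Next I would use the fact that $\hat{p}_3(\tilde{u}(\kappa);\kappa) \equiv 0$, which holds because $\Sigma_0(\tilde{u}(\kappa); 1, b, \kappa)$ is free boundary in a geodesic ball centered at $\mathbf{e}_4$ (Proposition \ref{pro:catenoidesFB} item (1)). Applying the mean value theorem to $u \mapsto \hat{p}_3(u;\kappa)$ between $\tilde{u}(\kappa)$ and $\tau(1, b_{\Theta_0}(1,\kappa), \kappa)$, and recalling from \eqref{eq:hatf}--\eqref{eq:g} that $g(\kappa) = \tau(1, b_{\Theta_0}(1,\kappa), \kappa) - \tilde{u}(\kappa)$, I obtain
\[
\tilde{\mathfrak{g}}(\kappa) = \partial_u \hat{p}_3\bigl(\xi(\kappa); \kappa\bigr)\, g(\kappa)
\]
for some intermediate value $\xi(\kappa)$ with $\xi(\kappa) \to \tilde{u}(\kappa^*)$ as $\kappa \to \kappa^*$. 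Since $\partial_u \hat{p}_3(\tilde{u}(\kappa^*);\kappa^*) > 0$ by Proposition \ref{pro:catenoidesFB} item (2), continuity yields $\partial_u \hat{p}_3(\xi(\kappa);\kappa) > 0$ for $\kappa$ sufficiently close to $\kappa^*$. Consequently, the sign of $\tilde{\mathfrak{g}}(\kappa)$ coincides with the sign of $g(\kappa)$ near $\kappa^*$, and Corollary \ref{cor:gcambiasigno} immediately gives the desired sign change.

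The only real subtlety I expect lies in justifying the joint analytic (or at least continuous) dependence of $\hat{p}$ on $(u,\kappa)$ needed to pass to the limit inside $\partial_u \hat{p}_3$; this should follow from the analyticity of $\tilde{u}(\kappa)$ (Proposition \ref{pro:catenoidesFB} item (3)) together with the construction of $\hat{p}$ as the intersection of the axis $\Upsilon$ with the geodesic $\zeta_u$, both of which are built from data depending analytically on $\kappa$ via Remarks \ref{rem:omeganalitico} and \ref{rem:psianalitico}.
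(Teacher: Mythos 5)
Your proposal is correct and follows essentially the same strategy as the paper's proof: reduce $\tilde{\mathfrak{g}}$ to $\hat{p}_3(\tau)$ via Lemma \ref{lem:hI}, then use the positivity of $\hat{p}_3'$ near $\tilde u$ (Proposition \ref{pro:catenoidesFB}, item (2)) to conclude that the sign of $\tilde{\mathfrak{g}}$ matches that of $g(\kappa) = \tau - \tilde u$, and finish with Corollary \ref{cor:gcambiasigno}. Your use of the mean value theorem is simply a slightly more explicit way of stating the monotonicity argument the paper uses directly.
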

\begin{proof}
    We first show that the function $\tilde{\mathfrak{g}}(\kappa)$ is well defined. By Corollary \ref{cor:gcambiasigno}, it holds $\tilde u(\kappa^*) = \tau\left(1, b_{\Theta_0}(1,\kappa^*),\kappa^*\right)$. In particular, for values of $\kappa$ near $\kappa^*$, $\tau = \tau(1,b_{\Theta_0}(1,\kappa),\kappa)$ will be close to $\tilde u(\kappa)$, and so $\tau$ will belong to the interval $\mathcal{I}$ in Proposition \ref{pro:catenoidesFB}. By Lemma \ref{lem:hI}, $\tilde{\mathfrak{g}}(\kappa)$ is well defined on a neighbourhood of $\kappa^*$.

\medskip

    In order to prove the Proposition, we will show that the signs of $\tilde{\mathfrak{g}}(\kappa)$ and $g(\kappa)$ in \eqref{eq:g} coincide on a neighbourhood of $\kappa^*$ and then apply Corollary \ref{cor:gcambiasigno}. To simplify notation, let $\tau = \tau\left(1,b_{\Theta_0}(1,\kappa),\kappa\right)$ and $\tilde u = \tilde u(\kappa)$. By \eqref{eq:hp3} and the definition of $\tilde{\mathfrak{g}}(\kappa)$, it holds $\tilde{\mathfrak{g}}(\kappa) = \hat p_3(\tau)$. Since the function $\hat p_3(u)$ is strictly increasing and $\hat p_3(\tilde u) = 0$, the sign of $\hat p_3(\tau)$ coincides with that of $\tau - \tilde u = g(\kappa)$, as we wanted to show. In particular, note that $\tilde{\mathfrak{g}}(\kappa^*) = g(\kappa^*)= 0$.
\end{proof}

\subsection{Proof of Theorem \ref{thm:main}}\label{subsec:83}

Fix $q \in \mathcal{J} \cap \mathbb{Q}$ and define
\begin{equation}
    \mathfrak{G}(a,\kappa):=\mathfrak{h}(a,b_q(a,\kappa),\kappa),
\end{equation}
where $b_q(a,\kappa)$ is the function in Remark \ref{rem:btheta} for $\Theta_0 = q$. Observe that if $a = 1$, $\mathfrak{G}(1,\kappa) = \tilde{\mathfrak{g}}(\kappa)$. By Proposition \ref{pro:g}, $\mathfrak{G}(a,\kappa)$ is well defined and analytic on a neighbourhood of $(a,\kappa) =(1,\kappa^*)$.
Arguing as in Remark \ref{rem:mu}, observe that $\kappa \mapsto \mathfrak{G}(1,\kappa)$ changes sign at $\kappa^*$. Hence, applying Lojasiewicz's Structure Theorem \cite[Theorem 5.2.3]{KP} to the map $\mathfrak{G}(a,\kappa)$, it follows that there exists $\varepsilon = \varepsilon(q)$ and a curve $\hat \mu_q:[0,\varepsilon) \to \mathbb{R}^2$, $\hat \mu_q(\eta):= \left(a(\eta), \kappa(\eta)\right)$, satisfying $\hat \mu_q(0) = (1,\kappa^*)$, $a(\eta) > 1$ for every $\eta > 0$ and $\kappa(\eta) < 0$, $\mathfrak{G}(\hat\mu_q(\eta)) \equiv 0$ for all $\eta \geq 0$. Consider the 1-parameter family of surfaces
$$\Sigma_q(\eta) := \Sigma_0\left(\tau;a(\eta),b_q(a(\eta),\kappa(\eta)),\kappa(\eta)\right),$$
where $\tau = \tau\left(a(\eta),b_q(a(\eta),\kappa(\eta)),\kappa(\eta)\right)$; see Definition \ref{def:Sigma_0}. Let us express the rational number $q$ as an irreducible fraction $q = -m/n$, $m,n \in \mathbb{N}$. From Remark \ref{rem:anillos} it follows that the surfaces $\Sigma_q(\eta)$ are compact minimal annuli once we identify $(u,v) \sim (u, v + 2n\sigma)$, where $\sigma = \sigma\left(a(\eta),b_q(a(\eta),\kappa(\eta)),\kappa(\eta)\right)$ depends analytically on $\eta$. Moreover, by definition of $\tau$ and since the height map vanishes along the parameters associated to $\Sigma_q(\eta)$, it follows from Remark \ref{rem:FBnegativo} that each of these annuli are free boundary in some geodesic ball $B = B(q,\eta)$ of $\mathbb{M}^3(\kappa(\eta))$. Furthermore, the following properties are satisfied:
\begin{enumerate}[(i)]
    \item The balls $B(q,\eta)$ are centered at $c(\tau) = 
 {\bf e}_4 \in \mathbb{M}^3(\kappa(\eta))$ by item (4) of Theorem \ref{thm:simetrias}.
    \item Each annulus $\Sigma_q(\eta)$ is invariant under the symmetry with respect to the totally geodesic surface $\mathbf{S}_{\kappa(\eta)} \subset \mathbb{M}^3({\kappa(\eta)})$ and also with respect to the $n$ totally geodesic surfaces $\Omega_j\subset \mathbb{M}^3({\kappa(\eta)}) $ which meet equiangularly along $\mathcal{L}_{\kappa(\eta)}$. This follows from item (1) of Theorem \ref{thm:simetrias}.
    \item The rotation index of the planar geodesic $\Sigma_q(\eta) \cap \mathbf{S}_{\kappa(\eta)}$ is $-m$; see Lemma \ref{lem:simetriaS} and Proposition \ref{pro:simetriasGamma}.
    \item The annuli $\Sigma_q(\eta)$ are foliated by spherical curvature lines, by Lemma \ref{lem:lineasesfericas}.
    \item If $\eta > 0$, then $a(\eta) > 1$, and so the annulus $\Sigma_q(\eta)$ has a prismatic symmetry group of order $4n$; see item (5) of Theorem \ref{thm:simetrias}.
    \item For $\eta = 0$ it holds $a(\eta) = 1$, so the annulus $\Sigma_q(0)$ is an $m$-cover of a free boundary hyperbolic catenoid. This follows from the discussion in Subsection \ref{sec:1bk} and Proposition \ref{pro:simetriasGamma}.
\end{enumerate}

We will now define the set of annuli $\mathbb{A}_q(\eta) \subset \mathbb{H}^3$ in Theorem \ref{thm:main}. First, recall that for every $\kappa < 0$, $\mathbb{M}^3(\kappa) \subset \mathbb{R}_\kappa^4$ is a hyperbolic space of sectional curvature $\kappa$; see Remark \ref{rem:M3k}. In fact, the linear change $x_4 \mapsto x_4/\sqrt{-\kappa}$ yields an isometry between $\mathbb{M}^3(\kappa)$ and the three-dimensional hyperboloid model $\mathbb{H}^3(\kappa) \subset \mathbb{L}^4$ of curvature $\kappa$, that is,
$$\mathbb{H}^3(\kappa) = \{(x_1,x_2,x_3,x_4) \in \mathbb{L}^4 \; : \; x_1^2 + x_2^2 + x_3^2 - x_4^2 = 1/\kappa, x_4> 0\}.$$
Now, a homothety of ratio $\sqrt{-\kappa}$ in Lorentz space $\mathbb{L}^4$ sends the hyperboloid $\mathbb{H}^3(\kappa)$ to the usual hyperbolic space $\mathbb{H}^3 \subset \mathbb{L}^4$ of curvature $-1$; see \eqref{eq:espaciohiperbolico1}. We then define $\mathbb{A}_q(\eta) \subset \mathbb{H}^3$ as the image of the annulus $\Sigma_q(\eta)$ under the isometry and homothety that sends the space $\mathbb{M}^3(\kappa(\eta))$ to $\mathbb{H}^3$. By the properties (i)-(vi) of the annuli $\Sigma_q(\eta)$ listed above, it is immediate to check that the the surfaces $\mathbb{A}_q(\eta)$ are free boundary minimal annuli in geodesic balls of $\mathbb{H}^3$ satisfying items (1)-(6) of Theorem \ref{thm:main}. This completes the proof.

\appendix 

\section{Appendix}\label{sec:appendix}
This Appendix is devoted to proving Proposition \ref{pro:catenoidesFB}. We will study the surfaces $\Sigma(1,b,\kappa)$ in terms of the parametrizations $\varphi = \varphi(\mathfrak{s},\theta)$ in \eqref{eq:paramesfera}, \eqref{eq:paramhiperb}, \eqref{eq:parameuclideo}. For this purpose, we first introduce the following definitions:

\begin{definition}\label{def:C0}
    For any $(1,b,\kappa) \in \mathcal{O}$ and $\mathfrak{s}_0 > 0$, we define $\mathfrak{C}_0(\mathfrak{s}_0;\kappa)$ as the restriction of $\varphi(\mathfrak{s},\theta)$ to $[-\mathfrak{s}_0,\mathfrak{s}_0]\times \mathbb{R}$ under the identification $(\mathfrak{s},\theta) \sim (\mathfrak{s},\theta + 2\pi)$.
\end{definition}
\begin{remark}\label{rem:C0S0}
    For every $\mathfrak{s}>0$ and $\kappa \in \left(-\frac{1}{4},\frac{1}{4}\right)$, the surface $\mathfrak{C}_0(\mathfrak{s};\kappa) $ is a rotational minimal annulus. The surface $\Sigma_0 = \Sigma_0(u_0;1,b,\kappa)$ in Definition \ref{def:Sigma_0} corresponds with the restriction of $\varphi(\mathfrak{s},\theta)$ to $[-\mathfrak{s}_0,\mathfrak{s}_0]\times \mathbb{R}$, where $\mathfrak{s}_0 := \mathfrak{s}(u_0)$; see Lemma \ref{lem:stuv}. Hence, $\Sigma_0$ is an infinite cover of the annulus $\mathfrak{C}_0$.    
\end{remark}

\begin{remark}\label{rem:zetas}
From Lemma \ref{lem:stuv} it follows that the geodesics $\zeta_{u(\mathfrak{s})}$ in Definition \ref{def:zetau} pass through $\varphi(\mathfrak{s},0)$ with tangent vector $\varphi_{\mathfrak{s}}(\mathfrak{s},0)$. For simplicity, we set $\zeta_{\mathfrak{s}}:= \zeta_{u(\mathfrak{s})}$.
\end{remark}

We will show next that there exists an analytic function $\tilde {\mathfrak{s}} = \tilde {\mathfrak{s}}(\kappa) > 0$ such that the annulus $\mathfrak{C}_0(\tilde {\mathfrak{s}};\kappa)$ is free boundary in a ball contained in the half-space $\{x_4 > 0\}$. In particular, setting $\tilde u(\kappa) := u(\tilde{\mathfrak{s}}(\kappa))$ we deduce items (1) and (3) of Proposition \ref{pro:catenoidesFB}. Item (2) will appear naturally along the proof.

\subsection{Proof of item (1) of Proposition \ref{pro:catenoidesFB}}\label{sec:appendixA1}

Let $\kappa \in \left(-\frac{1}{4},\frac{1}{4}\right)$ and $\mathfrak{s}_0 > 0$. By Remark \ref{rem:parametrosstheta}, we know that the annulus $\mathfrak{C}_0(\mathfrak{s}_0;\kappa)$ in Definition \ref{def:C0} is invariant under the reflection with respect to the totally geodesic surface ${{\bf S}_\kappa}$ and also under rotations with axis $\Upsilon$. So, if $\mathfrak{C}_0(\mathfrak{s}_0;\kappa)$ were free boundary in a geodesic ball $B$, the following properties should hold:
\begin{enumerate}
    \item $B$ must be invariant under these two isometries. In particular, the geodesic center of $B$ has to be the point ${\bf e}_4$, where $\Upsilon$ and ${{\bf S}_\kappa}$ meet.
    \item  The profile curve $\mathfrak{s} \mapsto \varphi(\mathfrak{s},0)$ must meet the boundary of $B$ orthogonally at $\mathfrak{s} = \mathfrak{s}_0$. Since $B$ is a geodesic ball, this is equivalent to the fact that the geodesic $\zeta_{\mathfrak{s}_0}$ in Remark \ref{rem:zetas} meets ${\bf e}_4$, i.e. the center of $B$.
\end{enumerate}
As we will see in Lemma \ref{lem:Fbordelibre}, these two geometric properties can be reduced to studying whether the function
\begin{equation}\label{eq:F}
        F(\mathfrak{s}) := x_3(\mathfrak{s}) x'(\mathfrak{s}) - x_3'(\mathfrak{s}) x(\mathfrak{s})
    \end{equation}
vanishes at $\mathfrak{s} = \mathfrak{s}_0$. Here, $x(\mathfrak{s})$ and $x_3(\mathfrak{s})$ denote the first and third coordinates of the profile curve $\varphi(\mathfrak{s},0)$ in \eqref{eq:paramesfera}, \eqref{eq:paramhiperb}, \eqref{eq:parameuclideo}.

\begin{lemma}\label{lem:F}
   For any $\kappa \in \left(-\frac{1}{4},\frac{1}{4}\right)$, the function $F(\mathfrak{s})$ in \eqref{eq:F} has a first positive root $\tilde{\mathfrak{s}} = \tilde{\mathfrak{s}}(\kappa) > 0$. In addition,
    \begin{enumerate}
        \item $x(\mathfrak{s})$ is strictly increasing in $(0,\tilde{\mathfrak{s}}]$ and $x'(\tilde{\mathfrak{s}}) > 0$.
        \item $x_3'(\mathfrak{s}) > 0$ for every $\mathfrak{s} \in [-\tilde{\mathfrak{s}},\tilde{\mathfrak{s}}]$. In particular, $x_3(\tilde{\mathfrak{s}})$ is strictly positive in $(0,\tilde{\mathfrak{s}}]$.
        \item If $\kappa \neq 0,$ $\kappa x_4'(\mathfrak{s})< 0$ for all $\mathfrak{s} \in (0,\tilde{\mathfrak{s}}]$ and $x_4(\tilde{\mathfrak{s}}) > 0$, where $x_4(\mathfrak{s})$ is the fourth coordinate of the profile curve $\varphi(\mathfrak{s},0)$.
    \end{enumerate}

\end{lemma}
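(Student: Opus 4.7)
The plan is to combine a direct computation of initial data, the explicit Euclidean formula for $F$, the geometric interpretation of $F=0$, and the monotonicity of $x$ and $\phi$ coming from the ODEs \eqref{eq:x}, \eqref{eq:phiesferico} and \eqref{eq:phihip}.

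First I would record the initial values. From \eqref{eq:x} one reads $x(0) = 2/\sqrt{4\kappa+1}$ and $x'(0) = 0$, while \eqref{eq:phiesferico}--\eqref{eq:phihip} give $\phi(0) = 0$. A short calculation using the parametrizations \eqref{eq:paramesfera}, \eqref{eq:paramhiperb} and \eqref{eq:parameuclideo} yields $x_3(0) = 0$ and $x_3'(0) = 1$ (in the spherical case the product $\sqrt{1/\kappa - x(0)^2}\,\phi'(0)$ collapses to $1$; the hyperbolic case is analogous, and $\kappa = 0$ is trivial). Hence $F(0) = -x(0) < 0$.

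Next I would prove existence of a first positive zero $\tilde{\mathfrak{s}}(\kappa)$. For $\kappa = 0$ this is immediate: $F(\mathfrak{s}) = 2\mathfrak{s}\,\mathrm{arcsinh}(\mathfrak{s}/2)/\sqrt{\mathfrak{s}^2+4} - 2$ is strictly increasing on $(0,\infty)$ and diverges to $+\infty$. For $\kappa \neq 0$ I would use the geometric reformulation: in the coordinates $(x_1, x_3, x_4)$ on $\mathbf{S}_\kappa$, the three vectors $\varphi(\mathfrak{s},0)$, $\varphi_\mathfrak{s}(\mathfrak{s},0)$ and ${\bf e}_4$ are linearly dependent precisely when $F(\mathfrak{s}) = 0$, so a zero of $F$ corresponds to the tangent geodesic $\zeta_\mathfrak{s}$ passing through ${\bf e}_4$. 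Existence of such a value then follows from the standard construction of the free boundary hyperbolic/spherical catenoid, as in \cite[Section 5]{CFM2}. Let $\tilde{\mathfrak{s}}(\kappa)$ denote the smallest such value.

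To establish items (1)--(3), I would use monotonicity. The ODE \eqref{eq:x} together with $x''(0) > 0$ shows that $x$ is strictly increasing on a maximal interval $(0, \mathfrak{s}_{\max})$, with $\mathfrak{s}_{\max} = +\infty$ for $\kappa \leq 0$ and $\mathfrak{s}_{\max} < \infty$ for $\kappa > 0$. For $\kappa \leq 0$ the bound $\tilde{\mathfrak{s}} < \mathfrak{s}_{\max}$ and $x'(\tilde{\mathfrak{s}}) > 0$ is automatic; for $\kappa > 0$ it follows from the free boundary construction placing the catenoid piece in the half-space $\{x_4 > 0\}$. Differentiating the parametrizations and using $\phi' > 0$ (immediate from \eqref{eq:phiesferico}--\eqref{eq:phihip}) together with $x' > 0$ on $(0, \tilde{\mathfrak{s}}]$ then yields $x_3' > 0$ and the sign $\kappa x_4' < 0$. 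The positivity $x_4(\tilde{\mathfrak{s}}) > 0$ is automatic in the hyperbolic case, and in the spherical case amounts to $\phi(\tilde{\mathfrak{s}}) < \pi/2$, which again follows from the ball lying in $\{x_4 > 0\}$. Finally, the parity of $x$ and oddness of $x_3$ (inherited from oddness of $\phi$) extend $x_3' > 0$ from $[0, \tilde{\mathfrak{s}}]$ to $[-\tilde{\mathfrak{s}}, \tilde{\mathfrak{s}}]$.

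The main obstacle is the spherical case, where both $x(\mathfrak{s})$ and $\phi(\mathfrak{s})$ must be controlled uniformly: one must rule out that the first zero of $F$ occurs only after $x$ passes its first critical point or after $\phi$ exceeds $\pi/2$. The cleanest resolution is the geometric input from \cite[Section 5]{CFM2} — the free boundary ball lies in $\{x_4 > 0\}$ and corresponds to a profile with $x$ still increasing — which together with the ODE monotonicity forces $\tilde{\mathfrak{s}} < \mathfrak{s}_{\max}$ and $\phi(\tilde{\mathfrak{s}}) < \pi/2$.
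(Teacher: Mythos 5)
Your initial computations are correct: $x_3'(0)=1$ in every regime, so $F(0)=-x(0)<0$, and your explicit Euclidean formula $F(\mathfrak{s})=\tfrac{2\mathfrak{s}\,\mathrm{arcsinh}(\mathfrak{s}/2)}{\sqrt{\mathfrak{s}^2+4}}-2$ is right (the paper's ``$-1$'' is a typo). The geometric reformulation of $F(\mathfrak{s})=0$ as ``$\zeta_{\mathfrak{s}}$ passes through ${\bf e}_4$'' is also the right way to think about it.

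The gap is in how you produce the first positive zero and, more importantly, how you control it. The paper does \emph{not} appeal to the free boundary catenoids of \cite[Section 5]{CFM2}; it explicitly says (Section~4) that it needs a different approach, and the whole point of Lemma~\ref{lem:F} together with Lemma~\ref{lem:Fbordelibre} is to \emph{re-derive} the existence and location of the free boundary catenoid piece in a way that is internal and analytic in $\kappa$. Your proposal inverts this logic: you invoke the free boundary ball in $\{x_4>0\}$ with monotone profile as ``geometric input'' to conclude $\tilde{\mathfrak{s}}<\mathfrak{s}_{\max}$ and $\phi(\tilde{\mathfrak{s}})<\pi/2$, but those statements are exactly what the subsequent Lemma~\ref{lem:Fbordelibre} is proving \emph{from} Lemma~\ref{lem:F}. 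Even read charitably as an external citation, you would need to verify that the construction in \cite{CFM2} yields a value $\mathfrak{s}_*$ at which $x$ is still increasing and $x_4(\mathfrak{s}_*)>0$; this is not asserted in your proposal and is precisely the nontrivial content.

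What the paper actually does in place of that citation is: for $\kappa<0$, show $F(\mathfrak{s})>0$ for large $\mathfrak{s}$ by computing $\lim_{\mathfrak{s}\to\infty}G(\mathfrak{s})=-\tfrac{1}{\kappa}\sinh\phi_M>0$ for the rescaled function $G=F\sqrt{x^2-1/\kappa}/x'$ (using boundedness of $\phi$); and, for $\kappa>0$, prove the key inequality $\phi(\mathfrak{s}_1)>\pi/2$ (where $\mathfrak{s}_1$ is the first time $x$ hits its maximum) by a residue-theorem computation, which forces the existence of $\mathfrak{s}_0\in(0,\mathfrak{s}_1)$ with $\phi(\mathfrak{s}_0)=\pi/2$ and hence $F(\mathfrak{s}_0)>0$. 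This sign change, located strictly before $\mathfrak{s}_1$ and before $\phi$ reaches $\pi/2$, simultaneously delivers items (1)--(3). The residue argument is the technical heart of the spherical case and is entirely absent from your sketch. Without it (or an equivalent estimate), the assertion that the first zero of $F$ occurs while $x$ is still increasing and $\phi<\pi/2$ is not established, so items (1) and (3) in the case $\kappa>0$ remain unproved.
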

\begin{proof}
    We will prove that $F(0) < 0$ and that $F(\mathfrak{s}_0) >0 $ for some $\mathfrak{s}_0 > 0$, deducing the existence of a first positive root $\tilde{\mathfrak{s}} \in (0, \mathfrak{s}_0)$. The properties on the functions $x(\mathfrak{s}), x_3(\mathfrak{s}), x_4(\mathfrak{s})$ will appear naturally in the process.
    
    \medskip

    It is straightforward that $F(0) < 0$ using \eqref{eq:paramesfera}, \eqref{eq:x}, \eqref{eq:phiesferico}, \eqref{eq:paramhiperb}, \eqref{eq:phihip} and \eqref{eq:parameuclideo}: indeed, for all $\kappa \in \left(-\frac{1}{4},\frac{1}{4}\right)$, it holds $x_3(0) = 0$, $x(0) = \frac{2}{\sqrt{4\kappa + 1}} > 0$ and $x_3'(0) > 0$. It just remains to prove that $F(\mathfrak{s}_0) > 0$ for some $\mathfrak{s}_0 > 0$. We will deal with three cases depending on the sign of $\kappa$. 

    \medskip

    \textbf{First case: $\kappa = 0$.} We have an explicit parametrization of the catenoid given by \eqref{eq:parameuclideo}, and so
    $$F(\mathfrak{s}) = 2 \arcsinh\left(\frac{\mathfrak{s}}{2}\right) \frac{\mathfrak{s}}{\sqrt{\mathfrak{s}^2 + 4}} - 1.$$
    The first (and unique) positive root of $F(\mathfrak{s})$ is given by the equation
    $$\arcsinh\left(\frac{\mathfrak{s}}{2}\right) = \frac{\sqrt{\mathfrak{s}^2 + 4}}{\mathfrak{s}}.$$
Items (1) and (2) are trivial in this situation.
    \medskip

    \textbf{Second case: $\kappa < 0$.} Let us study the functions $x(\mathfrak{s})$ and $\phi(\mathfrak{s})$ in \eqref{eq:phihip}. Observe that the polynomial $h(x)$ that appears in \eqref{eq:x} vanishes at $x(0)$ and is positive for all $x \in (x(0),\infty)$. This implies that $x(\mathfrak{s})$ is strictly increasing for all $\mathfrak{s} > 0$ with $\lim_{\mathfrak{s} \to \infty}x(\mathfrak{s}) = \infty$. In particular, the function $\phi(\mathfrak{s})$ in \eqref{eq:phihip} must be increasing as well. Nevertheless, $\phi(\mathfrak{s})$ is bounded: indeed, applying the change of variables $y = x(\mathfrak{s})$, it holds
    \begin{align*}
        \phi(\mathfrak{s}) &< \lim_{\mathfrak{s} \to \infty} \phi(\mathfrak{s}) = \int_{x(0)}^{\infty} \frac{\delta}{\sqrt{-\kappa} (y^2 - 1/\kappa)\sqrt{-\kappa y^4 + y^2 - \delta^2}}dy < \infty.
    \end{align*}
    We can now show that $F(\mathfrak{s}) > 0$ for $\mathfrak{s}$ large enough. Consider the function $G(\mathfrak{s}) := \frac{F(\mathfrak{s})\sqrt{x(\mathfrak{s})^2 - 1/\kappa}}{x'(\mathfrak{s})}$. If $\mathfrak{s} > 0$, the sign of $F(\mathfrak{s})$ coincides with that of $G(\mathfrak{s})$. A straightforward computation using \eqref{eq:paramhiperb}, \eqref{eq:phihip}, \eqref{eq:F} shows that
    $$\lim_{\mathfrak{s} \to \infty} G(\mathfrak{s}) = -\frac{1}{\kappa}\sinh(\phi_M) > 0,$$
    where $\phi_M:= \lim_{\mathfrak{s} \to \infty}\phi(\mathfrak{s})$. In particular, $G(\mathfrak{s})$ is positive for large $\mathfrak{s} > 0$, and so is $F(\mathfrak{s})$. Items (1) and (3) of the Lemma are immediate, while (2) is a consequence of the fact that $\phi(\mathfrak{s})$ is anti-symmetric and strictly increasing.

    \medskip

    \textbf{Third case: $\kappa > 0$.} The polynomial $h(x)$ in \eqref{eq:x} now has two positive roots $y_1 <  y_2$ with $y_1 = x(0)$. In particular, the function $x(\mathfrak{s})$ will be periodic and taking values on the interval $[y_1,y_2]$. Let us denote by $\mathfrak{s}_1$ the first positive value for which $x(\mathfrak{s}_1) = y_2$. The function $\phi(\mathfrak{s})$ is increasing in $[0,\mathfrak{s}_1]$, according to \eqref{eq:phiesferico}. We now claim that
    \begin{equation}\label{eq:phinu}
        \phi(\mathfrak{s}_1) > \frac{\pi}{2}
    \end{equation}
holds; we will prove this later. From \eqref{eq:phinu} we deduce that there exists $\mathfrak{s}_0 \in (0,\mathfrak{s}_1)$ for which $\phi(\mathfrak{s}_0) = \frac{\pi}{2}$. Since $x'(\mathfrak{s}_0) > 0$,
    $$F(\mathfrak{s}_0) = \sqrt{1/\kappa - x(\mathfrak{s}_0)^2}x'(\mathfrak{s}_0) + \frac{x'(\mathfrak{s}_0)}{2\sqrt{1/\kappa - x(\mathfrak{s}_0)^2}}x(\mathfrak{s}_0) > 0,$$
    as we wanted to show. Items (1) and (3) of the Lemma are immediate. Regarding item (2), suppose by contradiction that  $x_3'(\mathfrak{s})$ is not positive for all $\mathfrak{s} \in [0,\tilde{\mathfrak{s}}]$: in such a case, since $x_3'(0) > 0$, there exists a first root $s^* \in (0,\tilde{\mathfrak{s}}]$ of $x_3'(\mathfrak{s})$. As a consequence, $x_3(s^*) > 0$, and so $F(s^*) = x_3(s^*)x'(s^*) > 0$. This is impossible because $F(0) < 0$ and $\tilde{\mathfrak{s}}$ is by definition the first root of $F(\mathfrak{s})$. Item (2) is then a consequence of the anti-symmetry of $x_3(\mathfrak{s})$, proving Lemma \ref{lem:F}.
    
    \medskip
    
    We now prove \eqref{eq:phinu}. Consider the change of variables $w = x(\mathfrak{s})^2$ on the integral that defines $\phi(\mathfrak{s}_1)$. Letting $w_m = y_1^2$, $w_M = y_2^2$ and by using \eqref{eq:x}, we see that
$$\phi(\mathfrak{s}_1) = \int_{w_m}^{w_M} \frac{\delta}{2\sqrt{\kappa}\left(1/\kappa -w\right)\sqrt{w(w_M - w)(w-w_m)}}dw.$$

We can study this integral by using the residue theorem: indeed, let $f(z)$ be the function
$$f(z) := \frac{i\delta}{2\sqrt{\kappa}\left(1/\kappa -z\right)\sqrt{z(z - w_M)(z-w_m)}},$$
\begin{figure}
\centering
\includegraphics[width=0.45\textwidth]{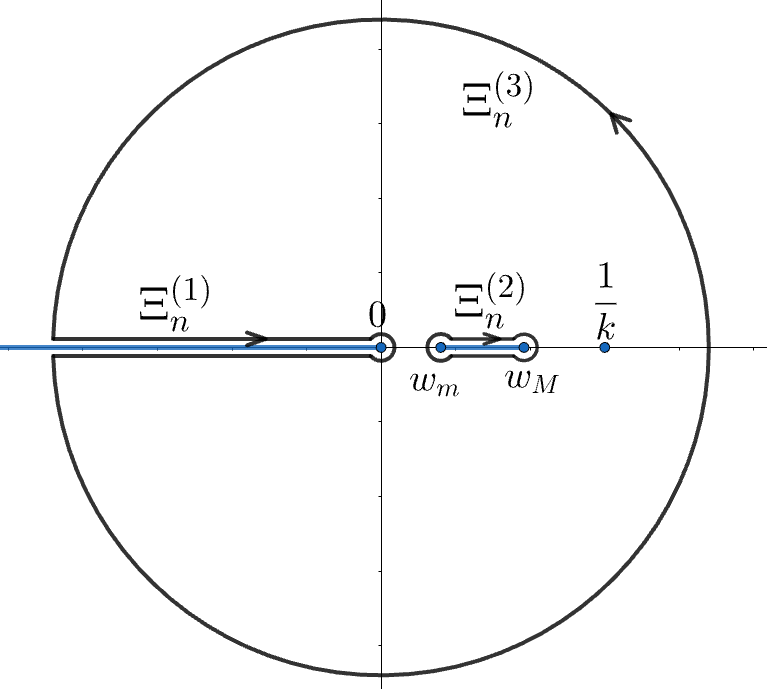}
\caption{Integration path $\Xi_n$.}\label{fig:integracion2}
\end{figure}
which is meromorphic on $\mathbb{C} \setminus ((-\infty,0]\cap [w_m,w_M])$ and has a pole at $z = 1/\kappa$. For $n$ large enough, let $\Xi_n$ be the integration path of Figure \ref{fig:integracion2}. By the residue theorem,
$$\int_{\Xi_n}f(z)dz = 2\pi i \text{Res}\left(f,\frac{1}{\kappa}\right) = \pi.$$
We can alternatively decompose $\Xi_n$ into three pieces, namely: the path $\Xi^{(1)}_n$ around the interval $(-\infty,0]$, the cycle $\Xi^{(2)}_n$ enclosing the interval $[-w_m,w_M]$ and the circular arc $\Xi^{(3)}_n$ of radius $n$. We take $\Xi^{(1)}_n$ and $\Xi^{(2)}_n$ so that they converge to their respective intervals. It can be checked that
\begin{align*}
\lim_{n \to \infty}& \int_{\Xi^{(1)}_n}f(z)dz = -\int_{-\infty}^{0} \frac{\delta}{2\sqrt{\kappa}\left(1/\kappa -w\right)\sqrt{-w(w - w_M)(w-w_m)}} = \mathcal{C} < 0, \\
 \lim_{n \to \infty} &\int_{\Xi^{(2)}_n}f(z)dz  = 2\phi(\mathfrak{s}_1),\\
   \lim_{n \to \infty}& \int_{\Xi^{(3)}_n}f(z)dz  = 0,
\end{align*}
for some $\mathcal{C} < 0$. As a consequence, $2\phi(\mathfrak{s}_1)  + \mathcal{C} = \pi$, and so $$\phi(\mathfrak{s}_1) = \frac{\pi - \mathcal{C}}{2} > \frac{\pi}{2},$$
proving \eqref{eq:phinu}. This completes the proof.
\end{proof}
We next prove item (1) in Proposition \ref{pro:catenoidesFB}:
\begin{lemma}\label{lem:Fbordelibre}
    For any $\kappa \in (-1,4/1,4)$, the compact minimal annulus $\mathfrak{C}_0(\tilde{\mathfrak{s}};\kappa)$ in Definition \ref{def:C0} is embedded and free boundary in a geodesic ball $B$ centered at ${\bf e}_4$. More specifically, for $\kappa \neq 0$,
\begin{equation}\label{eq:bolanoeuclideaFB}
        B = B[{\bf e}_4, x_4(\tilde{\mathfrak{s}})/\kappa],
    \end{equation}
    while for $\kappa = 0$, $B$ is the ball centered at ${\bf e}_4$ with radius
   \begin{equation}\label{eq:radiobola}
       R = \sqrt{x(\tilde{\mathfrak{s}})^2 + x_3(\tilde{\mathfrak{s}})^2}.
   \end{equation}
    In any of the cases, $B$ is contained in $\{x_4 > 0\} \cap \mathbb{M}^3(\kappa)$.
\end{lemma}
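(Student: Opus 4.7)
The plan is to exploit the rotational symmetry of $\mathfrak{C}_0(\tilde{\mathfrak{s}};\kappa)$ and reduce the free boundary condition to the single scalar equation $F(\tilde{\mathfrak{s}})=0$ already established in Lemma \ref{lem:F}. The first step is to show that $F(\tilde{\mathfrak{s}})=0$ is equivalent to $\mathbf{e}_4 \in \zeta_{\tilde{\mathfrak{s}}}$. Using the parametrizations \eqref{eq:paramesfera}, \eqref{eq:paramhiperb} and \eqref{eq:parameuclideo}, the three vectors $\varphi(\tilde{\mathfrak{s}},0)$, $\varphi_{\mathfrak{s}}(\tilde{\mathfrak{s}},0)$ and $\mathbf{e}_4$ all have vanishing second coordinate, so their linear dependence reduces to the vanishing of a $3\times 3$ determinant formed by the remaining coordinates; one checks by direct expansion that this determinant equals $-F(\tilde{\mathfrak{s}})$. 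Intersecting $\mathrm{span}\{\varphi(\tilde{\mathfrak{s}},0),\varphi_{\mathfrak{s}}(\tilde{\mathfrak{s}},0)\}$ with $\mathbb{M}^3(\kappa)$ (or the corresponding affine plane with $\mathbb{M}^3(0)$) recovers exactly the geodesic $\zeta_{\tilde{\mathfrak{s}}}$, placing $\mathbf{e}_4$ on it.

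With this in hand I would identify the candidate ball $B$ and verify orthogonality. For $\kappa\neq 0$, the inner product $\langle \varphi(\tilde{\mathfrak{s}},\theta),\mathbf{e}_4\rangle_\kappa = x_4(\tilde{\mathfrak{s}})/\kappa$ is constant along the boundary $v$-line (since $x_4$ does not depend on $\theta$), so this circle lies in $\partial B = S[\mathbf{e}_4, x_4(\tilde{\mathfrak{s}})/\kappa]$; the Euclidean case is analogous with the formula \eqref{eq:radiobola}. The outward normal to a geodesic sphere at a point is the tangent to the geodesic from its center to that point, which by the preceding observation is $\zeta_{\tilde{\mathfrak{s}}}$ and therefore has direction $\varphi_{\mathfrak{s}}(\tilde{\mathfrak{s}},0)$. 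Since this vector is tangent to the annulus, orthogonality holds at $(\tilde{\mathfrak{s}},0)$; rotational invariance propagates it to the whole boundary circle, and the reflection in $\mathbf{S}_\kappa$ handles the component at $\mathfrak{s}=-\tilde{\mathfrak{s}}$.

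Next I would verify $\mathfrak{C}_0(\tilde{\mathfrak{s}};\kappa)\subset B$. For $\kappa\neq 0$ this amounts to the inequality $x_4(\mathfrak{s})/\kappa \geq x_4(\tilde{\mathfrak{s}})/\kappa$ on $[-\tilde{\mathfrak{s}},\tilde{\mathfrak{s}}]$; the parities of $x$ (even, from \eqref{eq:x}) and $\phi$ (odd, from \eqref{eq:phiesferico} and \eqref{eq:phihip}) make $x_4$ even in $\mathfrak{s}$, so it suffices to check $[0,\tilde{\mathfrak{s}}]$, where the required monotonicity is precisely Lemma \ref{lem:F}(3). In the Euclidean case I would work directly with \eqref{eq:parameuclideo}: the map $\mathfrak{s}\mapsto x(\mathfrak{s})^2+x_3(\mathfrak{s})^2$ is even, and a short differentiation shows it is strictly increasing on $[0,\tilde{\mathfrak{s}}]$, so $|\varphi(\mathfrak{s},\theta)|\leq R$ on the entire annulus.

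Finally, embeddedness of $\mathfrak{C}_0(\tilde{\mathfrak{s}};\kappa)$ reduces to embeddedness of the profile curve, which follows from the strict monotonicity of $x_3(\mathfrak{s})$ on $[-\tilde{\mathfrak{s}},\tilde{\mathfrak{s}}]$ (Lemma \ref{lem:F}(2)), together with $x(\mathfrak{s})>0$ keeping the curve away from the axis $\Upsilon=\{x_1=x_2=0\}$ (Lemma \ref{lem:F}(1)). The inclusion $B\subset\{x_4>0\}$ is immediate case by case: for $\kappa>0$, $B$ is cut out by $x_4\geq x_4(\tilde{\mathfrak{s}})$ with $x_4(\tilde{\mathfrak{s}})>0$ by Lemma \ref{lem:F}(3); for $\kappa<0$, $\mathbb{M}^3(\kappa)\subset\{x_4>0\}$ by Definition \ref{def:M3k}; for $\kappa=0$, $\mathbb{M}^3(0)=\{x_4=1\}$. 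I expect the main technical obstacle to be the $\kappa=0$ case of the containment $\mathfrak{C}_0(\tilde{\mathfrak{s}};0)\subset B$, which lacks the clean conservation-type argument available in the non-flat cases and requires an explicit computation with the catenoid formula \eqref{eq:parameuclideo}.
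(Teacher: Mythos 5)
Your argument is correct, and the key step is carried out in a genuinely different way from the paper. Where the paper proves $\mathbf{e}_4 \in \zeta_{\tilde{\mathfrak{s}}}$ by introducing the totally geodesic projection $P$ of \eqref{eq:proyP}, writing the intercept of the projected tangent line as $\overline{y}(\mathfrak{s}) = F(\mathfrak{s})/\bigl(x'x_4 - xx_4'\bigr)$ as in \eqref{eq:yF}, and then verifying the denominator condition \eqref{eq:condnecesaria} by a separate contradiction argument, you instead note that $\varphi(\tilde{\mathfrak{s}},0)$, $\varphi_{\mathfrak{s}}(\tilde{\mathfrak{s}},0)$ and $\mathbf{e}_4$ all have vanishing second coordinate and compute the $3\times 3$ determinant
\[
\det\begin{pmatrix} x & x_3 & x_4 \\ x' & x_3' & x_4' \\ 0 & 0 & 1 \end{pmatrix} = x\,x_3' - x'\,x_3 = -F(\tilde{\mathfrak{s}}),
\]
so linear dependence is exactly $F(\tilde{\mathfrak{s}})=0$, and $\mathbf{e}_4\in\mathrm{span}\{\varphi,\varphi_{\mathfrak{s}}\}\cap\mathbb{M}^3(\kappa)=\zeta_{\tilde{\mathfrak{s}}}$ (with the obvious affine reading for $\kappa=0$). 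Your route is more direct and entirely bypasses the positivity of $x'x_4-xx_4'$; the paper's detour through $P$ and \eqref{eq:condnecesaria} is not wasted work in context, though, since the analyticity of $\overline{y}(\mathfrak{s})$ near $\tilde{\mathfrak{s}}$, which rests on that denominator being nonzero, is precisely what the subsequent proof of item (2) of Proposition \ref{pro:catenoidesFB} (well-definedness of the map $\hat p$) is built on. The remaining steps — the containment $\mathfrak{C}_0\subset B$ via the monotonicity of $\kappa x_4$ from Lemma \ref{lem:F}(3) (respectively of $x^2+x_3^2$ when $\kappa=0$), orthogonality via the geodesic-sphere normal being tangent to $\zeta_{\tilde{\mathfrak{s}}}$, and embeddedness from the monotonicity of $x_3$ — coincide with the paper's.
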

\begin{proof}
    The embeddedness of the annulus $\mathfrak{C}_0(\tilde{\mathfrak{s}};\kappa)$ follows from the fact that the coordinate function $x_3(\mathfrak{s})$ is strictly increasing on the interval $[-\tilde{\mathfrak{s}},\tilde{\mathfrak{s}}]$; see Lemma \ref{lem:F}.
    
\medskip

    By our discussion at the beginning of Section \ref{sec:appendixA1}, if $\mathfrak{C}_0(\tilde{\mathfrak{s}};\kappa)$ were free boundary in a geodesic ball $B$, its center should necessarily be ${\bf e}_4$. This will happen if the following conditions hold:
    \begin{enumerate}
        \item $\mathfrak{C}_0(\tilde{\mathfrak{s}};\kappa)$ must be contained in $B$ and its two boundary components, given by $\varphi\left(\{\tilde{\mathfrak{s}}\} \times \mathbb{R}\right)$, $\varphi\left(\{-\tilde{\mathfrak{s}}\} \times \mathbb{R}\right)$, should lie in the geodesic sphere $\partial B$.
        \item The geodesic $\zeta_{\tilde{\mathfrak{s}}}$ meets the point ${\bf e}_4$; see Remark \ref{rem:zetas}. This ensures that the annulus $\mathfrak{C}_0(\tilde{\mathfrak{s}};\kappa)$ meets the geodesic sphere $\partial B$ orthogonally.
    \end{enumerate}
The first condition holds if we choose the balls given by \eqref{eq:bolanoeuclideaFB}, \eqref{eq:radiobola}: indeed, if $\kappa \neq 0$ and $\mathfrak{s} \in [0,\tilde{\mathfrak{s}}],$
$$\langle {\bf e}_4, \varphi(\mathfrak{s},\theta)\rangle_\kappa = \frac{x_4(\mathfrak{s})}{\kappa} \geq \frac{x_4(\tilde{\mathfrak{s}})}{\kappa},$$
since $\kappa x_4(\mathfrak{s})$ is decreasing by Lemma \ref{lem:F}. The inequality also holds for $\mathfrak{s} \in [-\tilde{\mathfrak{s}},0]$, as $x_4(\mathfrak{s})$ is symmetric. This implies by \eqref{eq:bolanoeuclideaFB} that $\mathfrak{C}_0(\tilde{\mathfrak{s}}) \subset  B = B[{\bf e}_4, x_4(\tilde{\mathfrak{s}})/\kappa]$. Similarly, if $\kappa = 0$, we take the ball $B$ centered at ${\bf e}_4$ of radius $R$ as in \eqref{eq:radiobola}. Since the function $\mathfrak{s} \mapsto \sqrt{x(\mathfrak{s})^2 + x_3(\mathfrak{s})^2}$ is strictly increasing in $\mathfrak{s} \in [0,\tilde{\mathfrak{s}}]$ (see \eqref{eq:parameuclideo}), we deduce that the catenoid $\mathfrak{C}_0(\tilde{\mathfrak{s}};0)$ is contained in $B$, as expected.
 
\medskip

We will now show the second condition, i.e., that $\varphi(\mathfrak{s},0)$ meets the geodesic sphere $\partial B$ orthogonally. For any $\kappa \in \left(-\frac{1}{4},\frac{1}{4}\right)$, we define $\mathbb{M}^2(\kappa):= \mathbb{M}^3(\kappa) \cap \{x_2 = 0\}$. This surface has constant sectional curvature $\kappa$, so it is isometric to either a 2-sphere, a hyperbolic plane or a Euclidean plane. Now, let $P: \mathbb{M}^2(\kappa)\cap \{x_4 > 0\} \to \mathbb{R}^2$ be the totally geodesic projection
\begin{equation}\label{eq:proyP}
    (\overline x, \overline y) = P(x_1,x_3,x_4) := \left(\frac{x_1}{x_4},\frac{x_3}{x_4}\right).
\end{equation}
Observe that if $\kappa \leq 0$, $P$ is defined on the whole surface $\mathbb{M}^2(\kappa)$, while for $\kappa > 0$ it is just defined on a hemisphere. Moreover, $P$ sends the rotation axis $\Upsilon$ to the line $\{\overline x = 0\}$, and $P({\bf e}_4) = (0,0)$. Now, let $\eta(\mathfrak{s})$ be the projection of the profile curve $\varphi(\mathfrak{s},0)$, that is, $\eta(\mathfrak{s}) = P(\varphi(\mathfrak{s},0))$. Since $P$ is totally geodesic, the projection of any geodesic $\zeta_{\mathfrak{s}_0}$ coincides with the tangent line $L_{\mathfrak{s}_0}$ to the curve $\eta(\mathfrak{s})$ at $\eta(\mathfrak{s}_0)$. This line will either be parallel to the axis $\{\overline x = 0\}$ or meet it at a single point $(0, \overline y)$. The condition for $L_{\mathfrak{s}_0}$ not to be parallel to the axis is that $\left(\frac{x}{x_4}\right)'\big|_{\mathfrak{s}_0} \neq 0$. In that case, it follows that  
\begin{equation}\label{eq:yF}
    \overline y(\mathfrak{s}_0) = \frac{x_3(\mathfrak{s}_0)}{x_4(\mathfrak{s}_0)} - \frac{x(\mathfrak{s}_0)\left(\frac{x_3}{x_4}\right)'\big|_{\mathfrak{s}_0}}{x_4(\mathfrak{s}_0)\left(\frac{x}{x_4}\right)'\big|_{\mathfrak{s}_0}} = \frac{F(\mathfrak{s}_0)}{x'(\mathfrak{s}_0)x_4(\mathfrak{s}_0) - x(\mathfrak{s}_0) x_4'(\mathfrak{s}_0)},
\end{equation}
see \eqref{eq:F}. Assume for a moment that
\begin{equation}\label{eq:condnecesaria}
    \left(\frac{x}{x_4}\right)' = \frac{x'x_4 - x x_4'}{x_4^2} > 0
\end{equation}
holds on a neighbourhood of $\mathfrak{s} = \tilde{\mathfrak{s}}$.
Then, \eqref{eq:yF} implies that $\overline y(\tilde{\mathfrak{s}}) = 0$, that is, the line $L_{\tilde{\mathfrak{s}}}$ passes through the origin of $\mathbb{R}^2$, which means that $\zeta_{\tilde{\mathfrak{s}}} \subset \mathbb{M}^2(\kappa)$ meets ${\bf e}_4$. This is exactly the condition that we were looking for. Hence, it just remains to check \eqref{eq:condnecesaria} to prove Lemma \ref{lem:Fbordelibre}. Suppose by contradiction that
\begin{equation}\label{eq:contradict}
    x'(\tilde{\mathfrak{s}}) x_4(\tilde{\mathfrak{s}}) \leq  x(\tilde{\mathfrak{s}}) x_4'(\tilde{\mathfrak{s}}).
\end{equation}
This is impossible if $\kappa \geq 0$ as $x(\tilde{\mathfrak{s}}), x'(\tilde{\mathfrak{s}}), x_4(\tilde{\mathfrak{s}}) > 0$ but $x_4'(\tilde{\mathfrak{s}}) \leq 0$; see \eqref{eq:parameuclideo} and Lemma \ref{lem:F}. Assume then that $\kappa < 0$. The fact that $\varphi(\mathfrak{s},0) \in \mathbb{M}^2(\kappa)$ implies that
\begin{equation}\label{eq:estarenM3}
    x^2 + x_3^2 +\frac{x_4^2}{\kappa} = \frac{1}{\kappa}
\end{equation}
for all $\mathfrak{s}$, so in particular
\begin{equation}\label{eq:desigualdadderivada}
x(\tilde{\mathfrak{s}})x'(\tilde{\mathfrak{s}}) + x_3(\tilde{\mathfrak{s}})x_3'(\tilde{\mathfrak{s}}) = -\frac{x_4(\tilde{\mathfrak{s}}) x_4'(\tilde{\mathfrak{s}})}{\kappa}.
\end{equation}
By definition of $\tilde{\mathfrak{s}}$, it holds $x_3(\tilde{\mathfrak{s}})x'(\tilde{\mathfrak{s}}) = x(\tilde{\mathfrak{s}})x_3'(\tilde{\mathfrak{s}})$. Moreover, by \eqref{eq:contradict}, $x_4'(\tilde{\mathfrak{s}})  \geq \frac{x'(\tilde{\mathfrak{s}})x_4(\tilde{\mathfrak{s}})}{x(\tilde{\mathfrak{s}})}.$ Substituing in \eqref{eq:desigualdadderivada} for $\mathfrak{s} = \tilde{\mathfrak{s}}$,
$$xx' + \frac{x_3^2x'}{x} = -\frac{x_4x_4'}{\kappa} \geq -\frac{x'x_4^2}{\kappa x} = -\frac{x'}{\kappa x}\left(1 
 - \kappa (x^2 + x_3^2)\right).$$
 Multiplying both sides by $\frac{x}{x'} > 0$,
 $$x^2 + x_3^2 \geq  x^2 + x_3^2 - \frac{1}{\kappa}.$$
 This implies that $\frac{1}{\kappa} > 0$, which is impossible by hypothesis. Hence, \eqref{eq:condnecesaria} holds, proving Lemma \ref{lem:Fbordelibre} and item (1) of Proposition \ref{pro:catenoidesFB}.

\end{proof}


\subsection{Proof of item (2) in Proposition \ref{pro:catenoidesFB}}
    In the proof of Lemma \ref{lem:Fbordelibre}, we showed that \eqref{eq:condnecesaria} holds on a neighbourhood $I \subset \mathbb{R}$ of $\tilde{\mathfrak{s}}$. In particular, by \eqref{eq:yF} we see that $\overline y = \overline{y}(\mathfrak{s}):I\to \mathbb{R}$ is analytic. On the other hand, by definition of $\overline y$, the point $P^{-1}(0,\overline y(\mathfrak{s})) \in \mathbb{M}^2(\kappa)$ is the (unique) point in in the half-space $\{x_4 > 0\}$ in which the geodesics $\Upsilon$ and $\zeta_{\mathfrak{s}}$ meet. This shows that the map $\hat p(u)$ introduced in item (3) of Proposition \ref{pro:catenoidesFB} is well defined and analytic on a neighbourhood $\mathcal{I} \subset \mathbb{R}$ of $\tilde u$. The fact that $\hat p_3(\tilde u) = 0$ is immediate since $\overline{y}(\tilde{\mathfrak{s}}) = 0$. It is also clear by \eqref{eq:yF} that the sign of the derivative $\hat p_3'(\tilde u)$ coincides with that of $F'(\tilde{\mathfrak{s}})$. We will show in the following Section that $F'(\tilde{\mathfrak{s}}) > 0$, proving item (2) of Proposition \ref{pro:catenoidesFB}.

\subsection{Proof of item (3) of Proposition \ref{pro:catenoidesFB}}\label{sec:appendixA2}
We will now prove that the map $\tilde{\mathfrak{s}} = \tilde{\mathfrak{s}}(\kappa):\left(-\frac{1}{4},\frac{1}{4}\right) \to \mathbb{R}^+$ is analytic. By definition, $\tilde{\mathfrak{s}}$ is the first positive root of the function $F$ in \eqref{eq:F}. Hence, it suffices to show that
\begin{equation}\label{eq:derivadaF}
    F'(\tilde{\mathfrak{s}}) > 0,
\end{equation}
that $F = F(\mathfrak{s};\kappa)$ is analytic as a function of $\mathfrak{s}$ and $\kappa$, and apply the implicit function theorem. We first show that $F(\mathfrak{s};\kappa)$ is analytic. According to \eqref{eq:F}, we just need to check that the coordinate functions $x(\mathfrak{s};\kappa)$, $x_3(\mathfrak{s};\kappa)$ are analytic. We emphasize that the analytic dependence on $\kappa$ is not immediate, as $x_3$ adopts different expressions depending on the sign of this parameter; see \eqref{eq:paramesfera}, \eqref{eq:paramhiperb}, \eqref{eq:parameuclideo}. Nevertheless, for any $\kappa \in \left(-\frac{1}{4},\frac{1}{4}\right)$ the tuple $(x,x_3,x_4)$ can be expressed as a solution of the following differential system:

\begin{equation}\label{eq:xx3x4}
\left\{\def\arraystretch{1.3} \begin{array}{lll} x'' & =- \kappa x  + \frac{\delta^2}{x^3}, \\
x_3' &= \kappa\frac{x x'}{\kappa x^2 -1} x_3 + \frac{\delta  x_4}{x(1 - \kappa x^2)}, \\
        x_4' &= \kappa\frac{x x'}{\kappa x^2 - 1} x_4 + \kappa \frac{\delta x_3}{x(\kappa x^2 - 1)},\end{array} \right.
\end{equation}
where $\delta = \frac{2}{4\kappa + 1}$ and we set the initial conditions
\begin{align*}
    x(0) &= \frac{2}{\sqrt{4\kappa + 1}}, \\
    x'(0) &= 0, \\
    x_3(0) &= 0, \\
    x_4(0) &= \frac{1}{\sqrt{4\kappa + 1}}.
\end{align*}
The equations that define \eqref{eq:xx3x4} are immediate from \eqref{eq:paramesfera}, \eqref{eq:paramhiperb} and \eqref{eq:parameuclideo}. Both this system and its initial conditions depend analytically on $\kappa$, and so will do the solution $(x,x_3,x_4)$. We deduce then that $F = F(\mathfrak{s},\kappa)$ is analytic.

\medskip

Let us now prove \eqref{eq:derivadaF}. A direct computation using \eqref{eq:xx3x4} and \eqref{eq:x} shows that
\begin{equation}\label{eq:derivadaFcalculo}
    F'(\mathfrak{s}) = \frac{2 \left(2 x_3 + (1 + 4 \kappa) x^2 x' x_4  \right)}{\left(1 + 4 \kappa\right)^2 x^3 \left(1 - \kappa x^2\right)}
\end{equation}
for all $\mathfrak{s} \in \mathbb{R}$. At $\mathfrak{s} = \tilde{\mathfrak{s}}$, we have that $x(\tilde{\mathfrak{s}}), x'(\tilde{\mathfrak{s}}), x_3(\tilde{\mathfrak{s}}), x_4(\tilde{\mathfrak{s}}) > 0$. Moreover, $1 - \kappa x(\tilde{\mathfrak{s}})^2 > 0$; this is trivial if $\kappa \leq 0$, while for $\kappa > 0$ is an immediate consequence of \eqref{eq:estarenM3}. In any case, \eqref{eq:derivadaF} holds. This shows that $\tilde{\mathfrak{s}}= \tilde{\mathfrak{s}} (\kappa)$ is an analytic function.


\bibliographystyle{amsalpha}

\vskip 0.2cm

\noindent Alberto Cerezo

\noindent Departamento de Geometría y Topología \\ Universidad de Granada (Spain) \\ Instituto de Matemáticas IMUS \\ Departamento de Matemática Aplicada I \\ Universidad de Sevilla (Spain)

\noindent  e-mail: {\tt cerezocid@ugr.es}

\vskip 0.4cm

\noindent This research has been finantially supported by Grant PID2020-118137GB-I00 funded by MICIU/AEI/10.13039/501100011033 and by ESF+.

\vskip 0.2cm

\end{document}